\title{Cohomological Obstructions for Mittag-Leffler Problems}
\author{Mateus Schmidt}
\date{2020}
\begin{document}

    \maketitle
    \begin{abstract}
        This is an extensive survey of the techniques used to formulate generalizations of the Mittag-Leffler Theorem from complex analysis. With the techniques of the theory of differential forms, sheaves and cohomology, we are able to define the notion of a Mittag-Leffler Problem on a Riemann surface, as a problem of passage of data from local to global, and discuss characterizations of contexts where these problems have solutions. This work was motivated by discussions found in \cite{griffithsharris}, \cite{forster}, as well as \cite{hartshorne}.
    \end{abstract}
    
    \pagebreak
    
    \tableofcontents
    \pagebreak

\part{Preliminaries}

\begin{center}
\textit{“Wir beschr\"{a}nken die Ver\"{a}nderlichkeit der Gr\"{o}ssen x, y auf ein endliches Gebiet, indem wir als Ort des Punktes O nicht mehr die Ebene A selbst,
sondern eine \"{u}ber dieselbe ausgebreitete Fl\"{a}che T betrachten. ... Wir lassen die M\"{o}glichkeit offen, dass der Ort des Punktes O \"{u}ber denselben Theil der Ebene sich mehrfach erstrecke, setzen jedoch f\"{u}r einen solchen Fall voraus, dass die auf einander liegenden Fl\"{a}chentheile nicht l\"{a}ngs einer Linie zusammenh\"{a}ngen, so dass eine Umfaltung der Fl\"{a}che, oder eine Spaltung in auf einander liegende Theile nicht vorkommt.”}

\medskip

\textit{“We restrict the variables x, y to a finite domain by considering as the locus
of the point O no longer the plane A itself but a surface T spread over the
plane. We admit the possibility ... that the locus of the point O is covering
the same part of the plane several times. However in such a case we assume
that those parts of the surface lying on top of one another are not connected
along a line. Thus a fold or a splitting of parts of the surface cannot occur."}

-Bernhard Riemann in \cite{riemannthesis}, translation provided by Reinhold Remmert in \cite{rscs}.

\end{center}

\section{Riemann Surfaces and Meromorphic functions}

\makebox[0pt]{}\vspace{-2ex}

Riemann introduced the concept of Riemann surfaces in his PhD thesis in 1851. He had noted that they are the correct context for the study of functions of one complex variables, for they provide ways to deal with some problems that we encounter during the study of functions on the plane, such as multi-valuation. Their study throughout the years became interesting from both a complex analytic and geometric point of view. Much of the analytic study of Riemann surfaces revolves around trying to transport results of the classical theory of functions of one complex variable in the plane to the new setting of surfaces. 

\begin{definition}

Let $X$ be a two dimensional real manifold, i.e., a second countable Hausdorff topological space $X$ such that every point $a \in X$ has an open neighborhood which is homeomorphic to an open subset of $\mathbb{R}^2$. A \textit{complex chart} on $X$ is a homeomorphism $	\varphi: U	\rightarrow V$ of an open set $U \subseteq X$ onto an open subset $V \subseteq \mathbb{C}$. Two complex charts $\varphi_1: U_1	\rightarrow V_1$ and $\varphi_2: U_2	\rightarrow V_2$ are said to be \textit{holomorphically compatible} if the map
\begin{equation}
    \varphi_2\circ\varphi_1^{-1}: \varphi_1(U_1 \cap U_2) \rightarrow \varphi_2(U_1 \cap U_2)
\end{equation}
is biholomorphic, i.e., it is holomorphic and its inverse is holomorphic.

A \textit{complex atlas} on X is a system $\mathfrak{A} = \{\varphi_i: U_i	\rightarrow V_i, i \in I\}$ of charts which are holomorphically compatible and which cover $X$.
\end{definition}

\begin{definition}

Two complex atlases $\mathfrak{A}$ and $\mathfrak{A}'$ on $X$ are said to be \textit{analytically equivalent} if every chart of $\mathfrak{A}$ is holomorphically compatible with every chart of $\mathfrak{A}'$. A \textit{complex structure} on a two-dimensional manifold $X$ is an equivalence class of analytically equivalent atlases on $X$.

\end{definition}

\begin{remark}
Analitycal equivalence is indeed an equivalence relation, and each equivalence class has a canonical representative, called the \textit{maximal atlas}, consisting of the union of all of the atlases in said equivalence class.
\end{remark}

\begin{definition}
A \textit{Riemann surface} is a pair $(X,\Sigma)$, where $X$ is a connected two-dimensional manifold and $\Sigma$ is a complex structure on $X$. We will usually write $X$ for $(X,\Sigma)$, whenever $\Sigma$ is clear from the context.
\end{definition}

\begin{example}
The \textit{complex plane $\mathbb{C}$}, with the atlas composed by the identity map.
\end{example}

\begin{example}
Any \textit{domain}, i.e., any connected open subset of $\mathbb{C}$. If $G \subseteq \mathbb{C}$ is a domain, it has a natural structure of a Riemann surface, namely, the chart is the identity restricted to $G$. More generally, if $X$ is a Riemann surface and $Y \subseteq X$ is an open connected subset, it is a Riemann surface itself with the structure given by the charts $\varphi: U	\rightarrow V$ on X, where $U \subseteq Y$.
\end{example}

\begin{example}\label{P1}
The \textit{Riemann sphere}, which we identify with the projective space $\mathbb{P}^1$. Define $\mathbb{P}^1 \coloneqq \mathbb{C} \cup \{\infty\}$, where $\infty \notin \mathbb{C}$ is a symbol. We then give $\mathbb{P}^1$ a topology in the following way: the open sets are the usual open sets $U \subseteq \mathbb{C}$, together with sets of the form $V \cup \{\infty\}$, where $V \subseteq \mathbb{C}$ is the complement of a compact set $K \subseteq \mathbb{C}$. With this topology, $\mathbb{P}^1$ is a compact Hausdorff topological space, the $1$-point compactification of the plane, and is homeomorphic to the 2-Sphere $S^2$. Set

\begin{align*}
    &U_1 \coloneqq \mathbb{P}^1 \setminus \{\infty\} \\
    &U_2 \coloneqq \mathbb{P}^1 \setminus \{0\} = \mathbb{C} \setminus \{0\} \cup \{\infty\} = \mathbb{C}^{*} \cup \{\infty\}
\end{align*}

Define maps $\varphi_i: U_i	\rightarrow \mathbb{C}$, as follows: $\varphi_1$ is the identity, and
    \[ \varphi_2(z) = \begin{cases} 
          \frac{1}{z} & z\in \mathbb{C}^{*} \\
          0 & z= \infty 
       \end{cases}
    \]
    
These maps are homeomorphisms $U_i	\rightarrow \mathbb{C} \cong \mathbb{R}^2$, and thus $\mathbb{P}^1$ is a two-dimensional manifold. The complex structure on $\mathbb{P}^1$ is defined by the atlas consisting of the $\varphi_i, i=1, 2$. They are holomorphically compatible, since $\varphi_1(U_1 \cap U_2) = \varphi_2(U_1 \cap U_2) = \mathbb{C}^{*}$ and
\begin{equation}
    \varphi_2\circ\varphi_1^{-1}:\mathbb{C}^{*} \rightarrow \mathbb{C}^{*}, z \mapsto \frac{1}{z}
\end{equation}
is biholomorphic.
\end{example}

\begin{definition}
Let $X$ be a Riemann surface, and $Y \subseteq X$ an open subset. A function $f: Y \rightarrow \mathbb{C}$ is called \textit{holomorphic}, if for every chart $\psi: U \rightarrow V$ on $X$, the function
\begin{equation}
    f \circ \psi^{-1}: \psi(U \cap V) \rightarrow \mathbb{C}
\end{equation}
is holomorphic in the usual sense of complex functions (since $\psi(U \cap V) \subseteq \mathbb{C}$). The set of all functions holomorphic on $Y$ is denoted $\mathcal{O}(Y)$.
\end{definition}
\begin{remark}
The set $\mathcal{O}(Y)$ has a natural structure of $\mathbb{C}$-algebra, given by pointwise addition and multiplication.
\end{remark}

\begin{remark}
Every chart $\psi: U \rightarrow \mathbb{C}$ is trivially a holomorphic function.
\end{remark}

\begin{definition}
Suppose $X$ and $Y$ are Riemann surfaces. A continuous mapping $f:X \rightarrow Y$ is called \textit{holomorphic} if for every pair of charts $\psi_1:U_1 \rightarrow V_1$ on $X$ and $\psi_2:U_2 \rightarrow V_2$ on $Y$ with $f(U_1) \subseteq U_2$ the mapping
\begin{equation}
    \psi_2 \circ f \circ \psi_1^{-1}: V_1 \rightarrow V_2
\end{equation}
is holomorphic in the usual sense (note that it is a function on the plane).
\end{definition}

We now discuss two equivalent definitions of meromorphic functions:

\begin{definition}
Let $X$ be a Riemann surfface, and $Y \subseteq X$ an open subset. A \textit{meromorphic function} on $Y$ is a holomorphic function $f: Y' \rightarrow \mathbb{C}$, where $Y' \subseteq Y$ is an open subset, such that the following hold:
    \begin{enumerate}[label=(\roman*)]
        \item $Y \setminus Y'$ contains only isolated points
        \item For every point $p \in Y \setminus Y'$ one has
        \begin{equation}
            \lim_{x\to p} \left |f(x) \right| = \infty
        \end{equation}
    \end{enumerate}
The points of $Y \setminus Y'$ are called the poles of $f$. The set of all meromorphic functions on Y is denoted by $\mathcal{K}(Y)$.
\end{definition}

Now, recall that given a point $p$ in a Riemann surface $X$, we have the set $\mathcal{O}_p$ of equivalence classes of functions $f$ which are holomorphic in some neighborhood of $p$, subject to the equivalence relation $f \sim g$ if $f = g$ in a neighborhood of $p$. It is called the \textit{ring of germs of holomorphic functions} around $p$. If $f$ is holomorphic in a neighborhood of $p$, we write $f_p$ for its equivalence class in $\mathcal{O}_p$. Moreover, $\mathcal{O}_p$ is an integral domain, and so we can form its quotient field $\mathcal{K}_p$.

\begin{definition}
A \textit{meromorphic function} $\varphi$ in an open set $U$ of a Riemann surface $X$, is a map
\begin{equation}
    \varphi: U \rightarrow \bigcup\limits_{p} \mathcal{K}_p
\end{equation}
such that $\varphi(p) \in \mathcal{K}_p$ for every $p$ and to every point in $U$ there is a neighborhood $V$ and functions $f, g \in \mathcal{O}(V)$ such that $\varphi(p) = \frac{f_p}{g_p}$ when $p \in V$.\footnote{This definition also already hints at a potential relation with sheaf theory.} The set of all meromorphic functions in $U$ is denoted by $\mathcal{K}(U)$.
\end{definition}

By the second definition, we see that locally every meromorphic function is represented by a quotient of holomorphic functions.

We now present few interesting results about Riemann surfaces and meromorphic functions, which are generalizations of classical results of complex function theory in the plane:

\begin{theorem}[Riemann's Removable Singularity Theorem]
Let $U$ be an open subset of a Riemann Surface $X$, and let $a \in U$. Suppose the function $f \in \mathcal{O}(U\setminus \{a\})$ is bounded in some neighborhood of $a$. Then f can be extended uniquely to a function $\widetilde{f} \in \mathcal{O}(U)$.
\end{theorem}

\begin{proof}
Follows directly from Riemann's Removable Singularity Theorem in the plane (cf. \cite{krantz}, §4.1.5) 
\end{proof}

Using this theorem, one can prove that $\mathcal{K}(U)$ also has the natural structure of a $\mathbb{C}$-algebra.

\begin{theorem}[Identity Theorem]
Suppose $X$ and $Y$ are a Riemann surfaces, and $f_1, f_2: X \rightarrow Y$ are holomorphic mappings which coincide on a set $A \subseteq X$ having a limit point $a \in X$. Then $f_1$ and $f_2$ are identically equal.
\end{theorem}

\begin{proof}
 \cite{forster}, Chapter 1, Theorem 1.11.
\end{proof}

Let $f$ be defined and holomorphic in a punctured neighborhood of $p \in X$. Let $\varphi:U \rightarrow V$ be a chart on $X$, with $p \in U$. If $z$ is the local coordinate on $X$ near $p$, i.e., for all $x$ near $p$ we have $\varphi(x) = z$, we have that $f \circ \varphi^{-1}$ is holomorphic in a neighborhood of $\varphi(p) = z_0 \in \mathbb{C}$. Therefore, we may expand $\varphi(p) = z_0$ in a \textit{Laurent series} about $z_0$, just as we would in the plane:

\begin{equation}
    f(\varphi^{-1}(z)) = \sum_{n} c_n(z-z_0)^n
\end{equation}

This is called the \textit{Laurent series for $f$ about $p$ with respect to $\varphi$}. It \textbf{depends} on the choice of chart $\varphi$. The $\{c_n\}$ are called the \textit{Laurent coefficients}. The Laurent series encodes information about the isolated singularities of $f$, similarly to how it did for functions in the plane:

\begin{proposition}
The function $f$ has a pole at p if, and only if, any one of its Laurent series has finitely many nonzero negative terms.
\end{proposition}


The following proposition interprets meromorphic functions on Riemann surfaces as mappings to the Riemann sphere.

\begin{theorem}\label{mappingstop1}
Suppose $X$ is a Riemann surface and $f \in \mathcal{K}(X)$. For each pole $p$ of $f$, define $f(p) \coloneqq \infty$. Then $f:X \rightarrow \mathbb{P}^1$ is a holomorphic mapping. Conversely, if $f:X \rightarrow \mathbb{P}^1$ is a holomorphic mapping, then $f$ is either identically equal to $\infty$ or $\{f^{-1}(\infty)\}$ consists of isolated points and $f:X \setminus \{f^{-1}(\infty)\} \rightarrow \mathbb{C}$ is a holomorphic function on $X$.
\end{theorem}

\begin{proof}
Let $f \in \mathcal{K}(X)$ and let $P$ be the set of poles of $f$. Then $f$ induces a mapping $f:X \rightarrow \mathbb{P}^1$, which is continuous on the topology of $\mathbb{P}^1$, because it is continuous on the usual complex topology outside of $P$, and for any $p \in P$, $\lim_{x\to p} \left |f(x) \right| = \infty$ by construction. Suppose $\varphi: U \rightarrow V$ and $\psi:U' \rightarrow V'$ are charts on $X$ and $\mathbb{P}^1$ respectively, with $f(U) \subseteq U'$. We have to show that
\begin{equation}
    g \coloneqq \psi \circ f \circ \varphi^{-1}: V \rightarrow V'
\end{equation}
is holomorphic. Since $f$ is holomorphic on $X \setminus P$ by hypothesis, it follows that $g$ is holomorphic on $V \setminus \varphi(P)$. So, by the Riemann's Removable Singularities Theorem, $g$ is holomorphic on all of $V$ (if $\psi$ is equal to the chart $\varphi_2$, $g$ is trivially bounded. If $\psi$ it is equal to $\varphi_1$, $g$ is bounded on $V \setminus \varphi(P)$ by construction).

The converse follows from the Identity Theorem, for if $f:X \rightarrow \mathbb{P}^1$ is a holomorphic mapping, it agrees with a holomorphic function $\widetilde{f}: U \rightarrow \mathbb{C}$ on the open set $X \setminus f^{-1}(\infty)$. Since $\{f^{-1}(\infty)\}$ is closed, $X \setminus f^{-1}(\infty)$ is open unless $f$ is identically equal $\infty$, in which case it is clopen. Again by the Identity Theorem, if $f$ is not identically equal $\infty$, $\{f^{-1}(\infty)\}$ is a set of isolated points.
\end{proof}

\begin{remark}
The set $\mathcal{K}(X)$ is a field. Indeed, by Theorem \ref{mappingstop1}, the Identity Theorem is true for meromorphic functions away from their set of poles, and therefore locally we have a well defined algebraic inverse for every non-zero meromorphic function on its domain of definition.
\end{remark}

Finally, we state a theorem which we will prove in the following section, in the classical context of the plane:

\begin{theorem}[Runge's Theorem for non-compact Riemann surfaces]\label{rungesurfaces}
Suppose $X$ is a \textit{non-compact} Riemann surface. Let $Y$ be an open subset of $X$, and suppose that $X \setminus Y$ has no compact connected components. Then every holomorphic function $f \in \mathcal{O}(Y)$ is an uniform limit on every compact subset of $Y$ of restrictions of holomorphic functions on $X$.
\end{theorem}

\section{The Mittag-Leffler Theorem}

\makebox[0pt]{}\vspace{-2ex}

In this section, we study in depth the Mittag-Leffler Theorem in the complex plane, the main result which we intend to generalize to the context of Riemann surfaces.

For the Mittag-Leffler Theorem in the complex plane $\mathbb{C}$, we follow the theory as presented in \cite{conway}. In this section, $\mathbb{C}_\infty = \mathbb{C} \cup \{\infty\}$ denotes the Riemann sphere, which can be identified as a Riemann surface with $\mathbb{P}^1$, as was discussed above.

\begin{theorem}[Mittag-Leffler]\label{mlplane}
Let $G \subseteq \mathbb{C}$ be an open subset of the plane, $\{a_k\}$ a sequence of distinct points in $G$ without limit point in $G$, and let $\{S_k(z)\}$ be a sequence of rational functions given by
\begin{equation}
    S_k(z) = \sum_{j=1}^{m_k} \frac{A_{jk}}{(z-a_k)^j}
\end{equation}
where $m_k$ is a positive integer, and $A_{1k},...,A_{mk}$ are arbitrary complex coefficients. Then, there exists a meromorphic function $f$ on $G$ such that its poles are precisely the points $\{a_k\}$ and the principal part of $f$ at $a_k$ is $S_k(z)$.
\end{theorem}

To prove the Mittag-Leffler Theorem, we will use some preliminary results. We begin by recalling a version of the
 
\begin{theorem}[Cauchy Integral Formula]\label{fic}
Let $K \subset G$ be a compact subset of a region (i.e., an open connected subset) $G$. Then, there exists line segments $\gamma_1,..,\gamma_n$ in $G \setminus K$ such that for any holomorphic function $f$ on $G$

\begin{equation}
    f(z) = \sum_{k=1}^{n} \frac{1}{2\pi i} \int_{\gamma_k} \frac{f(w)}{w-z} dw
\end{equation}
for every $z \in K$. The line segments form a finite number of closed polygons.
\end{theorem}

\begin{proof}
\cite{conway}, VIII, 1.1
\end{proof}

We now restate Theorem \ref{rungesurfaces} in the context of $\mathbb{C}$. It is the main theorem used in the proof of the Mittag-Leffler Theorem. Its own proof will make use of several technical lemmas.

\begin{theorem}[Runge Theorem]\label{runge}
Let $K \subset \mathbb{C}$ and let $E$ be a subset of $\mathbb{C}_\infty \setminus K$ intersecting every connected component of $\mathbb{C}_\infty \setminus K$. If $f$ is holomorphic on some open set containing $K$, and $\epsilon > 0$, then there exists a rational function $R(z)$, such that all of its poles are on $E$, and such that

\begin{equation}
    \left|f(z) - R(z)\right| < \epsilon
\end{equation}
for every $z \in K$.
\end{theorem}

\begin{lemma}\label{lema1}
Let $\gamma$ be a sectionally $C^1$ curve, and let $K$ be a compact set such that $K \cap \{\gamma\} = \emptyset$. If $f$ is a continuous function on $\{\gamma\}$ and $\epsilon > 0$, then there exists a rational function $R(z)$ with all of its poles in $\{\gamma\}$ and such that

\begin{equation}
    \Biggl\lvert \int_{\gamma} \frac{f(w)}{w-z}dw - R(z) \Biggr\rvert < \epsilon
\end{equation}
for every $z \in K$.
\end{lemma}

\begin{proof}
\cite{conway}, VIII, 1.5.
\end{proof}

A small comment: this proof is done in the Banach space $C(K)$ of all continuous functions $f: K \rightarrow \mathbb{C}$, with the metric of uniform convergence that is induced by the norm

\begin{equation}
\norm{f}_\infty = sup\{|f(z)|: z \in K\}
\end{equation}

Denote by $B(E)$ = $\{f \in C(K)$: such that $\exists \{R_n\}$ a sequence of rational functions with poles in $E$, such that $R_n \rightarrow f$ uniformly on $K\}$. The Runge Theorem tells us that if $f$ is holomorphic in a neighborhood of $K$, then the restriction of $f$ to $K$ is in $B(E)$.

\begin{lemma}\label{lema2}
$B(E)$ is a closed $\mathbb{C}$-subalgebra of $C(K)$, containing all of the rational functions with poles in $E$.
\end{lemma}

%

\begin{lemma}\label{lema3}
If $a \in \mathbb{C} \setminus K$, then $(z-a)^{-1} \in B(E)$.
\end{lemma}

\begin{proof}
This proof is done with very technical analysis. For those interested, it can be found in \cite{conway}, VIII, 1.10.
\end{proof}

\begin{proof}[Proof of the Runge Theorem]
If $f$ is holomorphic on some open set $G$ and $K \subset G$, then for every $\epsilon > 0$, \textbf{Lemma} \ref{lema1} and \textbf{Theorem} \ref{fic} imply on the existence of a rational function $R(z)$ with poles in $\mathbb{C} \setminus K$, such that $\lvert f(z) - R(z) \rvert < \epsilon$ for all $z \in K$. Then \textbf{Lemma} \ref{lema2} and \textbf{Lemma} \ref{lema3} tell us that $R \in B(E)$.
\end{proof}

Besides Theorem \ref{runge}, we will need two additional results to prove the Mittag-Leffler Theorem. Denote by $\operatorname{int}X$ the interior of $X$:

\begin{lemma}\label{lema4}
If $G \subseteq \mathbb{C}$ is an open subset, then there exists a sequence $\{K_n\}$ of compact sets in $G$ such that $G = \bigcup_{n=1}^{\infty} K_n$. Moreover, the $K_n$ can be chosen so as to satisfy
\begin{enumerate}
    \item $K_n \subset \operatorname{int}K_{n+1}$
    \item If $K \subset G$ and $K$ is compact, then $K \subset K_n$ for some $n$
    \item All of the connected components of $\mathbb{C}_\infty \setminus K_n$ contain a connected component of $\mathbb{C}_\infty \setminus G$.
\end{enumerate}
\end{lemma}

\begin{proof}
For every positive integer $n$, let
\begin{equation}
    K_n = \{z: \left|z\right| \leq n \} \cap \{z: d(z, \mathbb{C} \setminus G) \geq \frac{1}{n} \}.
\end{equation}
Every $K_n$ is bounded, since $\{z: \left|z\right| \leq n \}$ and $\{z: d(z, \mathbb{C} \setminus G) \geq \frac{1}{n} \}$ are bounded, and also are closed, and therefore their intersection is closed. So $K_n$ is in fact compact. Moreover,
\begin{equation}
     \{z: \left|z\right| < n+1 \} \cap \{z: d(z, \mathbb{C} \setminus G) > \frac{1}{n+1} \}
\end{equation}
is open, contains $K_n$ and is contained in $K_{n+1}$. Therefore, the collection of the $K_n$ we have just defined satisfies \textit{(1)}.

Also, we have that $G = \bigcup_{n=1}^{\infty} K_n$, and so $G = \bigcup_{n=1}^{\infty} \operatorname{int}K_n$. Then, if $K \subset G$ is compact, the subsets $\{\operatorname{int}K_n\}$ form an open covering of $K$. Since $K$ is compact, there exists a finite subcover of $\{\operatorname{int}K_n\}$, and its union is a compact $K_n$. This proves that the collection of the $K_n$ satisfy \textit{(2)}. 

Finally, note that by construction, the unlimited component of $\mathbb{C}_\infty \setminus K_n$ (which contains $\mathbb{C}_\infty \setminus G)$ must contain $\infty$ and therefore must contain the connected component of $\mathbb{C}_\infty \setminus G$ that contains $\infty$. Moreover, the connected component contains $\{z: \left|z\right| > n\}$. So, if $D$ is a connected component of $\mathbb{C}_\infty \setminus K_n$, $D$ contains a point $z$ such that $d(z, \mathbb{C} \setminus G) < \frac{1}{n}$. Then by definition, we have a point $w$ in $\mathbb{C} \setminus G$ with $\left|w-z\right| < \frac{1}{n}$. But then $z \in \{a: \left|a-w\right| < \frac{1}{n}\} \subset \mathbb{C}_\infty \setminus K_n$. Since discs are connected, $z$ is in the component $D$ of $\mathbb{C}_\infty \setminus K_n$, $\{a: \left|a-w\right| < \frac{1}{n}\} \subset D$. So if $D_1$ is the connected component of $\mathbb{C}_\infty \setminus K_n$ containing $w$, $D_1 \subset D$. And so we have \textit{(3)}.
\end{proof}

\begin{theorem}[Weierstraß M-Test]
Let $(X, d)$ be a metric space and $u_n: X \rightarrow \mathbb{C}$ be a sequence of functions such that $\left|u_n(x)\right| \le M_n$ for all $x \in X$ and suppose that the constants satisfy

\begin{equation}
    \sum_{n=1}^{\infty} M_n < \infty
\end{equation}
Then $\sum_{n=1}^{\infty}u_n$ converges uniformly and absolutely.
\end{theorem}

\begin{proof}
\cite{conway}, II, 6.2
\end{proof}


\begin{proof}[Proof of the Mittag-Leffler Theorem]
Using \textbf{Lemma} \ref{lema4}, we can find compacts $K_n \subset G$ such that

\begin{equation}
    G = \bigcup_{n=1}^{\infty} K_n, \text{ } K_n \subset \operatorname{int}K_{n+1}
\end{equation}
and each connected component of $\mathbb{C}_\infty \setminus K_n$ contains a connected component of $\mathbb{C}_\infty \setminus G$. Since each $K_n$ is compact and $\{a_k\}$ does not have a limit point in $G$, there exists a \textit{finite} number of points $a_k$ in each $K_n$. Define the set of integers $I_n$ as:

\begin{align*}
    &I_1 = \{k: a_k \in K_1\} \\
    &I_n = \{k: a_k \in K_n \setminus K_{n-1}\}
\end{align*}
for $n \ge 2$. Define, for each $n \ge 1$ functions $f_n$ as

\[ f_n(z) = \begin{cases} 
          \sum_{k \in I_n} S_k(z) & I_n \neq \emptyset \\
          0 & I_n = \emptyset 
       \end{cases}.
    \]
Then $f_n$ is rational since the sums are finite, and its poles are the points

\begin{equation}
    \{a_k: k \in I_n\} \subset K_n \setminus K_{n-1}.
\end{equation}
Since $f_n$ does not have poles in $K_{n-1}$ if $n \ge 2$, it is holomorphic in a neighborhood of $K_{n-1}$. By the Runge Theorem, there exists a rational function $R_n(z)$, with poles in $\mathbb{C}_\infty \setminus G$ satisfying

\begin{equation}
    \left|f_n(z) - R_n(z)\right| < \biggl(\frac{1}{2}\biggr)^n
\end{equation}
for all $z \in K_{n-1}$. We claim that

\begin{equation}
  f(z) = f_1(z) +  \sum_{n=2}^{\infty} [f_n(z) - R_n(z)]
\end{equation}
is the meromorphic function we want. Indeed, let

\begin{equation}
    K \subset G \setminus \{a_k: k \ge 1\}
\end{equation}
be a compact subset. Then by \textbf{Lemma} \ref{lema4}, there exists an integer $N$ such that $K \subset K_N$. If $n \ge N$, then $\left|f_n(z) - R_n(z)\right| < \bigl(\frac{1}{2}\bigr)^n$ for all $z \in K$. Then the series is dominated in $K$ by the convergent series $\sum_{n \ge N} (\frac{1}{2})^n$, and therefore by the Weierstraß M-Test, the function $f$ converges absolutely and uniformly in $K$. So $f$ is holomorphic in $G \setminus \{a_k: k \ge 1\}$.

Now, consider a fixed integer $k \ge 1$. By hypothesis, there exists $R > 0$ such that $\left|a_j - a_k\right| > R$ if $j \neq k$. So, since the series converges absolutely, $f(z) = S_k(z) + g(z)$ in $0 < \left|z-a_k\right| < R$, with $g$ holomorphic in $\left|z-a_k\right| < R$. Threfore, $a_k$ is a pole of $f$ and $S_k(z)$ is its principal part.
\end{proof}

\section{Vector Bundles and Singular (Co)homology}

\makebox[0pt]{}\vspace{-2ex}

In this section, we give a very brief introduction to the objects and techniques that we are later going to generalize on this work. The objects introduced will be the examples where we can apply our intuition to later, when we develop the theory on more abstract settings.

\subsection{Vector Bundles}

We begin with give a quick review of the theory of vector bundles over differentiable manifolds, following the exposition from \cite{botttu}, Chapter I, §6. Throughout, differentiable manifold means differentiable manifold of class $C^{\infty}$.

\begin{definition}
Let $E$ and $M$ be differentiable manifolds. Let $\pi: E \rightarrow M$ be surjective morphism of manifolds, such that each fiber $\pi^{-1}(x)$ has the structure of a real vector space for every $x \in M$. The map $\pi$ is a \textit{$C^{\infty}$ real vector bundle of rank r} if there exists an open cover $\{U_{\alpha}\}$ of $M$ and \textit{fiber-preserving diffeomorphisms}

\begin{equation}
    \varphi_{\alpha}:  \left. E \right|_{U_{\alpha}} = \pi^{-1}(U_{\alpha}) \rightarrow U_{\alpha} \times \mathbb{R}^{r}
\end{equation}
which induce linear isomorphisms on each fibers, that is, for every $x \in U_{\alpha}$, the linear map

\begin{equation}
    \varphi_x = \left. \varphi_{\alpha} \right|_{\pi^{-1}(x)} : \pi^{-1}(x) \rightarrow \{x\} \times \mathbb{R}^r
\end{equation}
is an isomorphism. The maps $\varphi_{\alpha}$ are called the \textit{local trivializations} of the vector bundle.
\end{definition}

That is, given a point $x \in M$, we can find a neighborhood $U_{\alpha}$ of $x$ such that the restriction of the vector bundle to $U_{\alpha}$ is isomorphic to the trivial bundle $U_{\alpha} \times \mathbb{R}^{r} \rightarrow U_{\alpha}$, given by the projection on the first coordinate. Moreover, this local isomorphism of manifolds induces vector space isomorphism on each fiber over $U_{\alpha}$. We will denote the fiber $\pi^{-1}(x)$ over $x$ by $E_x$.

The maps

\begin{equation}
    \varphi_{\alpha} \circ \varphi_{\beta}^{-1}:(U_{\alpha} \cap U_{\beta}) \times \mathbb{R}^{r} \rightarrow (U_{\alpha} \cap U_{\beta}) \times \mathbb{R}^{r}
\end{equation}
induce automorphism of dimension $r$ real vector spaces on each fiber over the intersection $U_{\alpha} \cap U_{\beta}$, and so we get maps

\begin{equation}
    g_{\alpha\beta}: U_{\alpha} \cap U_{\beta} \rightarrow GL(r, \mathbb{R})
\end{equation}
given by $g_{\alpha\beta}(x) = \left. \varphi_{\alpha} \varphi_{\beta}^{-1} \right|_{\{x\} \times \mathbb{R}^{r}}$, from $U_{\alpha} \cap U_{\beta}$ to the group of invertible $r \times r$ matrices with real coefficients. The functions $g_{\alpha\beta}$ are called \textit{transition functions}. We will later see that these transition functions are very special, for they allow us to recover much of the information about the vector bundle.

The transition functions $\{g_{\alpha\beta}\}$ of a vector bundle satisfy the \textit{cocycle condition}, i.e., on $U_{\alpha} \cap U_{\beta} \cap U_{\gamma}$ we have

\begin{equation}
    g_{\alpha\beta} \circ g_{\beta\gamma} = g_{\alpha\gamma}.
\end{equation}
The cocycle $\{g_{\alpha\beta}\}$ does \textbf{depend} on the choice of trivialization, however. We are able to relate the cocycles associated to different trivializations with the following lemma:

\begin{lemma}
If the cocycle $\{g'_{\alpha\beta}\}$ comes from some other local trivializations $\{\varphi'_{\alpha}\}$, then there exists maps $\lambda_{\alpha}: U_{\alpha} \rightarrow GL(r, \mathbb{R})$ such that for each $x \in U_{\alpha} \cap U_{\beta}$

\begin{equation}
    g_{\alpha\beta}(x) = \lambda_{\alpha}(x) g'_{\alpha\beta}(x) \lambda_{\beta}(x)^{-1}.
\end{equation}
\end{lemma}

\begin{proof}
Since trivializations are fiber-preserving, we conclude that the two trivializations differ by some linear automorphism or $\mathbb{R}^r$, i.e., an invertible linear transformation at each point. So, for each $x \in U_{\alpha}$ we have

\begin{equation}
     \left. \varphi_{\alpha} \right|_{\pi^{-1}(x)} = \lambda_{\alpha}(x) \left. \varphi'_{\alpha} \right|_{\pi^{-1}(x)}
\end{equation}
where $\lambda_{\alpha}: U_{\alpha} \rightarrow GL(r, \mathbb{R})$ is a family of change-of-basis matrices. And so

\begin{equation}
    g_{\alpha\beta}(x) = \left. \varphi_{\alpha} \varphi_{\beta}^{-1} \right|_{\{x\} \times \mathbb{R}^{r}} = \lambda_{\alpha}(x) \left. \varphi_{\alpha}^{\prime} \varphi_{\beta}^{\prime -1} \right|_{\{x\} \times \mathbb{R}^{r}} \lambda_{\beta}(x)^{-1} = \lambda_{\alpha}(x) g^{\prime}_{\alpha\beta}(x)  \lambda_{\beta}(x)^{-1}
\end{equation}
\end{proof}

The following proposition states that given an open covering of a manifold and a collection of maps that we wish to be local trivializations, we can use the transition functions to recover a vector bundle with this structure.

\begin{proposition}\label{cocyclesconstructions}
Let $E$ be a set, $M$ be a manifold, and $\pi: E \rightarrow M$ be a surjective map. Let $\{U_{\alpha}\}$ be an open cover of $M$, and $\varphi_{\alpha}: \pi^{-1}(U_{\alpha}) \rightarrow U_{\alpha} \times \mathbb{R}^r$ be bijective maps satisfying $\pi_1 \circ \varphi_{\alpha} = \pi$, where $\pi_1$ denotes the projection onto the first coordinate, such that whenever the intersection $U_{\alpha} \cap U_{\beta}$ is nonempty, the map $\varphi_{\alpha} \circ \varphi_{\beta}^{-1}$ coincides with

\begin{equation}
    \varphi_{\alpha} \circ \varphi_{\beta}^{-1}(p, v) = (p, g_{\alpha\beta}(p)v)
\end{equation}
for some smooth map $g_{\alpha\beta}: U_{\alpha} \cap U_{\beta} \rightarrow GL(r, \mathbb{R})$. Then $E$ has a \textbf{unique} structure of a $C^{\infty}$ real vector bundle of rank $r$ over $M$. Moreover, the maps $\varphi_{\alpha}$ are its local trivializations for this bundle structure.
\end{proposition}

This proposition allows us to construct vector bundles on manifolds from charts and transition functions alone.

\begin{proof}
We begin by defining a linear structure on each fiber $E_x = \pi^{-1}(x)$. Let $\alpha$ be such that $x \in U_{\alpha}$. Since the map

\begin{equation}
    \left. \varphi \right|_{E_x} : E_x \rightarrow \{x\} \times \mathbb{R}^r
\end{equation}
is a bijection, we can define the linear structure on $E_x$ as the pullback of the linear structure on $\mathbb{R}^r$ via $\left. \varphi \right|_{E_x}$. We now check that this is well defined, that is, it is independent of our pick of trivializing chart $U_{\alpha}$. Suppose $U_{\beta}$ is another trivializing neighborhood with $x \in U_{\beta}$. Then, by our hypothesis, over $U_{\alpha} \cap U_{\beta}$

\begin{equation}
    \left. \varphi_{\alpha} \varphi_{\beta}^{-1} \right|_{\{x\} \times \mathbb{R}^{r}} = g_{\alpha\beta}(x) \in GL(r, \mathbb{R})
\end{equation}
and we get isomorphic vector spaces over $x$.

Now, we can assume without loss of generality that the $U_{\alpha}$ together with the $\varphi_{\alpha}$ form a coordinate chart for $M$ (since we can enlarge the collection of the $U_{\alpha}$ to form a chart), and so we can find diffeomorphisms $\psi_\alpha : U_{\alpha} \rightarrow \Tilde{U}_{\alpha}$ to some open subsets $\Tilde{U}_{\alpha} \subseteq \mathbb{R}^r$. Composing each $\varphi_\alpha$ with such a $\psi_{\alpha}$ we get maps

\begin{equation}
    \begin{tikzcd}
    \pi^{-1}(U_{\alpha}) \ar{r}{\varphi_{\alpha}} & U_{\alpha} \times \mathbb{R}^r \ar{r}{\psi_{\alpha} \times \operatorname{Id}_{\mathbb{R}^r}} & \Tilde{U}_{\alpha} \times \mathbb{R}^r
\end{tikzcd}
\end{equation}
and we can use this composites as coordinate charts for $E$. Moreover, since by our hypothesis the collection of the $\varphi_{\alpha}$ are smooth on overlaps $U_{\alpha} \cap U_{\beta}$, the collection $\{U_{\alpha}, \psi_{\alpha} \circ \varphi_{\alpha}\}$ is a differentiable structure for the manifold $E$, and the $\varphi_{\alpha}$ are diffeomorphisms with respect to this structure.
\end{proof}

\begin{definition}
Let $U \subseteq M$ be an open subset. A smooth map $s:U \rightarrow E$ such that $\pi \circ s = Id_{U}$ is called a \textit{section} of the vector bundle $E$ over $U$. The space of all sections of $E$ over $U$ is denoted $\Gamma(U, E)$.
\end{definition}

Note that, every vector bundle has a well-defined \textit{global} zero section, i.e., a section which takes a point $x$ to the zero vector of the fiber $E_x$ over it. A collection of sections $s_1,...,s_n$ over an open set $U \subseteq M$ is called a \textit{frame} for $E$ on $U$ if for every point $x \in U$, $s_1(x),...,s_n(x)$ is a basis of $E_x$.

The set $\Gamma(U, E)$ can be given the structure of a real vector space, by point-wise addition and multiplication by real scalar: given $s_1, s_2 \in \Gamma(U, E)$ and $\lambda \in \mathbb{R}$, we have

\begin{equation}
    (s_1 + s_2)(x) = s_1(x) + s_2(x)
\end{equation} 
and 

\begin{equation}
    (\lambda s_1)(x) = \lambda s_1(x)
\end{equation}
for all $x \in U$. Moreover, given a smooth function $f$ on $U$ and $s \in \Gamma(U, E)$, we can define the product $fs$ by $(fs)(x) = f(x)s(x)$ for all $x \in U$. So $\Gamma(U, E)$ also has the structure of a $C^{\infty}(U)$-module.

\begin{definition}
A morphism of vector bundles $f: E \rightarrow E'$ is a map of manifolds which is fiber-preserving (i.e., commutes with the maps $\pi$ and $\pi'$ of $E$ and $E'$), and which is linear on corresponding fibers: the induced map $f_{x}: E_x \rightarrow E'_{x}$ is linear.
\end{definition}

\begin{theorem}
Two vector bundles $E$ and $E'$ over $M$ are isomorphic if, and only if their cocycles with respect to some open cover are equivalent.
\end{theorem}

\begin{proof}
First, suppose that $f: E \rightarrow E'$ is a vector bundle isomorphism. Let $\{U_{\alpha}, \varphi_{\alpha}\}$ be trivializing charts for $E$. Since $f$ is a diffeomorphism between $E$ and $E'$, we have that $\{U_{\alpha}, \varphi'_{\alpha}\}$ where $\varphi'_{\alpha} = \varphi_{\alpha} \circ f^{-1}$ are trivializing charts for $E'$. Now suppose that $\{g_{\alpha\beta}\}$ are the cocycles for $E$. So, over $U_{\alpha} \cap U_{\beta}$, the cocyle $g'_{\alpha\beta}$ of $E'$ will be

\begin{equation}
    g^{\prime}_{\alpha\beta} = \varphi^{\prime}_{\alpha}\varphi_{\beta}^{\prime -1} = \varphi_{\alpha} \circ f^{-1} \circ (\varphi_{\beta} \circ f^{-1})^{-1} = \varphi_{\alpha} \circ f^{-1} \circ f \circ \varphi_{\beta}^{-1} = g_{\alpha\beta}.
\end{equation}

Conversely, suppose $\{U_{\alpha}, \varphi_{\alpha}\}$ and $\{U_{\alpha}, \varphi^{\prime}_{\alpha}\}$ are trivializing charts for $E$ and $E'$ respectively, with cocycles $\{g_{\alpha\beta}\}$ and $\{g'_{\alpha\beta}\}$, and $g'_{\alpha\beta} = \lambda_{\alpha}(x)g_{\alpha\beta}(x)\lambda_{\beta}(x)^{-1}$ for some non-singular matrices $\lambda_{\alpha}, \lambda_{\beta} \in GL(r, \mathbb{R})$. Over each $U_{\alpha}$, define functions $f_{\alpha}: U_{\alpha} \times \mathbb{R}^r \rightarrow U_{\alpha} \times \mathbb{R}^r$ in the following way:

\begin{equation}
    f_{\alpha}(x, v) = (x, \lambda_{\alpha}(x)^{-1}v).
\end{equation}
We then lift the maps $f_{\alpha}$ locally to a map $f: E \rightarrow E^{\prime}$ by requiring that on $U_{\alpha} \times \mathbb{R}^r$, $f = \varphi_{\alpha}^{\prime -1} \circ f_{\alpha} \circ \varphi_{\alpha}$.

So we have a function $f$, defined locally over $\pi^{-1}(U_{\alpha})$ for each $\alpha$. We now have to show that they glue to a bundle morphism that is globally defined, and moreover, that it is an isomorphism. Suppose first that $(x, v) \in U_{\alpha} \cap U_{\beta} \times \mathbb{R}^r$, and note that the rule by which we lifted our maps $f_{\alpha}$ implies that $\varphi_{\alpha}^{\prime -1} \circ f_{\alpha} = f \circ \varphi_{\alpha}^{-1}$. We want to show that, over the intersection $U_{\alpha} \cap U_{\beta}$, if $f = \varphi_{\beta}^{\prime -1} \circ f_{\beta} \circ \varphi_{\beta}$, then $f = \varphi_{\alpha}^{\prime -1} \circ f_{\alpha} \circ \varphi_{\alpha}$. By the definition of $f_{\beta}$:

\begin{equation}
    \varphi_{\beta}^{\prime -1} f_{\beta} = \varphi_{\beta}^{\prime -1}(x, \lambda_{\beta}(x)^{-1}v).
\end{equation}
Then, the hypothesis implies that

\begin{align*}
    \varphi_{\beta}^{\prime -1}(x, \lambda_{\beta}(x)^{-1}v) &= \varphi_{\alpha}^{\prime -1}(x, g'_{\alpha\beta}(x)\lambda_{\beta}(x)^{-1}v)\\
    &=  \varphi_{\alpha}^{\prime -1}(x, \lambda_{\alpha}^{-1}(x)g_{\alpha\beta}(x)\lambda_{\beta}(x) \lambda_{\beta}(x)^{-1}v)\\
    &= \varphi_{\alpha}^{\prime -1}f_{\alpha}(x, g_{\alpha\beta}(x)v)
\end{align*}
where the last equality is the definition of $f_{\alpha}$ again. So, we have proved that 

\begin{equation}
    \varphi_{\beta}^{\prime -1} f_{\beta} = \varphi_{\alpha}^{\prime -1}f_{\alpha}(x, g_{\alpha\beta}(x)v).
\end{equation}
But, we also have $\varphi_{\beta}^{\prime -1} f_{\beta} = f \varphi_{\beta}^{-1}$, and 

\begin{equation}
    f \varphi_{\beta}^{-1}(x, v) = f \varphi_{\alpha}^{-1}(x, g_{\alpha\beta}(x)v)
\end{equation}
and so $\varphi_{\alpha}^{\prime -1}f_{\alpha}(x, g_{\alpha\beta}(x)v) = f \varphi_{\alpha}^{-1}(x, g_{\alpha\beta}(x)v)$, and therefore $f = \varphi_{\alpha}^{\prime -1}f_{\alpha}\varphi_{\alpha}$, as desired. So the functions lift to a global morphism of bundles $f:E \rightarrow E'$. To check that it is indeed an isomorphism, it suffices to note that the maps $\varphi_{\alpha}$ and $\varphi'_{\alpha}$ are invertible, and that we can repeat the exact same glueing argument to construct an inverse of $f$. And so the result is proved.
\end{proof}

\begin{example}[The tangent bundle of a manifold]\label{tangentbundle}
Let $T_{x}M$ denote the the tangent space to $M$ at $x \in M$. Set

\begin{equation}
    TM = \bigcup_{x \in M} T_{x}M.
\end{equation}
It is called the \textit{tangent bundle} of $M$. We now check that it is indeed a vector bundle, by checking that it is locally trivial\footnote{For a discussion on the topology and manifold structure of $TM$, see for example \cite{tu}, Chap. 12.}. Let $\{(U_{\alpha}, \psi_{\alpha})\}$ be an atlas for $M$. Then the diffeomorphisms $\psi_{\alpha}: U_{\alpha} \rightarrow \mathbb{R}^r$ induce maps $(\psi_{\alpha})_{*}: TU_{\alpha} \rightarrow T\mathbb{R}^r$ given by the differential of $\psi_{\alpha}$ on the fibers. So the maps $(\psi_{\alpha})_{*}: TU_{\alpha} \rightarrow T\mathbb{R}^r \cong \psi_{\alpha}(U_{\alpha}) \times \mathbb{R}^r$ give the local trivializations of $TM$, and we note that the transition functions on $TM$ are the Jacobians representing the differentials of the transition functions glueing the $\psi_{\alpha}$ on $M$.

A section of the tangent bundle $TM$ is a \textit{vector field} on $M$.
\end{example}

\begin{example}
A frame $\{s_1,...s_n\}$ on the tangent bundle $TM$, over some open set $U$ is simply called a \textit{frame on $U$}. The collection of \textit{vector fields} $\{\frac{\partial}{\partial x_1},...,\frac{\partial}{\partial x_n}\}$ (i.e., sections of the tangent bundle) is a frame on $\mathbb{R}^n$.
\end{example}

\textbf{Operations with Vector Bundles:} There are some operations that can be made with vector bundles over a differentiable manifold $M$. Most functorial constructions such as direct sums, duals and homs that apply to vector spaces apply to vector bundles:

Let $E$ and $E'$ be vector bundles over $M$, of rank $r$ and $s$ respectively. The \textit{direct sum bundle} $E \bigoplus E'$ of $E$ and $E'$ is define as the vector bundle over $M$ whose fibers at $x \in M$ are $(E \bigoplus E')_x = E_x \bigoplus E'_{x}$. The local trivializations are maps $\left. E \bigoplus E' \right|_{U_{\alpha}} \rightarrow U_{\alpha} \times (\mathbb{R}^r \oplus \mathbb{R}^s)$ induced by the local trivializations of $E$ and $E'$. In the same fashion, we can define the \textit{tensor product bundle} $E \otimes E'$, the \textit{dual bundle} $E^{*}$, and the \textit{hom bundle} $\Hom(E, E')$. Note that with these constructions $\Hom(E, E')$ is isomorphic as a bundle to $E^{*} \otimes E'$.

\subsection{Singular (co)homology}
We again follow the exposition of \cite{botttu}, Chap. III, §15, and also \cite{mercpictausk}. We begin with a discussion inspired by the wonderful discussion in the beginning of \cite{rotman}.

Suppose that we're given a bounded connected region $R$ in the plane $\mathbb{R}^2$, without some points $x_1,...,x_n$. Let $\gamma: [0, 1] \rightarrow R$ be a closed curve such that its trace $\{\gamma\}$ is a subset of $R$. Let $P$ and $Q$ be two functions on $R$, satisfying suitable differentiability conditions. The motivation for our discussion is to study the path independence of the line integral

\begin{equation}
    \int_{\gamma} Pdx + Qdy
\end{equation}
i.e., study the conditions under which the above integral is independent of the path $\gamma$. We know from the theory of integral calculus that the integral is independent of $\gamma$ if, and only if

\begin{equation}
    \int_{\gamma} Pdx + Qdy = 0.
\end{equation}
This integral is affected by the position of the points $x_1,...,x_n$ relative to the curve $\gamma$, for one of the functions $P$ or $Q$ may have a singularity at one of the $x_i$, i.e., its value at the limit as $x$ approaches one of the $x_i$ may be infinite, or worse, not even defined. Consider now a collection of curves which we shall call $\gamma_i:[0, 1] \rightarrow R$ with each $\gamma_i$ containing $x_i$ in its interior\footnote{The rigorous notion of interior and exterior of a region enclosed by the trace of a closed curve is not needed for our purposes here.}, and suppose that each $\gamma_i$ is oriented oppositely to our original curve $\gamma$. Then again, from the theory of integral calculus, Green's theorem tells us that

\begin{equation}
    \int_{\gamma} Pdx + Qdy + \int_{\gamma_1} Pdx + Qdy + ... + \int_{\gamma_n} Pdx + Qdy = \int_{R}(\frac{\partial Q}{\partial x} - \frac{\partial P}{\partial y}) dxdy.
\end{equation}

The first step into the construction of an algebraic analog for the expression of this path invariance is to introduce a way to express the left-hand side of the above integral as a linear combination of curves: we write 

\begin{equation}
\int_{\gamma} Pdx + Qdy + \int_{\gamma_1} Pdx + Qdy + ... + \int_{\gamma_n} Pdx + Qdy
\end{equation}
as 

\begin{equation}
\int_{\gamma + \gamma_1 + ... + \gamma_n} Pdx + Qdy
\end{equation}
where the sign of the coefficient on each curve determines its orientation, and we may write some "weighted" linear combination, so as to take into account the number of times the curve winds around itself. We do this by introducing \textbf{integer} coefficients, different from $\pm 1$. So we have succesfully found a way to express algebraically the union of the curves $\gamma$ and $\gamma_i$, so as to take the orientation and the "winding multiplicity" into account.

We now restrict our attention to pairs of functions $P$ and $Q$ satisfying $(\frac{\partial Q}{\partial x} - \frac{\partial P}{\partial y})$ so that the right-hand side of the integral from Green's Theorem vanishes. We then have determined a whole class of linear combinations of paths such that

\begin{equation}
    \int_{\gamma + \gamma_1 + ... + \gamma_n} Pdx + Qdy = 0
\end{equation}
namely, all of the linear combinations of curves that fit the construction of the $\gamma_i$. This suggests the introduction of a \textbf{equivalence relation} on linear combinations of curves: we say that two linear combinations $\alpha$ and $\beta$ will be equivalent if

\begin{equation}
    \int_{\alpha} Pdx + Qdy = \int_{\beta} Pdx + Qdy
\end{equation}
and call the equivalence class $[\alpha]$ its \textbf{homology class}.

Note that for different homology classes such that the value is nonzero, we will not have path independence. The discussion served merely as a motivation for the consideration of the introduction of algebra for expressing finite unions of curves. We may have homology classes of curves which are not even closed! Arbitrary finite unions of curves are generally called \textbf{chains}, and closed finite unions are called \textbf{cycles}. 

Note that if $\alpha$ is the \textbf{boundary} of some two-dimensional region in the plane, then the integral is zero, and so we can consider an \textbf{abelian group} of cycles, in which the homology class of boundaries of two-dimensional regions is trivial. We call this its \textbf{homology group}. We will now develop the abstract theory of homological algebra and algebraic topology which was motivated by, and generalizes our discussion.

\textbf{Singular Homology:} Denote by $e_i$ the ith standard basis vector in $\mathbb{R}^n$, i.e., $e_i = (0,...,1,...,0)$ with $1$ in the ith component and $0$ in all other components. By convention, let $e_0$ denote the origin in $\mathbb{R}^n$. Define the \textit{standard q-simplex} $\Delta_q$ to be convex hull of the set $\{e_i\}_{i=0}^{q}$, i.e., as a set:

\begin{equation}
    \Delta_q = \Biggl\{ \sum_{j=0}^{q} t_j e_j : \sum_{j=0}^{q} t_j = 1, \text{ } t_j \ge 0 \Biggr\}.
\end{equation}

So $\Delta_0$ is the origin, $\Delta_1$ is the unit interval $[0, 1]$, $\Delta_2$ is a triangle in $\mathbb{R}^2$, $\Delta_3$ is a tetrahedron in $\mathbb{R}^3$, $\Delta_4$ is a hypertetrahedron in $\mathbb{R}^4$, and so on and so forth. We make use of the natural identification of $\mathbb{R}^n$ as a subspace of $\mathbb{R}^{n+1}$ via the map $(x_1,...,x_n) \mapsto (x_1,...,x_n,0)$.

Now, let $X$ be a topological space. A \textit{singular q-simplex} in $X$ is a continuous map $s: \Delta_q \rightarrow X$. A \textit{singular q-chain} in $X$ is a \textit{finite} linear combination of singular q-simplices, with integer coefficients. One may recognize this idea from the discussion above. A singular q-simplex is, intuitively, a "deformed q-dimensional triangle on $X$" (of course, this picture is not accurate, for we have for instance, singular simplices that are constant maps. Hence, the name \textit{singular}). These q-chains form an abelian group $S_q(X)$. We now wish to turn these groups into a \textit{complex}, and define the homology groups of $X$ as the homology groups of this complex.

For this, we need a way to define the notion of a \textit{boundary} of a singular q-simplex, as a linear combination of its faces (i.e., a singular (q-1)-chain). We do this by introducing a map that identifies the standard (q-1)-simplex with the faces of the standard q-simplex.

\begin{definition}
    The ith \textit{face map} of the standard q-simplex is the function
    
    \begin{equation}
        \partial_{q}^i : \Delta_{q-1} \rightarrow \Delta_q
    \end{equation}
    given by
    \begin{equation}
        \partial_{q}^i \Biggl(\sum_{j=0}^{q-1} t_j e_j \Biggr) = \sum_{j=0}^{i-1} t_j e_j + \sum_{j=i+1}^{q} t_j e_j.
    \end{equation}
\end{definition}

So by omitting the ith vertex, we identified the standard (q-1)-simplex with the ith face of the standard q-simplex. More generally, one can think of the face map as a \textit{linear simplex}: given some vectors $v_0,...,v_q \in V$, where $V$ is some finite dimensional real vector spaces, consider the map $L(v_0,...,v_q): \Delta_q \rightarrow V$ to be the restriction of the linear map $\mathbb{R}^q \rightarrow V$ taking $e_i$ to $v_i$. The image of $L(v_0,...,v_q)$ is evidently the convex hull of the $v_i$, since $L(v_0,...,v_q)$ is linear, and so maps $\sum_{j=0}^{q} t_j e_j \mapsto \sum_{j=0}^{q} t_j v_j$. So the ith face map becomes $L(e_0,...,\hat{e_i},...,e_q)$. In this way, we can define the ith face of singular q-simplex $s$ to be $s \circ L(e_0,...,\hat{e_i},...,e_q)$. This perspective of the linear simplex also works for more elegant computations, as we will see bellow.

We now wish to turn the groups of singular chains into a differential complex. For this, we need to define a \textit{boundary operator} $\partial: S_{q}(X) \rightarrow S_{q-1}(X)$. Consider, for $s \in S_q(X)$

\begin{equation}
    \partial s = \sum_{i=0}^{q} (-1)^{i}s \circ \partial_{q}^{i}.
\end{equation}
For every $s \in S_q(X)$, $\partial s \in S_{q-1}(X)$ is called its \textit{boundary}.

Now we need to check that the boundary of the boundary is $0$, or $\partial \circ \partial = 0$, to check that the sequence is indeed a complex. To see this, let $s \in S_q(x)$ and compute

\begin{equation}
    \partial^{2} s = \sum_{i=0}^{q}(-1)^i \sum_{j=0}^{i-1} (-1)^j s \circ L(e_0,...,\hat{e_j},...,\hat{e_i},...,e_q) + \sum_{j=i+1}^{q} (-1)^{j-1} s \circ L(e_0,...,\hat{e_i},...,\hat{e_j},...,e_q)
\end{equation}
and on the outer sum, each $L(e_0,...,\hat{e_i},...,\hat{e_j},...,e_q)$ appears exactly twice, with opposite signs, and so they cancel each other out. So the sequence

\begin{equation}\label{singchaincomp}
\begin{tikzcd}
    {\ldots} \ar{r}{\partial} & S_{q+1}(X) \ar{r}{\partial} & S_{q}(X) \ar{r}{\partial} & S_{q-1}(X) \ar{r}{\partial} & {\ldots}
\end{tikzcd}
\end{equation}
is in fact a complex. From now on, denote by $\partial_q$ the restriction $\left. \partial \right|_{S_{q}(X)}$. The group $Z_q(X) = \ker \partial_q \subset S_{q}(X)$ is called the group of \textit{singular q-cycles}, and the group $B_q(X) = \Ima \partial_{q+1} \subset S_{q}(X)$ is called the group of \textit{singular q-boundaries}, for all $q \ge 0$. By convention, we set $Z_{0}(X) = S_{0}(X)$. We define the \textit{qth singular homology group of $X$} with integer coefficients, to be $\mathrm{H}_q(X, \mathbb{Z}) = Z_q(X)/B_q(X)$. If $\alpha$ is a singular q-cycle, i.e., $\alpha \in Z_{q}(X)$, the equivalence class $[\alpha] = \alpha + B_{q}(X)$ is called its \textit{homology class}. By taking the coefficients of the linear combinations of simplices in an abelian group $G$, we get the qth singular homology groups with coefficients in $G$, $\mathrm{H}_q(X, G)$. When the group $G$ is $\mathbb{Z}$, we sometimes denote $\mathrm{H}_q(X, \mathbb{Z})$ simply by $\mathrm{H}_q(X)$.

\textbf{Singular Cohomology:} Applying the \textbf{contravariant functor} $\Hom (-, \mathbb{Z})$ to the sequence in (\ref{singchaincomp}), we get a sequence

\begin{equation}\label{singcochaincomp}
\begin{tikzcd}
    {\ldots} \ar{r} & \Hom(S_{q-1}(X), \mathbb{Z}) \ar{r}{d} & \Hom (S_{q}(X), \mathbb{Z}) \ar{r}{d} & \Hom (S_{q+1}(X), \mathbb{Z}) \ar{r} & {\ldots}
\end{tikzcd}
\end{equation}
of groups of $\mathbb{Z}$-linear functionals on each $S_q(X)$. A \textit{singular q-cochain} is such a linear functional. We denote by $S^q(X) = \Hom (S_{q}(X), \mathbb{Z})$ the group of singular q-cochains on $X$. The \textit{coboundary operator} $d$ is defined by

\begin{equation}
    d(\omega)(s) = \omega (\partial s)
\end{equation}
for $\omega \in S^q(X)$ and $s \in S_q(X)$. With this coboundary operator, the sequence (\ref{singcochaincomp}) also becomes a differential complex. Denoting by $d_q$ the restriction $\left. d \right|_{S^{q}(X)}$, $Z^q(X) = \ker d_q \subset S^{q}(X)$ and $B^q(X) = \Ima d_{q-1} \subset S^{q}(X)$, the groups $\mathrm{H}^q(X, \mathbb{Z}) = Z^q(X)/B^q(X)$ are called the \textit{qth singular cohomology group of $X$} with integer coefficients. In the same way, taking the coefficients of the linear combinations in an abelian group $G$, we get the qth singular cohomology groups with coefficients in $G$, $\mathrm{H}^q(X, G)$. To avoid confusions later in this text, we will denote the qth singular cohomology groups by $\mathrm{H}^{q}_{Sing}(X)$.

\begin{example}
Let $X$ be the one point space, say $X = \{*\}$. Then every cochain degenerates to a point, and so is a linear combination over the one point $*$, and so

\begin{equation}
    \mathrm{H}^0_{Sing}(\{*\}, \mathbb{Z}) \cong \mathbb{Z}
\end{equation}
is a free $\mathbb{Z}$-module of rank $1$. Moreover, since $X$ has only one point, every cocycle is a coboundary and we have

\begin{equation}
    \mathrm{H}^q_{Sing}(\{*\}, \mathbb{Z}) = 0
\end{equation}
for every $q > 0$. Moreover, this is also true for any space which is homotopically equivalent to a one-point space, namely, any \textbf{contractible space}.
\end{example}

\medskip

\noindent\textbf{The cup and cap products:} One advantage of studying cohomology is the possibility of having more structure on the groups $\mathrm{H}^{q}_{Sing}(X)$, making them into \textit{cohomology rings}. We now give a very brief overview of this additional structure, which is given by the \textbf{cup product}. The reference is \cite{hatcher}, 3.2.

For cochains $\varphi \in S^p(X)$, $\psi \in S^q(X)$, the \textbf{cup product} $\varphi \smile \psi \in S^{p+q}(X)$ is the cochain mapping a singular $p+q$-simplex $\sigma \in S_q(X)$ to

\begin{equation}
    (\varphi \smile \psi)(\sigma) = \varphi(\left. \sigma \right|_{[e_0,...,e_p]})\psi(\left. \sigma \right|_{[e_p,...,e_{p+q}]}) \in \mathbb{Z}
\end{equation}
where the multiplication is that of the integers. For the induced product on cohomology, we have the following formula relating the cup product $\smile$ with the coboundary operator $d$:

\begin{equation}\label{dercup}
    d(\varphi \smile \psi) = d\varphi \smile \psi + (-1)^p \varphi \smile d\psi.
\end{equation}
The formula gives that if $\varphi \in Z^p(X)$ and $\psi \in Z^q(X)$, then their cup product is in $Z^{p+q}(X)$. Moreover, it follows from equation (\ref{dercup}) that if $\varphi \in Z^p(X)$ and $\psi \in B^q(X)$, then

\begin{equation}
    d(\varphi \smile \psi) = \pm (\varphi \smile d\psi)
\end{equation}
and if $\varphi \in B^p(X)$ and $\psi \in Z^q(X)$

\begin{equation}
    d(\varphi \smile \psi) = d\varphi \smile \psi
\end{equation}
and so the cup product of a cocycle and a coboundary in any order is always a coboundary. So we have an induced cup product

\begin{equation}
    \smile: \mathrm{H}^p_{Sing}(X, \mathbb{Z}) \times \mathrm{H}^q_{Sing}(X, \mathbb{Z}) \rightarrow \mathrm{H}^{p+q}_{Sing}(X, \mathbb{Z}).
\end{equation}
It makes $\bigoplus_q \mathrm{H}^q_{Sing}(X, \mathbb{Z})$ into an associative commutative\footnote{Up to sign.} graded $\mathbb{Z}$-algebra.

There is also the more subtle construction of the \textbf{cap product}, which will allow us to relate the singular homology and cohomology groups of a manifold $X$, so long as it is closed\footnote{A closed manifold is a manifold compact and without boundary.} and orientable. For any topological space $X$, we can define, similarly to the cup product, a $\mathbb{Z}$-bilinear operation

\begin{equation}
    \frown: S_p(X) \times S^q(X) \rightarrow S_{p-q}(X)
\end{equation}
for $p \ge q$, given by

\begin{equation}
    \sigma \frown \varphi = \varphi(\left. \sigma \right|_{[e_0,...,e_q]})\left. \sigma \right|_{[e_p,...,e_q]}
\end{equation}
for a singular $p$-simplex $\sigma \in S_p(X)$ and a cochain $\varphi \in S^q(X)$. We again have a formula

\begin{equation}
    \partial(\sigma \frown \varphi)= (-1)^q (\partial \sigma \frown \varphi - \sigma \frown d\varphi)
\end{equation}
which again readily implies that the cap product of a cycle $\sigma \in Z_p(X)$ and a cocycle $\varphi \in Z^P(X)$ is again a cycle in $S_{p-q}(X)$, and that the cap product between a cycle and a coboundary, or a boundary and a cocycle are all boundaries. And so we have an induced $\mathbb{Z}$-bilinear product

\begin{equation}
    \frown: \mathrm{H}_p(X, \mathbb{Z}) \times \mathrm{H}^q_{Sing}(X, \mathbb{Z}) \rightarrow \mathrm{H}_{p-q}(X, \mathbb{Z})
\end{equation}

The cap product allows us to state

\begin{theorem}[Poincaré Duality]\label{poincareduality}
Let $X$ be an oriented closed manifold of dimension $n$. Then we have isomorphisms

\begin{equation}
    \mathrm{H}^p_{Sing}(X, \mathbb{Z}) \rightarrow \mathrm{H}_{n-p}(X, \mathbb{Z})
\end{equation}
for all $p \ge 0$.
\end{theorem}

The isomorphisms are given by mapping a class $\alpha \in \mathrm{H}^p(X, \mathbb{Z})$ to $[X] \frown \alpha$, where $[X]$ is a \textit{fundamental class}\footnote{If $X$ is closed and oriented, a fundamental class of $X$ is a generator of $\mathrm{H}_{n}(X, \mathbb{Z})$.} for $X$. One then shows that if $X$ is closed and oriented, a fundamental class $[X] \in \mathrm{H}_{n}(X, \mathbb{Z})$ always exists (cf. \cite{hatcher}, Theorem 3.26).

\medskip

Singular (co)homology are very important \textit{topological invariants} of $X$. Their definition is very general, and one should note that we made no assumptions about the topological space $X$. This makes the actual computation of the homology and cohomology groups using the singular construction to be quite hard.  One alternative, for instance, is to make further assumptions on $X$, and restrict the class of spaces on which we can compute homology, and use alternative constructions, such as \textit{simplicial}, or \textit{cellular} homology. The search for alternatives with which one can make easier topological computations will be a central topic of this work.

\pagebreak

\part{Cohomology}

\begin{center}
\textit{“Je suis quelque peu affolé par ce d´eluge de cohomologie, mais j’ai courageusement tenu le coup. Ta suite spectrale me paraît raisonnable (je croyais, sur un cas particulier, l’avoir mise en défaut, mais je m’étais trompé, et cela marche au contraire admirablement bien)."}

\medskip

\textit{“I am a bit panic-stricken by this flood of cohomology, but have borne up courageously. Your spectral sequence seems reasonable to me (I thought I had shown that it was wrong in a special case, but I was mistaken, on the contrary it works remarkably well)."}

-Jean-Pierre Serre in a letter to Alexander Grothendieck from March 14, 1956 

(avaliable in \cite{grothendieckserre}).

\end{center}

\section{De Rham Cohomology}

\makebox[0pt]{}\vspace{-2ex}

De Rham cohomology is an invaluable tool for the study of differential forms on smooth manifolds, and we will later see that it is an important topological invariant. In this section, we briefly develop the general theory needed to discuss the de Rham cohomology groups, and then state some general results about them.

\subsection{Differential Forms}

\makebox[0pt]{}\vspace{-2ex}

The theory of differential forms appear as an answer to the question of \textit{``what is the right object to integrate over abstract manifolds?"}. For the general theory, we follow the exposition from \cite{warner}.

\begin{definition}
Let $V$ be a finite dimensional real vector space. The \textit{tensor space} $V_{r,s}$ of type $(r,s)$ associated to $V$ is the vector space
\begin{equation}
    \underbrace{V \otimes V \otimes \text{...} \otimes V \otimes V}_\text{r copies}\text{ }\otimes\text{ }\underbrace{V^{*} \otimes V^{*} \otimes \text{...} \otimes V^{*} \otimes V^{*}}_\text{s copies}.
\end{equation}
The direct sum
\begin{equation}
    T(V) = \sum V_{r,s} \text{    } (r,s \ge 0)
\end{equation}
where $V_{0,0} = \mathbb{R}$, is called the \textit{tensor algebra} of $V$. The elements of $T(V)$ are linear combinations over $\mathbb{R}$ of elements of the $V_{r,s}$ and are called \textit{tensors}.
\end{definition}

The algebra $T(V)$ is a noncommutative, associative graded algebra, with multiplication given by $\otimes$. If $u=u_1 \otimes u_2 \otimes \text{ ... } \otimes u_{1}^{*}\otimes u_{2}^{*} \in V_{r_1,s_1}$ and $v=v_1 \otimes v_2 \otimes \text{ ... } \otimes v_{1}^{*}\otimes v_{2}^{*} \in V_{r_2,s_2}$, then $u \otimes v$ is given by
\begin{equation}
    u \otimes v = u_1 \otimes \text{...} \otimes u_{r_1} \otimes v_1\otimes \text{...} \otimes v_{r_2} \otimes u_{1}^{*} \otimes \text{...} \otimes u_{s_1}^{*} \otimes v_{1}^{*} \otimes \text{...} \otimes v_{s_2}^{*}
\end{equation}
and $u \otimes v \in V_{r_1+r_2,s_1+s_2}$.

\begin{definition}
Let $C(V)$ be the subalgebra $\sum_{k=0}^{\infty} V_{k,0}$ of $T(V)$. Let $I(V)$ be the bilateral ideal in $C(V)$ generated by the elements of the form $v \otimes v$, $v \in V$, and denote by
\begin{equation}
    I_k(V) = I(V) \cap V_{k,0}
\end{equation}
its homogeneous component. Then
\begin{equation}
    I(V) = \sum_{k=0}^{\infty}I_{k}
\end{equation}
is a graded ideal of $C(V)$. The \textit{exterior algebra} $\bigwedge(V)$ of $V$ is the graded algebra ${C(V)}/{I(V)}$. Denote

\begin{equation}
    \bigwedge^0(V) = \mathbb{R}, \text{    } \bigwedge^1(V) = V, \text{    } \bigwedge^k(V) = {V_{k,0}}/{I_k(V)}, \text{    } (k \ge 2)
\end{equation}
then

\begin{equation}
    \bigwedge(V) = \sum_{k=0}^{\infty} \bigwedge^{k}(V).
\end{equation}
\end{definition}

We denote the multiplication in the algebra $\bigwedge(V)$ by $\wedge$, the exterior product or \textit{wedge}. The equivalence class of $u_1 \otimes \text{ ... } \otimes u_{k}$ in $\bigwedge(v)$ is denoted by $u_1 \wedge \text{ ... } \wedge u_{k}$.

\begin{definition}
Let $M$ be a real smooth manifold and let $p \in M$. Denote by $T_p M$ the \textit{tangent space} to $M$ at $p$. Define

\begin{align*}
    &\bigwedge^{k}(M) = \bigcup_{p \in M} \bigwedge^k(T_{p}^{*} M) \\
    &\bigwedge(M) = \bigcup_{p \in M} \bigwedge(T_{p}^{*} M)
\end{align*}
$\bigwedge^{k}(M)$ and $\bigwedge(M)$ have canonical structures of smooth vector bundles over $M$.
\end{definition}

Suppose that $(U, \varphi)$ is a coordinate system on $M$ with coordinates $y_1, ... ,y_d$. Then the basis $\{\partial/ \partial y_i\}$ of $T_p M$ and $\{dy_i\}$ of $T_{P}^{*} M$, for $p \in U$ induce basis of $(T_p M)_{r,s}$, $\bigwedge^{k}(T_{P}^{*} M)$ and $\bigwedge(T_{P}^{*} M)$. For example, given a basis $y_1, ... ,y_d$, the basis in $\bigwedge^{k}(T_{P}^{*} M)$ is $\{dy_{i_1} \wedge \text{...} \wedge dy_{i_k} : i_1 < \text{...} < i_k\}$.
\par With these basis we can define maps
\begin{equation}
    \pi^{-1}(U) \rightarrow \varphi(U) \times \mathbb{R}^d
\end{equation}
where $\pi$ is the canonical projection in $T_{r,s}(M)$, $\bigwedge^{k}(M)$ and $\bigwedge(M)$. If we impose that these trivializations are coordinate systems, we have the manifold structures on our bundles.

\begin{definition}
A section of the bundle $\bigwedge^{k}(M)$ is called a \textit{differential k-form} on $M$. A section of $\bigwedge(M)$ will be simply called a \textit{differential form} on $M$.
\end{definition}

\begin{definition}
Let $M$ be a smooth manifold. Denote by $TM$ the tangent bundle on $M$. A ($C^{\infty}$) \textit{vector field} $X$ on an open set $U \subset M$ is a smooth section of $TM$.
We denote by $\mathfrak{X}(M)$ the vector space of all vector fields on $M$. Besides the vector space structure, $\mathfrak{X}(M)$ also has the structure of a $C^{\infty}(M)$-module.
\end{definition}

If $m \in U$ and $X \in \mathfrak{X}(U)$, we denote $X(m) \in T_m M$, the value of $X$ at $m$, by $X_m$. If $f \in C^{\infty}(U)$, then $X(f)$ will denote the function in $U$ such that its value at $m$ is $X_m(f)$.

We now estabilish an important duality between multilinear functions on $V$ and its exterior algebra: denote by
\begin{equation}
    A_k(V) = \text{ set of all multilinear alternated functions on }\underbrace{V \times ... \times V}_\text{k copies}.
\end{equation}
Then
\begin{equation}\label{dualidade1}
    \bigwedge^k(V^{*}) \cong \bigwedge^k(V)^{*} \cong A_k(V) \\ 
\end{equation}
and it follows that
\begin{equation}\label{dualidade2}
    \bigwedge(V^{*}) \cong \bigwedge(V)^{*} \cong A(V) = \sum_{k=0}^{\infty}A_k(V)
\end{equation}
We denote by $\mathcal{A}^k(M)$ the set of all differential k-forms, and by $\mathcal{A}^{*}(M)$ the set of all differential forms on $M$. We can add forms, multiply forms (with the \textit{wedge} product) and multiply forms by scalars. Let $\omega, \varphi \in \mathcal{A}^{*}(M)$, $m \in M$ and $c \in \mathbb{R}$. Denote the value of $\omega$ at $m$ by $\omega_m$. The value of the forms $\omega + \varphi$, $c\omega$ and $\omega \wedge \varphi$ at $m$ is, respectively, $\omega_m + \varphi_m$, $c\omega_m$ and $\omega_m \wedge \varphi_m$.
\par Let $f \in \mathcal{A}^{0}(M) \cong C^{\infty}(M)$, and let $\omega \in \mathcal{A}^{*}(M)$. Then we denote $f \wedge \omega$ by $f\omega$. That way, $\mathcal{A}^{*}(M)$ has at the same time the structure of a graded algebra over $\mathbb{R}$ and of a  $C^{\infty}(M)$-module.
\par Let $\omega \in \mathcal{A}^{k}(M)$. Then $\omega_m \in \bigwedge^{k}(T_{m}^{*}M)$, and by the duality (\ref{dualidade1}), (\ref{dualidade2}), can be considered as a multilinear alternated function in $T_m M$. Therefore, if $X_1,...,X_k \in \mathfrak{X}(M)$, we define $\omega(X_1,...,X_k)$ as the function such that its value at $m$ is

\begin{equation}
    \omega(X_1,...,X_k)(m) = \omega_m(X_1(m),...,X_k(m)).
\end{equation}

So we can consider $\omega$ as a map of modules

\begin{equation}
    \omega: \underbrace{\mathfrak{X}(M) \times \text{...} \times \mathfrak{X}(M)}_\text{k copies} \rightarrow C^{\infty}(M)
\end{equation}
that is \textbf{$C^{\infty}(M)$-multilinear and alternated}.
\par Moreover, any alternate, $C^{\infty}(M)$-multilinear map from $\mathfrak{X}$ to $C^{\infty}(M)$ defines a form (cf. \cite{warner} Cap. 2, 2.18).

\begin{remark}\label{dim}
Let $V$ be a vector space of dimension $d$. Then
\begin{equation}
    dim\bigwedge^{k}(V) = \binom{d}{k}.
\end{equation}
In particular, if $k > d$, $\bigwedge^{k}(V) = \{0\}$
\end{remark}

\begin{remark}[Canonical representation of forms]\label{canonicalrepforms}
A map $\beta: M \rightarrow \bigwedge^{k}(M)$ is a differential k-form if, and only if for each coordinate system $(U, y_1,...,y_d)$ on $M$
\begin{equation}
    \left. \beta \right|_U = \sum_{i_1<...<i_k} b_{i_1,...,i_k}dy_{i_1} \wedge ... \wedge dy_{i_k}
\end{equation}
where $b_{i_1,...,i_k}$ are $C^{\infty}$ functions on $U$.
\end{remark}

Our goal now is to construct the theory of \textbf{De Rham Cohomology}. For this, we need to extend the notion of derivative to differential forms:

\begin{definition}
Let $A = \bigoplus_{k \in \mathbb{Z}}A_k$ be a graded algebra, and let $l \in End(A)$. $l$ is called

\begin{enumerate}
    \item a \textit{derivation}, if $l(ab) = l(a)b + al(b)$ (Leibniz rule) \\
    \item an \textit{anti-derivation}, if $l(ab) = l(a)b + (-1)^{k}al(b)$ $(a \in A_k; b \in A)$ \\
    \item of degree $n$ if $l:A_k \rightarrow A_{k+n}$ for all $k$
\end{enumerate}

\end{definition}
The \textit{exterior derivative} will then be an \textit{anti-derivation of degree +1} $d:\mathcal{A}^{*}(M) \rightarrow \mathcal{A}^{*}(M)$ such that

\begin{enumerate}\label{extder}
    \item $d^2 = 0$
    \item if $f \in \mathcal{A}^{0}(M) = C^{\infty}(M)$, $df$ is the differential of $f$.
\end{enumerate}

\begin{theorem}
The exterior derivative exists and is unique.
\end{theorem}

\begin{proof}
\cite{warner}, Chap. 2, 2.20.
\end{proof}

A sketch of the construction of the exterior derivative is as follows: given $m \in M$ and $(U, x_1,...,x_d)$ a coordinate system about $p$, in view of Remark \ref{canonicalrepforms}, given $\omega \in \mathcal{A}^{*}(U)$, it can be written uniquely as

\begin{equation}
    \left. \omega \right|_U = \sum_I b_{I}dx_{I}
\end{equation}
for some $C^{\infty}$ functions $b_i$, and $dx_{I} = dx_{i_1} \wedge ... \wedge dx_{i_k}$, $\{i_1 <...< i_k\}$ where the index set $I$ runs over the subsets of $\{1,...,d\}$\footnote{If $I$ is empty, then $dx_I = 1$.}. The exterior derivative of $\omega$ at $p$ is then defined as

\begin{equation}
    d\omega_p = \sum_I  \left. da_I\right|_p \wedge  \left. dx_I \right|_p
\end{equation}
where by the second condition on the above discussion, $da$ is the differential of $a$. One then has to check that this definition is independent of the chart $(U, x_1,...,x_d)$.

\subsection{De Rham Cohomology}

\begin{definition}
Let $M$ be a smooth manifold. A differential p-form $\alpha \in \mathcal{A}^{p}(M)$ is called \textit{closed} if $d\alpha = 0$. It is called \textit{exact} if there exists $\beta \in \mathcal{A}^{p-1}(M)$ such that $d\beta = \alpha$.
\end{definition}

Denote by $Z^p(M), \subset \mathcal{A}^{p}(M)$ the subspace of all closed differential p-forms. Since $d^2 = 0$, $d(E^{p-1}(M)) \subset Z^p(M)$. Define

\begin{equation}
    \mathrm{H}_{dR}^{p}(M) = \frac{Z^p(M)}{d(\mathcal{A}^{p-1}(M))}
\end{equation}
the group of \textbf{closed forms modulo exact forms}, called the pth \textit{De Rham cohomology group} of $M$.

\begin{example}
Consider $M = S^1$. Since $dim_\mathbb{R}S^1 = 1$, by Remark \ref{dim}, $\mathcal{A}^p(S^1) = \{0\}$ if $p>1$. Then for all $p>1$, $\mathrm{H}_{dR}^{p}(S^1) = 0$. Since there aren't any exact $0$-forms ($C^{\infty}$ functions), and $S^1$ is connected, closed $0$-forms on $S^1$ are constant functions, we have
\begin{equation}
    \mathrm{H}_{dR}^{0}(S^1) \cong \mathbb{R}.
\end{equation}
To compute $\mathrm{H}_{dR}^{1}(S^1)$, we need to give meaning to the notion of integration of forms.
\end{example}

\noindent\textbf{Integration of $n$-forms over $\mathbb{R}^n$}: Denote by $dx$ the Lebesgue measure in $\mathbb{R}^n$ and denote its canonical coordinates by $x_1,...,x_n$. Let $U \subset \mathbb{R}^n$ be an open set and let $\omega = fdx_1 \wedge ... \wedge dx_n \in \mathcal{A}^{n}(U)$. Let $V \subset U$. Define

\begin{equation}
    \int_{V} \omega = \int_{V} f dx
\end{equation}

That is, integrating an n-form in $\mathbb{R}^n$ consists simply of integrating the coefficient of the form against the Lebesgue measure on $\mathbb{R}^n$. Note that the integral depends on the coordinates $x_1,...,x_n$.

\begin{theorem}[\textbf{Change of Variable Formula}]
Let $\varphi$ be a diffeomorphism from a bounded open set $D \subset \mathbb{R}^n$ to a bounded open set $\varphi(D)$. Denote
\begin{equation}
    J_{\varphi} = det\biggl(\frac{\partial \varphi_i}{\partial x_j}\biggr).
\end{equation}
Let $f$ continuous and bounded in $\varphi(D)$, and let $A \subset D$. Then
\begin{equation}
    \int_{\varphi(A)} f = \int_{A} f \circ \varphi |J_{\varphi}|
\end{equation}
\end{theorem}

\begin{definition}
Let $M, N$ be smooth manifolds and let $F \in C^{\infty}(N, M)$. Denote by $F_{*}: T_p N \rightarrow T_{F(p)} M$ its differential at $p \in N$. If $X_p \in T_p N$, we call $F_{*}(X_p)$ the \textit{push-forward} of the vector $X_p$ at $p$. Note that in general, this notion does not extend well to vector fields.
\end{definition}

\begin{remark}
If $F$ is a diffeomorphism, it extends well to vector fields, since the differential is injective at every point.
\end{remark}

\begin{definition}
Let $M, N$ be smooth manifolds and let $\omega \in \mathcal{A}^{1}(M)$. Let $F \in C^{\infty}(N, M)$. We define the \textit{pullback} of $\omega$ by $F$ as the form $F^{*}\omega \in \mathcal{A}^{1}(N)$ given by
\begin{equation}
    (F^{*}\omega)_p(X_p) = \omega_{F(p)}(F_{*}X_{p})
\end{equation}
for any $p \in N$ and $X_p \in T_p N$. More generally, if $\omega \in \mathcal{A}^{k}(M)$, define $F^{*}\omega \in \mathcal{A}^{k}(N)$ in the following manner: if $p \in N$ and $v_1,...,v_k \in T_p N$
\begin{equation}
    (F^{*} \omega)_p(v_1,...,v_k) = \omega_{F(p)}(F_{*}v_1,...,F_{*}v_k).
\end{equation}
\end{definition}

\begin{remark}
If $f \in \mathcal{A}^{0}(M)$, (i.e., if $f$ is a smooth function on $M$) the pullback $F^{*}f$ is defined as the composition
\begin{equation}
    N \xrightarrow{F} M \xrightarrow{f} \mathbb{R}, \text{  } F^{*}f = f \circ F \in \mathcal{A}^{0}(N).
\end{equation}
\end{remark}

\begin{proposition}
If $\varphi: M \rightarrow N$ is a smooth mapping, then

\begin{equation}
    \varphi^{*}(d\omega) = d(\varphi^{*}\omega),
\end{equation}
\end{proposition}

\begin{proof}
Let $\omega = fd_{x_1} \wedge ... \wedge d_{x_n}$. Then

\begin{align*}
    \varphi^{*}(d\omega) &= \varphi^{*}(df) \wedge \varphi^{*}(d_{x_1} \wedge ... \wedge d_{x_n})\\
                         &= d(f \circ \varphi) \wedge d(x_1 \circ \varphi) \wedge ... \wedge d(x_n \circ \varphi)\\
                         &= d(\varphi^{*}\omega).
\end{align*}
\end{proof}

Now consider the notion of a \textit{smooth singular $q$-simplex}, that is, a map $\sigma: \Delta^{q} \rightarrow M$ which extends to a smooth map from a neighborhood of $\Delta^q$ on $\mathbb{R}^n$ to $M$. By abuse of notation, in this section we will also denote the group of smooth singular $q$-chains on $M$ by $S_q(M)$.\footnote{The theory could also be developed for contiuous chains and continous forms.}

Let $\sigma \in S_q(M)$ be a smooth singular $q$-simplex and $\omega \in \mathcal{A}^q(M)$. If $q \ge 1$, $\omega$ can be pulled back via $\sigma$ to a $q$-form $\sigma^{*}\omega \in \mathcal{A}^{q}(U)$,\footnote{Since we are supposing that $\sigma$ extends to a smooth function on $U$.} where $U$ is an open neighborhood of $\Delta^q$ in $\mathbb{R}^n$. We then define the integral of $\omega$ over $\sigma$ to be

\begin{equation}
    \int_\sigma \omega = \int_{\Delta^q} \sigma^{*}\omega.
\end{equation}

If $\delta = \sum_i a_i \sigma_i \in S_q(M)$, then we use the linearity of the integral and extend the definition of the integral of $\omega$ over the chain $\delta$:

\begin{equation}
    \int_\delta \omega = \sum_i a_i \int_{\Delta^q} \sigma_{i}^{*}\omega
\end{equation}

Finally, if $q = 0$, then $\Delta^0 = \{0\}$, a $0$-simplex is just a point in $M$, and a $0$-form is just a smooth function on $M$. We then define, for a $0$-simplex $\sigma \in S_0(M)$

\begin{equation}
    \int_\sigma \omega = \omega(\sigma(0)).
\end{equation}

Before returning to the example, we present the most important theorem of the theory of integration of forms:

\begin{theorem}[Stokes' Theorem]
Let $\omega \in \mathcal{A}^{q-1}(U)$ be a $(q-1)$-form defined in a neighborhood of the image of $\delta \in S_q(M)$. Then
\begin{equation}
    \int_{\delta} d\omega = \int_{\partial \delta} \omega.
\end{equation}
\end{theorem}

Now returning to the example of $S^1$: the diffeomorphism $\theta$ of change of coordinates to polar coordinates is not a well defined function globally on $S^1$, because it is not injective. However, its differential $d\theta$ is well defined globally, and is a $1$-form on $S^1$. Every $1$-form on $S^1$ is closed, for if $\omega \in \mathcal{A}^{1}(S^1)$, $d\omega \in \mathcal{A}^{2}(S^1) =\{0\}$, and therefore $\mathcal{A}^{1}(S^1) = Z^1(S^1)$. Consider the map $\varphi: \mathcal{A}^{1}(S^1) \rightarrow \mathbb{R}$ given by
\begin{equation}
    \varphi(\omega) = \int_{S^1} \omega.
\end{equation}
Note that since $S^1$ is compact, $\varphi$ is well-defined, and by the linearity of the integral, is a linear functional on $\mathcal{A}^{1}(S^1)$. Moreover, $\varphi(\frac{d\theta}{2\pi i}) = 1$, and so $\varphi(c\frac{d\theta}{2\pi}) = c\varphi(\frac{d\theta}{2\pi}) = c$, for all $c \in \mathbb{R}$, and therefore $\varphi$ is surjective. By the first isomorphism theorem, we have that
\begin{equation}
    \frac{\mathcal{A}^{1}(S^1)}{ker\varphi} \cong \mathbb{R}
\end{equation}
and since $\mathcal{A}^{1}(S^1) = Z^1(S^1)$, if we show that $\ker \varphi = \left. \Ima d \right|_{\mathcal{A}^{0}(S^1)}$, the left-hand side will be, by definition $\mathrm{H}_{dR}^{1}(S^1)$, and we will have that $\mathrm{H}_{dR}^{1}(S^1) \cong \mathbb{R}$. If $\omega \in \left. \Ima d \right|_{\mathcal{A}^{0}(S^1)}$, $\omega = \left. d \right|_{\mathcal{A}^{0}(S^1)}f$, for some $f \in C^{\infty}(S^1)$, which we identify with the $C^{\infty}(\mathbb{R})$ functions that are $2\pi$-periodic. And so, from this last observation we conclude that
\begin{equation}
    \int_{S^1}df = 0
\end{equation}
and $\left. \Ima d \right|_{\mathcal{A}^{0}(S^1)} \subseteq \ker \varphi$. Moreover, if $\omega \in \ker \varphi$, define
\begin{equation}
    g(\theta) = \int_{0}^{\theta} \omega
\end{equation}
and note that $\omega \in \ker \varphi$, $\int_{S^1} \omega = 0$ and so the functions $g$ defined above are well-defined, and $\left. d \right|_{\mathcal{A}^{0}(S^1)}g = \omega$. Therefore $\omega \in \left. \Ima d \right|_{\mathcal{A}^{0}(S^1)}$. So $\ker \varphi \subseteq \left. \Ima d \right|_{\mathcal{A}^{0}(S^1)}$ and $\ker \varphi = \left. \Ima d \right|_{\mathcal{A}^{0}(S^1)}$. We conclude that $\mathrm{H}_{dR}^{1}(S^1) \cong \mathbb{R}$.

Afterwards, we will compute again the cohomology of the circle, using some more elegant machinery. 

We now present the first version of a result about the cohomology of open convex subsets of $\mathbb{R}^n$:

\begin{theorem}[Poincaré Lemma]\label{poincarelemma}
Let $U \subseteq \mathbb{R}^n$ be an open convex set. Then
\begin{equation}
    \mathrm{H}_{dR}^{k}(U) = 0
\end{equation}
for all $k \ge 1$.
\end{theorem}

To end this section, we give a brief overview of the theory of \textit{partitions of unity}:

\begin{definition}
A $C^{\infty}$ \textit{partition of unity} on a manifold $M$ is a collection of $C^{\infty}$ functions $\{\rho_{\alpha}\}_{\alpha \in A}$ such that
\begin{enumerate}
    \item the collection $\{supp(\rho_{\alpha})\}_{\alpha \in A}$ is \textit{locally finite} (a collection of subsets $\{A_{\alpha}\}$ in a topological space $S$ is locally finite if every point $q \in S$ has a neighborhood $q \in V$ such that $V \cap A_{\alpha} = \emptyset$ for all but a finite number of $\alpha$'s).
    \item $\sum \rho_{\alpha} = 1$
\end{enumerate}
Given a manifold $M$ and an open covering $\{U_{\alpha}\}_{\alpha \in A}$ of $M$, we say that the partition of unity $\{\rho_{\alpha}\}_{\alpha \in A}$ is \textit{subordinate to the covering $\{U_{\alpha}\}_{\alpha \in A}$} if $supp(\rho_{\alpha}) \subset U_{\alpha}$ for all $\alpha \in A$.
\end{definition}

Since the collection of supports $\{supp(\rho_{\alpha})\}$ is locally finite by the first condition, each point $q \in M$ is only in a finite number of the $supp(\rho_{\alpha})$, and therefore $\rho_{\alpha}(q) \neq 0$ only for a finite number of the $\alpha$'s. So the sums on the second condition are finite.

\begin{theorem}
Let $M$ be a manifold and $\{U_{\alpha}\}_{\alpha \in A}$ an open covering of $M$.
\begin{enumerate}
    \item there exists a $C^{\infty}$ partition of unity $\{\varphi_k\}_{k=1}^{\infty}$ with compact support such that for each $k$, $supp(\varphi_k) \subset U_{\alpha}$ for some $\alpha \in A$.
    \item if we let go of the condition of compact support, there exists a $C^{\infty}$ partition of unity $\{\rho_{\alpha}\}$ subordinate to $U_{\alpha}$.
\end{enumerate}
\end{theorem}

\subsection{The Real De Rham Complex}

\makebox[0pt]{}\vspace{-2ex}

In this section we begin discussing a few facts about homological algebra which will serve as the general foundation for all of the cohomology theories that we will develop in this text. We will work over an abelian category $\mathcal{C}$ (like the category \textbf{Vec$_k$} of vector spaces over $k$, or the category \textbf{Mod$_A$} of modules over a commutative ring $A$). For a historical overview of the formulation of the axioms and of the general theory of abelian categories, we refer the reader to (\cite{mclarty}). During the first part of this section the reference will be (\cite{vakil}, Cap. 1, §1.6.5).

Let $\mathcal{C}$ be an abelian category, and let $A_i$ be a collection of objects of $\mathcal{C}$. A sequence

\begin{equation}\label{complexo}
\begin{tikzcd}
{\ldots} \arrow{r}{f_{i-2}} & A_{i-1} \arrow{r}{f_{i-1}} & A_{i} \arrow{r}{f_{i}} & A_{i+1} \arrow{r}{f_{i+1}} & {\ldots}
\end{tikzcd}
\end{equation}
is called a \textit{complex at $A_i$} if $f_{i} \circ f_{i-1} = 0$, that is, if $\Ima f_{i-1} \subseteq \ker f_{i}$. It is said to be \textit{exact at $A_i$} if $\Ima f_{i-1} = \ker f_{i}$. It is called a \textit{complex} if it is a complex at $A_i$ for all $i$. Similarly, it is \textit{exact} if it is exact at $A_i$ for all $i$.
Given a complex like in (\ref{complexo}), its \textit{cohomology} at $A_i$, denoted by $\mathrm{H}^{i}$ is $\ker f_{i}/ \Ima f_{i-1}$. Therefore, a complex is exact at $A_i$ if, and only if, its cohomology at $A_i$ is $0$.

A complex like in (\ref{complexo}) will be denoted by $A^{\bullet}$.

Suppose that $\mathcal{C}$ is an abelian category. Define the category \textbf{Com$_{\mathcal{C}}$} of complexes in $\mathcal{C}$ in the following way: the objects are infinite complexes

\begin{equation}
\begin{tikzcd}
{\ldots} \arrow{r} & A_{i-1} \arrow{r}{f_{i-1}} & A_{i} \arrow{r}{f_{i}} & A_{i+1} \arrow{r}& {\ldots}
\end{tikzcd}
\end{equation}
in $\mathcal{C}$, and the morphisms $f: A^{\bullet} \rightarrow B^{\bullet}$ are diagrams

\begin{equation}
\begin{tikzcd}
{\ldots} \arrow{r} & A_{i-1} \arrow{r}{f_{i-1}} \arrow{d} & A_{i} \arrow{r}{f_{i}} \arrow{d} & A_{i+1} \arrow{r} \arrow{d} & {\ldots} \\
{\ldots} \arrow{r} & B_{i-1} \arrow{r}{g_{i-1}} & B_{i} \arrow{r}{g_{i}} & B_{i+1} \arrow{r}& {\ldots}
\end{tikzcd}
\end{equation}

\begin{remark}
The category \textbf{Com$_{\mathcal{C}}$} is also abelian.
\end{remark}
We then have a family of \textit{(covariant) functors} $\mathrm{H}^{i}(-): \textbf{Com}_{\mathcal{C}} \rightarrow \mathcal{C}$. That is, each morphism of complexes $A^{\bullet} \rightarrow B^{\bullet}$ induces a map on cohomology $\mathrm{H}^{i}(A^{\bullet}) \rightarrow \mathrm{H}^{i}(B^{\bullet})$ (in fact, the precise definition would involve a discussion of when two maps of complexes are \textit{homotopic}: homotopic maps are maps that induce the same map on cohomology. We would then have the above definition up to homotopy. For a discussion (cf. \cite{kashiwara}, §1.3)).

\begin{theorem}\label{lescohomology1}
A short exact sequence

\begin{equation}
\begin{tikzcd}
0 \arrow{r} & A^{\bullet} \arrow{r} & B^{\bullet} \arrow{r} & C^{\bullet} \arrow{r} & 0
\end{tikzcd}
\end{equation}

on \textbf{Com$_{\mathcal{C}}$}, that is, a diagram

\begin{equation}
\begin{tikzcd}
{\ldots} \arrow{r} & 0 \arrow{r} \arrow{d} & 0 \arrow{r} \arrow{d} & 0 \arrow{r} \arrow{d} & {\ldots} \\
{\ldots} \arrow{r} & A_{i-1} \arrow{r}{f_{i-1}} \arrow{d} & A_{i} \arrow{r}{f_{i}} \arrow{d} & A_{i+1} \arrow{r} \arrow{d} & {\ldots} \\
{\ldots} \arrow{r} & B_{i-1} \arrow{r}{g_{i-1}} \arrow{d} & B_{i} \arrow{r}{g_{i}} \arrow{d} & B_{i+1} \arrow{r} \arrow{d} & {\ldots} \\
{\ldots} \arrow{r} & C_{i-1} \arrow{r}{h_{i-1}} \arrow{d} & C_{i} \arrow{r}{h_{i}}  \arrow{d}& C_{i+1} \arrow{r} \arrow{d} & {\ldots} \\
{\ldots} \arrow{r} & 0 \arrow{r} & 0 \arrow{r} & 0 \arrow{r} & {\ldots}
\end{tikzcd}
\end{equation}

induces a long exact sequence

\begin{equation}
\begin{tikzcd}
{\ldots} \arrow{r} & \mathrm{H}^{i-1}(C^{\bullet}) \arrow{r} & \mathrm{H}^{i}(A^{\bullet}) \arrow{r} & \mathrm{H}^{i}(B^{\bullet}) \arrow{r} & \mathrm{H}^{i}(C^{\bullet}) \arrow{r}{\delta} & \mathrm{H}^{i+1}(A^{\bullet}) \arrow{r} & {\ldots}
\end{tikzcd}
\end{equation}

on $\mathcal{C}$.
\end{theorem}

\begin{proof}[Sketch of Proof]
The idea is to construct the connecting morphism $\delta$ using the \textit{Snake Lemma} and an opportune commutative diagram (cf. \cite{kashiwara}, Cap. 1, Prop. 1.3.6). Recall that given a morphism $f: A \rightarrow B$, its \textit{cokernel} is $\coker f = B/ \Ima f$. Then $\mathrm{H}^{i}(A^{\bullet}) = \coker (\Ima f_{i-1} \rightarrow \ker f_{i})$. There exists an exact sequence

\begin{equation}
\begin{tikzcd}
    0 \arrow{r} & \mathrm{H}^{i}(A^{\bullet}) \cong \coker(\Ima f_{i-1} \rightarrow \ker f_{i}) \arrow{r} & \coker f_{i-1} \arrow{r}{f_{i}} & \ker f_{i+1} \\ \arrow{r} & \mathrm{H}^{i+1}(A^{\bullet}) \cong   \coker (\Ima f_{i} \rightarrow \ker f_{i+1}) \arrow{r} & 0
\end{tikzcd}
\end{equation}

Consider the commutative diagram with exact rows:

\begin{tikzpicture}[>=triangle 60]
\matrix[matrix of math nodes,column sep={60pt,between origins},row
sep={60pt,between origins},nodes={asymmetrical rectangle}] (s)
{
&|[name=A]| \coker f_{i-1} &|[name=B]| \coker g_{i-1} &|[name=C]| \coker h_{i-1} &|[name=01]| 0 \\
|[name=02]| 0 &|[name=A']| \ker f_{i+1} &|[name=B']| \ker g_{i+1} &|[name=C']| \ker h_{i+1} \\
};
\draw[overlay,->, font=\scriptsize,>=latex] 
          (A) edge (B)
          (B) edge node[auto] {\( \)} (C)
          (C) edge (01)
          (A) edge node[auto] {\(f_{i}\)} (A')
          (B) edge node[auto] {\(g_{i}\)} (B')
          (C) edge node[auto] {\(h_{i}\)} (C')
          (02) edge (A')
          (A') edge node[auto] {\( \)} (B')
          (B') edge (C')

;

;

\end{tikzpicture}

then applying the Snake Lemma

\begin{tikzpicture}[>=triangle 60]
\matrix[matrix of math nodes,column sep={60pt,between origins},row
sep={60pt,between origins},nodes={asymmetrical rectangle}] (s)
{
&|[name=ka]| \mathrm{H}^{i}(A^{\bullet}) &|[name=kb]| \mathrm{H}^{i}(B^{\bullet}) &|[name=kc]| \mathrm{H}^{i}(C^{\bullet}) \\
&|[name=A]| \coker f_{i-1} &|[name=B]| \coker g_{i-1} &|[name=C]| \coker h_{i-1} &|[name=01]| 0 \\
|[name=02]| 0 &|[name=A']| \ker f_{i+1} &|[name=B']| \ker g_{i+1} &|[name=C']| \ker h_{i+1} \\
&|[name=ca]| \mathrm{H}^{i+1}(A^{\bullet}) &|[name=cb]| \mathrm{H}^{i+1}(B^{\bullet}) &|[name=cc]| \mathrm{H}^{i+1}(C^{\bullet}) \\
};
\draw[overlay,->, font=\scriptsize,>=latex]
          (ka) edge (A)
          (kb) edge (B)
          (kc) edge (C)
          (A) edge (B)
          (B) edge node[auto] {\( \)} (C)
          (C) edge (01)
          (A) edge node[auto] {\(f_{i}\)} (A')
          (B) edge node[auto] {\(g_{i}\)} (B')
          (C) edge node[auto] {\(h_{i}\)} (C')
          (02) edge (A')
          (A') edge node[auto] {\( \)} (B')
          (B') edge (C')
          (A') edge (ca)
          (B') edge (cb)
          (C') edge (cc)
;

\draw[overlay,->, font=\scriptsize,>=latex] (ka) edge (kb)
               (kb) edge (kc)
               (ca) edge (cb)
               (cb) edge (cc)
;

\draw[->,gray,rounded corners] (kc) -| node[auto,text=black,pos=.7]
{\(\delta\)} ($(01.east)+(.5,0)$) |- ($(B)!.35!(B')$) -|
($(02.west)+(-.5,0)$) |- (ca);

\end{tikzpicture}

we get the connecting morphism

\begin{equation}
\begin{tikzcd}
\mathrm{H}^{i}(A^{\bullet}) \arrow{r} & \mathrm{H}^{i}(B^{\bullet}) \arrow{r} & \mathrm{H}^{i}(C^{\bullet}) \arrow{r}{\delta} & \mathrm{H}^{i+1}(A^{\bullet}) \arrow{r} & \mathrm{H}^{i+1}(B^{\bullet}) \arrow{r} & \mathrm{H}^{i+1}(C^{\bullet})
\end{tikzcd}
\end{equation}

\end{proof}

The main reference for the discussions about the \textit{Real De Rham Complex} is (\cite{botttu}, Cap. 1).

Let $M$ be a real smooth manifold, and let $U \subseteq M$ be an open set. Define $d_i$ as $d_i = \left. d \right|_{\mathcal{A}^{i}(U)}$. We then have a natural complex

\begin{equation}
\begin{tikzcd}
    C^{\infty}(U) \cong \mathcal{A}^{0}(U) \arrow{r}{d_0} & \mathcal{A}^{1}(U) \arrow{r}{d_1} & \mathcal{A}^{2}(U) \arrow{r}{d_2} & {\ldots} \arrow{r}{d_{k-1}} & \mathcal{A}^{k}(U) \arrow{r}{d_k} & {\ldots}
\end{tikzcd}
\end{equation}
and the De Rham Cohomology groups we defined last section are the cohomology groups of this complex.

\begin{equation}
    \mathrm{H}_{dR}^{i}(M) = \frac{\ker d_i}{\Ima d_{i-1}} = \frac{Z^i(M)}{d(\mathcal{A}^{i-1}(M))}
\end{equation}

\noindent\textbf{The Mayer-Vietoris Sequence for De Rham Cohomology}: The Mayer-Vietories sequence is a powerful instrument for computing the cohomology of the union of two open sets:

\begin{proposition}
$\mathcal{A}^{*}(-)$ is a functor from the category $\textbf{Man}^{\circ}$ of smooth manifolds and smooth maps with reverted arrows to the category $\textbf{Alg}_{\mathbb{R}}$ of $\mathbb{R}$-algebras.
\end{proposition}

Therefore, the de Rham complex shares all of the functorial properties induced by $\mathcal{A}^{*}(-)$. For more about the functoriality of $\mathcal{A}^{*}(-)$ (cf. \cite{botttu}, Cap. 1, §2).

Suppose now that $M = U \cup V$, with $U$ and $V$ open sets. There exists a sequence

\begin{equation}
\begin{tikzcd}
    M & \ar{l}{i} U \bigsqcup V & \ar[l,shift left=.75ex,"\partial_1"]
  \ar[l,shift right=.75ex,swap,"\partial_0"]  U \cap V
\end{tikzcd}
\end{equation}

Where the first map is the surjection of $U \bigsqcup V$ onto $M$, and the pair of arrows $\partial_i$ are the inclusions of $U \cap V$ in $U$ and $V$. Applying the contravariant functor $\mathcal{A}^{*}(-)$, we get

\begin{equation}
\begin{tikzcd}
    \mathcal{A}^{*}(M) \ar{r}{i^{*}} & \mathcal{A}^{*}(U) \bigoplus  \mathcal{A}^{*}(V) \ar[r,shift left=.75ex,"\partial_{0}^{*}"]
  \ar[r,shift right=.75ex,swap,"\partial_{1}^{*}"] & \mathcal{A}^{*}(U \cap V)
\end{tikzcd}
\end{equation}
where the two restriction maps $\partial_{i}^{*}$ are the \textit{pullbacks} induced by the inclusions. By taking the difference of the pair of arrows $\partial_{0}^{*}$ and $\partial_{1}^{*}$, $(\omega, \tau) \in \mathcal{A}^{*}(U) \bigoplus  \mathcal{A}^{*}(V) \mapsto \tau - \omega \in \mathcal{A}^{*}(U \cap V)$, we obtain the \textbf{Mayer-Vietoris sequence}:

\begin{equation}
\begin{tikzcd}
    0 \ar{r} & \mathcal{A}^{*}(M) \ar{r}{i^{*}} & \mathcal{A}^{*}(U) \bigoplus  \mathcal{A}^{*}(V) \ar[r,shift left=.75ex,"\partial_{0}^{*}"]
  \ar[r,shift right=.75ex,swap,"\partial_{1}^{*}"] & \mathcal{A}^{*}(U \cap V) \ar{r} & 0
\end{tikzcd}
\end{equation}

\begin{remark}
The disjoint union $\bigsqcup$ is the \textit{coproduct} in the category $\textbf{Man}$, and the direct sum $\bigoplus$ is the \textit{coproduct} in the category $\textbf{Vec}_{\mathbb{R}}$.
\end{remark}

\begin{proposition}
The Mayer-Vietoris sequence is exact.
\end{proposition}

\begin{proof}
First we need to prove that $i^{*}$ is injective. Let $\omega_1, \omega_2 \in \mathcal{A}^{*}(M)$ such that $i^{*}\omega_1 = i^{*}\omega_2$. Note that the map $U \bigsqcup V \rightarrow M$ is a submersion, that is, its differential is surjective. Precisely, given $v \in TM$, there exists $u \in T (U \bigsqcup V)$ such that $v = i_{*}(u)$. We get:
\begin{equation}
    \omega_1(v) = \omega_1(i_{*}(u)) = i^{*}\omega_1(u) = i^{*}\omega_2(u) = \omega_2(i_{*}(u)) = \omega_2(v)
\end{equation}
and so $\omega_1 = \omega_2$.
\par Now we need to show that the difference of the last pair of arrows is surjective. For this, begin by considering the case of $M = \mathbb{R}$. Let $f \in C^{\infty}(U \cap V)$. We want to write $f$ as the difference between a function  on $U$ and a function on $V$. Let $\{\rho_{U}, \rho_{V}\}$ be a partition of unity subordinate to the covering $\{U, V\}$. Note then that since $f$ is defined on $U \cap V$ and $supp(\rho_{V}) \subset V$, defining $\rho_{V} f = 0$ on $U \cap V^{c}$, we have the function $\rho_{V} f$ defined on $(U \cap V) \cup (U \cap V^{c}) = U$. Then, since
\begin{equation}
    (\rho_{U}f)-(-\rho_{V}f) = f
\end{equation}
$\mathcal{A}^{0}(U) \bigoplus \mathcal{A}^{0}(V) \rightarrow \mathcal{A}^{0}(U \cap V)$ is surjective. The same argument for any manifold $M$ tells us that if $\omega \in \mathcal{A}^{q}(U \cap V)$, then $(-\rho_{V}\omega, \rho_{U}\omega) \in \mathcal{A}^{q}(U) \bigoplus \mathcal{A}^{q}(V)$ is mapped to $\omega$.
\end{proof}

\begin{example}\label{circlecohomology}

We compute $\mathrm{H}_{dR}^{1}(S^1)$ again, using the machinery of the Mayer-Vietoris sequence: Cover the circle with two open sets $\{U, V\}$. We have a sequence:

\begin{equation}
\begin{tikzcd}
    0 \ar{r} & \mathcal{A}^{*}(M) \ar{r}{i^{*}} & \mathcal{A}^{*}(U) \bigoplus  \mathcal{A}^{*}(V) \ar[r,shift left=.75ex,"\partial_{0}^{*}"]
  \ar[r,shift right=.75ex,swap,"\partial_{1}^{*}"] & \mathcal{A}^{*}(U \cap V) \ar{r} & 0.
\end{tikzcd}
\end{equation}

We know that given a short exact sequence of complexes, we get a long exact sequence on cohomology:

\begin{equation}
\begin{tikzcd}
\mathrm{H}_{dR}^{0}(S^1) \arrow{r} & \mathrm{H}_{dR}^{0}(U) \bigoplus \mathrm{H}_{dR}^{0}(V) \arrow{r} & \mathrm{H}_{dR}^{0}(U \cap V) \\ \arrow{r}{\delta} & \mathrm{H}_{dR}^{1}(S^1) \arrow{r} & \mathrm{H}_{dR}^{1}(U) \bigoplus \mathrm{H}_{dR}^{1}(V) \arrow{r} & \mathrm{H}_{dR}^{1}(U \cap V)
\end{tikzcd}
\end{equation}

We already know from last time that

\begin{equation}
\begin{tikzcd}
\mathbb{R} \arrow{r} & \mathbb{R} \bigoplus \mathbb{R} \arrow{r} & \mathrm{H}_{dR}^{0}(U \cap V) \arrow{r}{\delta} & \mathrm{H}_{dR}^{1}(S^1) \\ \arrow{r} & \mathrm{H}_{dR}^{1}(U) \bigoplus \mathrm{H}_{dR}^{1}(V) \arrow{r} & \mathrm{H}_{dR}^{1}(U \cap V)
\end{tikzcd}
\end{equation}

Now note that $\mathrm{H}_{dR}^{0}$ counts the number of connected components of the space: in fact, just note that $\mathrm{H}_{dR}^{0}(U)$ are the functions with differential zero, that is, locally constant functions. Therefore, it is generated as an $\mathbb{R}$-vector space by locally constant functions, and so its dimension is precisely the number of connected components of $U$. Moreover, note that if $\{U, V\}$ is a covering of $S^1$ and $U \neq V \neq S^1$, $U \cap V$ has two connected components. Then

\begin{equation}
\begin{tikzcd}
\mathbb{R} \arrow{r} & \mathbb{R} \bigoplus \mathbb{R} \arrow{r} & \mathbb{R} \bigoplus \mathbb{R} \arrow{r}{\delta} & \mathrm{H}_{dR}^{1}(S^1) \arrow{r} & \mathrm{H}_{dR}^{1}(U) \bigoplus \mathrm{H}_{dR}^{1}(V) \arrow{r} & \mathrm{H}_{dR}^{1}(U \cap V)
\end{tikzcd}
\end{equation}

Since the intervals are balls on $\mathbb{R}$, by the Poincaré Lemma,

\begin{equation}
\begin{tikzcd}
\mathbb{R} \arrow{r} & \mathbb{R} \bigoplus \mathbb{R} \arrow{r} & \mathbb{R} \bigoplus \mathbb{R} \arrow{r}{\delta} & \mathrm{H}_{dR}^{1}(S^1) \arrow{r} & 0 \arrow{r} & 0
\end{tikzcd}
\end{equation}
 and therefore the map $\delta$ is surjective. Finally, the image of $\mathbb{R} \rightarrow \mathbb{R} \bigoplus \mathbb{R}$ is $\mathbb{R}$, and so the kernel of $\mathbb{R} \bigoplus \mathbb{R} \rightarrow \mathbb{R} \bigoplus \mathbb{R}$ is $\mathbb{R}$. So
 
 \begin{equation}
 \mathrm{H}_{dR}^{1}(S^1) \cong \frac{(\mathbb{R} \bigoplus \mathbb{R})}{\mathbb{R}} \cong \mathbb{R}.
\end{equation}
\end{example}

To end this section, we discuss an important result which states that the de Rham cohomology groups of $M$ are in fact a topological invariant:

\begin{theorem}[De Rham Theorem]\label{derhamthm}
Let $M$ be a smooth manifold. Then we have isomorphisms

\begin{equation}
    \mathrm{H}_{dR}^{k}(M) \cong \mathrm{H}_{Sing}^{k}(M; \mathbb{R})
\end{equation}
for all $k\ge 0$.
\end{theorem}

We'll talk about the main ingredients used in the proof of the theorem. W e first need to find a way to relate de Rham cohomology and singular cohomology. The following discussion is taken from \cite{bredon}.

By the way we defined integration, the integral gives us a map\footnote{We are of course dealing with smooth singular simplices.}

\begin{equation}
    \psi_\omega: S_q(M) \rightarrow \mathbb{R}
\end{equation}
given by $\psi_\omega(\delta) = \int_{\delta} \omega$. Moreover, this map is linear in both $\omega$ and $\delta$, and so we get a map

\begin{equation}
    \psi: \mathcal{A}^p(M) \rightarrow \Hom(S_q(M), R) = S^q(M).
\end{equation}

If $\omega \in \mathcal{A}^{p-1}(M)$ and $\sigma$ is a smooth singular $p$-simplex, we have

\begin{equation}
    \psi_{d\omega}(\sigma) = \int_{\sigma} d\omega = \int_{\Delta^{p}} \sigma^{*}(d\omega) = \int_{\Delta^{p}} d(\sigma^{*}(\omega))
\end{equation}
and from Stokes' Theorem

\begin{equation}
    \int_{\Delta^{p}} d(\sigma^{*}(\omega)) = \int_{\partial \Delta^p} \sigma^{*}(\omega).
\end{equation}

Expanding into the explicit form, the integral on the right is

\begin{align*}
     \int_{\partial \Delta^p} \sigma^{*}(\omega) &= \sum_i (-1)^i \int_{\Delta^{p-1}} (\partial^i)^{*}\sigma^{*}\omega \\
     &= \sum_i (-1)^i \int_{\Delta^{p-1}} (\sigma \circ \partial^i)^{*}\omega \\
     &= \sum_i (-1)^i \int_{\sigma \circ \partial^i} \omega = \psi_{\omega} (\partial\sigma) = d(\psi_{\omega}(\sigma))
\end{align*}
and we see that the exterior derivative commutes with the differential of the singular cochain complex. More precisely, $\psi$ induces a map of complexes

\begin{equation}
\begin{tikzcd}
{\ldots} \arrow{r} & \mathcal{A}^{p-1}(M) \arrow{r}{d} \arrow{d} & \mathcal{A}^{p}(M) \arrow{r} \arrow{d} & {\ldots} \\
{\ldots} \arrow{r} & S^{q-1}(M) \arrow{r}{d} & S^q(M) \arrow{r} & {\ldots}
\end{tikzcd}
\end{equation}
which in turn induces a map on cohomology

\begin{equation}
    \psi^{*}: \mathrm{H}_{dR}^{k}(M) \rightarrow \mathrm{H}_{Sing}^{k}(M; \mathbb{R}).
\end{equation}

The de Rham Theorem says that this map is an isomorphism.

Let $\mathcal{U} = \{U_\alpha\}$ be a covering of $M$. Define $S^\mathcal{U}_q(M)$ to be the subgroup of smooth singular $q$-simplexes with image contained in $U_\alpha$ for some $\alpha$, and $S^q_\mathcal{U} (M)$ to be its dual. We have an inclusion map $S^\mathcal{U}_q(M) \rightarrow S_q(M)$, and the induced map on homology \textbf{is an isomorphism} (cf. \cite{bredon} Chap. IV, Theorem 17.7).

Suppose now for simplicity that $\mathcal{U} = \{U, V\}$. We then get an exact sequence

\begin{equation}
\begin{tikzcd}
    0 \ar{r} & S^q_\mathcal{U} (M) \ar{r} & S^q (U) \bigoplus S^q(V) \ar[r] & S^q(U \cap V) \ar{r} & 0
\end{tikzcd}
\end{equation}
analogous to the Mayer-Vietoris sequence for differential $q$-forms. If we compose the map $\psi: \mathcal{A}^q(M) \rightarrow S^q_{\mathcal{U}} (M)$ with the restriction map $S^q(M) \rightarrow S^q_\mathcal{U} (M)$, we get a commutative diagram

\begin{equation}
\begin{tikzcd}
    0 \ar{r} & \mathcal{A}^q (M) \ar{r} \ar{d} & \mathcal{A}^q (U) \bigoplus \mathcal{A}^q(V) \ar{r} \ar{d} & \mathcal{A}^q(U \cap V) \ar{r} \ar{d} & 0 \\
    0 \ar{r} & S^q_\mathcal{U} (M) \ar{r} & S^q (U) \bigoplus S^q(V) \ar{r} & S^q(U \cap V) \ar{r} & 0
\end{tikzcd}
\end{equation}
which in turn, induces long exact sequences on cohomology, the \textbf{Mayer-Vietoris ladder}:

\begin{equation}
\begin{tikzcd}
    \ldots \ar{r} & \mathrm{H}^q_{dR} (M) \ar{r} \ar{d} & \mathrm{H}^q_{dR} (U) \bigoplus  \mathrm{H}^q_{dR}(V) \ar{r} \ar{d} & \mathrm{H}^q_{dR}(U \cap V)  \ar{d}  \\
    \ldots \ar{r} & \mathrm{H}^q_{Sing} (M) \ar{r} & \mathrm{H}^q_{Sing} (U) \bigoplus  \mathrm{H}^q_{Sing}(V) \ar{r} & \mathrm{H}^q_{Sing}(U \cap V) \\
     \ar{r} & \mathrm{H}^{q+1}_{dR} (M) \ar{r} \ar{d} & \mathrm{H}^{q+1}_{dR} (U) \bigoplus  \mathrm{H}^{q+1}_{dR}(V) \ar{r} \ar{d} & \mathrm{H}^{q+1}_{dR}(U \cap V) \ar{r} \ar{d} & \ldots \\
     \ar{r} & \mathrm{H}^{q+1}_{Sing} (M) \ar{r} & \mathrm{H}^{q+1}_{Sing} (U) \bigoplus  \mathrm{H}^{q+1}_{Sing}(V) \ar{r} & \mathrm{H}^{q+1}_{Sing}(U \cap V) \ar{r} & \ldots
\end{tikzcd}
\end{equation}
where the vertical maps are all $\psi^{*}$. 

Now suppose that for $U$, $V$, and $U \cap V$, $\psi^{*}$ is an isomorphism. That is, if we have

\begin{equation}
\begin{tikzcd}
     \mathrm{H}^q_{dR} (U) \bigoplus  \mathrm{H}^q_{dR}(V) \ar{r} \ar[d, "\sim" {anchor=south, rotate=90, inner sep=.5mm}] & \mathrm{H}^q_{dR}(U \cap V)  \ar[d, "\sim" {anchor=south, rotate=90, inner sep=.5mm}]  \\
    \mathrm{H}^q_{Sing} (U) \bigoplus  \mathrm{H}^q_{Sing}(V) \ar{r} & \mathrm{H}^q_{Sing}(U \cap V) \\
     \ar{r} & \mathrm{H}^{q+1}_{dR} (M) \ar{r} \ar{d} & \mathrm{H}^{q+1}_{dR} (U) \bigoplus  \mathrm{H}^{q+1}_{dR}(V) \ar{r} \ar[d, "\sim" {anchor=south, rotate=90, inner sep=.5mm}] & \mathrm{H}^{q+1}_{dR}(U \cap V) \ar[d, "\sim" {anchor=south, rotate=90, inner sep=.5mm}] \\
     \ar{r} & \mathrm{H}^{q+1}_{Sing} (M) \ar{r} & \mathrm{H}^{q+1}_{Sing} (U) \bigoplus  \mathrm{H}^{q+1}_{Sing}(V) \ar{r} & \mathrm{H}^{q+1}_{Sing}(U \cap V) 
\end{tikzcd}
\end{equation}
Then the 5-Lemma implies that $\psi^{*}$ is also an isomorphism $\mathrm{H}^{q+1}_{dR} (M) \cong \mathrm{H}^{q+1}_{Sing} (M)$. Next, we need to state a more general version of the Poincaré Lemma:

\begin{theorem}[Poincaré Lemma revisited]
Let $U \subset \mathbb{R}^n$ be an open convex subset. Then

\begin{equation}
    \psi^{*}: \mathrm{H}_{dR}^{k}(U) \rightarrow \mathrm{H}_{Sing}^{k}(U; \mathbb{R})
\end{equation}
is an isomorphism for all $k \ge 0$.
\end{theorem}

And the final ingredient we'll need is the following proposition, and its proof can be found in \cite{bredon} Chap. V, Lemma 9.4:

\begin{proposition}
If $\psi^{*}$ is an isomorphism for disjoint open sets, then it is also an isomorphism for $\bigcup_\alpha U_\alpha$.
\end{proposition}

And now the theorem will follow directly from the following general result:

\begin{lemma}
Let $M$ be a smooth manifold of dimension $n$. Let $P(U)$ be a property of open subsets of $M$ such that we have the following:

\begin{enumerate}
    \item For all $U \subset M$ diffeomorphic to some open convex subset of $\mathbb{R}^n$, $U$ satisfies $P(U)$;\\
    \item If $U$, $V$ and $U \cap V$ all satisfy $P(U)$, $P(V)$ and $P(U \cap V)$, then $U \cup V$ satisfies $P(U \cup V)$;\\
    \item If $\{U_\alpha\}$ is a disjoint collection of open sets and for every $\alpha$, $U_\alpha$ satisfies $P(U_\alpha)$, then their disjoint union satisfies $P(\cup U_\alpha)$. 
\end{enumerate}
Then $M$ satisfies $P(M)$.
\end{lemma}

This general type of argument is known as a \textbf{Mayer-Vietoris argument}.

\begin{proof}
Suppose first that $M$ is diffeomorphic to some open subset $U \subset \mathbb{R}^n$. With effect, in this case we'll assume that $M$ is in fact some open subset of $\mathbb{R}^n$.

We will first prove by induction the case in which $U$ is a finite union of convex subsets. If $U$ is convex, then the first condition immediately yields the result. Suppose now that if $U = \bigcup_{i = 1}^n U_k$ with each $U_k$ open and convex, then $P(U)$ holds. If $U = \bigcup_{i = 1}^{n+1} U_k$, then by the first and second conditions, $P(\bigcup_{i = 1}^{n} U_k)$, $P(U_{n+1})$ and $P(\bigcup_{i = 1}^{n} U_k \cap U_{n+1}) = P(\bigcup_{i = 1}^{n} (U_k \cap U_{n+1}))$ hold, and therefore so does $P(\bigcup_{i = 1}^{n+1} U_k)$.

Now let $f$ be a proper map from $M$ to the non-negative reals. Define

\begin{equation}
    A_n = f^{-1}([n, n+1]).
\end{equation}

Since each $A_n$ is compact, we can cover each one by a finite union of convex open sets contained in $f^{-1}(n - \frac{1}{2}, n + \frac{3}{2})$. Call this union $U_n$. Then $A_n \subset U_n$, and we note that the collections of sets $U_{2n}$, and $U_{2n+1}$ are disjoint.

Now, since each $U_n$ is a finite union of convex open sets, the above induction tells us that $P(U_n)$ holds. So the third condition grants that $P(\bigcup U_{2n})$ and $P(\bigcup U_{2n+1})$ hold. But then we have $(\bigcup U_{2n}) \cap (\bigcup U_{2n+1}) = \bigcup_{k, l} (U_{k} \cap U_{2l+1})$, and since each $(U_{k} \cap U_{2l+1})$ is a finite union of convex open sets, $P$ holds for the last disjoint union. So, finally by the second condition, we have $P(\bigcup U_{2n})$, $P((\bigcup U_{2n+1}))$ and $P(\bigcup U_{2n}) \cap (\bigcup U_{2n+1})$, and therefore $P(\bigcup U_{2n}) \cup (\bigcup U_{2n+1}) = P(M)$ holds.

So we can substitute the first condition by the following condition: \textit{For all $U \subset M$ diffeomorphic to some open subset of $\mathbb{R}^n$, $U$ satisfies $P(U)$}, and prove the general case by repeating the exact same argument for general open subsets instead of convex open subsets. Indeed, by what we just proved, we can repeat the same induction without the hypothesis of convexity, and subsequently derive the general case.
\end{proof}

The framework provided by the Mayer-Vietoris argument is a widespread technique in topology. Some of its applications include proving that smooth singular simplexes and continuous singular simplexes induce the same cohomology groups; Poincaré Duality; the Thom-Leray-Gysin Isomorphism, and so on. For further discussions, see \cite{botttu}, \cite{bredon}.

\section{Holomorphic Vector Bundles and Dolbeault Cohomology}

\makebox[0pt]{}\vspace{-2ex}

In this section we develop the theory of complex manifolds and holomorphic vector bundles, generalizing the earlier concepts discussed about Riemann surfaces, and emphasizing their relation with algebraic geometry. We then proceed to develop the theory of forms on complex manifolds, and Dolbeault cohomology.

\subsection{Holomorphic Vector Bundles and Dolbeault Cohomology}

We begin with a result of the theory of several complex variables which will be useful later. For a full overview of the basic theory of several complex variables, we refer the reader to \cite{griffithsharris}, Chapter 0, as well as \cite{gunningrossi} and \cite{hormander}.

\begin{theorem}[Hartogs' Theorem]\label{hartogs}
Any holomorphic function $f$ in a neighborhood of $\Delta(r) \setminus  \Delta(r')$, where $\Delta(r)$ and $\Delta(r')$ are polydiscs in $\mathbb{C}^n$, with $r' < r$, extends to a holomorphic function on all of $\Delta(r)$.
\end{theorem}

In particular, the theorem states that for $n \ge 2$, holomorphic functions have no isolated singularities. The author finds the proof given in \cite{griffithsharris}, Chap. 0 to be particularly elegant.

The main references for complex manifolds and vector bundles is \cite{voisin_2002}, I, 2 and \cite{huybrechts}, Chapter 2.

Let $X$ be a $2n$-dimensional smooth manifold. We say that $X$ is equipped with a \textit{complex structure} if there is a covering $\{U_i\}$ and diffeomorphisms $\varphi_i: U_i \rightarrow V \subset \mathbb{C}^n$ with $V$ open, in such a way that the \textit{transition maps}

\begin{equation}
    \varphi_{j} \circ \varphi_{i}^{-1}: \varphi_i(U_i \cap U_j) \rightarrow \varphi_j(U_i \cap U_j)
\end{equation}
are holomorphic in the usual sense of functions on $\mathbb{C}^n$. Its \textit{complex dimension} is $n$ by definition. A function $f:U \rightarrow \mathbb{C}$ on some open subset $U \subseteq X$ is \textit{holomorphic} if $f \circ \varphi_{i}^{-1}$ is a holomorphic function on $\varphi(U \cap U_i)$. This definition is independent of the choice of chart on $X$, since the transition functions are holomorphic by definition and compositions of holomorphic functions are also holomorphic.

A \textit{meromorphic function} on $U \subseteq \mathbb{C}^n$ is a function defined on the complement of some nowhere dense subset $S$ of $U$, with the property that there exists an open covering $\{U_i\}$ of $U$ and holomorphic functions $g_i, h_i$ on $U_i$ with $\left. f \right|_{U_i \setminus S} = \frac{\left. g_i \right|_{U_i \setminus S}}{\left. h_i \right|_{U_i \setminus S}}$, where $h_i \neq 0$. A meromorphic function on a complex manifold $X$ is a map

\begin{equation}
    f: X \rightarrow \coprod_{x \in X} \mathcal{K}(\mathcal{O}_{X,x})
\end{equation}
where $\mathcal{K}(\mathcal{O}_{X,x})$ is the fraction field of the ring of germs of holomorphic functions around $x \in X$. The map associates to each $x \in X$ an element $f_x \in \mathcal{K}(\mathcal{O}_{X,x})$ such that for any $x_0 \in X$ there exists an open neighborhood $U$ of $x_0$ in $X$ and two holomorphic functions $f,g:U \rightarrow \mathbb{C}$ with $f_x = \frac{g}{h}$ for all $x \in U$ (and so again f is locally represented by a quotient of holomorphic functions on $U$). We denote the \textbf{field\footnote{It is indeed a field, if $U$ is connected.} of meromorphic functions} by $\mathcal{K}_X(U)$.\footnote{Like $\mathcal{O}_X$, $\mathcal{K}_X$ is in fact a \textit{sheaf of rings} on $X$.} If $X$ is connected, we denote the field of global meromorphic functions on $X$ by $K(X)$ and call it its function field.

\begin{example}[Projective Space]\label{Pn}
Complex projective space, denoted $\mathbb{P}^n_{\mathbb{C}}$ or simply $\mathbb{P}^n$ is one of the most important examples of complex manifolds, and is constructed as follows: it is the set of lines through the origin in $\mathbb{C}^{n+1}$ or equivalently,

\begin{equation}
    \mathbb{P}^n = (\mathbb{C}^{n+1} \setminus \{0\})/\mathbb{C}^{*}
\end{equation}
where we think of the action of $\mathbb{C}^{*}$ on $\mathbb{C}^{n+1}$ as multiplication by a nonzero complex scalar. The coordinates of points of $\mathbb{P}^n$ are written $(z_0: z_1:...:z_n)$. The notation is intended to indicated that for any $\lambda \in \mathbb{C}^{*}$, the two points $(\lambda z_0: \lambda z_1:...:\lambda z_n)$ and $(z_0: z_1:...:z_n)$ give the same point in $\mathbb{P}^n$. We now discuss the complex manifold structure on $\mathbb{P}^n$: the \textit{standard open covering} of $\mathbb{P}^n$ is given by the $n+1$ subsets $U_i = \{(z_0:...:z_n) | z_i \neq 0\} \subset \mathbb{P}^n$, for $0 \le i \le n$. The standard open sets are open in the quotient topology, given by

\begin{equation}
    \pi: \mathbb{C}^{n+1} \setminus \{0\} \rightarrow (\mathbb{C}^{n+1} \setminus \{0\})/\mathbb{C}^{*} = \mathbb{P}^n.
\end{equation}
The local charts $\varphi_i: U_i \rightarrow \mathbb{C}^n$ are given by

\begin{equation}
    (z_0:...:z_n) \mapsto (\frac{z_0}{z_i},...,\frac{z_{i-1}}{z_i},\hat{\frac{z_i}{z_i}},\frac{z_{i+1}}{z_i},...,\frac{z_n}{z_i})
\end{equation}
(since $z_i \neq 0$ on $U_i$). For the glueing with the transition maps

\begin{equation}
\varphi_{ij} = \varphi_i \circ \varphi_{j}^{-1}: \varphi_{j}(U_i \cap U_j) \rightarrow \varphi_{i}(U_i \cap U_j)
\end{equation}
we have

\begin{equation}
    \varphi_{ij}(w_1,...,w_n) = (\frac{w_1}{w_i},...,\frac{w_{i-1}}{w_i},\frac{w_{i+1}}{w_i},...,\frac{w_{j-1}}{w_i},\frac{1}{w_i},\frac{w_j}{w_i},...,\frac{w_n}{w_i}).
\end{equation}

Note that $U_i \cap U_j$ is given by the points with nonzero ith and jth coordinate. Since $\varphi_i$ ommits the ith coordinate, $\varphi_i(U_i \cap U_j)$ is $\varphi_i(U_i \cap U_j) = \mathbb{C}^n \setminus \{w_j = 0\}$.
\end{example}

\begin{definition}
A \textit{complex vector bundle} $\pi:E \rightarrow X$ over a complex manifold $X$ is a differentiable vector bundle whose fibers are $\mathbb{C}$-vector spaces, and the transition functions are $\mathbb{C}$-linear. A complex vector bundle is said to be equipped with a \textit{holomorphic structure} if we have a covering $U_i$ of $X$ and biholomorphisms

\begin{equation}
    \psi_i: \pi^{-1}(U_i) \rightarrow U_i \times \mathbb{C}^r
\end{equation}
that make the diagram

\begin{center}

\begin{tikzpicture}[>=triangle 60]
\matrix[matrix of math nodes,column sep={60pt,between origins},row
sep={60pt,between origins},nodes={asymmetrical rectangle}] (s)
{
|[name=A]| \pi^{-1}(U_i) &|[name=B]| U_i \times \mathbb{C}^r \\
&|[name=A']| U_i\\
};
\draw[overlay,->, font=\scriptsize,>=latex] 
          (A) edge node[auto] {\(\psi_i\)} (B)
          (A) edge node[auto] {\(\pi\)} (A')
          (B) edge node[auto] {\(p\)} (A')
;

;

\end{tikzpicture}

\end{center}
commutes, where $p$ is the $\mathbb{C}$-linear projection to $U_i$.

\end{definition}

This way, for every $x \in U_i \cap U_i$, we have a $\mathbb{C}$-linear map

\begin{equation}
    g_{ij}(x): \mathbb{C}^r \rightarrow \mathbb{C}^r
\end{equation}
where $g_{ij} = \psi_i \circ \psi_j^{-1}: U_i \cap U_j \rightarrow GL(r, \mathbb{C})$. The collection $\{U_i, g_{ij}\}$ is called the \textit{holomorphic cocycle} of the bundle. In the same way that differentiable vector bundles (both real or complex), a rank $r$ holomorphic vector bundle is uniquely determined by its defining holomorphic cocycle.

If $\pi:E \rightarrow X$ is a holomorphic vector bundle, then we can assume that the $U_i$ in the above definition are charts on $X$ since they are identified with open sets of $\mathbb{C}^n$ via the charts $\varphi_i$. Then 

\begin{equation}
(\varphi_i \times id_{\mathbb{C}^n}) \circ \psi_i : \pi^{-1}(U_i) \rightarrow U_i \times \mathbb{C}^r \rightarrow V_i \times \mathbb{C}^r
\end{equation}
give charts for $E$ whose transition functions are holomorphic. So in particular, $E$ is a complex manifold, and $\pi$ is a holomorphic map, and we can conclude that a complex structure on the manifold $X$ induces a holomorphic structure on the bundle $E$.

\begin{definition}
Given a holomorphic vector bundle $\pi:E \rightarrow X$, a \textit{holomorphic section} over an open set $U \subseteq X$ is a section $s: U \rightarrow E$ which is a holomorphic map.
\end{definition}

Later (Theorem \ref{locfreesheavesvecbundles}) we will see that holomorphic functions on $X$ are equivalent to holomorphic sections of some line bundle $L$ over $X$.

For example, the holomorphic local trivializations $g_i$ of $E$ above are given by a choice of a family of holomorphic sections of $E$, whose values at $x \in U_i$ form a basis of the fiber $E_{x} = \pi^{-1}(x)$ over $x$, as a complex vector space (i.e., are given by a \textit{holomorphic frame} over U).

\begin{example}[The Holomorphic Tangent Bundle]

Let $X$ be a complex manifold, and let $\varphi_i: U_i \rightarrow \mathbb{C}^n$ be holomorphic charts on $X$. The \textit{holomorphic tangent bundle} of $X$ is defined as the bundle given by the cocycle $\{U_i, J(\varphi_{ij}) \circ \varphi_j\}$, where $J(\varphi_{ij})$ is the jacobian of the map $\varphi_{ij} = \varphi_i \circ \varphi_j^{-1}: \varphi_j(U_i \cap U_j) \rightarrow \varphi_i(U_i \cap U_j)$.
\end{example}

So the map $g_{ij}$ takes $x \in U_i \cap U_j$ and maps it to the matrix $J(\varphi_{ij})(\varphi_j(x))$. The above definition is independent of the choice of charts on $X$. We also define the \textit{holomorphic cotangent bundle} $TX^{*}$ as the dual bundle of $TX$. The \textit{bundle of holomorphic $p$-forms} is $\bigwedge^{p} TX^{*}$. Its sheaf of sections is denoted $\Omega_X^{p}$.

\begin{example}[The tautological bundle on $\mathbb{P}^n$]\label{tautbundle}
One can prove that over $\mathbb{P}^n$ there exists essentially only one holomorphic line bundle: the \textbf{tautological line bundle}. We now describe it and its dual:

\begin{proposition}
Consider the set $\mathcal{O}(-1) \subset \mathbb{P}^n \times \mathbb{C}^{n+1}$ consisting of all pairs $(L, z) \in \mathbb{P}^n \times \mathbb{C}^{n+1}$ with $z \in L$ (so, all pairs consisting of a line $L$, or a point in $\mathbb{P}^n$, and the points on $\mathbb{C}^{n+1}$ that lie on that line). So $\mathcal{O}(-1)$ has in a natural way the structure of a holomorphic line bundle over $\mathbb{P}^n$.
\end{proposition}

\begin{proof}
The projection $\pi: \mathcal{O}(-1) \subset \mathbb{P}^n \times \mathbb{C}^{n+1} \rightarrow \mathbb{P}^n$ is given by the projection to the first factor. Consider the standard open covering $\mathbb{P}^n = \bigcup_{i=0}^n U_i$. The trivialization of $\mathcal{O}(-1)$ over $U_i$, $\psi_i:\pi^{-1}(U_i) \rightarrow U_i \times \mathbb{C}$ is given by $(L, z) \mapsto (L, z_i)$. The transition maps over some line $L = (z_0:...:z_n)$ are given by $\psi_{ij}(L): \mathbb{C} \rightarrow \mathbb{C}$, and map (recall that since its a line bundle, the transition matrices are $1 \times 1$, i.e., are given by a complex scalar) $z \mapsto \frac{z_i}{z_j} z$.
\end{proof}

So we have constructed a line bundle $\mathcal{O}(-1)$ over $\mathbb{P}^n$ such that its fiber over $L \in \mathbb{P}^n$ is naturally isomorphic (as a complex vector space of dimension $1$) to $L$.

The line bundle $\mathcal{O}(1)$ is then defined as the dual of $\mathcal{O}(-1)$, that is $\mathcal{O}(1) = \mathcal{O}(-1)^{*}$. It is called the \textit{hyperplane bundle on $\mathbb{P}^n$}. For $k > 0$, we denote by $\mathcal{O}(k)$ the line bundle $\mathcal{O}(1)^{\otimes k}$. It is the dual of $\mathcal{O}(-k) = \mathcal{O}(-1)^{\otimes k}$. We will further study line bundles on projective space below.

\end{example}
\begin{example}[The pullback bundle]
Let $X$ and $Y$ be complex manifolds, and let $f: Y \rightarrow X$ be a holomorphic map. Let $E$ be a holomorphic vector bundle on $X$, given by some cocycle  $\{U_i, \psi_{ij}\}$. Then the \textit{pullback bundle} $f^{*}E$ of $E$ by $f$ is the bundle on $Y$ given by the cocycle $\{(f^{-1}(U_i)), \psi_{ij} \circ f\}$. For any $y \in Y$ we have an isomorphism $f^{*}E_y \cong E_{f(y)}$.
\end{example}

\begin{definition}
Let $L$ be a line bundle on a complex manifold $X$ with cocycle $\{g_{ij}\}$, and $\{U_i\}$ a cover of $X$ by trivializing neighborhoods of $L$. A \textit{meromorphic section of $L$} is a collection of meromorphic functions $f_i \in \mathcal{K}_X(U_i)$ that are compatible with the cocycle of $L$, that is, on overlaps we have $f_i = g_{ij} f_j$.
\end{definition}

\noindent\textbf{Almost Complex Structures and Integrability:} Let $X$ be a differentiable manifold. Let $TX_{\mathbb{R}}$ denote the real tangent bundle. An \textit{almost complex structure} on $X$ is an endomorphism $I$ of $TX_{\mathbb{R}}$ such that $I^2 = -Id$. Equivalently, it is the \textbf{structure of a complex vector bundle on $TX_{\mathbb{R}}$}.

By the observation on the definition of holomorphic vector bundle, we noted that a complex structure on $X$ naturally gives rise to an almost complex structure on $TX_{\mathbb{R}}$.

\begin{definition}
An almost complex structure on $TX_{\mathbb{R}}$ is said to be \textit{integrable} if there exists a complex structure on $X$ that induces it.
\end{definition}

\begin{proposition}
Every complex manifold admits a natural almost complex structure.
\end{proposition}

Let $\varphi_i: U_i \rightarrow \mathbb{C}^n$ be holomorphic charts covering $X$. Consider the complex tangent bundle $TX_{\mathbb{C}} = TX_{\mathbb{R}} \otimes \mathbb{C}$, the complexified tangent bundle of $X$. If we set the complex coordinates as $z_i = x_i + iy_i$, we have a complex basis for each fiber $T_x U_i$ given by

\begin{equation}
    \frac{\partial}{\partial x_1},...,\frac{\partial}{\partial x_n},\frac{\partial}{\partial y_1},...,\frac{\partial}{\partial y_n}.
\end{equation}
Consider, on each $T_x U_i$

\begin{equation}
    I_x: T_x U_i \rightarrow T_x U_i
\end{equation}
given by

\begin{equation}
    \frac{\partial}{\partial x_i} \mapsto \frac{\partial}{\partial y_i} \text{,   } \frac{\partial}{\partial y_i} \mapsto -\frac{\partial}{\partial x_i}.
\end{equation}

One can prove that the construction is independent of the $\varphi_i$, and moreover that the endomorphisms on the fibers glue via trivializations to an endomorphism $I$ of the base space of the complexified tangent bundle $TX_{\mathbb{C}}$ (cf. \cite{huybrechts} Proposition 1.3.2, Proposition 2.6.2).

If $V$ is a real finite dimensional vector space equipped with a $\mathbb{C}$-linear operator $I: V \rightarrow V$ such that $I^2 = -Id$, then the complexification $V_{\mathbb{C}} = V \otimes \mathbb{C}$ of $V$ decomposes naturally as $V_{\mathbb{C}} = V^{1,0} \bigoplus V^{0,1}$, where $V^{1,0}$ is the eigenspace associated to the eigenvalue $i$ of $I$, and $V^{0,1}$ is the eigenspace associated to the eigenvalue $-i$ of $I$. Complex conjugation acts naturally on $V_{\mathbb{C}}$, inducing an isomorphism $V^{1,0} \cong V^{0,1}$. This decomposition allows us to prove:

\begin{proposition}
Let $(X, I)$ be an almost complex manifold. Then $TX_{\mathbb{C}}$ decomposes naturally as a direct sum

\begin{equation}
    TX_{\mathbb{C}} = T^{1,0}X \bigoplus T^{0,1}X
\end{equation}
of complex vector bundles on $X$, such that the extension of $I$ as a $\mathbb{C}$-linear operator on the fibers of $T^{1,0}X$ (respec. of $T^{0,1}X$) acts as multiplication by $i$ (respec. acts as multiplication by $-i$).
\end{proposition}

\begin{proof}
We define $T^{1,0}X$ as the vector bundle whose fibers are the vector spaces spanned by the eigenvectors associated to the eigenvalue $i$, i.e., as the kernel of $I-i Id$, and similarly, $T^{0,1}X$ as the kernel of $I+i Id$. Then, by the above discussion and by the definition of direct sum of bundles, we have the decomposition $TX_{\mathbb{C}} = T^{1,0}X \bigoplus T^{0,1}X$.
\end{proof}

\begin{proposition}\label{ident1}
If $X$ is a complex manifold then we have a canonical isomorphism between $T^{1,0}X$ and the \textbf{holomorphic} tangent bundle $TX$.
\end{proposition}

\begin{proof}
We start by noting that if $f: U \rightarrow V$ is a biholomorphism between open subsets, the differential $df$ respects the above decomposition, i.e., $df(T^{1,0}_{x}U) = T^{1,0}_{f(x)}V$, and $df(T^{0,1}_{x}U) = T^{0,1}_{f(x)}V$ (cf. \cite{huybrechts} Proposition 1.3.2). So we have an isomorphism

\begin{equation}
    df: T^{1,0}_{x}U \bigoplus T^{0,1}_{x}U \rightarrow T^{1,0}_{f(x)}V \bigoplus T^{0,1}_{f(x)}V
\end{equation}
and, on coordinates, the Jacobian of $f$ has the form

\begin{equation}
\begin{pmatrix}
    J(f) & 0 \\
    0 & \overline{J(f)} \\
\end{pmatrix}
\end{equation}
Now, let $\varphi_i:U_i \rightarrow \mathbb{C}^n$ be holomorphic charts covering $X$. Denote by $V_i = \varphi_i(U_i) \subset \mathbb{C}^n$. Consider the pullback bundle $(\varphi_{i}^{-1})^{*}(\left. T^{1,0}X \right|_{U_i})$. It is canonically isomorphic to $T^{1,0}\varphi(U_i) = T^{1,0}V_i$. Now, $T^{1,0}V_i$ is canonically trivial, and the linear isomorphism between $T^{1,0}_{\varphi_{i}(x)}V_i$ and $T^{1,0}_{\varphi_{j}(x)}V_j$ at each $x \in U_i \cap U_j$ is given by $J(\varphi_{ij}) \circ \varphi_j(x)$. So $T^{1,0}X$ is the bundle associated to the cocycle $\{J(\varphi_{ij}) \circ \varphi_j\}$. Therefore $T^{1,0}X \cong TX$.
\end{proof}

So we have identified the bundle $TX$ as a complex subbundle of the complexified tangent bundle $TX_{\mathbb{C}}$. We will now refer to $T^{1,0}X$ as the \textit{holomorphic tangent bundle} of $X$, and to $T^{0,1}X$ as the \textit{antiholomorphic tanget bundle} of $X$.

\begin{definition}
Let $(X, I)$ be an almost complex manifold. We have the following complex vector bundles:

\begin{equation}
    \bigwedge^{k}_{\mathbb{C}}X = \bigwedge^{k}(TX_{\mathbb{C}})^{*} \text{  and  } \bigwedge^{p,q}X = \bigwedge^{p}(T^{1,0}X)^{*} \otimes_{\mathbb{C}} \bigwedge^{q}(T^{0,1}X)^{*}.
\end{equation}
\end{definition}

Their modules of sections are denoted $\mathcal{A}_{\mathbb{C}}^{k}(X)$ and $\mathcal{A}^{p,q}(X)$ respectively. An element of $\mathcal{A}^{p,q}(X)$ is called a \textit{$(p,q)$-form} on $X$ or a \textit{form of type $(p,q)$} (or of \textit{bidegree $(p,q)$}). Our goal now is to define the \textit{holomorphic de Rham Complex} and the \textit{Dolbeault Complex} of $X$.

There is a canonical decomposition

\begin{equation}
    \bigwedge^{k}_{\mathbb{C}}X = \bigoplus_{p+q = k} \bigwedge^{p,q}X
\end{equation}
and consequently

\begin{equation}
    \mathcal{A}^{k}_{\mathbb{C}}(X) = \bigoplus_{p+q = k} \mathcal{A}^{p,q}(X).
\end{equation}

\begin{remark}
It follows from Proposition \ref{ident1} that we can identify the bundles $\bigwedge^{p} TX^{*}$ and $\bigwedge^{p,0}X$ of a complex manifold $X$. In particular, we can identify the modules $\Omega_X^{p}$ and $\mathcal{A}^{p,0}$, and any holomorphic section of $\bigwedge^{p} TX^{*}$ defines a section of $\bigwedge^{p,0}X$.
\end{remark}

Now, by abuse of notation, denote by $d$ the $\mathbb{C}$-linear extension of the exterior differential. Denote by $\pi^{p,q}: \mathcal{A}^{k}_{\mathbb{C}}(X) \rightarrow \mathcal{A}^{p,q}(X)$ the canonical projection. Then we define the operators 

\begin{equation}
    \partial = \pi^{p+1,q} \circ d: \mathcal{A}^{p,q}(X) \rightarrow \mathcal{A}^{p+1,q}(X), \text{    } \bar{\partial} = \pi^{p,q+1} \circ d: \mathcal{A}^{p,q}(X) \rightarrow \mathcal{A}^{p,q+1}(X).
\end{equation}
where we consider the restriction of $d$ to $\mathcal{A}^{p,q}(X)$. It follows from this definition that $d = \partial + \bar{\partial}$, and that $\partial^2 = \bar{\partial}^2 = 0$, $\partial \bar{\partial} + \bar{\partial} \partial = 0$. They also satisfy the Leibniz rule: for $\alpha \in \mathcal{A}^{p,q}(U)$ and $\beta \in \mathcal{A}^{r,s}(U)$ we have

\begin{align*}
    &\partial(\alpha \wedge \beta) = \partial(\alpha) \wedge \beta + (-1)^{p+q} \alpha \wedge \partial(\beta) \\
    &\bar{\partial}(\alpha \wedge \beta) = \bar{\partial}(\alpha) \wedge \beta + (-1)^{p+q} \alpha \wedge \bar{\partial}(\beta).
\end{align*}

Indeed, since the exterior differential $d$ satisfies the Leibniz rule $d(\alpha \wedge \beta) = d(\alpha) \wedge \beta + (-1)^{p+q} \alpha \wedge d(\beta)$, taking the $(p+r+1, q+s)$-parts on both sides (respec. $(p+r, q+s+1)$-parts on both sides ), we get the Leibniz rule for $\partial$ (respec. for $\bar{\partial}$).

We can then define the \textit{complexified de Rham complex}: it is the complex

\begin{equation}
\begin{tikzcd}
    \mathcal{A}^{0}_{\mathbb{C}}(X) \arrow{r}{d_0} & \mathcal{A}^{1}_{\mathbb{C}}(X) \arrow{r}{d_1} & \mathcal{A}^{2}_{\mathbb{C}}(X) \arrow{r}{d_2} & {\ldots} \arrow{r}{d_{k-1}} & \mathcal{A}^{k}_{\mathbb{C}}(X) \arrow{r}{d_k} & {\ldots}.
\end{tikzcd}
\end{equation}

The cohomology groups of this complex

\begin{equation}
    \mathrm{H}_{dR}^{k}(X; \mathbb{C}) = \frac{\ker d_k}{\Ima d_{k-1}}
\end{equation}
are the \textit{de Rham Cohomology groups with coefficients in} $\mathbb{C}$. They satisfy

\begin{equation}
    \mathrm{H}_{dR}^{k}(X; \mathbb{C}) = \mathrm{H}_{dR}^{k}(X) \otimes_{\mathbb{R}} \mathbb{C}
\end{equation}
since $\mathbb{C}$ is flat as an $\mathbb{R}$-module (cf. Example \ref{univcoeffthm}).

Now, if we fix $p$, we get a differential complex $\mathcal{A}^{p,\bullet}X$, with differential given by $\bar{\partial}$. That is, the complex

\begin{equation}
\begin{tikzcd}
    \mathcal{A}^{p,0}(X) \arrow{r}{\bar{\partial}_0} & \mathcal{A}^{p,1}(X) \arrow{r}{\bar{\partial}_1} & \mathcal{A}^{p,2}(X) \arrow{r}{\bar{\partial}_2} & {\ldots} \arrow{r}{\bar{\partial}_{k-1}} & \mathcal{A}^{p,k}(X) \arrow{r}{\bar{\partial}_k} & {\ldots}.
\end{tikzcd}
\end{equation}
where $\bar{\partial}_k$ denotes $\left. \bar{\partial} \right|_{\mathcal{A}^{p,k}(X)}$. This is called the \textit{Dolbeault Complex} of $X$. The cohomology groups of this complex

\begin{equation}
    \mathrm{H}_{\bar{\partial}}^{p,q}(X) = \frac{\ker \bar{\partial}_{q}}{\Ima \bar{\partial}_{q-1}}
\end{equation}
are called the \textit{Dolbeault cohomology groups} of $X$.

Note that the definition of the Dolbeault cohomology groups of $X$ depend on the decomposition $TX_{\mathbb{C}} = T^{1,0}X \bigoplus T^{0,1}X$, and consequently, on the complex structure of $X$. So the \textbf{Dolbeault cohomology groups of $X$ are not a purely topological invariant of $X$}, for their depend on the additional structure of $X$ as a complex manifold.

\subsection{Divisors and Line Bundles}\label{divlb}

\makebox[0pt]{}\vspace{-2ex}

In this section we further study line bundles and tie together some ideas by introducing divisors on higher dimensional complex manifolds. This section should be read after Section \ref{sheaves}, and it relies heavily on its constructions and results. The reader unfamiliar with sheaf cohomology should skip it for now. It is only included here for contextual completeness and some proofs are omitted or just sketched. The main references are \cite{huybrechts}, 2 and \cite{griffithsharris}, Chap. 1.

Let $\mathcal{O}(0)$ denote the trivial line bundle $X \times \mathbb{C} \rightarrow \mathbb{C}$ on $\mathbb{P}^n$. We want to endow set of isomorphism classes of holomorphic line bundles on $\mathbb{P}^n$ with an abelian group strucutre. In fact, we will later show that \textit{any} holomorphic line bundle on $\mathbb{P}^n$ is isomorphic to $\mathcal{O}(k)$, for some $k$. 
This abelian group structure is a special case of the following proposition:

\begin{proposition}
Let $X$ be a complex manifold. The isomorphism classes of holomorphic line bundles over $X$ form an abelian group under the operation $\otimes$. The inverse operation is given by taking duals. This group is called the \textbf{Picard group} of $X$, and is denoted $Pic(X)$.
\end{proposition}

\begin{proof}
We need to show that given a line bundle $L$ on $X$, $L \otimes L^{*}$ is isomorphic to the trivial bundle. For this, we use the cocycle description of $L \otimes L^{*}$. Recall that if $L$ and $L'$ are holomorphic line bundles over $X$ given by the cocycles $\{g_{ij}\}$ and $\{g'_{ij}\}$ respectively, the cocycle of $L \otimes L'$ is given by $\{g_{ij}g'_{ij}\}$. Moreover, the line bundle $L^{*}$ is given by the cocyle $\{g_{ij}^{-1}\}$. So the line bundle $L \otimes L^{*}$ is given by the cocycle $\{g_{ij}g_{ij}^{-1}\}$ and so it is isomorphic to the trivial bundle over $X$. 
\end{proof}

\begin{corollary}
The Picard group $Pic(X)$ is naturally isomorphic to $\mathrm{H}^{1}(X, \mathcal{O}_{X}^{*})$, the first cohomology group of $X$ with coefficients in the subsheaf of invertible elements of the sheaf of holomorphic functions on $X$.
\end{corollary}

Indeed, the cocycle description of the elements of $Pic(X)$ associates to each line bundle $L$ an element of $\check{\mathrm{H}}^1(X, \mathcal{O}_{X}^{*})$. The isomorphism then follows from Theorem \ref{paracompact}.

\begin{corollary}
Let $f: X \rightarrow Y$ be a holomorphic map. Then pullback of holomorphic line bundles by $f$ defines a group homomorphism

\begin{equation}
    f^{*}:Pic(Y) \rightarrow Pic(X).
\end{equation}
\end{corollary}

Thinking of $Pic(X)$ as $\mathrm{H}^1(X, \mathcal{O}_{X}^{*})$, the map $\mathrm{H}^1(Y, \mathcal{O}_{Y}^{*}) \rightarrow \mathrm{H}^1(X, \mathcal{O}_{X}^{*})$ is induced by the sheaf morphism $f^{*}:\mathcal{O}_{Y}^{*} \rightarrow \mathcal{O}_{X}^{*}$ induced by $f: X \rightarrow Y$.

We now introduce the \textit{exponential sequence of $X$}, as a way to study the Picard group $Pic(X)$:

\begin{definition}
Let $X$ be a complex manifold. The \textit{exponential sequence} is the short exact sequence

\begin{equation}\label{expsequence}
\begin{tikzcd}
    0 \arrow{r} & \underbar{$\mathbb{Z}$} \arrow{r} & \mathcal{O}_X \arrow{r} & \mathcal{O}_{X}^{*} \arrow{r} & 0
\end{tikzcd}
\end{equation}
where $\underbar{$\mathbb{Z}$}$ is the constant sheaf with value $\mathbb{Z}$ and $\underbar{$\mathbb{Z}$} \rightarrow \mathcal{O}_{X}$ is the natural inclusion. The second map $\mathcal{O}_{X}\rightarrow \mathcal{O}_{X}^{*}$ is given by exponentiating $f$: $f \mapsto e^{2 \pi i f}$.\footnote{Note that this indeed gives us a sheaf morphism $\mathcal{O}_{X}\rightarrow \mathcal{O}_{X}^{*}$: $e^{2 \pi i f}$ is a nonzero holomorphic function, and moreover $e^{2 \pi i (f+g)} = e^{2 \pi i f}e^{2 \pi i g}$} It is an epimorphism because around each point $x \in X$ we can find a small simply connected neighborhood where we have a locally defined branch of the complex logarithm, the inverse of the exponential.
\end{definition}

We now look at the long exact sequence in cohomology induced by the short exact sequence (\ref{expsequence}):

\begin{equation}
\begin{tikzcd}
    \mathrm{H}^{0}(X, \mathcal{O}_{X}^{*}) \arrow{r} & \mathrm{H}^1(X, \underbar{$\mathbb{Z}$}) \arrow{r} & \mathrm{H}^1(X, \mathcal{O}_{X}) \arrow{r} & \mathrm{H}^1(X, \mathcal{O}_{X}^{*}) \arrow{r} & \mathrm{H}^2(X, \underbar{$\mathbb{Z}$})
\end{tikzcd}
\end{equation}

First, recall that $\mathrm{H}^{0}(X, \mathcal{O}_{X}^{*}) = \Gamma(X, \mathcal{O}_{X}^{*})$, and so if $X$ is compact, by the \textbf{maximum principle}, it has \textbf{no nonconstant holomorphic functions}. So $\mathrm{H}^{0}(X, \mathcal{O}_{X}^{*}) = 0$, and we can conclude that \textbf{if $X$ is compact, $\mathrm{H}^1(X, \underbar{$\mathbb{Z}$}) \rightarrow \mathrm{H}^1(X, \mathcal{O}_{X})$ is injective}. 

This sequence is a useful tool to compute the Picard group of $X$. One way in which this sequence gives information about $Pic(X)$ is through its image in $\mathrm{H}^{2}(X, \underbar{$\mathbb{Z}$})$. The image of a line bundle under this map is called its \textbf{first Chern class}:

\begin{definition}
Let $L \in \mathrm{H}^{1}(X, \mathcal{O}_{X}^{*})$ be a line bundle. Its \textbf{first Chern class} $c_{1}(L)$ is the image of $L$ under the boundary map

\begin{equation}
    Pic(X) \cong \mathrm{H}^{1}(X, \mathcal{O}_{X}^{*}) \longrightarrow \mathrm{H}^{2}(X, \underbar{$\mathbb{Z}$}).
\end{equation}
\end{definition}

\noindent\textbf{Analytic Subvarieties:} Let $X$ be a complex manifold. An \textit{analytic subvariety} of $X$ is a closed subset $V \subset X$ such that for any point $x \in X$ there exists some open neighborhood $x \in U \subset X$ of $x \in X$ such that $V \cap U$ is the zero locus of finitely many holomorphic functions $f_1,...,f_k \in \mathcal{O}_X(U)$.

A point $x$ of an analytic subvariety $V$ is called \textit{smooth} or \textit{regular} if the functions $f_1,...,f_k$ can be chosen such that $\varphi(x) \in \varphi(U)$ is a regular point of the holomorphic map $f = (f_1 \circ \varphi_1,...,f_k \circ \varphi_k): \varphi(U) \rightarrow \mathbb{C}^k$ (i.e., its Jacobian at $x$ has maximum rank, where $(U, \varphi)$ is a chart around $x$). A point $x \in V$ is called \textit{singular} if it is not regular. The set of regular points of $V$ is called $V_{reg} = V \setminus V_{sing}$, and it is a \textbf{complex submanifold of $X$}. The set $V_{sing}$ of singular points of $V$ is closed in $V$, and $V_{reg}$ is open and dense in $V$.

An analytic subvariety $V$ is said to be \textit{irreducible} if it cannot be written as the union $V = V_1 \cup V_2$ of two analytic subvarieties $V_1$ and $V_2$, with $V_1 \neq V$ and $V_2 \neq V$. The dimension of an analytic variety is the dimension of $V_{reg}$ as a submanifold of $X$.

An \textit{analytic hypersurface} is an analytic subvariety of codimension $1$, that is

\begin{equation}
\dim X - \dim V = 1.
\end{equation}
An analytic hypersurface $V \subset V$ is given as the zero locus of a single nontrivial holomorphic function. Every hypersurface is the union of its irreducible components, i.e., $V = \bigcup_{i} V_i$ with $V_i$ irreducible. This union is always finite if $X$ is compact. In general, it is only locally finite.

\begin{definition}
A \textbf{divisor} $D$ on a complex manifold $X$ is a formal linear combination over $\mathbb{Z}$ of irreducible analytic hypersurfaces $Y_i$ of $X$

\begin{equation}
    D = \sum a_i [Y_i], \text{ } a_i \in \mathbb{Z}.
\end{equation}
The \textbf{divisor group} $Div(X)$ is the set of all divisors endowed with the group structure induced by that of the additive group structure of $\mathbb{Z}$, that is, for $D = \sum a_i [Y_i]$, $D' = \sum a'_i [Y_i]$, $(D+D')$ is given by $(D+D') = \sum (a_i + a'_i) [Y_i]$.
\end{definition}

We will assume that the sums in the above definition are locally finite, i.e., for any $x \in X$, there is an open neighborhood $U$ of $x$ in $X$ such that only a finite number of the $a_i$ are nonzero, with $Y_i \cap U \neq \emptyset$.

\begin{example}\label{divisorhypersurface}
Every hypersurface $V$ defines a divisor $\sum [V_i] \in Div(X)$, where the $V_i$ are its irreducible components.
\end{example}

\begin{definition}
A divisor $D = \sum a_i [Y_i]$ is said to be \textit{effective} if $a_i \ge 0$ for all $i$.
\end{definition}

The divisor associated to a hypersurface in the example is an effective divisor, since $a_i = 1$ for all $i$.

We now describe a way to associate to each meromorphic function on $X$ a divisor:

\begin{definition}
Let $Y$ be a hypersurface, and $f$ be a meromorphic function in a neighborhood of $x \in Y$. We define the \textit{order} of $f$ with respect to $Y$ in $x$ to be $ord_{Y,x}(f)$ as the number given by the equality

\begin{equation}
    f = g^{ord_{Y,x}(f)}h
\end{equation}
for some $h \in \mathcal{O}_{X,x}^{*}$.
\end{definition}

The above definition \textbf{does not} depend on $g$, so long as $g$ is irreducible (cf. \cite{huybrechts}, Remarks 2.3.6i). We define the order $ord_Y(f)$ of some $f \in K(X)$ along a hypersurface $Y$ as $ord_Y(f) = ord_{Y,x}(f)$ on some point $x \in Y$ such that there is a neighborhood $U$ of $x$ in $Y$ where $Y$ is irreducible (cf. \cite{huybrechts}, Remarks 2.3.6ii).

A meromorphic function $f \in K(X)$ has zeros (respec. poles) of order $d \ge 0$ along an irreducible hypersurface $Y$ of $X$ if

\begin{equation}
    ord_Y(f) = d \text{ (respec. if $ord_Y(f) = -d$).}
\end{equation}
Indeed, this definition is similar to the classical definition of zeros and poles of meromorphic functions on the complex plane, except that now we're studying zeros and poles along divisors. Much of the motivation and intuition for this definition can be gained from classical function theory, cf. \cite{ahlfors}, \cite{conway}.

Similarly, given a line bundle $L$ and a global section $s \in \mathrm{H}^0(X, L)$ such that $s \neq 0$, we can define the order of $s$ along $Y$, $ord_Y(s)$. It is defined as $ord_Y(\varphi_i(s))$, for some trivialization $\varphi_i$. This definition does not depend on the choice of trivialization.

\begin{definition}
Let $f \in K(X)$. Then we associate to $f$ a divisor $(f) \in Div(X)$ given by

\begin{equation}
    (f) = \sum ord_Y(f)[Y]
\end{equation}
where the sum is taken over all of the \textit{irreducible hypersurfaces} $Y$ of $X$.
\end{definition}

A divisor $D$ associated to a meromorphic function is called \textit{principal}.

A principal divisor $(f)$ can always be written as a difference of two effective divisors: if we define

\begin{equation}
    Z(f) = \sum_{ord_Y(f) > 0}ord_Y(f)[Y]
\end{equation}

and

\begin{equation}
    P(f) = - \sum_{ord_Y(f) < 0}ord_Y(f)[Y].
\end{equation}
the \textit{zero divisor} and the \textit{pole divisor} of $f$, then $Z(f)$ and $P(f)$ are principak and $(f) = Z(f) - P(f)$.

One may notice that throughout our construction, we relied on the existance of globally defined meromorphic functions, and the existence of a function field $K(X)$. One may generalize this constructions to a more general setting where we do not have such objects available. We may think about locally defined meromorphic functions, and consider nowhere vanishing holomorphic functions as invertible meromorphic functions, i.e., an inclusion of sheaves $\mathcal{O}_X^{*} \rightarrow \mathcal{K}_X^{*}$. We then have an isomorphism between the global sections of the sheaf $\mathcal{K}_X^{*}/\mathcal{O}_X^{*}$,  $\mathrm{H}^0(X, \mathcal{K}_X^{*}/\mathcal{O}_X^{*}) = \Gamma(X, \mathcal{K}_X^{*}/\mathcal{O}_X^{*})$ and $Div(X)$ (\cite{huybrechts}, Proposition 2.3.9).\footnote{In a context of schemes, we may not even have ordinary divisors (i.e. the originally defined \textbf{Weil divisors}). On a general scheme in which we do not have function fields (for example, a scheme which is not \textit{integral}) one considers the notion of \textbf{Cartier divisor} as a global section of the subsheaf of invertible elements of the \textit{sheaf of total quotients of the structure sheaf $\mathcal{O}_X$}, which in replaces the notion of function field for general schemes (cf. \cite{hartshorne}, II, 6). This proposition guarantees that, on a complex manifold, Cartier divisors and Weil divisors agree.} The isomorphism is given as follows: we associate to each meromorphic function its divisor. A global section $f \in \mathrm{H}^0(X, \mathcal{K}_X^{*}/\mathcal{O}_X^{*}) = \Gamma(X, \mathcal{K}_X^{*}/\mathcal{O}_X^{*})$ is locally a nontrivial meromorphic function $f_i \in \mathcal{K}_{X}^{*}(U_i)$, where $U_i$ is a covering of $X$ such that $f_i f_{j}^{-1}$ is a holomorphic function without zeros. And so, for any hypersurface $Y$ with $Y \cap U_i \cap U_j \neq \emptyset$ we have a well defined order of $f$, since $ord_Y(f_i) = ord_Y(f_j)$. So for any $f \in \mathrm{H}^0(X, \mathcal{K}_X^{*}/\mathcal{O}_X^{*})$ we can compute $(f) = \sum ord_Y(f)[Y]$. The fact that it is a group isomorphism follows from the additivity of the order (cf. \cite{huybrechts}, Remarks 2.3.6iii) and the explicitation of an inverse (\cite{huybrechts}, Proposition 2.3.9).

\begin{proposition}\label{divpic}
There exists a natural group homomorphism $Div(x) \longrightarrow Pic(X)$, given by

\begin{equation}
    D \mapsto \mathcal{O}(D).
\end{equation}
\end{proposition}

The construction of $\mathcal{O}(D)$ is as follows: if $D \in Div(X)$ is corresponds to $f \in \mathrm{H}^0(X, \mathcal{K}_X^{*}/\mathcal{O}_X^{*})$, which is given by functions $f_i \in \mathcal{K}_{X}^{*}(U_i)$ for any open covering $\{U_i\}$, we define $\mathcal{O}(D)$ as the line bundle with transition functions $\varphi_{ij} = f_i f_{j}^{-1} \in \mathcal{O}^{*}_{X}(U_i \cap U_j) = \mathrm{H}^0(U_i \cap U_j, \mathcal{O}^{*}_{X})$. By construction, since $D$ is the line bundle associated to $f$, the pair $\{(U_i, \varphi_{ij})\}$ satisfies the cocycle condition, i.e., is an element of $\check{\mathrm{H}}^1(\{U_i\}, \mathcal{O}^{*}_{X})$, and so indeed $\mathcal{O}(D)$ is an element of $\mathrm{H}^1(X, \mathcal{O}^{*}_{X}) \cong Pic(X)$.

Given a nonzero section $0 \neq s \in \mathrm{H}^0(X, L)$, we can describe its zero divisor\footnote{It in fact encodes more information than the usual zero divisor of some holomorphic function $f$, cf. \cite{huybrechts}, Remarks 2.3.17} $Z(s)$: we have that $Z(s) = \sum ord_Y(s) [Y]$. With this, we state the following

\begin{proposition}\label{zerolocussection}
For any \textit{effective} divisor $D \in Div(X)$, there exists some section $0 \neq s \in \mathrm{H}^0(X, \mathcal{O}(D))$ with $Z(s) = D$.
\end{proposition}


\begin{proof}
Let $D \in Div(X)$. By the above discussion, since $\mathrm{H}^0(X, \mathcal{K}_X^{*}/\mathcal{O}_X^{*}) \cong Div(X)$, we can take $D$ to be given by $\{f_i \in \mathcal{K}^{*}_X(U_i)\}$. Since we are supposing $D$ to be effective, all of the $f_i$ are holomorphic functions, since they have no poles. So the $f_i$ are elements of $\mathcal{O}_X(U_i)$.

On the other hand, the line bundle $\mathcal{O}(D) \in Pic(X)$ is, by construction, the bundle given by the cocycle $\{g_{ij = }f_i f_j^{-1}\} \in \mathrm{H}^1(X, \mathcal{O}_X^{*})$.

The functions $f_i \in \mathcal{O}_X(U_i)$ then define an element of $\mathrm{H}^0(X, \mathcal{O}(D))$ by making $g_{ij}f_j = f_i$. Finally, $Z(s) \cap U_i = Z(\left. s \right|_{U_i})$, and $Z(\left. s \right|_{U_i}) = Z(f_i) = D \cap U_i$. Therefore $Z(s) = D$.
\end{proof}

\begin{definition}
Two divisors $D, D' \in Div(X)$ are called \textit{linearly equivalent}, denoted by $D \sim D'$ if their difference $D - D'$ is a principal divisor. 
\end{definition}

We finish this section discussing a few results concerning the projective space $\mathbb{P}^n$ which are proved as an application of the theory we just developed. The reference is \cite{griffithsharris}, Chapter 1, §3.\footnote{I spent so much time of my undergraduate life reading bits of Chapter 0 of Griffiths-Harris, that I never thought I'd eventually get to Chapter 1.}

\begin{theorem}\label{projdivpic}
Let $X \subset \mathbb{P}^n$ be a submanifold. Then,

\begin{equation}
    Pic(X) \cong Div(X)/\sim
\end{equation}
that is, the Picard group of $X$ is isomorphic to the divisor group of $X$, modulo linear equivalence.
\end{theorem}

This theorem states that, any line bundle $L$ on a projective submanifold is the image of a divisor through the map of Proposition \ref{divpic} (i.e., is of the form $\mathcal{O}(D)$ for some $D \in Div(X)$). Its proof can be found in \cite{griffithsharris}, Chapter 1, Section 2.

\medskip

\noindent\textbf{The Gysin Sequence:} We now quickly discuss a result which we will need for computations. Let $X$ be an oriented smooth manifold of dimension $n$, and let $Y$ be an oriented smooth submanifold of codimension $k$. Then, the inclusion

\begin{equation}
    i: Y \rightarrow X
\end{equation}
induces a map

\begin{equation}
    i_{*}: \mathrm{H}^p(Y, \mathbb{Z}) \rightarrow \mathrm{H}^{p+k}(X, \mathbb{Z})
\end{equation}
called the \textit{Gysin map}, which in turn gives us a long exact sequence in cohomology

\begin{equation}
\begin{tikzcd}
    \ldots \arrow{r} & \mathrm{H}^{p-r}(Y, \mathbb{Z}) \arrow{r} & \mathrm{H}^{p}(X, \mathbb{Z}) \arrow{r} & \mathrm{H}^{p}(X \setminus Y, \mathbb{Z}) \arrow{r} & \mathrm{H}^{p-r+1}(Y, \mathbb{Z}) \arrow{r} & \ldots
\end{tikzcd}
\end{equation}
called the \textbf{Gysin sequence}.

\begin{proposition}\label{projcohomology}
For the complex projective space $\mathbb{P}^n$, we have

\begin{equation}
    \mathrm{H}^p(\mathbb{P}^n, \mathbb{Z}) =
    \begin{cases}
    0, & \text{if $p$ is odd or $p > 2n$} \\
    \mathbb{Z}, & \text{if $p$ is even.}
    \end{cases}
\end{equation}
\end{proposition}

The idea of the proof is as follows: we proceed by induction on $n$. We know that we can identify $\mathbb{P}^{n-1}$ inside $\mathbb{P}^n$, by making the last coordinate be zero. Then $\mathbb{P}^n \setminus \mathbb{P}^{n-1}$ is the standard open $U_n$, given by the points with the last coordinate non-zero, and so it is isomorphic to $\mathbb{C}^n$ via the chart $\varphi_n$ on $\mathbb{P}^n$. We then apply the Gysin sequence\footnote{The real codimension of $\mathbb{P}^{n-1}$ in $\mathbb{P}^n$ is $2$.}
 
\begin{equation}
\begin{tikzcd}
    \ldots \arrow{r} & \mathrm{H}^{p-2}(\mathbb{P}^{n-1}, \mathbb{Z}) \arrow{r} & \mathrm{H}^{p}(\mathbb{P}, \mathbb{Z}) \arrow{r} & \mathrm{H}^{p}(\mathbb{C}^n, \mathbb{Z}) \arrow{r} & \mathrm{H}^{p-1}(\mathbb{P}^{n-1}, \mathbb{Z}) \arrow{r} & \ldots
\end{tikzcd}
\end{equation}

Since $\mathbb{C}^n$ is contractible, all of its cohomology groups are $0$ in positive degree, and $\mathrm{H}^{0}(\mathbb{C}^n, \mathbb{Z}) = \mathbb{Z}$. So the sequence allows us to conclude that

\begin{equation}
    \mathrm{H}^{i}(\mathbb{P}^{n}, \mathbb{Z}) \cong \mathrm{H}^{i-2}(\mathbb{P}^{n-1}, \mathbb{Z})
\end{equation}
for all $i > 0$. So now we divide in cases: if $i$ is even $i - 2$ is also even, and by the induction hypothesis 

\begin{equation}
    \mathrm{H}^{i}(\mathbb{P}^{n}, \mathbb{Z}) \cong \mathrm{H}^{i-2}(\mathbb{P}^{n-1}, \mathbb{Z}) \cong \mathbb{Z}.
\end{equation}
On the other hand, if $i$ is odd, $i - 2$ is also odd, and so

\begin{equation}
    \mathrm{H}^{i}(\mathbb{P}^{n}, \mathbb{Z}) \cong \mathrm{H}^{i-2}(\mathbb{P}^{n-1}, \mathbb{Z}) = 0.
\end{equation}

Finally, if $i = 0$, note that

\begin{equation}
    \mathrm{H}^{0}(\mathbb{P}^{n}, \mathbb{Z}) \cong \mathrm{H}^{0}(\mathbb{C}^{n}, \mathbb{Z})
\end{equation}
and the result follows.

This result can be used to prove the following:

\begin{proposition}\label{chernclassfundamental}
The first chern class $c_1(\mathcal{O}_{\mathbb{P}^n}(1))$ of the hyperplane bundle is a generator of $\mathrm{H}^{2}_{Sing}(\mathbb{P}^n, \mathbb{Z})$.
\end{proposition}

Its proof can be found in \cite{voisin_2002}, Theorem 7.14 or \cite{milnstas}, Section 14.

Next, we state a version of the \textbf{Hodge Decomposition Theorem}, which is the analogous of the decomposition on last section for cohomology:

\begin{theorem}
Let $X = \mathbb{P}^n$. Then we have\footnote{The theorem is in fact a statement about \textbf{compact Kähler manifolds}, such as the complex projective space $\mathbb{P}^n$.}

\begin{equation}
    \mathrm{H}^k_{dR}(X, \mathbb{C}) \cong \bigoplus_{p+q = k} \mathrm{H}^{p,q}_{\overline{\partial}}(X).
\end{equation}

Moreover, $\mathrm{H}^{p,q}_{\overline{\partial}}(X) = \overline{\mathrm{H}^{q, p}_{\overline{\partial}}(X)}$, where the space in the right hand side is the complex-conjugate of $\mathrm{H}^{q, p}_{\overline{\partial}}(X)$.
\end{theorem}

Its proof is complex analytical, and uses the theory of \textbf{harmonic forms} (cf. \cite{griffithsharris}, Chap. 0, Section 7). We use the Hodge Decomposition Theorem  to prove the following:

\begin{proposition}\label{cohdiffPn}
Let $X = \mathbb{P}^n$, and let $\Omega_{X}^{P}$ denote its sheaf of holomorphic $p$-forms. We have

\begin{equation}
    \mathrm{H}^q(X, \Omega_{X}^{p}) =
    \begin{cases}
    0, & p \neq q \\
    \mathbb{C}, & p=q.
    \end{cases}
\end{equation}
In particular, $\mathrm{H}^k(X, \mathcal{O}_{X}) = 0$, for $k \ge 1$.
\end{proposition}

Let's introduce some notation: we call

\begin{equation}
    h^{p,q}(X) = \dim \mathrm{H}^{p,q}_{\overline{\partial}}(X)
\end{equation}
the \textbf{Hodge numbers} of $X$. We also put $b_q(X)$ the rank off the torsion-free part of $\mathrm{H}^q_{Sing}(X, \mathbb{Z})$, in the case where it is finitely generated. The numbers $b_q(X)$ are called the \textbf{Betti numbers} of $X.$ In the case where $\mathrm{H}^q_{Sing}(X, \mathbb{Z})$ is not finitely generated, we put $b_q(X) = \infty$. The \textbf{Universal Coefficient Theorem} in algebraic topology tells us that for any field $k$ of characteristic $0$, the dimension of the vector space $\mathrm{H}^q_{Sing}(X, k)$ coincides with $b_q(X)$, whenever it is finite (see Example \ref{univcoeffthm}).

The Hodge Decomposition then tells us that

\begin{equation}\label{hodgenumberscommute}
    h^{p,q}(\mathbb{P}^n) = h^{q,p}(\mathbb{P}^n)
\end{equation}
and it also follows from the de Rham Theorem that

\begin{equation}\label{bettihodge}
    b_k(\mathbb{P}^n) = \sum_{p + q = k} h^{p,q}(\mathbb{P}^n).
\end{equation}

With this, we can conclude that

\begin{equation}
    \mathrm{H}^q(\mathbb{P}^n, \Omega_{\mathbb{P}^n}^{p}) = 0
\end{equation}
when $p + q$ is odd, since $\mathrm{H}^q(\mathbb{P}^n, \Omega_{\mathbb{P}^n}^{p}) \cong  \mathrm{H}^{p,q}_{\overline{\partial}}(\mathbb{P}^n)$ by the Dolbeault Theorem, and then Hodge Decomposition, the de Rham Theorem and Proposition \ref{projcohomology} give us the result. Now, since $\mathrm{H}^{2k}_{Sing}(\mathbb{P}^n, \mathbb{Z}) \cong \mathbb{Z}$, also by Proposition \ref{projcohomology}, whenever $p \neq k$ we get from Equation \ref{bettihodge} that

\begin{equation}
    b_{2k}(\mathbb{P}^n) \ge h^{p, 2k-p}(\mathbb{P}^n)  + h^{2k - p, p}(\mathbb{P}^n)  = 2 \cdot  h^{p, 2k-p}(\mathbb{P}^n) 
\end{equation}
by Equation \ref{hodgenumberscommute}. But since $b_{2k}(\mathbb{P}^n) = 1$, this implies that $h^{p, 2k-p} = h^{2k - p, p} = 0$. And so we conclude that

\begin{equation}
    h^{p, p}(\mathbb{P}^n) = 1
\end{equation}
and so $\mathrm{H}^{p,p}_{\overline{\partial}}(\mathbb{P}^n) \cong \mathrm{H}^{2p}(\mathbb{P}^n, \Omega_{\mathbb{P}^n}^{p}) \cong \mathrm{H}^{2p}_{dR}(\mathbb{P}^n, \mathbb{C}) \cong \mathbb{C}$.

\medskip

Now we turn back to the long exact sequence associated to the exponential sheaf sequence of $\mathbb{P}^n$. We have

\begin{equation}
\begin{tikzcd}
    \mathrm{H}^1(X, \mathcal{O}_{X}) \arrow{r} & \mathrm{H}^1(X, \mathcal{O}_{X}^{*}) \arrow{r} & \mathrm{H}^2(X, \underbar{$\mathbb{Z}$}) \arrow{r} & \mathrm{H}^2(X, \mathcal{O}_{X})
\end{tikzcd}
\end{equation}
and so since $\mathrm{H}^k(X, \mathcal{O}_{X}) = 0$, for $k \ge 1$, $Pic(X) \cong \mathrm{H}^1(X, \mathcal{O}_{X}^{*}) \cong \mathrm{H}^2(X, \underbar{$\mathbb{Z}$})$ and \textbf{every line bundle $L$ on $\mathbb{P}^n$ is determined by its chern class $c_1(L)$}. Moreover, Proposition \ref{chernclassfundamental} yields that

\begin{theorem}\label{lbprojspace}
Every holomorphic line bundle on $\mathbb{P}^n$ is a multiple (i.e., a tensor power) of the hyperplane bundle $\mathcal{O}(1)$.
\end{theorem}

We now state and prove a special case of the last theorem in this section, beginning with a definition:

\begin{definition}
A \textbf{complex projective algebraic variety} is the zero locus in $\mathbb{P}^n$ of a collection of homogeneous polynomials $\{F_{\alpha}(X_0,...,X_n)\}$.
\end{definition}

Now, lets devote our attention to describing the global sections of $\mathcal{O}(d)$ for any $d$, i.e., lets describe the group $\mathrm{H}^0(\mathbb{P}^n, \mathcal{O}(d))$:

\begin{proposition}\label{projh0}
For any $d \ge 0$, $\mathrm{H}^0(\mathbb{P}^n, \mathcal{O}(d))$ is canonically isomorphic to the space $\mathbb{C}[x_0,...,x_n]_d$ of all homogeneous polynomials of degree $d$.
\end{proposition}

\begin{proof}
    We start by noting that an inclusion of bundles $E \subset F$ induces an inclusion on all tensor powers, and therefore 
    
    \begin{equation}
        \mathcal{O}(-d) \subset \mathbb{P}^n \times (\mathbb{C}^{n+1})^{\otimes d}.
    \end{equation}
     Now we note that any form $s \in \mathbb{C}[x_0,...,x_n]_d$ induces a $\mathbb{C}$-linear map $(\mathbb{C}^{n+1})^{\otimes d} \rightarrow \mathbb{C}$, given by evaluation. This gives a map
     
     \begin{equation}
     \Tilde{s}: \mathbb{P}^n \rightarrow \mathbb{P}^n \times ((\mathbb{C}^{n+1})^{\otimes d})^{\lor}
     \end{equation}
     which in turn gives a map, that we will also call $s$:
     
     \begin{equation}
        s: \mathbb{P}^n \times (\mathbb{C}^{n+1})^{\otimes d} \rightarrow \mathbb{C}
     \end{equation}
     linear over the fibers of  $\mathbb{P}^n \times (\mathbb{C}^{n+1})^{\otimes d} \rightarrow \mathbb{P}^n$. The restriction of this map $s$ to $\mathcal{O}(-d) \subset \mathbb{P}^n \times (\mathbb{C}^{n+1})^{\otimes d} \rightarrow \mathbb{C}$ (see the construction in Example \ref{tautbundle}) is linear over each fiber, and the composititon of the map $\Tilde{s}$ with the projection $\mathbb{P}^n \times (\mathbb{C}^{n+1})^{\otimes d} \rightarrow \mathbb{P}^n$ is the identity. Therefore, we get a holomorphic section of $\mathcal{O}(d)$. This associates to any homogeneous polynomial degree $d$ a global holomorphic section of $\mathcal{O}(d)$.
     
     Moreover, note that if $0 \neq s \in \mathbb{C}[x_0,...,x_n]_d$, then the associated section is not trivial, since the compositition 
     \begin{equation}
     \mathcal{O}(-1) \longrightarrow \mathbb{P}^n \times (\mathbb{C}^{n+1}) \longrightarrow \mathbb{C}^{n+1}
     \end{equation}
     where the first map is the inclusion and the second is the projection on the second coordinate, is surjective, and so any map inducing the trivial section of $\mathcal{O}(d)$ must induce a trivial polynomial function $\mathbb{C}^{n+1} \rightarrow \mathbb{C}$, and therefore must be trivial. 
     
     So we have a linear map of complex vector spaces  $\mathbb{C}[x_0,...,x_n]_d \rightarrow \mathrm{H}^0(\mathbb{P}^n, \mathcal{O}(d))$ mapping $s$ to $\Tilde{s}$, whose kernel is trivial. So to show that it is in fact and isomorphism, we are only left to show surjectivity.
     
    We first need to describe a way in which we can associate to a pair nonzero sections $s_1, s_2 \in \mathrm{H}^0(X, L)$ of some line bundle over a complex manifold $X$, a meromorphic function in $K(X)$.  For this, let $\bigcup U_i$ be a covering of $X$ by trivializing neighborhoods $(U_i, \varphi_i)$ of the line bundle $L$. In this way,
    
    \begin{equation}
        \varphi_i: \pi^{-1}(U_i) \rightarrow X \times \mathbb{C}
    \end{equation}
    are holomorphic isomorphisms. Therefore, the sections $\varphi_i(\left. s_1 \right|_{U_i})$ and $\varphi_i(\left. s_2 \right|_{U_i})$ are holomorphic functions on $U_i$. Define the meromorphic function $s_1/s_2$ associated to the pair of sections $s_1, s_2$ as
    
    \begin{equation}
        \frac{\varphi_i(\left. s_1 \right|_{U_i})}{\varphi_i(\left. s_2 \right|_{U_i})}.
    \end{equation}
    This definition is independent of the choice of trivializing neighborhoods $(U_i, \varphi_i)$ of $L$.
    
    Let $0 \neq t \in \mathrm{H}^0(\mathbb{P}^n, \mathcal{O}(d))$ be a section and let $s_0 \in \mathrm{H}^0(\mathbb{P}^n, \mathcal{O}(d))$ be a nontrivial section in the image of the map $\mathbb{C}[x_0,...,x_n]_d \rightarrow \mathrm{H}^0(\mathbb{P}^n, \mathcal{O}(d))$, that is, $s_0$ is a section induced by some homogeneous polynomial $f$ of degree $d$. Consider the meromorphic function $F = \frac{t}{s_0} \in K(\mathbb{P}^n)$, as defined above. Pulling back by the projection $\pi: \mathbb{C}^{n+1} \setminus \{0\} \rightarrow \mathbb{P}^n$, we get a meromorphic function $\Tilde{F} = F \circ \pi \in K(\mathbb{C}^{n+1} \setminus \{0\})$. Moreover, if we set $\Tilde{G} = f\Tilde{F}$, by the above construction of the section $s_0$ from $f$, we get a function $\Tilde{G}: \mathbb{C}^{n+1} \setminus \{0\} \rightarrow \mathbb{C}$ holomorphic everywhere in $\mathbb{C}^{n+1} \setminus \{0\}$. Then by Hartogs' Theorem (Theorem \ref{hartogs}), $\Tilde{G}$ extends to a holomorphic function $G: \mathbb{C}^{n+1} \rightarrow \mathbb{C}$. Now, by construction, $\Tilde{F}(\lambda x) = \Tilde{F}(x)$ and $f(\lambda x) = \lambda^{d}f(x)$ for all $x \in \mathbb{C}^{n+1}$, $\lambda \in \mathbb{C}$, and so
    
    \begin{equation}
        G(\lambda x) = \lambda^{d}G(x)
    \end{equation}
    for all $x \in \mathbb{C}^{n+1}$, that is, \textbf{$G$ is homogeneous of degree $d$}.
    
    So letting $y$ be a real variable and taking a line through the origin $i:y \mapsto (z_0y,...,z_{n}y) = zy$ in $\mathbb{C}^{n+1}$, the pullback $i^{*}G$ is either identically $0$ if $z_0=...=z_n=0$, or
    
    \begin{equation}\label{pullbackg}
        i^{*}G = zy^d
    \end{equation}
    and it has a zero of order $d$ at $y = 0$. Then, equation (\ref{pullbackg}) implies that the power series expansion of $G$ around the origin only has terms of degree $d$, and so $G$ is a homogeneous polynomial of degree $d$. Moreover, it follows from the construction of $F$ and the construction of the section associated to a homogeneous polynomial that the section $t$ is induced by the polynomial $G$, and this concludes the proof.
\end{proof}

Its important to note that given a nonzero section $0 \neq s \in \mathrm{H}^0(\mathbb{P}^n, \mathcal{O}(d))$, the zero divisor $Z(s)$ of $s$ is precisely the image in $\mathbb{P}^n$ of the zero locus of $F$ in $\mathbb{C}^{n+1}$, where $F$ is the homogeneous polynomial associated to the section $s$. Indeed, $Z(s)$ is given by $\sum ord_Y (\varphi_i(s))[Y]$ for any trivializations $\varphi_i$, and the order of the holomorphic function $\varphi_i(s)$ is $ord_Y (\varphi_i(s)) > 0$ precisely where $F$ vanishes.

This result allows us to prove a special case of

\begin{theorem}[Chow's  Theorem]
    Every analytic subvariety of $\mathbb{P}^n$ is algebraic.
\end{theorem}

\begin{proof}
We present a proof in the special case of hypersurfaces: let $V$ be an analytic hypersurface in $\mathbb{P}^n$, and let $D$ be the associated divisor, as in Example \ref{divisorhypersurface}. Consider the associated line bundle $\mathcal{O}(D)$. By Theorem \ref{lbprojspace}, $\mathcal{O}(D)$ is of the form $\mathcal{O}(d)$ for some $d \ge 0$. Moreover, by Proposition \ref{zerolocussection}, $V$ is the zero locus of some section $s \neq 0$ of $\mathcal{O}(D)$. But by Proposition \ref{projh0}, every nonzero section $s \in \mathrm{H}^0(\mathbb{P}^n, \mathcal{O}(D))$ is induced by a homogeneous polynomial $F$, and so we have

\begin{equation}
    V = Z(s) = \{F = 0\}
\end{equation}
and $V$ is algebraic.
\end{proof}

In the spirit of Proposition \ref{projh0} one can also compute $\mathrm{H}^q(\mathbb{P}^n, \mathcal{O}(d))$ for any $q > 0$, and conclude that

\begin{equation}
    \mathrm{H}^q(\mathbb{P}^n, \mathcal{O}(d)) =
    \begin{cases}
    \mathbb{C}[x_0,...,x_n]_d, & q = 0 \\
    0, & q \neq 0, q \neq n \\
    (\frac{1}{x_0...x_n}\mathbb{C}[\frac{1}{x_0},...,\frac{1}{x_n}])_d, & q = n.
    \end{cases}
\end{equation}
as an application of the \textbf{Kodaira Vanishing Theorem}, which is a statement about vanishing of cohomology of \textit{ample line bundles} on \textit{Kähler manifolds}. But maybe some other time...

\section{Sheaves and Sheaf Cohomology}\label{sheaves}

\subsection{Sheaves on Spaces}

Sheaves are among the most important tools developed in the 20th century for the study of topology and geometry. They are useful in situations when the passage of data from local to global is necessary. We give a concise revision of the theory, with the main reference being (\cite{hartshorne}, Chap. 2). Throughout, all sheaves and presheaves will be of abelian groups, unless otherwise stated.

\begin{definition}
Let $X$ be a topological space. Define the category $\textbf{Op(X)}$ in the following way: the objects are the open sets of $X$, and the morphisms between two open sets are given by the inclusion - that means that, for $U, V$ opens, $\Hom(U, V) = \{i\}$, with $i$ being the inclusion of $U$ in $V$ if $U \subset V$ and $\Hom(U, V) = \emptyset$ otherwise. A \textit{presheaf of abelian groups} $\mathscr{F}$ on $X$ is a functor $\textbf{Op(X)}^{\circ} \rightarrow \textbf{Ab}$. 
\end{definition}

We call $\mathscr{F}(U)$ or $\Gamma(U, \mathscr{F})$ the group of \textit{sections} of $\mathscr{F}$ over $U$. Given an inclusion $U \rightarrow V$, the induced map $\rho_{VU}: \mathscr{F}(V) \rightarrow \mathscr{F}(U)$ is called the \textit{restriction map} from $V$ to $U$. Sometimes, if $s \in \mathscr{F}(V)$ and $U \subseteq V$, we denote $\rho_{VU}(s)$ by the usual restriction notation, namely $\left. s \right|_U$. Because of the functoriality of $\mathscr{F}$, it is subject to
\begin{enumerate}
    \item $\rho_{UU} \in \Hom(U, U)$ is the identity map;
    \item $W \subseteq U \subseteq V$ implies that $\rho_{VW} = \rho_{UW} \circ \rho_{VU}$.
\end{enumerate}

\begin{definition}
A \textit{sheaf} on $X$, is a presheaf satisfying the two extra conditions:
\begin{enumerate}
    \item if $U \in \textbf{Op(X)}$, $\{V_i\}$ is an open covering of $U$ and $s \in \mathscr{F}(U)$ is an element such that $\rho_{UV_{i}}(s) = \left. s \right|_{V_{i}} = 0$ for all $i$, then $s = 0$. This condition can also be formulated as follows: if $\{V_i\}$ is an open covering of $U$, $s_1, s_2 \in \mathscr{F}(U)$ and $\rho_{UV_{i}}(s_1) = \rho_{UV_{i}}(s_2)$ for all $i$, then $s_1 = s_2$. This is called the \textbf{identity axiom}. A presheaf satisfying the identity axiom is called a \textit{separated presheaf}.
    \item if $U \in \textbf{Op(X)}$, $\{V_i\}$ is an open covering of $U$ and we have elements $s_i \in \mathscr{F}(V_i)$ for each $i$ with the property that for each $i, j$, $\rho_{UV_{i} \cap V_{j}}(s_i) = \rho_{UV_{i} \cap V_{j}}(s_j)$, then there is an element $s \in \mathscr{F}(U)$ such that $\rho_{UV_{i}}(s) = s_i$ for each $i$. This is called the \textbf{gluability axiom}.
\end{enumerate}
\end{definition}

These two axioms together guarantee that sheaves can be glued along intersections, and that such a glueing is unique. They can be cleanly expressed in terms of a short exact sequence:

\begin{equation}
\begin{tikzcd}
    0 \ar{r} & \mathscr{F}(U) \ar{r} & \prod \mathscr{F}(V_i) \ar[r,shift left=.75ex]
  \ar[r,shift right=.75ex,swap] & \prod \mathscr{F}(V_i \cap V_j)
\end{tikzcd}
\end{equation}
where exactness at $\mathscr{F}(U)$ implies on the validity of the identity axiom, and exactness at $\prod \mathscr{F}(V_i)$ implies on the gluability axiom.

\begin{definition}
The \textit{stalk} of a presheaf at $p \in X$ is 
\begin{equation}
   \mathscr{F}_p = \varinjlim_{p \in U} \mathscr{F}(U).
\end{equation}
Alternatively, it can be explicitly described in a \textit{germ}-like manner: the stalk of $\mathscr{F}$ at $p \in X$ is
\begin{equation}
    \mathscr{F}_p = \{(f, U): U \in \textbf{Op(X)}, p \in U, f \in \mathscr{F}(U)\}
\end{equation}
modulo the relation that $(f, U) \sim (g, V)$ if there exists some $W \subset U, V$ with $p \in W$ and $\rho_{UW}(f) = \rho_{VW}(g)$.
\end{definition}

\begin{definition}
A \textit{morphism of presheaves} $\varphi: \mathscr{F} \rightarrow \mathscr{G}$ on $X$ is a morphism of $\mathscr{F}$ and $\mathscr{G}$ as functors - namely, a \textit{natural transformation} of functors. Explicitly, for $U \subseteq V$ opens, it is a commutative diagram of abelian groups 
\end{definition}

\begin{center}
\begin{tikzpicture}[>=triangle 60]
\matrix[matrix of math nodes,column sep={60pt,between origins},row
sep={60pt,between origins},nodes={asymmetrical rectangle}] (s)
{
&|[name=A]| \mathscr{F}(V) &|[name=B]| \mathscr{G}(V) \\
&|[name=A']| \mathscr{F}(U) &|[name=B']| \mathscr{G}(U) \\
};
\draw[overlay,->, font=\scriptsize,>=latex] 
          (A) edge node[auto] {\(\varphi(V)\)} (B)
          (A) edge node[auto] {\(\rho_{VU}\)} (A')
          (B) edge node[auto] {\(\rho'_{VU}\)} (B')
          (A') edge node[auto] {\(\varphi(U)\)} (B')

;

;

\end{tikzpicture}
\end{center} 

We then have a category $\textbf{PSh(X)}$ of presheaves of abelian groups on $X$. If $\mathscr{F}$ and $\mathscr{G}$ are sheaves, we use the same definition of morphism between $\mathscr{F}$ and $\mathscr{G}$. The category of sheaves of abelian groups on $X$ is denoted $\textbf{Sh(X)}$. It is a full subcategory of $\textbf{PSh(X)}$. An isomorphism of (pre)sheaves is a morphism that has a two-sided inverse. The next proposition is a strong characterisation of isomorphisms of sheaves, and it very well illustrates the local nature of the object. It is \textbf{not true} for presheaves. Note that if $\varphi: \mathscr{F} \rightarrow \mathscr{G}$ is a morphism of sheaves, it induces a a morphism $\varphi_p: \mathscr{F}_p \rightarrow \mathscr{G}_p$ on the stalks, for any $p \in X$.

\begin{proposition}\label{isostalks}
Let $\varphi: \mathscr{F} \rightarrow \mathscr{G}$ be a morphism of sheaves on $X$. Then $\varphi$ is an isomorphism if, and only if the induced map on the stalk $\varphi_p: \mathscr{F}_p \rightarrow \mathscr{G}_p$ is an isomorphism for all $p \in X$.
\end{proposition}

\begin{proof}
The proof needs the identity axiom to show injectivity, and the gluability axiom to show surjectivity. This is the reason why the proposition is not valid for presheaves in general (\cite{hartshorne}, Chap. 2, Prop. 1.1).
\end{proof}

It would be interesting for our cohomological purposes if the category $\mathbf{Sh(X)}$ was an abelian category. For this to be true, we need to start by defining kernels, cokernels and images:

\begin{definition}
Let $\mathscr{F}, \mathscr{G} \in \mathbf{PSh(X)}$, and let $\varphi \in \Hom (\mathscr{F}, \mathscr{G})$. We define the \textit{presheaf cokernel} of $\varphi$, \textit{presheaf kernel} of $\varphi$ and the \textit{presheaf image} of $\varphi$ to be the presheaves given by $U \mapsto \ker \varphi(U)$, $U \mapsto \coker \varphi(U)$ and $U \mapsto \Ima \varphi(U)$ respectively, in the sense of abelian groups.
\end{definition}

\begin{remark}\label{sheafexactness}
A map $\varphi: \mathscr{F} \rightarrow \mathscr{G}$ is a monomorphism (respec. epimorphism, isomorphism) in the category of sheaves if, and only if, the map $\varphi_{x}: \mathscr{F}_{x} \rightarrow \mathscr{G}_{x}$ is injective (respec. surjective, isomorphism), for all $x \in X$. Moreover, if the map $\mathscr{F}_{x} \rightarrow \mathscr{G}_{x}$ is injective for all $x \in X$, then $\mathscr{F}(X) \rightarrow \mathscr{G}(X)$ is injective. This is in general \textbf{not} true for surjections. That means that we can have a \textbf{epimorphism of sheaves} which is \textbf{not surjective on sections}. This phenomenon will be further examinated on our discussion about the exactness of the \textbf{global section functor}.
\end{remark}

We note however that if $\mathscr{F}, \mathscr{G}$ are sheaves, the presheaf cokernel and the presheaf image are in general \textbf{not} sheaves. This motivates the following discussion:

\textbf{Sheafification of a presheaf:} Given a presheaf $\mathscr{F}$, we would like to associate to it a sheaf, with the universal property that any sheaf map factors through it. Moreover, if $\mathscr{F}$ is already a sheaf, we would like that this associated sheaf be $\mathscr{F}$ itself. There are several ways to describe this association. We keep following \cite{hartshorne}:

\begin{proposition}
Given a presheaf $\mathscr{F}$, there is a sheaf $\mathscr{F}^{+}$ and a morphism $\theta: \mathscr{F} \rightarrow \mathscr{F}^{+}$, with the property that for any sheaf $\mathscr{G}$ and any morphism $\varphi: \mathscr{F} \rightarrow \mathscr{G}$, there is a unique morphism $\psi: \mathscr{F}^{+} \rightarrow \mathscr{G}$ such that $\varphi = \psi \circ \theta$. Furthermore, the pair $(\mathscr{F}^{+}, \theta)$ is unique up to unique isomorphism. $\mathscr{F}^{+}$ is called the \textit{sheafification} of the presheaf $\mathscr{F}$.
\end{proposition}

\begin{proof}
We construct the sheaf $\mathscr{F}^{+}$ as follows: for any open set $U$, let $\mathscr{F}^{+}(U)$ be the set of functions $s$ from $U$ to the union $\bigcup_{p \in U} \mathscr{F}_p$ of the stalks of $\mathscr{F}$ over all the points of $U$, i.e., $\mathscr{F}^{+} = \{s: U \rightarrow \bigcup_{p \in U} \mathscr{F}_p\} = \prod_{p \in U} \mathscr{F}_p$ such that
\begin{enumerate}
    \item for each $p \in U$, $s(p) \in \mathscr{F}_p$
    \item for each $p \in U$, there is a neighborhood $V$ of $p$ contained in $U$, and an element $t \in \mathscr{F}(V)$, such that for every point $q \in V$, the equivalence class $t_q$ of $t$ in $\mathscr{F}_q$ is equal to $s(q)$.
\end{enumerate}
One now has to show that this is a sheaf with the desired properties. The best proof of this, using this approach that the author has ever came across can be found in (\cite{stacks-project}, section 6.17).
\end{proof}

We can then take the \textit{sheaf cokernel} and \textit{sheaf image} to be the sheafification associated to the presheaf cokernel and presheaf image, respectively.

\begin{definition}
A \textit{subsheaf} $\mathscr{F}'$ of a sheaf $\mathscr{F}$ is a sheaf such that for every open set $U$ of $X$, the section $\mathscr{F}'(U)$ is a subgroup of $\mathscr{F}(U)$, with the restriction morphisms being those induced by $\mathscr{F}$. Given a subsheaf $\mathscr{F}'$ of a sheaf $\mathscr{F}$, the \textit{quotient sheaf} $\mathscr{F}/\mathscr{F}'$ is the sheafification of the presheaf defined by the rule $U \mapsto \mathscr{F}(U)/\mathscr{F}'(U)$.
\end{definition}

\begin{definition}
If $\varphi: \mathscr{F} \rightarrow \mathscr{G}$ is a morphism of sheaves, we say that it is \textit{injective} if the \textit{sheaf kernel} $\ker \varphi = 0$. Since sheafification was not needed for the sheaf kernel, a map $\varphi: \mathscr{F} \rightarrow \mathscr{G}$ is injective if and only if the induced map $\varphi(U): \mathscr{F}(U) \rightarrow \mathscr{G}(U)$ is injective for every $U \subseteq X$ open. We say that a morphism $\varphi: \mathscr{F} \rightarrow \mathscr{G}$ is \textit{surjective} if the \textit{sheaf image} (i.e. the sheafification of the presheaf image) $\Ima \varphi = \mathscr{G}$. Remark \ref{sheafexactness} tells us that monomorphisms (respec. epimorphisms) of sheaves are precisely injective (respec. surjective) maps of sheaves.

\begin{equation}
\begin{tikzcd}
   {\ldots} \ar{r} & \mathscr{F}^{i-1} \ar{r}{\varphi_{i-1}} & \mathscr{F}^{i} \ar{r}{\varphi_i} & \mathscr{F}^{i+1} \ar{r} & {\ldots}
\end{tikzcd}
\end{equation}
is \textit{exact} at $\mathscr{F}^i$ if $\ker \varphi_i = \Ima \varphi_{i-1}$. A sequence

\begin{equation}
\begin{tikzcd}
   0 \ar{r} & \mathscr{F} \ar{r}{f} & \mathscr{G}
\end{tikzcd}
\end{equation}
is exact if and only if $f$ is injective. A sequence

\begin{equation}
\begin{tikzcd}
   \mathscr{F} \ar{r}{g} & \mathscr{G} \ar{r} & 0
\end{tikzcd}
\end{equation}
is exact if and only if $g$ is surjective.

\end{definition}


Here are some examples of sheaves:

\begin{example}
Let $M$ be a complex manifold, and let $U \subseteq M$ be an open set. We have $\mathcal{O}(U)$ the sheaf of \textit{holomorphic functions} on $U$, $\Omega^{p}(U)$ the sheaf of \textit{holomorphic p-forms} on $U$, $\mathcal{A}^{p,q}(U)$ the sheaf of $C^{\infty}$ \textit{forms of type (p,q)} on $U$ and $\mathcal{A}_{\Bar{\partial}}^{p,q}(U)$ the sheaf of \textit{$\Bar{\partial}$-closed $C^{\infty}$ forms of type (p,q)} on $U$. We also have the sheaf $\mathcal{K}$ of meromorphic functions on $M$. Remember that a meromorphic function is given locally as the quotient of two holomorphic functions: for some covering $\{U_i\}$ of $U \subseteq M$, $\left. f \right|_{U_i} = \frac{g_i}{h_i}$, for $g_i$ and $h_i$ relatively prime in $\mathcal{O}(U_i)$ and $g_{i}h_{i} = g_{j}h_{j}$ in $\mathcal{O}(U_i \cap U_j)$, for the sheaf $\mathcal{O}$ defined above. A meromorphic function is only \textit{partially defined} - i.e. it is undefined in points where $g_i = h_i = 0$.
\end{example}

\begin{example}\label{structsheaf}
By an \textit{algebraic variety} $X$, we will always mean an \textit{integral, separated scheme of finite-type}, over a field $k$ (which, in our case, can always be taken to be the field of complex numbers $\mathbb{C}$). Notice in particular that varieties will \textbf{always} be \textit{irreducible}. In this definition, the variety is equipped with a canonical sheaf of rings - its \textit{structure sheaf} $\mathcal{O}_X$. It generalizes the classical notion of \textit{ring of regular functions} on an algebraic variety. Its sections $\mathcal{O}_X(U)$ are \textit{reduced} (i.e., \textit{nilpotent-free}) \textit{commutative $k$-algebras} with unity, for all Zariski open subsets $U \subseteq X$. Its stalk at $p \in X$, denoted $\mathcal{O}_{X,p}$, is a local ring.
\end{example}

\begin{example}\label{skyscrapper}
Let $X$ be a topological space, $p \in X$ and let $S$ be a set. Consider the \textit{sheaf of sets} defined as

    \[ \begin{cases} 
          S & p \in U \\
          \{*\} & p \notin U
       \end{cases}
    \]
where $\{*\}$ is any one element set. This is indeed a sheaf, and is called the \textit{skyscrapper sheaf}. Given an inclusion $V \rightarrow U$, its restriction maps are $\rho_{UV} = Id$, if $p \in V$, and $\rho_{UV}(s) = *$ for all $s 
\in V$, if $p \notin V$. This can be generalized to a sheaf in any category having a final object, and its restriction maps are always the identity, and the maps to said final object. For example, in \textbf{Ab}, the one element set would be replaced by $\{0\}$, and the restriction maps would be the map to $0$, if $p \notin V$.
\end{example}

\begin{example}
Let X be a topological space, and $S$ be a set. Define $\underbar{S}_{pre}(U) = S$ for all open sets $U$. This is a presheaf, called the constant presheaf associated to $S$. It is not, in general, a sheaf: suppose $S$ has more than one element, and $X$ is a two-point discrete space. Then $\underbar{S}_{pre}$ fails to satisfy the gluability axiom, even if we set $\underbar{S}_{pre}(\emptyset) = \{*\}$.

A more interesting object is the \textit{sheafification} of the constant presheaf $\underbar{S}_{pre}$, the \textit{constant sheaf} $\underbar{S}$. An explicit description of the constant sheaf is as follows: endow $S$ with the discrete topology, and consider $\underbar{S}_{pre}(U)$ to be the maps $U \rightarrow S$ that are continuous (i.e. locally constant in the general case).
\end{example}

\subsection{Sheaf Cohomology}


Our first approach to sheaf cohomology is largely based on the following observation, which can be found in (\cite{godement}, Chap. 5, §5.10, Théorème 5.10.1):

\begin{theorem}\label{paracompact}
Let $X$ be a paracompact topological space, $\mathscr{A}$ a sheaf on $X$, and $\mathcal{U}$ be a cover of $X$. Then, there are canonical isomorphisms
\begin{equation}
    \check{\mathrm{H}}^n(\mathcal{U},\mathscr{A}) \rightarrow {\mathrm{H}}^n(X,\mathscr{A}).
\end{equation}

\end{theorem}

The theorem states that if our space is paracompact, \v{C}ech theory and derived-functor theory of Grothendieck coincide (notice that there are no hypotheses on the sheaf $\mathscr{A}$ - this is true for \textbf{any} sheaf on $X$). Since smooth manifolds are always paracompact, we first devote our attention to the \v{C}ech cohomology theory. The main references for this section are (\cite{FAC}, I, §3, \cite{griffithsharris}, Chap. 0, §3, \cite{vakil}, Chap. 18, §18.2, \cite{gallierquaintance}, Chap. 10).

We need to start by defining the \textit{\v{C}ech complex} of a cover $\mathcal{U}$. Let $X$ be a topological space, $\mathscr{F}$ a sheaf on $X$ and $\mathcal{U} = \{U_{\alpha}\}_{\alpha \in J}$ an open cover. Denote $\mathcal{U}_I = \bigcap_{\alpha \in I} U_{\alpha}$. Define

\begin{align*}
    C^{0}(\mathcal{U}, \mathscr{F}) &= \prod_{|I| = 1} \mathscr{F}(\mathcal{U}_{I})\\
    C^{1}(\mathcal{U}, \mathscr{F}) &= \prod_{|I| = 2} \mathscr{F}(\mathcal{U}_{I})\\
    &\;\;\vdots \notag \\
    C^{i}(\mathcal{U}, \mathscr{F}) &= \prod_{|I| = i+1} \mathscr{F}(\mathcal{U}_{I}),\\
    &\;\;\vdots \notag \\
\end{align*}
that is, for $I \subset J$ finite sets, with $|I| = i+1$, we have

\begin{equation}
    C^{i}(\mathcal{U}, \mathscr{F}) = \prod_{|I| = i+1} \mathscr{F}(U_{\alpha_{0}} \cap ... \cap U_{\alpha_{i}}).
\end{equation}

An element $\sigma_{\alpha_{0},...,\alpha_{i}} \in C^{i}(\mathcal{U}, \mathscr{F})$ is an association which takes every subset of $i+1$ indices of $J$ to an element $s \in \mathscr{F}(U_{\alpha_{0}} \cap ... \cap U_{\alpha_{i}})$. Such an element is called an \textit{i-cochain} of $\mathscr{F}$. To make our sequence of groups into a complex, we need to define the \textit{coboundary maps}. Given an inclusion

\begin{equation}
    U_{\alpha_{0}} \cap ... \cap U_{\alpha_{i+1}} \rightarrow U_{\alpha_{0}} \cap ...\cap \hat{U}_{\alpha_j} \cap ... \cap U_{\alpha_{i+1}}
\end{equation}
we get an induced map (i.e., a restriction morphism)

\begin{equation}
    \mathscr{F}(U_{\alpha_{0}} \cap ...\cap \hat{U}_{\alpha_j} \cap ... \cap U_{\alpha_{i+1}}) \rightarrow \mathscr{F}(U_{\alpha_{0}} \cap ... \cap U_{\alpha_{i+1}})
\end{equation}

We define the map $\delta$ as being the alternating sum of such morphisms:

\begin{equation}
    \delta: C^{i}(\mathcal{U}, \mathscr{F}) \rightarrow C^{i+1}(\mathcal{U}, \mathscr{F})
\end{equation}
if $\sigma \in C^{i}(\mathcal{U}, \mathscr{F})$ is an \textit{i-cochain}, then

\begin{equation}
    (\delta\sigma)_{\alpha_{0},...,\alpha_{i+1}} = \sum_{j=0}^{i+1} (-1)^{j} \left. \sigma_{\alpha_{0},...,\hat{\alpha}_{j},...,\alpha_{i+1}} \right|_{U_{\alpha_{0}} \cap ... \cap U_{\alpha_{i+1}}}
\end{equation}
where the restriction morphism is denoted here as the usual restriction of maps. To check that the coboundary map $\delta$ in fact makes our sequence into a complex, we need to check that the composition of two consecutive mappings is zero: let $\sigma \in C^{i}(\mathcal{U}, \mathscr{F})$. Making the following computation, using the definition above

\begin{align*}
    (\delta^{2}\sigma)_{\alpha_{0},...,\alpha_{i+2}} &= \sum_{j=0}^{i+2} (-1)^{j} \left. (\delta\sigma)_{\alpha_{0},...,\hat{\alpha}_{j},...,\alpha_{i+2}} \right|_{U_{\alpha_{0}} \cap ... \cap U_{\alpha_{i+2}}} \\
    &= \sum_{j=0}^{i+2} (-1)^{j} (\sum_{k=0}^{i+1} (-1)^{k}\left. \sigma_{\alpha_{0},...,\hat{\alpha}_{j},...,\hat{\alpha}_{k},...,\alpha_{i+2}}) \right|_{U_{\alpha_{0}} \cap ... \cap U_{\alpha_{i+2}}} \\
    &= \sum_{j=0}^{i+2}  \sum_{k=0}^{i+1} (-1)^{j+k} (\left.\sigma_{\alpha_{0},...,\hat{\alpha}_{j},...,\hat{\alpha}_{k},...,\alpha_{i+2}}) \right|_{U_{\alpha_{0}} \cap ... \cap U_{\alpha_{i+2}}}
\end{align*}

Fixing $\alpha_m < \alpha_n$, note that the terms

\begin{equation}
(\left.\sigma_{\alpha_{0},...,\hat{\alpha}_{n},...,\hat{\alpha}_{m},...,\alpha_{i+1}}) \right|_{U_{\alpha_{0}} \cap ... \cap U_{\alpha_{i+2}}}
\end{equation}
and

\begin{equation}
(\left.\sigma_{\alpha_{0},...,\hat{\alpha}_{m},...,\hat{\alpha}_{n-1},...,\alpha_{i+1}}) \right|_{U_{\alpha_{0}} \cap ... \cap U_{\alpha_{i+2}}}
\end{equation}
appear one time each on the last sum, with opposite signs and so they cancel each other out. Since the fixed pair of indices was arbitrary, we get that
\begin{equation}
    (\delta^{2}\sigma) = 0
\end{equation}
and so $C^{\bullet}(\mathcal{U}, \mathscr{F})$ is indeed a complex.

We can now define the \textit{\v{C}ech cohomology groups of the cover $\mathcal{U}$ with values in $\mathscr{F}$} as the cohomology groups of this \v{C}ech complex. More precisely, denote by $\delta^{p} = \left.\delta \right|_{C^{p}(\mathcal{U}, \mathscr{F})}$. Then the group $B^{p}(\mathcal{U}, \mathscr{F}) = \Ima \delta^{p-1}$ is called the \textit{group of \v{C}ech p-coboundaries}, and the group $Z^{p}(\mathcal{U}, \mathscr{F}) = \ker \delta^{p}$ is called the \textit{group of \v{C}ech p-cocycles}. The \textit{$p^{th}$-\v{C}ech cohomology group of the cover $\mathcal{U}$ with values in $\mathscr{F}$} is then defined as the group of \v{C}ech p-cocycles modulo the {group of \v{C}ech p-coboundaries}

\begin{equation}
    \check{\mathrm{H}}^p(\mathcal{U},\mathscr{F}) = \frac{Z^{p}(\mathcal{U}, \mathscr{F})}{B^{p}(\mathcal{U}, \mathscr{F})}.
\end{equation}

\begin{remark}
Some authors develop the theory for \textit{alternated cochains}, which form a subgroup $C'^{p}(\mathcal{U}, \mathscr{F}) \subset C^{p}(\mathcal{U}, \mathscr{F})$, for all $p$, and subsequently form a complex $C'^{\bullet}(\mathcal{U}, \mathscr{F})$. However, one shows that the inclusion $C'^{p}(\mathcal{U}, \mathscr{F}) \rightarrow C^{p}(\mathcal{U}, \mathscr{F})$ induces an isomorphism between the two cohomology theories of these complexes, for all $p$.
\end{remark}

The first thing that we will prove is that the group $\check{\mathrm{H}}^0(\mathcal{U},\mathscr{F})$ can be identified with the global section of $\mathscr{F}$, independently of the choice of cover $\mathcal{U}$:

\begin{proposition}\label{zerocohglobalsecn}
If $\mathscr{F}$ is a sheaf on $X$, then
\begin{equation}
    \check{\mathrm{H}}^0(\mathcal{U},\mathscr{F}) = \Gamma(X, \mathscr{F})
\end{equation}
\end{proposition}

\begin{proof}
A 0-cocycle $(f_j)_{j \in J} \in C^{0}(\mathcal{U}, F)$ is a family $f_j \in \mathscr{F}(U_j)$ such that $\delta^0(f_j) = 0$, i.e., $\rho_{XU_{j}}(f_j) = 0$, for all $j$. Since $\mathcal{U}$ is a covering of $X$, the gluability and identity axioms guarantee that we can glue all of these $f_j$ to a global section $f \in \Gamma(X, \mathscr{F})$.
\end{proof}

\begin{definition}
We say that a cover $\mathcal{U} = \{U_i\}_{i \in I}$ is a refinement of a cover $\mathcal{V} = \{V_j\}_{j \in J}$, and denote it $\mathcal{V} \prec \mathcal{U}$ if there exists a function $\pi: I \rightarrow J$ such that $U_i \subset V_{\pi(i)}$ for all $i \in I$. We say that $\mathcal{U}$ and $\mathcal{V}$ are equivalent if $\mathcal{V} \prec \mathcal{U}$ and $\mathcal{U} \prec \mathcal{V}$. If $\sigma \in C^{p}(\mathcal{V}, \mathscr{F})$, define

\begin{equation}
    \pi^{p}(\sigma)_{\alpha_0,...,\alpha_p} = \rho_{VU}(\sigma_{\pi(\alpha_0),...,\pi(\alpha_p)})
\end{equation}
where $\rho_{VU}$ denotes the restriction morphism induced by the inclusion $U_{\alpha_0} \cap ... \cap U_{\alpha_p} \rightarrow V_{\pi(\alpha_0)} \cap ... \cap V_{\pi(\alpha_p)}$.
\end{definition}

The map $\pi^{p}: C^{p}(\mathcal{V}, \mathscr{F}) \rightarrow C^{p}(\mathcal{U}, \mathscr{F})$ is a homomorphism, and it commutes with the coboundary map $\delta$. So we get induced morphisms

\begin{equation}
    \pi^{*p}: \check{\mathrm{H}}^p(\mathcal{V},\mathscr{F}) \rightarrow \check{\mathrm{H}}^p(\mathcal{U},\mathscr{F})
\end{equation}

\begin{proposition}
The morphisms above are independent of $\mathcal{U}, \mathcal{V}$ and the map $\pi$.
\end{proposition}

\begin{proof}
\cite{FAC}, Chapter 1, §3, Proposition 3.
\end{proof}

This proposition tells us that if $\mathcal{V} \prec \mathcal{U}$, then there is a morphism

\begin{equation}
    \rho_{\mathcal{V} \mathcal{U}}: \check{\mathrm{H}}^p(\mathcal{V},\mathscr{F}) \rightarrow \check{\mathrm{H}}^p(\mathcal{U},\mathscr{F})
\end{equation}

We then check that the covers with the relation that $\mathcal{V} \prec \mathcal{U}$ form a \textit{preordered index category} (i.e., a \textit{filtered index category}): indeed, given covers $\mathcal{U} = \{U_i\}_{i \in I}$ and $\mathcal{V} = \{V_j\}_{j \in J}$, the cover $\mathcal{W} = \{U_i \cap V_j\}_{(i,j) \in I \times J}$ is a refinement of both $\mathcal{V}$ and $\mathcal{U}$,and so it is in fact filtered. Also, if $\mathcal{U} \prec \mathcal{V} \prec \mathcal{W}$,

\begin{equation}
    \rho_{\mathcal{U} \mathcal{W}} = \rho_{\mathcal{V} \mathcal{W}} \circ \rho_{\mathcal{U} \mathcal{V}}
\end{equation}
and $\rho_{\mathcal{U} \mathcal{U}} = Id$. Finally, we see that if $\mathcal{U}$ and $\mathcal{V}$ are equivalent,

\begin{equation}
    \check{\mathrm{H}}^p(\mathcal{U},\mathscr{F}) \rightarrow \check{\mathrm{H}}^p(\mathcal{V},\mathscr{F})
\end{equation}
is an isomorphism. So, the family $(\check{\mathrm{H}}^p(\mathcal{U},\mathscr{F}))_{\mathcal{U}}$ is a direct system of groups indexed by the filtered index category of covers. However, we arrive at the set-theoretic issue that \textbf{this index category is not necessarily small}, for it allows for arbitrary sets of indices. Serre proposes a way to avoid this issue in FAC: given a cover $\mathcal{U}$ we can always find an equivalent cover whose index set is a subset of the power set of $X$ (cf. \cite{FAC}, Chapter 1, §3, sec. 22) and so is bounded by $2^{2^{X}}$. Another alternative is proposed by Godement in (\cite{godement}, Chap. 5, §5.8). We finally arrive at the definition:

\begin{definition}
If $X$ is a topological space and $\mathscr{F}$ is a presheaf on $X$, the \textit{$p^{th}$-\v{C}ech cohomology group with values in $\mathscr{F}$} is defined as the direct limit

\begin{equation}
    \check{\mathrm{H}}^p(X,\mathscr{F}) = \varinjlim_{\mathcal{U}} \check{\mathrm{H}}^p(\mathcal{U},\mathscr{F}).
\end{equation}
with respect to the coverings $\mathcal{U}$ whose index set is a subset of the power set of $X$.
\end{definition}

In particular, the \textit{\v{C}ech cohomology groups with coefficients in the constant sheaf $\underbar{G}$} $\check{\mathrm{H}}^p(X,\underbar{G})$ allow us to recover the groups $\mathrm{H}^p_{Sing}(X, G)$.


Looking at this definition, one quickly notices that it is virtually impossible to work with, and to make computations. 
A strong characterisation of the cover $\mathcal{U}$ so as to make computing $\check{\mathrm{H}}^p(\mathcal{U},\mathscr{F})$ the same as computing $\check{\mathrm{H}}^p(X,\mathscr{F})$ is the following result:

\begin{definition}
Let $\mathcal{U} = \{U_i\}_{i \in I}$ be a cover. We say that the cover $\mathcal{U}$ is \textit{acyclic with respect to the sheaf $\mathscr{F}$}, if

\begin{equation}
    \check{\mathrm{H}}^q(U_{i_1} \cap ...\cap U_{i_p}, \mathscr{F}) = 0
\end{equation}
for all $q > 0$ and any $\{i_1,...,i_p\} \subset I$.
\end{definition}

\begin{theorem}[Leray's Theorem for Acyclic covers]\label{leraycech}
If the covering $\mathcal{U}$ is acyclic for the sheaf $\mathscr{F}$, then $\check{\mathrm{H}}^q(\mathcal{U}, \mathscr{F}) = \check{\mathrm{H}}^q(X, \mathscr{F})$.
\end{theorem}

Leray's Theorem reduces the problem of computing the cohomology of some space $X$ to computing the cohomology of some acyclic cover of $X$. For example, one can prove that a covering admitting a locally finite refinement is acyclic (\cite{godement}, Chap. 5, §5.10, Théorème 5.10.2), and this is precisely why Theorem \ref{paracompact} is useful. This theorem will later be generalized to a version of Leray's Theorem for acyclic covers for the cohomology of derived functors.

\begin{remark}
It is worth noticing that, given $X$ and fixing a covering $\mathcal{U}$, if

\begin{equation}
\begin{tikzcd}
    0 \ar{r} & \mathscr{F} \ar{r} & \mathscr{G} \ar{r} & \mathscr{H} \ar{r} & 0
\end{tikzcd}
\end{equation}
is an exact sequence of sheaves on $X$, then in general we \textbf{do not} get a long exact sequence in the \v{C}ech cohomology groups. One way to see this is, given a short exact sequence as above, if we were to get an exact sequence of \v{C}ech complexes through the \textit{sheafified} version of the \v{C}ech complex as will be described below, we may fail to get the connecting morphisms, since $\Gamma(X, -)$ is not exact. This is further illustrated in Example \ref{extohoku} (also, \cite{hartshorne}. Chap. 3, Caution 4.0.2 for a discussion involving $\delta$-functors).
\end{remark}

Now let $\mathcal{U}$ be a covering on $X$ and $\mathscr{F}$ be some abelian sheaf on $X$. Let $\sigma \in \check{\mathrm{H}}^0(\mathcal{U},\mathscr{F})$ be a $0$-cocycle. So $\sigma$ is giving by a collection of sections of $\mathscr{F}$ agreeing on the pairwise intersections of elements of $\mathcal{U}$. Let's describe the image of the differential operator $\delta$ in $C^1(\mathcal{U}, \mathscr{F})$, that is, the group $Z^1(\mathcal{U}, \mathscr{F})$ of \v{C}ech $1$-cocycles: For each $U_{i_1}, U_{i_2}, U_{i_3} \in \mathcal{U}$, the operator $\delta$ maps a $0$-cochain, say $f \in C^0(\mathcal{U}, \mathscr{F})$ to the alternated sum

\begin{equation}
    \left. f_{i_2 i_3} \right|_{U_{i_1} \cap U_{i_2} \cap U_{i_3}} - \left. f_{i_1 i_3} \right|_{U_{i_1} \cap U_{i_2} \cap U_{i_3}} + \left. f_{i_1 i_2} \right|_{U_{i_1} \cap U_{i_2} \cap U_{i_3}}
\end{equation}
and moreover, this sum is $0$ in $\check{\mathrm{H}}^1(\mathcal{U}, \mathscr{F})$. So, the class $[\delta f]$ in $\check{\mathrm{H}}^1(\mathcal{U}, \mathscr{F})$ satisfies

\begin{equation}
        \left. f_{i_2 i_3} \right|_{U_{i_1} \cap U_{i_2} \cap U_{i_3}}  + \left. f_{i_1 i_2} \right|_{U_{i_1} \cap U_{i_2} \cap U_{i_3}} = \left. f_{i_1 i_3} \right|_{U_{i_1} \cap U_{i_2} \cap U_{i_3}}
\end{equation}
and we have a description of the \textit{cocycle condition} in the language of \v{C}ech theory. This hints at the idea of \v{C}ech $1$-cocyles giving the datum for the construction of some object like in Proposition \ref{cocyclesconstructions}, except that this time it is depending on the sheaf $\mathscr{F}$ and the covering $\mathcal{U}$.

\subsection{Quasicoherent Sheaves}

We start our journey into generalization with a quick review of the theory of vector bundles over projective varieties over $\mathbb{C}$, following (\cite{lepotier}, Part I, Sect. 1). Throughout, all rings are commutative with unity.

\begin{definition}
Let $X$ be a complex projective variety. A \textit{complex linear fibration} over $X$ is given by an algebraic variety $E$ and a surjective \textbf{morphism of algebraic varieties} $\pi:E \rightarrow X$ such that for every $x \in X$, $\pi^{-1}(x)$ has the structure of a complex vector space. We again denote the spaces $\pi^{-1}(x)$ by $E_x$.
\end{definition}

\begin{definition}
Given two linear fibrations $\pi:E \rightarrow X$ and $\pi':E' \rightarrow X$, a \textit{morphism of linear fibrations} $f$ is a morphism of varieties that commutes with the projections $\pi$ and $\pi'$, i.e., the following diagram

\begin{center}

\begin{tikzpicture}[>=triangle 60]
\matrix[matrix of math nodes,column sep={60pt,between origins},row
sep={60pt,between origins},nodes={asymmetrical rectangle}] (s)
{
|[name=A]| E &|[name=B]| E' \\
&|[name=A']| X\\
};
\draw[overlay,->, font=\scriptsize,>=latex] 
          (A) edge node[auto] {\(f\)} (B)
          (A) edge node[auto] {\(p\)} (A')
          (B) edge node[auto] {\(p'\)} (A')
;

;

\end{tikzpicture}

\end{center}
commutes and the induced map $f_x: E_x \rightarrow E'_x$ on the fibers, is linear. An \textit{isomorphism} of linear fibrations is a morphism with a two-sided inverse.

\end{definition}

The fibration $X \times \mathbb{C}^r \rightarrow X$ given by the projection in the first coordinate is called the \textit{trivial fibration of rank $r$}. For each open set $U \subseteq X$, write $\left. E \right|_U$ for the restriction fibration given by $\pi: \pi^{-1}(U) \rightarrow U$.

\begin{definition}
An algebraic vector bundle of rank $r$ over $X$ is a linear fibration $\pi: E \rightarrow X$ such that for each $x \in X$ there exists an open neighborhood $U$ of $x$ and an isomorphism of fibrations

\begin{equation}
    \varphi: \left. E \right|_U \rightarrow U \times \mathbb{C}^r.
\end{equation}
The isomorphism $\varphi$ is called a trivialization of the vector bundle. A vector bundle is a fibration that is \textit{locally trivial}.
\end{definition}

Given two trivializations $\varphi_i: \left. E \right|_{U_{i}} \rightarrow U_i \times \mathbb{C}^r$ and $\varphi_j: \left. E \right|_{U_{j}} \rightarrow U_j \times \mathbb{C}^r$ over $U_i$ and $U_j$ respectively, then the change of coordinate map defined over $U_i \cap U_j$ by

\begin{equation}
    \varphi_i \circ \varphi^{-1}_j : U_i \cap U_j \times \mathbb{C}^r \rightarrow U_i \cap U_j \times \mathbb{C}^r
\end{equation}
maps $(u, v) \in U_i \cap U_j \times \mathbb{C}^r$ to $(x, g_{ij}(x)v)$, where $g_{ij}: U_i \cap U_j \rightarrow GL(r, \mathbb{C})$ is a map of varieties between the open subset $U_i \cap U_j$ and $GL(r, \mathbb{C})$ as algebraic varieities.
The maps $g_{ij}$ are called \textit{transition functions}, and they satisfy the cocycle condition:

\begin{equation}
    g_{ij} \circ g_{jk} =  g_{ik}
\end{equation}
over $U_i \cap U_j \cap U_k$.

Conversely, like in the differentiable case, given the data of the transition functions $g_{ij}$ on the open cover $\{U_i\}$, we can retrieve the bundle $E$ by gluing the $U_i \times \mathbb{C}^r$ along the intersections $U_i \cap U_j$, using $g_{ij}$ similar to Proposition \ref{cocyclesconstructions}.

\begin{definition}
Let $E$ and $F$ be algebraic vector bundles over $X$, of ranks $r$ and $s$ respectively. A \textit{map of vector bundles} $f$ is a map between $E$ and $F$ as fibrations.

\end{definition}

Given two bundles $E$ and $F$ over $X$, we can construct: $E \oplus F$, the \textit{direct sum} of $E$ and $F$; \underbar{$\Hom$}$(E, F)$, the \textit{internal-hom} bundle; $E \otimes F$, the \textit{tensor product} bundle; $E^{*}$ the \textit{dual} bundle; $\Lambda^{k}(E)$ the \textit{$k$-th exterior power} bundle.

Given a bundle $E$ of rank $r$ over $X$, a sub-bundle $F$ of rank $m$ is an \textit{algebraic subvariety} $F \subset E$ such that for every $x \in X$, $F \cap E_x$ is a complex vector space of dimension $m$, and the induced fibration $\left. \pi \right|_F : F \rightarrow X$ is locally trivial.

Given a bundle $E$ and a sub-bundle $F$ of $E$, we can construct the \textit{quotient bundle} $E/F$, and it has a unique structure of an algebraic vector bundle.

\begin{definition}
Let $\pi: E \rightarrow X$ be an algebraic vector bundle of rank $r$ over $X$. A \textit{regular section} of $E$ over some open set $U \subseteq X$ is a \textit{map 
$s: U \rightarrow E$ of varieties} such that $\pi (s(x)) = x$ for all $x \in U$. The set of regular sections of $E$ over $U$ is denoted $\Gamma(U, E)$.
\end{definition}

By now the similarities between the theory of bundles over smooth manifolds and algebraic varieties should tell that there is a common underlying principle that connects the studies of vector bundles over  geometric spaces: it consists of a space of the same kind as the base geometric space, such that \textit{locally}, it looks like whatever \textit{trivial} space is, varying from setting to setting. Such similarities suggest a generalization. We extend this idea to the more general language of sheaves, which will serve as a general framework to apply the cohomological machinery that we've been developing to varieties and manifolds, and more general geometric objects such as schemes and later, analytic spaces.

Recall that a \textit{ringed space} is a pair $(X, \mathcal{O}_X)$, where $X$ is a topological space and $\mathcal{O}_X$ is a sheaf of rings on $X$. A \textit{morphism of ringed spaces} $f: (X, \mathcal{O}_X) \to (Y, \mathcal{O}_Y)$ is a pair $(f, f^{\#})$, consisting of a continuous map $f: X \to Y$ and a map $f^{\#}: \mathcal{O}_Y \to f_{*}\mathcal{O}_X$ of sheaves of rings on $Y$ (the sheaf $f_{*}\mathcal{O}_X$ will be defined below). A \textit{sheaf of $\mathcal{O}_X$-modules} on $X$ is a sheaf $\mathscr{F}$ of abelian groups, with an $\mathcal{O}_X(U)$ module structure on every section $\mathscr{F}(U)$ for any open set $U \subseteq X$, that commutes with restriction maps. Precisely, given an inclusion $U \subset V$ we have a commutative diagram

\begin{center}
\begin{tikzpicture}[>=triangle 60]
\matrix[matrix of math nodes,column sep={90pt,between origins},row
sep={60pt,between origins},nodes={asymmetrical rectangle}] (s)
{
&|[name=A]| \mathcal{O}(V) \times \mathscr{F}(V) &|[name=B]| \mathscr{F}(V) \\
&|[name=A']| \mathcal{O}(U) \times \mathscr{F}(U) &|[name=B']| \mathscr{F}(U) \\
};
\draw[overlay,->, font=\scriptsize,>=latex] 
          (A) edge ++(2.5cm,0) node[auto] {\(\)} (B)
          (A) edge node[auto, swap] {\(\rho_{VU} \times \rho'_{VU}\)} (A')
          (B) edge node[auto] {\(\rho'_{VU}\)} (B')
          (A') edge ++(2.5cm,0) node[auto] {\(\)} (B')

;

;

\end{tikzpicture}
\end{center} 

\begin{proposition}
For a ringed space $(X, \mathcal{O}_X)$, the category \textbf{Mod$_{\mathcal{O}_X}$} of sheaves of $\mathcal{O}_X$-modules is abelian.
\end{proposition}
\noindent\textbf{Operations with sheaves of modules}: 
There are some constructions that can be made within the category \textbf{Mod$_{\mathcal{O}_X}$} of sheaves of $\mathcal{O}_X$-modules. Throughout, let's fix a ringed space $(X, \mathcal{O}_X)$. Let $\mathscr{F}_{i}$ be a family of sheaves of $\mathcal{O}_X$-modules indexed by a set $I$. Define the \textit{product} 

\begin{equation}
    \prod_{i \in I} \mathscr{F}_i
\end{equation}
to be the sheaf that associates to each open set $U$, the product $\prod_{i \in I} \mathscr{F}_i(U)$ of the sections $\mathscr{F}_i (U)$ of each $\mathscr{F}_i$ over $U$. Define the \textit{direct sum}

\begin{equation}
    \bigoplus_{i \in I} \mathscr{F}_i
\end{equation}
to be the \textit{sheafification} of the presheaf defined by the rule

\begin{equation}
    U \mapsto \bigoplus_{i \in I}\mathscr{F}_i (U)
\end{equation}
that associates to each open set $U$ the direct sum of the modules $\mathscr{F}_i (U)$. Note that if the index set $I$ is \textit{finite}, the direct sum and the direct product coincide, and so the \textbf{sheafification for finite direct sums is not necessary}.

Let $\mathscr{F}$ and $\mathscr{G}$ be two sheaves of $\mathcal{O}_X$-modules. Define the \textit{presheaf tensor product} $\mathscr{F} \otimes_{Pre\mathcal{O}_{X}} \mathscr{G}$ to be the presheaf defined by the rule
\begin{equation}
    U \mapsto \mathscr{F}(U) \otimes_{Pre\mathcal{O}_{X}} \mathscr{G}(U)
\end{equation}
and the \textit{sheaf tensor product} $\mathscr{F} \otimes_{\mathcal{O}_{X}} \mathscr{G}$ as its sheafification. It shares the usual properties of the tensor product of modules, such as the universal property, the commutativity and associativity, and the right-exactness of the functor $- \otimes_{\mathcal{O}_{X}} \mathscr{G}$. Another interesting property is that, for any $x \in X$, the stalk of the tensor product at $x$ is the tensor product of the stalks at $x$, namely 

\begin{equation}
    (\mathscr{F} \otimes_{\mathcal{O}_{X}} \mathscr{G})_x \cong \mathscr{F}_x \otimes_{\mathcal{O}_{X,x}} \mathscr{G}_x.
\end{equation}
The category \textbf{Mod$_{\mathcal{O}_X}$} also has an \textit{internal hom}: given $\mathscr{F}$ and $\mathscr{G}$, the association 
\begin{equation}
U \mapsto \Hom _{\mathcal{O}_{U}}(\left. \mathscr{F} \right|_{U}, \left. \mathscr{G} \right|_{U})
\end{equation}
is a sheaf of abelian groups. Given $g \in \Hom _{\mathcal{O}_{U}}(\left. \mathscr{F} \right|_{U}, \left. \mathscr{G} \right|_{U})$ and $f \in \mathscr{O}_U$, we can define $fg \in \Hom _{\mathcal{O}_{U}}(\left. \mathscr{F} \right|_{U}, \left. \mathscr{G} \right|_{U})$ by precomposing with the morphism of multiplication by $f$ on $\left. \mathscr{F} \right|_{U}$. And so we have a canonical $\mathcal{O}_X$-module structure on $\Hom _{\mathcal{O}_{U}}(\left. \mathscr{F} \right|_{U}, \left. \mathscr{G} \right|_{U})$. This \textit{sheaf hom} will be denoted \underbar{$\Hom$}$_{\mathcal{O}_{X}}(\mathscr{F}, \mathscr{G})$. There is also a morphism 

\begin{equation}
    (\underbar{$\Hom$} _{\mathcal{O}_{X}}(\mathscr{F}, \mathscr{G}))_x \rightarrow \underbar{$\Hom$} _{\mathcal{O}_{X}}(\mathscr{F}_x, \mathscr{G}_x)
\end{equation}
but in general it is \textbf{not} an isomorphism - it is, however, in the case that $\mathscr{F}$ is \textit{finitely presented}. The sheaf \underbar{$\Hom$}$ _{\mathcal{O}_{X}}(\mathscr{F}, \mathscr{G})$ also shares the property of letf-exactness of the usual $\Hom$ functor. The sheaves $\mathscr{F} \otimes_{\mathcal{O}_{X}} \mathscr{G}$ and \underbar{$\Hom$}$ _{\mathcal{O}_{X}}(\mathscr{F}, \mathscr{G})$ form an \textit{adjoint pair}, in the sense of the internal hom: namely, for any other sheaf of $\mathcal{O}_X$-modules $\mathscr{H}$ there is an isomorphism

\begin{equation}
    \underbar{$\Hom$}_{\mathcal{O}_{X}}(\mathscr{F} \otimes_{\mathcal{O}_{X}} \mathscr{G}, \mathscr{H}) \cong \underbar{$\Hom$} _{\mathcal{O}_{X}}(\mathscr{F}, \underbar{$\Hom$} _{\mathcal{O}_{X}}(\mathscr{G}, \mathscr{H}))
\end{equation}
functorial in $\mathscr{F}$, $\mathscr{G}$ and $\mathscr{H}$.

Now, let $f: X \rightarrow Y$ be a continuous map. For any sheaf $\mathscr{F}$ on $X$, we can define the \textit{direct image} sheaf $f_{*}\mathscr{F}$ on $Y$ by the rule 

\begin{equation}
    f_{*}\mathscr{F}(V) = \mathscr{F}(f^{-1}(V))
\end{equation}
for any open set $V \subseteq Y$. We can also define, for any \textit{abelian} sheaf $\mathscr{G}$ on $Y$ the \textit{inverse image} sheaf $f^{-1}\mathscr{G}$ as the sheafification of the presheaf given by the rule 

\begin{equation}
    U \mapsto \varinjlim_{f(U) \subset V} \mathscr{G}(V)
\end{equation}
where $U \subseteq X$ is any open set and the limit is taken over all open sets $V \subseteq Y$ such that $f(U) \subset V$. We have that $f_{*}$ is a functor from the category of sheaves of abelian groups on $X$ to the category of sheaves of abelian groups on $Y$ and $f^{-1}$ is a functor from the category of sheaves of abelian groups on $Y$ to the category of sheaves of abelian groups on $X$. They also form an adjoint pair: we have an isomorphism

\begin{equation}
    \Hom_X (f^{-1}\mathscr{G}, \mathscr{F}) \cong \Hom_X (\mathscr{G}, f_{*}\mathscr{F}).
\end{equation}

Given a map of ringed spaces $f: (X, \mathcal{O}_X) \rightarrow (Y, \mathcal{O}_Y)$, if $\mathscr{F}$ is an $\mathcal{O}_X$-module, then $f_{*}\mathscr{F}$ is an $f_{*}\mathcal{O}_X$-module. Since by the definition of morphism of ringed spaces we have a morphism $f^{\#}: \mathcal{O}_Y \rightarrow f_{*}\mathcal{O}_X$ of sheaves of rings on $Y$, we get a natural $\mathcal{O}_Y$-module structure on $f_{*}\mathscr{F}$. Now, we would like to define an inverse image functor for sheaves of modules: we would like to define a functor that behaves like the inverse image functor $f^{-1}:\textbf{Sh(Y)} \rightarrow \textbf{Sh(X)}$ for general sheaves of modules, such that said inverse image has the structure of an $\mathcal{O}_X$-module, and such that this new functor has the adjoint property with the functor $f_{*}: \textbf{Mod$_{\mathcal{O}_X}$} \rightarrow \textbf{Mod$_{\mathcal{O}_Y}$}$. Let $\mathscr{G}$ be a sheaf of $\mathcal{O}_Y$-modules. So $f^{-1}\mathscr{G}$ has the structure of an $f^{-1}\mathcal{O}_Y$-module. We would like to make it an $\mathcal{O}_X$-module. Because of the adjoint property between $f^{-1}$ and $f_{*}$, giving a morphism $\mathcal{O}_Y \rightarrow f_{*}\mathcal{O}_X$ is the same as giving a morphism $f^{-1}\mathcal{O}_Y \rightarrow \mathcal{O}_X$. We define the sheaf $f^{*}\mathscr{G}$ to be the tensor product

\begin{equation}
    f^{-1}\mathscr{G} \otimes_{f^{-1}\mathcal{O}_Y} \mathcal{O}_X
\end{equation}
and thus $f^{*}\mathscr{G}$ is an $\mathcal{O}_X$-module. It is called the \textit{inverse image} of $\mathscr{G}$ by $f$. The functors $f_{*}: \textbf{Mod$_{\mathcal{O}_X}$} \rightarrow \textbf{Mod$_{\mathcal{O}_Y}$}$ and $f^{*}: \textbf{Mod$_{\mathcal{O}_Y}$} \rightarrow \textbf{Mod$_{\mathcal{O}_X}$}$ again form an adjoint pair. This fact allows us to prove the following

\begin{lemma}
Let $f: (X, \mathcal{O}_X) \rightarrow (Y, \mathcal{O}_Y)$ be a morphism of ringed spaces. Then
\begin{enumerate}
    \item the functor $f_{*}: \textbf{Mod$_{\mathcal{O}_X}$} \rightarrow \textbf{Mod$_{\mathcal{O}_Y}$}$ is \textit{left exact}, and commutes with all limits;
    \item the functor $f^{*}: \textbf{Mod$_{\mathcal{O}_Y}$} \rightarrow \textbf{Mod$_{\mathcal{O}_X}$}$ is right exact, and commutes with all colimits.
\end{enumerate}
\end{lemma}

\begin{proof}
The result follows from the adjoint property of the pair $f^{*}$ and $f_{*}$ and two categorical facts: if the source category has all finite colimits (resp. the target category has all finite limits), then the left adjoint (resp. right adjoint) is right exact (resp. left exact) (\cite{stacks-project}, Lemma 4.24.6); and left adjoints (resp. right adjoints) commute with all representable colimits (resp. representable limits) (\cite{stacks-project}, Lemma 4.24.5). Since the categories \textbf{Mod$_{\mathcal{O}_X}$} and \textbf{Mod$_{\mathcal{O}_Y}$} have all limits and colimits\footnote{This is a stronger condition then being abelian - being an abelian category only guarantees the existence of all \textbf{finite} limits and colimits.} (\cite{stacks-project}, Lemma 17.3.2), this completes the proof.
\end{proof}

\begin{definition}
A \textit{free sheaf} on a ringed space $(X, \mathcal{O}_X)$ is a sheaf $\mathscr{F}$ of $\mathcal{O}_X$-modules isomorphic to $\mathcal{O}^{\oplus I}_X$, for some index set $I$. A free sheaf is of \textit{rank r} if it is isomorphic to $\mathcal{O}^{\oplus r}_X$. A \textit{locally free sheaf} is a sheaf of $\mathcal{O}_X$-modules locally isomorphic to a free sheaf.
\end{definition}
Recall that a variety $X$ has a canonical sheaf of rings, its \textit{sheaf of regular functions} $\mathcal{O}_X$ (see Example \ref{structsheaf}). The set $\Gamma(U, E)$ has a canonical structure of an $\mathcal{O}_X$-module: for every $U \subseteq X$, the operations are given by

\begin{align*}
    (s+t)(x) &= s(x)+t(x) \\
    (\alpha s)(x) &= \alpha(x)s(x)
\end{align*}
for $s, t \in \Gamma(U, E)$ and $\alpha \in \mathcal{O}_X(U)$. Moreover, given a trivialization over $U$, we can naturally identify a section over $U$ with an $r$-tuple of functions over $U$, by composing with the trivialization. So $\Gamma(U, E) \cong \mathcal{O}_{X}^{\oplus r}(U)$, and \textbf{$\Gamma(U, E)$ is a locally free sheaf of rank r}. This observation leads us to an important result, which kickstarts the theory of (quasi)coherent sheaves:

\begin{theorem}\label{locfreesheavesvecbundles}
The functor which associates the sheaf of regular sections to a vector bundle $E$ over $X$ is an equivalence of categories between the category of algebraic vector bundles over $X$ and the category  of locally free sheaves of finite rank on $X$.
\end{theorem}

Note that the theorem speaks of the functor from the category of bundles $\Gamma(X, -)$ which receives as input an algebraic vector bundle, and returns as output a locally finite rank $\mathcal{O}_X$-module. So we're estabilishing an equivalence between \textbf{algebraic vector bundles and locally free $\mathcal{O}_X$-modules of finite rank}.

\begin{proof}
The proof proceeds in two steps: first, we embbed the category of bundles fully faithfully in the category of $\mathcal{O}_X$-modules of finite rank; then we show that the functor is essentially surjective, that is, given an $\mathcal{O}_X$-module of finite rank, we find a bundle that is isomorphic to it.

So first, we start by showing that the functor is fully faithful: given any two vector bundles $E$ and $F$ on $X$, of ranks $p$ and $q$ respectively, the natural linear map

\begin{equation}
    \Hom (E, F) \rightarrow \Hom_{\mathcal{O}_{X}} (\Gamma (X, E), \Gamma (X, F))
\end{equation}
is an isomorphism - indeed, since the question is local on $X$, and since we can always find a trivial neighborhooud of every point $x \in X$, we can suppose that $E = X \times \mathbb{C}^p$ and $F = X \times \mathbb{C}^q$. Then any map $f: E \rightarrow F$ can then be regarded as a map $X \rightarrow L(\mathbb{C}^p, \mathbb{C}^q)$ from $X$ to the space of $\mathbb{C}$-linear maps $\mathbb{C}^p \rightarrow \mathbb{C}^q$. Since we are assuming $X$ to be projective, $\Gamma(X, E) \cong \mathcal{O}_X^{\oplus p} \cong \mathbb{C}^p$ and $\Gamma(X, F) \cong \mathcal{O}_X^{\oplus q} \cong \mathbb{C}^q$ and so each linear map in $L(\mathbb{C}^p, \mathbb{C}^q)$ gives us a morphism of $\mathcal{O}_X$-modules $\mathcal{O}_{X}^{\oplus p} \rightarrow \mathcal{O}_{X}^{\oplus q}$. 

Now, let $\mathcal{E}$ be a locally free sheaf of rank $r$. We need to construct a vector bundle $E$ and an isomorphism $\Gamma(U, E) \cong \mathcal{E}$. Let $x \in X$, and let $\mathcal{E}_x$ be the stalk of $\mathcal{E}$ at $x$. Consider the vector space $\mathcal{E}(x) = \mathcal{E}_x/ \mathfrak{m}_x \mathcal{E}_x$, where $\mathfrak{m}_x$ is the unique maximal ideal of the local ring $\mathcal{O}_{X, x}$. Since there exists a neighborhood for which we can find a local trivialization of $\mathcal{E}$, $\mathcal{E}_x$ is free of rank $r$. We can then give $\mathcal{E}_x/ \mathfrak{m}_x \mathcal{E}_x$ a $\mathcal{O}_{X,x} / \mathfrak{m}_x$-vector space structure, for which it has dimension $r$. Define

\begin{equation}
    E = \bigsqcup_{x \in X} \mathcal{E}(x) \rightarrow X
\end{equation}
Let $\{U_i\}$ be an open cover of $X$ for which we have isomorphisms $\left. \mathcal{E} \right|_{U_{i}} \cong \mathcal{O}_{X}^{\oplus r}$ for the locally free sheaf $\mathcal{E}$. On the intersections $U_i \cap U_j$, we then get maps 

\begin{equation}
    f_{ij} = \left. f_j \right|_{U_{i} \cap U_{j}} \circ \left. f_{i}^{-1} \right|_{U_{i} \cap U_{j}} : \left. \mathcal{O}_{X}^{\oplus r} \right|_{U_{i} \cap U_{j}} \rightarrow \left. \mathcal{O}_{X}^{\oplus r} \right|_{U_{i} \cap U_{j}}.
\end{equation}
By the previous discussion on fully faithfulness, each of these $f_{ij}$ induces a map

\begin{equation}
    g_{ij}: (U_i \cap U_j) \times \mathbb{C}^r \rightarrow (U_i \cap U_j) \times \mathbb{C}^r.
\end{equation}
We then have the gluing data $(\{U_i\}, g_{ij})$, and by the previous discussion, we obtain a vector bundle by gluing along the intersections using the $g_{ij}$. Finally, by construction the sections of $E$ over $U$ are given by sections $s_i$ over $U_i$ agreeing on $U_i \cap U_j$, and so $\Gamma(U, E)$ is isomorphic to $\mathcal{E}$ as an $\mathcal{O}_X$-module.
\end{proof}

\begin{remark}
We also have this equivalence in contexts more general than bundles over projective varieties over $\mathbb{C}$.
\end{remark}

\begin{remark}
A special case is that of vector bundles of rank $1$, or line bundles. The associated locally free sheaf of rank $1$ is called an \textbf{invertible sheaf}.
\end{remark}

And so, we have embedded our category of algebraic vector bundles over $X$ into the category of sheaves of $\mathcal{O}_X$-modules over $X$, which is an abelian category. The problem is, the subcategory of locally free sheaves of finite rank is \textbf{not} an abelian category - cokernels of maps of $\mathcal{O}_X$-modules between locally free sheaves need not be locally free\footnote{In fact, in \cite{EGA}, Grothendieck and Dieudonné define quasicoherent sheaves this way - they are the sheaves that are the \textit{cokernel} of a map $\mathcal{O}_{X}^{\oplus I} \rightarrow \mathcal{O}_{X}^{\oplus J}$, for some index sets $I$ and $J$, not necessarily finite.}. So we have to enlarge our category of bundles to a reasonable abelian category which contains it, so we can use our homological algebra machinery. Turns out that this larger category is the category of \textit{quasicoherent sheaves} \textbf{QCoh($X$)}, which is an abelian category which contains locally free sheaves and its reasonable generalisations, such as locally free sheaves not necessarily of finite rank. Besides the usual references of \cite{hartshorne}, \cite{vakil}, we will also follow \cite{FAC}, \cite{stacks-project} and \cite{ag2}. We begin with a general discussion, and later specialize:

\begin{definition}
Let $(X, \mathcal{O}_X)$ be a ringed space. A sheaf of $\mathcal{O}_X$-modules $\mathscr{F}$ is called a \textit{quasicoherent sheaf of $\mathcal{O}_X$-modules} if for every point $x \in X$ there is an open neighborhood $U$ of $x$ in $X$ such that $\left. \mathscr{F} \right|_{U}$ is isomorphic to the \textit{cokernel} of some map

\begin{equation}
\begin{tikzcd}
    \mathcal{O}_{U}^{\oplus I} \ar{r} & \mathcal{O}_{U}^{\oplus J} 
\end{tikzcd}
\end{equation}
of $\mathcal{O}_X$-modules, for some arbitrary index sets $I$ and $J$.
\end{definition}

The category of quasicoherent sheaves of $\mathcal{O}_X$-modules is denoted \textbf{QCoh($X$)}, and it is an abelian category. The above definition can be interpreted in the following way: a sheaf of $\mathcal{O}_X$-modules $\mathscr{F}$ is quasicoherent if there exists a covering of $X$ by open sets $U$ such that $\left. \mathscr{F} \right|_{U}$ has a (not necessarily finite) \textit{presentation} of the form

\begin{equation}
\begin{tikzcd}
    \mathcal{O}_{U}^{\oplus I} \ar{r} & \mathcal{O}_{U}^{\oplus J} \ar{r} & \left. \mathscr{F} \right|_{U} \ar{r} & 0
\end{tikzcd}
\end{equation}
that is, the above sequence is exact.

\begin{definition}
We say that a sheaf $\mathscr{F}$ of $\mathcal{O}_X$-modules is \textit{generated by global sections} if there exists a set $I$, and global sections $s_i \in \Gamma(X, \mathscr{F})$ such that the map
\begin{equation}
\begin{tikzcd}
    \mathcal{O}_{X}^{\oplus I} \ar{r} & \mathscr{F} \ar{r} & 0
\end{tikzcd}
\end{equation}
given by  $f \mapsto fs_i$ i.e., multiplication by $s_i$, on the $ith$ summand of $\mathcal{O}_{X}^{\oplus I}$, is surjective. In this case, we say that the $s_i$ generate $\mathscr{F}$. We say that $\mathscr{F}$ is of \textit{finite type} if for every $x \in X$ there exists a neighborhood such that $\left. \mathscr{F} \right|_{U}$ is generated by finitely many sections,

\begin{equation}
\begin{tikzcd}
    \mathcal{O}_{U}^{\oplus n} \ar{r} & \left. \mathscr{F} \right|_{U} \ar{r} & 0.
\end{tikzcd}
\end{equation}
\end{definition}

\begin{definition}
A quasicoherent sheaf of $\mathcal{O}_X$-modules $\mathscr{F}$ is said to be \textit{coherent} if it satisfies the following two conditions:

\begin{enumerate}
    \item $\mathscr{F}$ is of finite type and;
    \item for every open set $U \subset X$, and every finite collection $s_i \in \mathscr{F}(U)$, for $i = 1,...,n$, the kernel of the associated map 
    
    \begin{equation}
    \begin{tikzcd}
    \mathcal{O}_{U}^{\oplus n} \ar{r} & \left. \mathscr{F} \right|_{U}
    \end{tikzcd}
    \end{equation}
    is of finite type, i.e., the \textit{module of relations} between the $s_i$ is of finite type.
\end{enumerate}
the subcategory of coherent $\mathcal{O}_X$-modules is denoted by $\textbf{Coh(X)}$. It is an abelian category.
\end{definition}

\begin{proposition}
Any coherent $\mathcal{O}_X$-module is of \textit{finite presentation}, that is, we can find an exact sequence of the form 

\begin{equation}
\begin{tikzcd}
    \mathcal{O}_{U}^{\oplus n} \ar{r} & \mathcal{O}_{U}^{\oplus m} \ar{r} & \left. \mathscr{F} \right|_{U} \ar{r} & 0.
\end{tikzcd}
\end{equation}
\end{proposition}

\begin{proof}
By the first condition of the definition, for every $x\in X$ there is a neighborhood $x \in U \subset X$ and a sequence of the form

\begin{equation}
\begin{tikzcd}
    \mathcal{O}_{U}^{\oplus n} \ar{r} & \left. \mathscr{F} \right|_{U} \ar{r} & 0.
\end{tikzcd}
\end{equation}
By the second condition of the definition, we can find an open neighborhood $x \in V \subset U \subset X$ and sections $s_i$, $i = 1,...,m$ that generate the kernel of the map $\mathcal{O}_{V}^{\oplus n} \to \left. \mathscr{F} \right|_{V}$. So, over $V$, we get the presentation

\begin{equation}
\begin{tikzcd}
    \mathcal{O}_{V}^{\oplus m} \ar{r} & \mathcal{O}_{V}^{\oplus n} \ar{r} & \left. \mathscr{F} \right|_{V} \ar{r} & 0
\end{tikzcd}
\end{equation}
and this concludes the proof.
\end{proof}

We will now discuss quasicoherent sheaves over \textit{affine schemes}. In the case of schemes, quasicoherent sheaves of $\mathcal{O}_X$-modules have a very explicit description. We begin with several equivalent constructions in an abstract commutative algebraic setting, all of which will be useful later at some point, and then later we apply these constructions to describe quasicoherent sheaves over affine schemes with the Zariski topology:

\noindent\textbf{The $\Tilde{M}$ functor}: Let $R$ be a ring (commutative with unity as usual), and consider the affine scheme $X = \operatorname{Spec} R$. The idea is to associate to every $R$-module $M$ a sheaf $\Tilde{M}$ over $X$, and to prove an equivalence between these new sheaves $\Tilde{M}$ and quasicoherent sheaves on $X$. We follow the construction in \cite{stacks-project} and \cite{ag2}.

\begin{lemma}
Let $R$ be a ring and $M$ be an $R$-module. Let $\alpha: R \rightarrow \Gamma(X, \mathcal{O}_X)$ be a ring homomorphism from $R$ to the ring of global sections of the sheaf $\mathcal{O}_X$ on $X$. Then the following three constructions give isomorphic sheaves of $\mathcal{O}_X$-modules:

\begin{enumerate}
    \item  Let $(X, \mathcal{O}_X)$ be a ringed space, and let $(\{*\}, R)$ be the ringed space consisting of the one point set and the constant sheaf of rings with value $R$. Let $\pi: (X, \mathcal{O}_X) \rightarrow (\{*\}, R)$ be a morphism of ringed spaces with $\pi: X \rightarrow \{*\}$ being the unique map of topological spaces, and $\pi^{\#}$ being the given map $\alpha: R \rightarrow \Gamma(X, \mathcal{O}X)$. Denote the first sheaf $\mathscr{F}_1$ to be the inverse image of the module $M$ by this map $\pi$ defined above:
    
    \begin{equation}
        \mathscr{F}_1 = \pi^{*}M
    \end{equation}
    \item Recall that by the universal property of free modules, a module is always the quotient of a free module. So choose a presentation of $M$
    
    \begin{equation}
    \begin{tikzcd}
    R^{\oplus I} \ar{r} & R^{\oplus J} \ar{r} & M \ar{r} & 0.
    \end{tikzcd}
    \end{equation}
    Set the second sheaf $\mathscr{F}_2$ to be the cokernel
    
    \begin{equation}
    \mathscr{F}_2 = \coker (\mathcal{O}_{X}^{\oplus I} \rightarrow \mathcal{O}_{X}^{\oplus J})
    \end{equation}
    of the map given on the $ith$ component by $\sum_{j} \alpha(r_{ji})$, where $\alpha$ is the given map and $r_{ji}$ are the coefficients of the matrix of the map
    
    \begin{equation}
    \begin{tikzcd}
    R^{\oplus I} \ar{r} & R^{\oplus J}
    \end{tikzcd}
    \end{equation}
    in the presentation of $M$;
    \item Define the third sheaf $\mathscr{F}_3$ to be the sheafification of the presheaf given by the rule
    \begin{equation}
        U \mapsto \mathcal{O}_X(U) \otimes_R M
    \end{equation}
    where the map of the $R$-modules sctructure $R \rightarrow \mathcal{O}_X(U)$ is given by the composition of the given map $\alpha$ with the restriction morphism $\mathcal{O}_X(X) \rightarrow \mathcal{O}_X(U)$.
\end{enumerate}
\end{lemma}

\begin{proof}
The isomorphism between $\mathscr{F}_1$ and $\mathscr{F}_3$ comes from the definition of the inverse image sheaf - $\pi^{*}$ is defined as the sheafification of the presheaf defined by the rule in the third construction, if we see $M$ as the constant presheaf with value $M$; the isomorphism between $\mathscr{F}_1$ and $\mathscr{F}_2$ comes from the right exactness of the functor $\pi^{*}$: applying $\pi^{*}$ to the sequence in the second construction, we obtain an exact sequence

\begin{equation}
\begin{tikzcd}
   \pi^{*}(R^{\oplus I}) \ar{r} & \pi^{*}(R^{\oplus J}) \ar{r} & \pi^{*}M \ar{r} & 0.
\end{tikzcd}
\end{equation}
Since, $\pi^{*}$ commutes with all colimits, it commutes with direct sums, and the above sequence becomes

\begin{equation}
\begin{tikzcd}
   (\pi^{*}R)^{\oplus I} \ar{r} & (\pi^{*}R)^{\oplus J} \ar{r} & \pi^{*}M \ar{r} & 0.
\end{tikzcd}
\end{equation}
Finally, by definition, $\pi^{*}R = \pi^{-1}R \otimes_{\pi^{-1}R} \mathcal{O}_X = \mathcal{O}_X$, and so the sequence becomes

\begin{equation}
\begin{tikzcd}
   \mathcal{O}_X^{\oplus I} \ar{r} & \mathcal{O}_X^{\oplus J} \ar{r} & \pi^{*}M \ar{r} & 0
\end{tikzcd}
\end{equation}
and so $\pi^{*}M$ is the cokernel defined in the second construction, and this completes the proof.
\end{proof}

The sheaf $\mathscr{F}_M \cong \mathscr{F}_1 \cong \mathscr{F}_2 \cong \mathscr{F}_3$ constructed above is called the \textit{sheaf associated to the module $M$ and the ring map $\alpha$}. If $\Gamma(X, \mathcal{O}_X) = R$ and $\alpha$ is the identity, we simply call $\mathscr{F}_M$ the \textit{sheaf associated to the module $M$}.

\begin{proposition}\label{modprop}
The sheaf $\mathscr{F}_M$ associated to an $R$-module $M$ has the following properties:

\begin{enumerate}
    \item $\mathscr{F}_M$ is quasicoherent;
    \item the constructions induce a functor from \textbf{Mod$_R$} to \textbf{QCoh($X$)} that commutes with arbitrary colimits;
    \item for any $x \in X$, $\mathscr{F}_{M,x} = \mathcal{O}_{X,x} \otimes_R M$, functorial in $M$;
    \item given any $\mathcal{O}_X$-module $\mathscr{G}$, we have
    \begin{equation}
        \Hom _{\mathcal{O}_X}(\mathscr{F}_M, \mathscr{G}) \cong \Hom _{R}(M, \Gamma(X, \mathscr{G}))
    \end{equation}
    where the $R$-module structure on $\Gamma(X, \mathscr{G})$ comes from the $\Gamma(X, \mathcal{O}_X)$-module structure via $\alpha$.
\end{enumerate}
\end{proposition}

\begin{proof}
Item $(1)$ follows directly from the definition and the construction of $\mathscr{F}_2$: we explicitly constructed our sheaf as the cokernel of a map between locally free sheaves, and so it is quasicoherent; item $(2)$ follows directly from the construction of $\mathscr{F}_1$, since $\pi^{*}$ is a functor from \textbf{Mod$_R$} to \textbf{QCoh($X$)} that commutes with arbitrary colimits; item $(3)$ follows directly from the construction of $\mathscr{F}_1$ and $\mathscr{F}_3$, and the facts that the former is the sheafification of the latter and the stalk of the tensor product is the tensor product of the stalks. We then have

\begin{equation}
    \mathscr{F}_{M,x} = (\pi^{*}M)_x = (\pi^{-1}M \otimes_{\pi^{-1} R} \mathcal{O}_X)_x = (M \otimes_{R} \mathcal{O}_X)_x = M_x \otimes_{R_x} \mathcal{O}_{X,x} = M \otimes_R \mathcal{O}_{X,x} = \mathcal{O}_{X,x} \otimes_R M.
\end{equation}
For item $(4)$, on the left hand side we use the definition of $\mathscr{F}_1 = \pi^{*}M$, and on the right hand side we use the fact that, by definition, the pushforward of any sheaf to the one-point space is the constant sheaf with value equal to the global sections of the original sheaf, and so for any $\mathcal{O}_X$-module $\mathscr{G}$, $\pi_{*}\mathscr{G} = \Gamma(X, \mathscr{G})$. The result follows from the adjointness of $\pi^{*}$ and $\pi_{*}$.
\end{proof}

\begin{theorem}\label{qcohaffine1}
Let $(X, \mathcal{O}_X)$ be a locally ringed space, and suppose that $x \in X$ has a \textbf{fundamental system of quasicompact neighborhoods}.\footnote{The term quasicompact is used by algebraic geometers to mean compact in the non-Hausdorff setting, i.e., every open cover of the space admits a finite refinement.} Let $\mathscr{F}$ be a quasicoherent sheaf of $\mathcal{O}_X$-modules on $X$. Then, there exists an open neighborhood $x \in U \subset X$ such that on $(U, \mathcal{O}_U)$, $\left. \mathscr{F} \right|_{U} \cong \mathscr{F}_M$ for some $\Gamma(U, \mathcal{O}_U)$-module $M$.
\end{theorem}

\begin{proof}
The idea of the proof is as follows: we take a local presentation of our $\mathcal{O}_X$-module and construct the module $M$ explicitly, by taking it to be the cokernel of the induced map of modules between the modules of sections $\Gamma(U, \mathcal{O}_{X}^{\oplus J})$ and $\Gamma(U, \mathcal{O}_{X}^{\oplus I})$, for some suitable neighborhood $U$. The problem is, the modules $\Gamma(U, \mathcal{O}_{X}^{\oplus J})$ and $\Gamma(U, \mathcal{O}_{X})^{\oplus J}$ need not be equal, or equivalently, the map

\begin{equation}
    \Gamma(U, \mathcal{O}_{X})^{\oplus J} \rightarrow \Gamma(U, \mathcal{O}_{X}^{\oplus J})
\end{equation}
need not be bijective. This is where being able to find a quasicompact neighborhood is important - if $U$ is quasicompact, the above map is bijective, and this is what we need to prove:

First, consider $x \in U \subset X$ a quasicompact neighborhood of $x$, and the map

\begin{equation}
\Gamma(U, \mathcal{O}_{X})^{\oplus I} \rightarrow \Gamma(U, \mathcal{O}_{X}^{\oplus I}).
\end{equation}
Let $s \in \Gamma(U, \mathcal{O}_{X}^{\oplus I})$. Then, there exists an open covering $U_j$ of $U$ such that $\left. s \right|_{U_j}$ is a \textit{finite} sum $\sum_{i \in I_j} s_{ji}$, with $s_{ji} \in \mathscr{F}_i (U_j)$. Since $U$ is quasicompact, there exists a finite refinement of the covering $U_j$, and we can assume that the index set $J$ is finite. Set $I' = \cup_{j \in J} I_j$. This is a finite set, and $s \in \Gamma(U, \mathcal{O}_{X}^{\oplus I'})$. Then since finite direct sums are equal to finite direct products, the bijection follows.

Now, we try to apply this idea to our original problem. Since our sheaf is quasicoherent, we can work locally and replace $X$ by some open neighborhood $U$ of $x$, and assume that it is isomorphic to the cokernel of some map

\begin{equation}
\begin{tikzcd}
   \mathcal{O}_X^{\oplus J} \ar{r} & \mathcal{O}_X^{\oplus I}.
\end{tikzcd}
\end{equation}
Let $x \in E \subset X$ be a quasicompact neighborhood of $x \in X$. Let $x \in U \subset E$ be an open neighborhood of $x$ in $E$. Consider the map

\begin{equation}
\begin{tikzcd}
   \Gamma(U, \mathcal{O}_X^{\oplus J}) \ar{r} & \Gamma(U, \mathcal{O}_X^{\oplus I}),
\end{tikzcd}
\end{equation}
and denote by $s_j \in \Gamma(U, \mathcal{O}_X^{\oplus I})$ the image of $1 \in \mathcal{O}_X$ corresponding to the summand indexed by $j \in J$. Then, there exists a finite collection of open sets $U_{jk}$ indexed by $k \in K_j$ such that:

\begin{enumerate}
    \item $E \subset \bigcup_{k \in K_j} U_{jk}$;
    \item each restriction $\left. s_j \right|_{U_{jk}}$ is a finite sum $\sum_{i \in I_{jk}} f_{jki}$, with $I_{jk} \subset I$ finite, and $f_{jki} \in \mathcal{O}_X$, in the summand indexed by $i \in I$.
\end{enumerate}
Again, we set $I'_{j} = \cup_{k \in K_{j}} I_{jk}$. This is a finite set. Since $U \subset E \subset \bigcup_{k \in K_j} U_{jk}$, the section $\left. s_j \right|_{U}$ is a section of the finite direct sum $\mathcal{O}_{X}^{\oplus I'_{j}}$, and so it is a finite sum $\sum_{i \in I_{j}} f_{ij}$, with $f_{ij} \in \Gamma(U, \mathcal{O}_U)$.

Finally, we can define $M$: define it as the cokernel of the map

\begin{equation}
\begin{tikzcd}
   \Gamma(U, \mathcal{O}_U)^{\oplus J} \ar{r} & \Gamma(U, \mathcal{O}_U)^{\oplus I},
\end{tikzcd}
\end{equation}
given by the matrix with the entries $(f_{ij})$. By the construction $\mathscr{F}_2$, the sheaf $\mathscr{F}_M$ associated to the module $M$ has the same local presentation as our original quasicoherent sheaf $\mathscr{F}$, and so $\left. \mathscr{F} \right|_{U} \cong \mathscr{F}_M$.
\end{proof}

Before we explicitly construct the sheaves associated to modules over some affine scheme $\operatorname{Spec} R$ we will need a result in sheaf theory, that is used in the construction of the structure sheaf $\mathcal{O}_{\operatorname{Spec} R}$ on $\operatorname{Spec} R$:

\begin{lemma}\label{sheafonabasis}
Let $X$ be a topological space, and $\mathcal{B}$ be a base for the topology on $X$. Let $\mathscr{F}$ be a sheaf on this base. Then there exists a sheaf $\mathscr{F}^{ext}$ such that $\mathscr{F}(U) = \mathscr{F}^{ext}(U)$ for all $U \in \mathcal{B}$, and the restriction morphisms for elements of $\mathcal{B}$ are the same as those of $\mathscr{F}$.
\end{lemma}

In other words, given an association that is a \textbf{sheaf} if restricted to some \textbf{base} of the topology, we can extend it to a sheaf on the whole space.

\begin{proof}[Idea of proof:]
Define $\mathscr{F}^{ext}(U)$ as $(s_x)_{x \in U} \in \prod_{x \in U} \mathscr{F}_x$, satisfying the condition that for every $x \in U$, there exists $V \in \mathcal{B}$ such that $x \in V \subset U$ and $\sigma \in \mathscr{F}(V)$ such that for all $y \in V$, we have $s_y = \left. \sigma \right|_{V}$ in $\mathscr{F}_y$. We can than check that it agrees with $\mathscr{F}$ when restricted to $\mathcal{B}$, that it is indeed a sheaf, and that it extends the sheaf to the rest of the space (\cite{stacks-project}, Lemma 6.30.5, Lemma 6.30.6).
\end{proof}

It is also worth noting that, by construction $\mathscr{F}_x = \mathscr{F}_{x}^{ext}$, because the base for the topology is also in particular a fundamental system of neighborhoods of every $x \in X$. So now, let $R$ be a ring, $M$ an $R$-module, and consider the affine scheme $\operatorname{Spec} R$. Let $D(f)$ be a distinguished open set, and recall that \textbf{distinguished opens form a base for the Zariski topology on} $\operatorname{Spec} R$. Define

\begin{equation}
    \Tilde{M}(D(f)) = M_f
\end{equation}
that is, on the open set $D(f)$, the (a priori) presheaf $\Tilde{M}$ has value the localized module $M_f$. Lets compute the stalk $\Tilde{M}_x$, for $x \in X$ corresponding to the prime ideal $\mathfrak{p}$ of $R$. By definition, we have

\begin{align*}
    \Tilde{M}_x &= \varinjlim_{x \in U,\text{ $U$ dist. open}} \Tilde{M}(U)\\
                &= \varinjlim_{f \in R,\text{ } f(x) \neq 0 } M_f \\
                &= \varinjlim_{f \in R \setminus \mathfrak{p}} M_f \\
                &= M_{\mathfrak{p}}.
\end{align*}

Note in particular that from this definition, it follows that the global sections

\begin{equation}
\Tilde{M}(X) = \Gamma(X, \Tilde{M}) = M.
\end{equation}
Now, we have to check that $\Tilde{M}$ is indeed a sheaf on the distinguished open sets. For ease of notation, denote the distinguished open set $D(f) \subset X$ by $X_f$:

\begin{lemma}
Suppose $X_f = \cup_{i = 1}^{N} X_{g_i}$. Then

\begin{enumerate}
    \item if $\frac{b}{f^k} \in M_f$ maps to $0$ in each localization $M_{g_i}$, then $\frac{b}{f^k}$;
    \item if $\frac{b_i}{g_{i}^{k_i}} \in M_{g_i}$ is a set of elements such that $\frac{b_i}{g_{i}^{k_i}} = \frac{b_j}{g_{j}^{k_j}}$ in $M_{g_{i}g_{j}}$, then there exists $\frac{b}{f^k} \in M_f$ which maps to $\frac{b_i}{g_{i}^{k_i}}$ in each $i$.
\end{enumerate}
\end{lemma}

Note that this are just the identity and gluability axioms stated for the distinguished open sets as a basis for the Zariski topology.

\begin{proof}
This proof is pure algebraic manipulation. Start by noticing that our hypothesis implies that, for some $m \ge 1$, we have

\begin{equation}
    f^m = \sum a_i g_i
\end{equation}
for some $a_i \in R$. Raising this equation to some power, and factoring out the highest powers, we see that for each $n$ we get $m'$ and $a'_i$ such that

\begin{equation}
    f^{\prime m} = \sum a'_i g_{i}^{n}.
\end{equation}
Now, to prove the first part, suppose $\frac{b}{f^k} \in M_f$ is $0$ in $M_{g_i}$ for all $i$. Since for all $i$ we have that $X_{g_i} \subset X_f$, then for some $a \in R$, $m \ge 1$ we have that $g_{i}^m = af$, and so for large enough $n$, $g_{i}^{n}b = 0$ for all $i$. We then replace in last equation and get

\begin{equation}
    f^{m'}b = \sum a'_i (g_{i}^{n}b) = 0
\end{equation}
and so $\frac{b}{f^k} = 0$ in $M_f$.

To prove the second part, begin by noticing that $\frac{b_i}{g_{i}^{k_i}} = \frac{b_j}{g_{j}^{k_j}}$ in $M_{g_{i}g_{j}}$ means that, by taking the "least common divisor", we get, for some ${m_{ij}} \ge 1$

\begin{equation}
    (g_{i}g_{j})^{m_{ij}} g_{j}^{k_j} b_i = (g_{i}g_{j})^{m_{ij}} g_{i}^{k_i} b_j.
\end{equation}
Set $M = max({m_{ij}}) + max(k_i)$. Then

\begin{equation}
    \frac{b_i}{g_{i}^{k_i}} = \frac{b_i}{g_{i}^{k_i}} \frac{g_{i}^M}{g_{i}^M} = \frac{b_i g_{i}^{M-k_{i}}}{g_{i}^M}
\end{equation}
and call the numerator on the last term $b'_i$. We then have,

\begin{align*}
    g_{j}^M b'_i &= g_{j}^M b_i g_{i}^{M-k_{i}} = (g_{j}^{M-k_{j}} g_{i}^{M-k_{i}}) g_{j}^{k_{j}} b_i \\
    &= (g_{j}^{M-k_{j}} g_{i}^{M-k_{i}}) g_{i}^{k_{i}} b_j \text{  by the first identity, since we took $M-k_i$ and $M-k_j$ to be } \ge m_{ij} \\
    &= g_{i}^{M} b'_j
\end{align*}
Now, take $k$ and $a'_i$ such that

\begin{equation}
    f^k = \sum a'_i g_{i}^M.
\end{equation}
Define $b$ as $b = \sum a'_j b'_j$. We claim that with this $b$ that we have just defined

\begin{equation}
    \frac{b}{f^k} = \frac{b'_i}{g_{i}^M}
\end{equation}
in $M_{g_i}$. In fact:

\begin{align*}
    g_{i}^M b &= \sum_{j} g_{i}^M a'_j b'_j \\
              &= \sum{j} g_{j}^M a'_j b'_i \\
              &= f^k b'_i
\end{align*}
and so $\frac{b}{f^k} = \frac{b'_i}{g_{i}^M}$. Finally, since $\frac{b'_i}{g_{i}^M} = \frac{b_i}{g_{i}^{k_{i}}}$ by construction, the proof is finished.
\end{proof}

And so $\Tilde{M}$ is a sheaf on the basis of distinguished open sets, and by the above Lemma, we can extend it to a sheaf on all open sets of $\operatorname{Spec} R$, which we will also denote $\Tilde{M}$. We now show that the sheaf $\Tilde{M}$ is isomorphic to the sheaf $\mathscr{F}_M$ constructed above, and so, it is quasicoherent.

\begin{theorem}\label{qcohaffine2}
Let $X = \operatorname{Spec} $ be an affine scheme, $M$ be an $R$-module. There is a canonical sheaf isomorphism between the sheaves $\mathscr{F}_M$ associated to the module $M$, and the sheaf $\Tilde{M}$ defined above. This isomorphism is functorial in $M$. In particular, $\Tilde{M}$ is quasicoherent.
\end{theorem}

\begin{proof}
By the last item of Proposition \ref{modprop}, there is a morphism $\mathscr{F}_M \rightarrow \Tilde{M}$, corresponding to the map $M \rightarrow \Gamma(X, \Tilde{M}) = M$. Recall by Proposition \ref{isostalks} that we only need to check if a sheaf morphism is an isomorphism if it is an isomorphism on the induced map on the stalks. Let $x \in X$ be the point corresponding to the prime ideal $\mathfrak{p}$. By the third item in Proposition \ref{modprop}, $\mathscr{F}_{M,x} = \mathcal{O}_{X,x} \otimes_R M$, and so $\mathscr{F}_{M,x} \rightarrow \Tilde{M}_x$ is equal to $\mathcal{O}_{X,x} \otimes_R M \rightarrow \Tilde{M}_x$, which in turn is equal to $R_{\mathfrak{p}} \otimes_R M \rightarrow M_{\mathfrak{p}}$, which is an isomorphism by definition. Hence, $\mathscr{F}_M \rightarrow \Tilde{M}$ is an isomorphism.
\end{proof}

It is worth noting that we also have a mapping property, which follows directly from the mapping property on $\mathscr{F}_M$: for any other sheaf of $\mathcal{O}_X$-modules $\mathscr{G}$, we have

\begin{equation}\label{qcohaffine3}
    \Hom _{\mathcal{O}_X} (\Tilde{M}, \mathscr{F}) = \Hom _{R} (M, \Gamma(X, \mathscr{F}))
\end{equation}
that is, a map $\Tilde{M} \rightarrow \mathscr{F}$ corresponds to its effect on the global sections.

We have proved that any quasicoherent sheaf over $\operatorname{Spec} R$ is of the form $\Tilde{M}$, for some $M$. The last thing that is left to do is, given some quasicoherent sheaf $\mathscr{F}$, determine to which module $M$ it is associated to:

\begin{proposition}\label{qcohaffine4}
Let $\mathscr{F}$ be a quasicoherent sheaf of $\mathcal{O}_X$-modules over $\operatorname{Spec} R$. Then $\mathscr{F}$ is isomorphic to the sheaf $\Tilde{M}$, where $M = \Gamma(X, \mathscr{F})$ is the module of global sections of $\mathscr{F}$.
\end{proposition}

\begin{proof}
Let $\operatorname{Spec} R = \bigcup_{i = 1}^n D(f_i)$ be an open covering by distinguished open sets on $\operatorname{Spec} R$. By Proposition \ref{qcohaffine1}, we have $R_{f_i}$-modules, say $M_i$ such that over $D_{f_i}$, we have isomorphisms $\varphi_i$ of $\mathcal{O}_X$-modules

\begin{equation}
    \varphi_i: \left. \mathscr{F} \right|_{D(f_i)} \rightarrow \mathscr{F}_{M_i}.
\end{equation}

We now have to look at the overlaps $D(f_i f_j)$. Since the $D(f_i)$ are also affine, by the same argument, we get

\begin{equation}
    g_{ij} = \left. \varphi_{j} \right|_{D(f_i f_j)} \circ  \left. \varphi_{i}^{-1} \right|_{D(f_i f_j)} : \left. \mathscr{F}_{M_i} \right|_{D(f_i f_j)} \rightarrow \left. \mathscr{F}_{M_j} \right|_{D(f_i f_j)}
\end{equation}
and the $g_{ij}$ satisfy the cocycle condition over $D(f_i f_j f_k)$. Again, since all of these overlaps are affine, Theorem \ref{qcohaffine2} tells us that $\mathscr{F}_{M_i} \cong \Tilde{M_i}$, and over $D(f_i f_j)$, $\mathscr{F}_{M_i}(D(f_i f_j))$ is isomorphic to the localized module $(M_i)_{f_j}$. Moreover, the adjunction estabilished in (\ref{qcohaffine3}) tells us that each $g_{ij}$ correspond to an $R_{f_i f_j}$-linear morphism

\begin{equation}
    \Tilde{g}_{ij}: (M_i)_{f_j} \rightarrow (M_j)_{f_i}
\end{equation}
which also satisfy the cocycle condition over triple intersections $D(f_i f_j f_k)$. So the only thing that we are left to do is to glue up all of these modules to an $R$-module $M$ such that each $M_i \cong M_{f_i}$.

Consider the map

\begin{equation}
    \bigoplus_{i=1}^n M_i \rightarrow \bigoplus_{i = 1, j = 1}^n (M_i)_{f_j}
\end{equation}
mapping $(m_1,...,m_n)$ to $\frac{m_i}{1} - \Tilde{g}_{ji}(\frac{m_j}{1})$ on the $i, j$ coordinate. Then the module 

\begin{equation}
\begin{tikzcd}
  0 \ar{r} & M \ar{r} & \bigoplus_{i=1}^n M_i \ar{r} & \bigoplus_{i = 1, j = 1}^n (M_i)_{f_j}
\end{tikzcd}
\end{equation}
which is the kernel of this map is the module satisfying this property (\cite{stacks-project}, Lemma 10.23.5).

So now we want to show that this module $M$ is isomorphic to $\Gamma(X, \mathscr{F})$. Every $m \in M$ maps to $m_i = \frac{m}{1} \in M_i$, by construction of $M$. By construction of each $M_i$, each $m_i$ corresponds to a section $s_i$ of $\mathscr{F}(D(f_i))$ via $\varphi_i^{-1}(s_i) = m_i$, since each $\varphi_i$ is an isomorphism, and moreover, this image agrees on the overlaps $\mathscr{F}(D(f_i f_j))$, since $g_{ij}(m_i) = m_j$. The gluability axiom then tells us that the collection of the sections $s_i$ glue to a global section $s$ of $\mathscr{F}$. Moreover, since the map $M \rightarrow \bigoplus_{i=1}^n M_i$ is $R$-linear, and each $\varphi_i$ is $R$-linear, the association that maps $m$ to $s$ that we just constructed gives an $R$-linear map

\begin{equation}
    M \rightarrow \Gamma(X, \mathscr{F}).
\end{equation}
Again, (\ref{qcohaffine3}) tells us that this corresponds to a map of sheaves of $\mathcal{O}_X$-modules

\begin{equation}
    \Tilde{M} \rightarrow \mathscr{F}
\end{equation}
and so it suffices to check that it is an isomorphism by checking locally. But locally over the distinguished open sets $D(f_i)$, the map is given by $\varphi_i^{-1}$ which is an isomorphism by construction, and this concludes the proof.
\end{proof}

We conclude that the functors given $M \mapsto \Tilde{M}$ and $\mathscr{F} \mapsto \Gamma(X, \mathscr{F})$ define an equivalence of categories between the categories of quasicoherent $\mathcal{O}_X$-modules and the categories of $R$-modules.

\subsection{Derived Functors}

\begin{center}
    \textit{
    “The question you raise 'how can such a formulation lead to computations' doesn't bother me in the least! Throughout my whole life as a mathematician, the possibility of making explicit, elegant computations has always come out by itself, as a byproduct of a thorough conceptual understanding of what was going on. Thus I never bothered about whether what would come out would be suitable for this or that, but just tried to understand - and it always turned out that understanding was all that mattered. [Often the route to solving problems is] to bring new concepts out of the dark."}
    
    \medskip

-Alexander Grothendieck, in a letter to Ronnie Brown, 1983.

Available at https://mathshistory.st-andrews.ac.uk/Biographies/Grothendieck/

\end{center}

We begin motivating our discussion about derived functors with a proposition:

\begin{proposition}
Let $X$ be a topological space and

\begin{equation}
\begin{tikzcd}
   0 \ar{r} & \mathscr{F} \ar{r}{f} & \mathscr{G} \ar{r}{g} & \mathscr{H} \ar{r} & 0
\end{tikzcd}
\end{equation}
be an \textit{exact sequence of sheaves}. Then the sequence

\begin{equation}
\begin{tikzcd}
   0 \ar{r} & \Gamma(X, \mathscr{F}) \ar{r}{\Tilde{f}} & \Gamma(X, \mathscr{G}) \ar{r}{\Tilde{g}} & \Gamma(X, \mathscr{H})
\end{tikzcd}
\end{equation}
is an exact sequence of abelian groups, i.e., the \textbf{global section functor $\Gamma(X, -)$ is left-exact}.
\end{proposition}

\begin{proof}
We have already discussed in the definition of injective morphisms of sheaves that an injective morphism of sheaves induces injective morphisms on sections. So we need to prove exactness at $\Gamma(X, \mathscr{G})$. First, note that the sequence induces an exact sequence on the stalks

\begin{equation}
\begin{tikzcd}
   0 \ar{r} & \mathscr{F}_{x} \ar{r}{f_{x}} & \mathscr{G}_{x} \ar{r}{g_{x}} & \mathscr{H}_{x} \ar{r} & 0.
\end{tikzcd}
\end{equation}
First, we show that $\Ima \Tilde{f} \subseteq \ker \Tilde{g}$. Computing on the stalks, we get

\begin{equation}
    (\Tilde{g} \circ \Tilde{f}(s))_{x} = g_{x} \circ f_{x}(s) = 0.
\end{equation}
where the first equality is by definition and the second is by the exactness of the induced sequence on the stalks. Since $x$ is arbitrary, we conclude that $g_{x} \circ f_{x} = 0$ and therefore $\Tilde{g} \circ \Tilde{f} = 0$, and so $\Ima \Tilde{f} \subseteq \ker \Tilde{g}$. For the other inclusion $\Ima \Tilde{f} \supseteq \ker \Tilde{g}$, let $r \in \ker \Tilde{g}$. So $\Tilde{g}(r) = 0$. Since the sequence on the stalks is exact and $\Ima f_{x} = \ker g_{x}$ for any $x$, we can find some $s_{x} \in \mathscr{F}_x$ such that $f_{x}(s_x) = r_{x} \in \ker g_{x} = \Ima f_{x}$. Choose a class representative of this $s_{x}$, and call it $(s^x, V^x)$, with $s^x \in \mathscr{F}(V^x)$. So $f(s^x)$ and $\rho_{XV^x}(r) = \left. r \right|_{V^x}$ are both elements of $\mathscr{G}(V^x)$ whose stalks at $x$ coincide, and therefore there exists some open neighborhood of $x$ in $V^x$, which we can assume without loss of generality to be $V^x$ itself, in which 
\begin{equation}
f(s^x) = \left. r \right|_{V^x} \in \mathscr{G}(V^x).
\end{equation}
Since for every $x \in X$ we have such a $V^x$, they form a cover of $X$, and so there exists $s^x \in \mathscr{F}(V^x)$ and $s^y \in \mathscr{F}(V^y)$ that on $\mathscr{F}(V^{x} \cap V^{y})$ are mapped by $\Tilde{f}$ to the same $\left. r \right|_{V^x \cap V^y}$. Since $\Tilde{f}$ is injective as we have already discussed, this implies that $s^x = s^y$ in $\mathscr{F}(V^{x} \cap V^{y})$, and so we use the \textbf{gluability axiom} to find $s \in \mathscr{F}$ such that $\left. s \right|_{V^x} = s^x$ for each $x \in X$. The only thing that is left to do is to check that $\Tilde{f}(s) = r$. For this, note that $\rho_{XV^{x}}(\Tilde{f}(s) - r) = \left. (\Tilde{f}(s) - r) \right|_{V^x} = \Tilde{f}(s^x) - \left. r \right|_{V^x} = 0$ for every $x \in X$, and so by the \textbf{identity axiom}, $\Tilde{f}(s) - r = 0$. So $\Tilde{f}(s) = r$ and we are done.
\end{proof}

We would like to, in the spirit of Theorem \ref{lescohomology1} think about this sequence as the beginning of a long exact sequence, so that we can retrieve more information about the sheaves and about the topology of the space. This is done with the machinery of \textbf{derived functors}. Besides the usual references of $\cite{hartshorne}, \cite{vakil}$ for this section we also follow $\cite{ahtopos}$.

\medskip

\noindent\textbf{Abelian Categories and Exact Functors:} So far, through our discussion we have encountered some abelian categories and functors that were either left or right exact (or both). We give a quick review of the main properties of such objects:


\begin{definition}
    Let $\mathcal{C}$ be a category and let $B$ and $C$ be objects of $\mathcal{C}$. We say that a morphism $f:B \rightarrow C$ is a \textit{monomorphism} if for any object $A$ and any two morphisms $g_1 : A \rightarrow B$ and $g_2: A \rightarrow B$ such that $f \circ g_1 = f \circ g_2$ satisfy $g_1 = g_2$. Equivalently, that is the same as the map $\Hom (A, B) \rightarrow \Hom (A, C)$ being injective.
\end{definition}

Dually, we define the notion of \textit{epimorphism}, by reversing all the arrows in the above definition.

\begin{definition}
We say that a category $\mathcal{C}$ is \textit{additive} if it satisfies the following axioms:

\begin{enumerate}
    \item For each pair of objects $A$ and $B$ of $\mathcal{C}$, we have that $\Hom (A, B)$ has the structure of an \textit{abelian group}, and the composition law is \textit{bilinear}: that is, if we write  the group $\Hom (A, B)$ additively, we have that, for $f, f' \in \Hom (A, B)$ and $g, g' \in \Hom (B, C)$
    
    \begin{equation}
        (f + f') \circ (g + g') = f \circ g + f \circ g' + f' \circ g + f' \circ g'
    \end{equation}
    is an element of $\Hom (A, C)$, that is, \textbf{composition distributes over the group operation};
    \item $\mathcal{C}$ has a \textit{zero object} denoted by $0$, i.e., an object that is both \textit{initial} and \textit{final};
    \item For each pair of objects $A$ and $B$ of $\mathcal{C}$, the \textit{product} of $A$ and $B$ exists, and consequently, any \textit{finite product} of elements of $\mathcal{C}$ exists.
\end{enumerate}
\end{definition}

\begin{remark}
    If $\mathcal{C}$ is an additive category, then
    \begin{enumerate}
        \item the opposite category $\mathcal{C}^{\circ}$ is an additive category;
        \item if $\mathcal{D}$ is any category, the category $\mathcal{C}^{\mathcal{D}}$ whose objects are the \textit{functors from $\mathcal{D}$ to $\mathcal{C}$} and the morphisms are \textit{natural transformations}, is an additive category.
    \end{enumerate}
\end{remark}

\begin{remark}
It follows that an additive category also has all \textit{finite coproducts}. The coproduct in an additive category is denoted by $\bigoplus$.
\end{remark}

\begin{definition}
    Let $\mathcal{C}$ be an additive category, $B$ and $C$ be objects of $\mathcal{C}$ and $f: B \rightarrow C$ be a morphism.
    
    \begin{enumerate}
        \item a \textit{kernel} of $f$ is a morphism $i: A \rightarrow B$ such that $f \circ i = 0$, and $i$ is universal with respect to this property - that is, for any other morphism $i':A' \rightarrow B$ such that $f \circ i' = 0$, there is a unique morphism $g: A' \rightarrow A$ such that $i' = i \circ g$. If the kernel of $f$ exists, we denote it by $\ker f \rightarrow B$;
        \item dually, a \textit{cokernel} of $f$ is a morphism $p: C \rightarrow D$ such that $p \circ f = 0$, and is universal with respect to this property - that is, for any other morphism  $p': C \rightarrow D'$ such that $p' \circ f = 0$, there is a unique morphism $g: D \rightarrow D'$ such that $p' = g \circ p$. If the cokernel of $f$ exists, we denote it by $C \rightarrow \coker f$;
        \item the \textit{image} of $f$ is the kernel of $C \rightarrow \coker f$, provided that the cokernel exists. That is, $\Ima f = \ker(C \rightarrow \coker f)$.
    \end{enumerate}
\end{definition}

\begin{definition}
    Let $\mathcal{C}, \mathcal{C}'$ be additive categories. A functor $F: \mathcal{C} \rightarrow \mathcal{C}'$ is said to be \textit{additive} if it preserves the group structure of $\Hom$, that is, the map $\Hom _{\mathcal{C}}(A, B) \rightarrow \Hom _{\mathcal{C}'} (F(A), F(B))$ is a group homomorphism.
\end{definition}

\begin{definition}
    An \textit{abelian category} $\mathcal{C}$ is an additive category satisfying the additional three axioms:
    \begin{enumerate}
        \item every morphism has a kernel and a cokernel;
        \item every \textit{monomorphism} is the \textit{kernel of its cokernel};
        \item every \textit{epimorphism} is the \textit{cokernel of its kernel}.
    \end{enumerate}
\end{definition}

Examples of abelian categories are \textbf{Ab}, the category of abelian groups, and more generally \textbf{Mod$_{R}$} the category of modules over some ring $R$. For examples that we have already encountered, we have the categories $\textbf{Sh(X)}$ of sheaves of abelian groups over some topological space $X$, \textbf{Mod$_{\mathcal{O}_X}$} of sheaves of $\mathcal{O}_X$-modules on some ringed space $(X, \mathcal{O}_X)$, \textbf{QCoh($X$)} of quasicoherent sheaves of $\mathcal{O}_X$-modules and $\textbf{Coh(X)}$ of coherent sheaves of $\mathcal{O}_X$-modules.

\begin{remark}
Similarly to the case of additive categories, a category $\mathcal{C}$ is abelian if, and only if the opposite category $\mathcal{C}^{\circ}$ is abelian, and so we make use of duality arguments freely. 
\end{remark}

\begin{definition}
    Let $\mathcal{C}, \mathcal{C}'$ be abelian categories, and let
    
    \begin{equation}
    \begin{tikzcd}
    0 \ar{r} & A \ar{r} & B \ar{r} & C \ar{r} & 0.
    \end{tikzcd}
    \end{equation}
    be an exact sequence of elements of $\mathcal{C}$. An \textbf{additive functor} $F: \mathcal{C} \rightarrow \mathcal{C}'$ is said to be
    
    \begin{enumerate}
        \item \textit{left exact} if the sequence
            \begin{equation}
            \begin{tikzcd}
            0 \ar{r} & F(A) \ar{r} & F(B) \ar{r} & F(C)
            \end{tikzcd}
            \end{equation}
        is exact in $\mathcal{C}'$;
        \item \textit{right exact} if the sequence
            \begin{equation}
            \begin{tikzcd}
            F(A) \ar{r} & F(B) \ar{r} & F(C) \ar{r} & 0
            \end{tikzcd}
            \end{equation}
        is exact in $\mathcal{C}'$;
        \item \textit{exact} if the sequence
            \begin{equation}
            \begin{tikzcd}
            0 \ar{r} & F(A) \ar{r} & F(B) \ar{r} & F(C) \ar{r} & 0.
            \end{tikzcd}
            \end{equation}
        is exact in $\mathcal{C}'$.
    \end{enumerate}
\end{definition}

\begin{definition}
    We say that an object $I$ of an abelian category $\mathcal{C}$ is \textit{injective} if, for any monomorphism $f: A \rightarrow B$ and \textbf{any} morphism $\alpha: A \rightarrow I$, there exists a morphism $\beta: B \rightarrow I$ such that $\beta \circ f = \alpha$, i.e.,
    
    \begin{center}

    \begin{tikzpicture}[>=triangle 60]
    \matrix[matrix of math nodes,column sep={60pt,between origins},row
    sep={60pt,between origins},nodes={asymmetrical rectangle}] (s)
    {
    |[name=0]| 0 &|[name=A]| A &|[name=B]| B\\
    &|[name=I]| I\\
    };
    \draw[overlay,->, font=\scriptsize,>=latex] 
             (0) edge node[auto] {\(\)} (A)
             (A) edge node[auto] {\(f\)} (B)
             (A) edge node[auto] {\(\alpha\)} (I)
             (B) edge node[auto] {\(\exists \beta\)} (I)
    ;
    
    ;

    \end{tikzpicture}

    \end{center}
    is commutative. Dually, we define a \textit{projective} object $P$ in $\mathcal{C}$
    
    \begin{center}

    \begin{tikzpicture}[>=triangle 60]
    \matrix[matrix of math nodes,column sep={60pt,between origins},row
    sep={60pt,between origins},nodes={asymmetrical rectangle}] (s)
    {
    &|[name=P]| P\\
    |[name=B]| B &|[name=C]| C &|[name=0]| 0\\ 
    };
    \draw[overlay,->, font=\scriptsize,>=latex] 
             (B) edge node[auto] {\(f\)} (C)
             (C) edge node[auto] {\(\)} (0)
             (P) edge node[auto][swap] {\(\exists \delta\)} (B)
             (P) edge node[auto] {\(\gamma\)} (C)
    ;
    
    ;

    \end{tikzpicture}

    \end{center}
\end{definition}

\begin{proposition}\label{hominj}
Let $\mathcal{C}$ be an abelian category, and let $I$ be an object of $\mathcal{C}$. Then $I$ is injective if, and only if the functor $\Hom _{\mathcal{C}} (-, I)$ is exact.
\end{proposition}

\begin{proof}
Let

    \begin{equation}
    \begin{tikzcd}
    0 \ar{r} & A \ar{r}{f} & B \ar{r}{g} & C \ar{r} & 0.
    \end{tikzcd}
    \end{equation}
be an exact sequence in $\mathcal{C}$. Recall that the functor $\Hom _{\mathcal{C}} (-, I)$ is a left exact functor $\mathcal{C}^{\circ} \rightarrow \textbf{Ab}$.

$(\Longrightarrow)$ Let $I$ be injective. We need to prove that the induced map $f_{*}: \Hom _{\mathcal{C}} (B, I) \rightarrow \Hom _{\mathcal{C}} (A, I)$ is surjective. Since $I$ is injective and $f$ is a monomorphism by hypothesis, given any $\alpha \in \Hom _{\mathcal{C}} (A, I)$, there exists some $\beta \in \Hom _{\mathcal{C}} (B, I)$ such that $\alpha = \beta \circ f = f_{*} \beta$, and so $f_{*}$ is surjective.

$(\Longleftarrow)$ Now, let $\Hom _{\mathcal{C}} (-, I)$ be exact, and let $\alpha \in \Hom _{\mathcal{C}} (A, I)$. We have that

    \begin{equation}
    \begin{tikzcd}
    0 \ar{r} & Hom _{\mathcal{C}} (C, I) \ar{r}{g_{*}} & Hom _{\mathcal{C}} (B, I) \ar{r}{f_{*}} & Hom _{\mathcal{C}} (A, I) \ar{r} & 0.
    \end{tikzcd}
    \end{equation}
is exact, and so $f_{*}$ is surjective. So for $\alpha \in Hom _{\mathcal{C}} (A, I)$, there exists some $\beta \in Hom _{\mathcal{C}} (B, I)$, such that $\alpha = f_{*}\beta = \beta \circ f$, and therefore $I$ is injective.
\end{proof}

\begin{proposition}\label{dirsuminj}
Let $\mathcal{C}$ be an abelian category, and let $I_1$ and $I_2$ be two objects. Then their direct sum $I_1 \oplus I_2$ is injective if, and only if $I_1$ and $I_2$ are both injective.
\end{proposition}

\begin{proof}
$(\implies)$ Suppose that $I_1 \oplus I_2$ is injective. Let $f: A \rightarrow B$ be a monomorphism in $\mathcal{C}$, and let $\alpha: A \rightarrow I_1$ be any morphism. Consider the canonical map $s_1: I_1 \rightarrow I_1 \oplus I_2$. Since $I_1 \oplus I_2$ is injective by hypothesis, there exists $\beta \in \Hom (B, I_1 \oplus I_2)$ such that $s_1 \circ \alpha = \beta \circ f$. Composing with the projection in the first coordinate $p_1: I_1 \oplus I_2 \rightarrow I_1$, we get that $p_1 \circ s_1 \circ \alpha = p_1 \circ \beta \circ f$. Then, the left hand side is $\alpha$, and $p_1 \circ \beta \in \Hom(B, I_1)$ is such that $\alpha = p_1 \circ \beta \circ f$, and so $I_1$ is injective. Doing a symmetrical argument for $I_2$, we conclude that it is also injective.

$(\Longleftarrow)$ Now suppose that $I_1$ and $I_2$ are injective. Let $f: A \rightarrow B$ be a monomorphism in $\mathcal{C}$, and let $\alpha: A \rightarrow I_1 \oplus I_2$ be any morphism. Since $I_1$ is injective, composing with the projection in the first coordinate $p_1: I_1 \oplus I_2 \rightarrow I_1$, $p_1 \circ \alpha$, we get a morphism in $\Hom (A, I_1)$, and so there is some $\beta_1 \in \Hom (B, I_1)$ such that $p_1 \circ \alpha = \beta_1 \circ f$. Doing a symmetrical argument for $I_2$, we obtain $\beta_2 \in \Hom (B, I_2)$ such that $p_2 \circ \alpha = \beta_2 \circ f$. We then have two morphisms, $\beta_1$ and $\beta_2$, and so by the universal property of the coproduct, there exists some $\beta \in \Hom (B, I_1 \oplus I_2)$ such that both $\beta_1$ and $\beta_2$ factor through it. So we have a morphism $\beta: B \rightarrow I_1 \oplus I_2$ such that $\alpha = \beta \circ f$, and $I_1 \oplus I_2$ is injective.
\end{proof}

\begin{remark}
Note that the implication $(\Longleftarrow)$ above is true for any family of injective objects, that is $\{I_j\}_{j \in J}$ is such that for every $j \in J$, $I_j$ is injective, then $I_{j}^{\oplus J}$ is injective.
\end{remark}

Recall that given an abelian category $\mathcal{C}$, we can construct the category \textbf{Com}$_{\mathcal{C}}$ of complexes in $\mathcal{C}$.

\begin{definition}
    Let $\mathcal{C}$ be an abelian category and $A$ be an object in $\mathcal{C}$. Let $I^{\bullet} \in \textbf{Com}_{\mathcal{C}}$ be such that $I^i = 0$ for all $i < 0$, and let $A \rightarrow I^0$ be a morphism. We say that $I^{\bullet}$ is a \textit{resolution} of $A$, if
    
    \begin{equation}
    \begin{tikzcd}
    0 \ar{r} & A \ar{r} & I^0 \ar{r} & I^1 \ar{r} & I^2 \ar{r} &{\ldots}
    \end{tikzcd}
    \end{equation}
    is exact. Furthermore, if each $I^i$ is injective, we say that $I^{\bullet}$ is an \textit{injective resolution} of A.
\end{definition}

\begin{definition}
    We say that an abelian category $\mathcal{C}$ has \textit{enough injectives} if for every object $A$ of $\mathcal{C}$ there exists a monomorphism $A \rightarrow I$ with $I$ injective.
\end{definition}

\begin{theorem}
Let $F: \mathcal{C} \rightarrow \mathcal{C}'$ be a left exact functor between two abelian categories, and suppose that the source category $\mathcal{C}$ has enough injectives. Then

\begin{enumerate}
    \item For every $i \ge 0$, there exists additive functors $R^{i}F: \mathcal{C} \rightarrow \mathcal{C}'$;
    \item $F \cong R^{0}F$ is an isomorphism;
    \item For each short exact sequence
    
    \begin{equation}
    \begin{tikzcd}
    0 \ar{r} & A_1 \ar{r} & A_2 \ar{r} & A_3 \ar{r} & 0
    \end{tikzcd}
    \end{equation}
    and each $i \ge 0$, there exists a morphism $\delta^i: R^{i}F(A_3) \rightarrow R^{i+1}F(A_1)$ such that the sequence
    
    \begin{equation}
    \begin{tikzcd}
    {\ldots} \ar{r} & R^{i}F(A_2) \ar{r} & R^{i}F(A_3) \ar{r}{\delta^i} & R^{i+1}F(A_1) \ar{r} & R^{i+1}F(A_2) \ar{r} & {\ldots}
    \end{tikzcd}
    \end{equation}
    is exact.
\end{enumerate}
\end{theorem}

The last item says that, for each short exact sequence, we get connecting morphisms that induce a long exact sequence on the $R^{i}F$. We will not present the proof of the theorem in this text, because it needs some more homological algebra to discuss the homotopic invariance of the constructed functors, which we will not develop here (for the proof, we refer the reader to \cite{ahtopos}, 1, Teorema 1.4.9). Despite not presenting the full proof here, it deserves some comments: let $A$ be an arbitrary object of $\mathcal{C}$, and let $I^{\bullet}$ be an injective resolution for $A$, that we denote by $I^{\bullet}_A$. Since $F$ is left exact, $F(I^{\bullet}_A)$ is a complex in $\mathcal{C}'$. For each $i \ge 0$, define $R^{i}F$ to be the cohomology at the $i$ position of the complex $F(I^{\bullet}_A)$, that is

\begin{equation}
    R^{i}F(A) = \mathrm{H}^i(F(I^{\bullet}_A)).
\end{equation}

Looking at this definition, the first thing that one would need to check is that it does not depend on the choice of injective resolution $I^{\bullet}$. This requires a lemma in homological algebra that states that if $I^{\bullet}$ and $J^{\bullet}$ are two injective resolutions of the same object, then they are homotopy-equivalent. For a reference, cf. \cite{ahtopos} 1, Teorema 1.4.6 or \cite{weibel_1994}, Comparison Theorem 2.2.6. One would then proceeds to argue that homotopy equivalent complexes induce isomorphic cohomology theories, and so the definition of the $R^{i}F$ independs of $I^{\bullet}$.

\begin{definition}
    The functors $R^{i}F$ defined above are called the \textit{right derived functors} of the left exact functor $F$. Dually, if $G$ is a right exact functor, we construct the \textit{left derived functors} of $G$, and denote them $L_{n}G$.
\end{definition}

Note that if the functor $F$ is also right exact (i.e., is an exact functor), then 

\begin{equation}
    R^{i}F = \mathrm{H}^i(F(I^{\bullet})) = 0
\end{equation}
for all $i > 0$. To see this, let $Q$ be an injective object such that there is a monomorphism $A \rightarrow Q$, and take the resolution to be

\begin{equation}
\begin{tikzcd}
    0 \ar{r} & A \ar{r} & Q \ar{r} & Q \ar{r} & 0.
\end{tikzcd}
\end{equation}

\begin{example}
Let $A$ be a ring, $M$ be an $A$-module. We know that the functor $M \otimes_{A} -$ is a \textit{right exact functor} $\textbf{Mod$_{A}$} \rightarrow \textbf{Mod$_{A}$}$, and so we can construct its left derived functors, as discussed above. These functors are denoted $\operatorname{Tor}^{A}_{i}(M, -)$. An interesting application of the $\operatorname{Tor}^{A}_{i}(M, -)$ functors is to find the obstruction for the \textit{flatness} of an $A$-module. 

Recall that an $A$-module $M$ if \textbf{flat} if the functor $M \otimes_{A} -$ is exact, i.e., if it preserves monomorphisms (since it is always right exact, and so preserves epimorphisms anyway). The notion of flatness is an algebraic condition with deep geometric consequences. As was discussed above, we already know that if $M$ is a flat $A$-module then it follows that for any $A$-module $N$, $\operatorname{Tor}^{A}_{i}(M, N) = 0$ for all $i > 0$. Recall the commutative algebra result of the ideal-theoretic criterion for flatness:

\begin{theorem}
Let $M$ be an $A$-module. Then $M$ is flat if, and only if for every ideal $I$ of $A$, the canonical morphism $I \otimes_{A} M \rightarrow A \otimes_{A} M$ is an injection, and therefore $I \otimes_{A} M \rightarrow IM$ is an isomorphism.
\end{theorem}
The proof of the criterion can be found for example in \cite{liu}, 1.2, Theorem 2.4 or \cite{matsumura_1987}, 3, Theorem 7.7. In fact, one can prove that it suffices to check the condition for \textit{finitely generated} ideals only (cf. \cite{vakil}, Unimportant remark 24.4.3), using a colimit argument (cf. \cite{rotman}, 3, Theorem 3.53, Theorem 3.54). We then have the following theorem, from \cite{matsumura_1987}:

\begin{theorem}
Let $M$ be an $A$-module. The following are equivalent

\begin{enumerate}
    \item $M$ is flat;
    \item for every $A$-module $N$, we have $\operatorname{Tor}^{A}_{1}(M, N) = 0$;
    \item $\operatorname{Tor}^{A}_{1}(M, A/I) = 0$ for every finitely generated ideal $I$.
\end{enumerate}
\end{theorem}

\begin{proof}
We have already discussed the first implication, and the second implication is obvious since $A/I$ is an $A$-module. So we're left to check that $(3) \implies (1)$. Consider the canonical short exact sequence

\begin{equation}
\begin{tikzcd}
    0 \ar{r} & I \ar{r} & A \ar{r} & A/I \ar{r} & 0.
\end{tikzcd}
\end{equation}
We then use the induced long exact sequence

\begin{equation}
\begin{tikzcd}
    \operatorname{Tor}^{A}_{1}(M, A/I) \ar{r} & \operatorname{Tor}^{A}_{0}(M, I) \ar{r} & \operatorname{Tor}^{A}_{0}(M, A) \ar{r} & \operatorname{Tor}^{A}_{0}(M, A/I) \ar{r} & 0.
\end{tikzcd}
\end{equation}
Since by hypothesis $\operatorname{Tor}^{A}_{1}(M, A/I) = 0$ and $\operatorname{Tor}^{A}_{0}(M, N)$ is $M \otimes_{A} N$, the above sequence becomes

\begin{equation}
\begin{tikzcd}
    \operatorname{Tor}^{A}_{1}(M, A/I) = 0 \ar{r} & I \otimes_{A} M \ar{r} & A \otimes_{A} M = M \ar{r} & M \otimes_{A} A/I \ar{r} & 0
\end{tikzcd}
\end{equation}
and so $I \otimes_{A} M \rightarrow A \otimes_{A} M$ is injective, and $M$ is flat.
\end{proof}
\end{example}

\begin{example}\label{univcoeffthm}
Dually, we know that the functor $\Hom(-,N)$ is left-exact. Its right derived functor are the $\operatorname{Ext}_{A}^i(-, N)$ functors. One application of the $\operatorname{Ext}_{A}^i$ functors is the Universal Coefficient Theorem in algebraic topology, which relates the singular cohomology groups with coefficients in different groups. It states that, for some coefficient group $G$ we have a short exact sequence

\begin{equation}
\begin{tikzcd}
    0 \ar{r} & \operatorname{Ext}_{\mathbb{Z}}^1(\mathrm{H}_{k-1}(X, \mathbb{Z}), G) \ar{r} & \mathrm{H}^{k}_{Sing}(X, G) \ar{r} & \Hom(\mathrm{H}_k(X, \mathbb{Z}), G) \ar{r} & 0.
\end{tikzcd}
\end{equation}

We also have the dual version, relating the singular homology groups with coeffficients in different groups, which states that the following sequence is exact

\begin{equation}
\begin{tikzcd}
    0 \ar{r} & \mathrm{H}_k(X, \mathbb{Z}) \otimes G \ar{r} & \mathrm{H}_k(X, G) \ar{r} & \operatorname{Tor}^{\mathbb{Z}}_{1}(\mathrm{H}_{k-1}(X, \mathbb{Z}), G) \ar{r} & 0.
\end{tikzcd}
\end{equation}
This last sequence tells us, for example, that if $G$ is a field of characteristic $0$, then it is a flat $\mathbb{Z}$-module, and so by last example, the group $\operatorname{Tor}^{\mathbb{Z}}_{1}(\mathrm{H}_{k-1}(X, \mathbb{Z}), G)$ is $0$. Therefore the vector spaces $\mathrm{H}_k(X, \mathbb{Z}) \otimes G$ and $\mathrm{H}_k(X, G)$ have the same dimension.
\end{example}


We now wish to define the cohomology groups $\mathrm{H}^i(X, \mathscr{F})$ of a space $X$ with values in a sheaf $\mathscr{F}$ as the right derived functors of the global section functor $\Gamma(X, -)$.

\begin{theorem}\label{eninj}
Let $X$ be a topological space. Then the category $\textbf{Sh(X)}$ of sheaves of abelian groups on $X$ has enough injectives.
\end{theorem}

For the proof of the above, we will need some preliminary results

\begin{lemma}
An abelian group $D$ is an injective object in \textbf{Ab} if, and only if $D$ is a \textit{divisible group}, that is, if it is divisible when seen as a $\mathbb{Z}$-module: for all $y \in D$ and every $n \in \mathbb{Z}$, there exists some $x \in D$ such that $y = nx$.
\end{lemma}

This is a well known fact in group theory, and its proof uses the Axiom of Choice, disguised as Zorn's Lemma.

\begin{lemma}
The category \textbf{Ab} = \textbf{Mod$_{\mathbb{Z}}$} has enough injectives.
\end{lemma}

\begin{proof}
Consider the $\mathbb{Z}$-module (abelian group) $\mathbb{Q}/\mathbb{Z}$. It obviously is a divisible $\mathbb{Z}$-module, and so by the above lemma, it is injective. For any abelian group $H$, denote by $H^{\lor}$ the group $\Hom_{\mathbb{Z}}(H, \mathbb{Q}/\mathbb{Z})$. Now let $G$ be any abelian group. There is a natural map $\psi: G \rightarrow G^{\lor \lor}$. Now, consider the projection from some free module $\bigoplus_{j \in J}\mathbb{Z} \rightarrow G^{\lor}$. Then, applying the contravariant functor $\Hom_{\mathbb{Z}}(-, \mathbb{Q}/\mathbb{Z})$, we get

\begin{equation}
    G^{\lor \lor} = \Hom_{\mathbb{Z}}(G^{\lor}, \mathbb{Q}/\mathbb{Z}) \rightarrow \Hom_{\mathbb{Z}}(\bigoplus_{j \in J}\mathbb{Z}, \mathbb{Q}/\mathbb{Z}) = (\mathbb{Q}/\mathbb{Z})^{\oplus J}.
\end{equation}
By Proposition \ref{dirsuminj}, $(\mathbb{Q}/\mathbb{Z})^{\oplus J}$ is injective. Since $\bigoplus_{j \in J}\mathbb{Z} \rightarrow G^{\lor}$ is a surjection,

\begin{equation}
    \Hom_{\mathbb{Z}}(G^{\lor}, \mathbb{Q}/\mathbb{Z}) \rightarrow \Hom_{\mathbb{Z}}(\bigoplus_{j \in J}\mathbb{Z}, \mathbb{Q}/\mathbb{Z})
\end{equation}
is an injection since $\mathbb{Q}/\mathbb{Z}$ is injective (Proposition \ref{hominj}), and so we're left to prove that the map $\psi: G \rightarrow G^{\lor \lor}$ is injective, and we will have injected our group $G$ in the injective $\mathbb{Z}$-module $(\mathbb{Q}/\mathbb{Z})^{\oplus J}$. The injectivity of $\psi$ follows from \textit{Baer's Criterion}, see cf. \cite{weibel_1994}, Chap. 2, Section 2.3.
\end{proof}

\begin{proof}[Proof of Theorem \ref{eninj}:]
We begin by defining a pair of adjoint functors

\begin{equation}
    x^{*}: \textbf{Sh(X)} \rightarrow \textbf{Ab}
\end{equation}
and

\begin{equation}
      x_{*}: \textbf{Ab} \rightarrow \textbf{Sh(X)}
\end{equation}
in the following way: given a sheaf of abelian groups $\mathscr{F} \in \textbf{Sh(X)}$, $x^{*}\mathscr{F}$ will be the stalk of $\mathscr{F}$ at $x$, that is, $x^{*}\mathscr{F} = \mathscr{F}_x$. Given an abelian group $G$, $x_{*}G$ will be the \textbf{skyscrapper sheaf} with value $G$, centered at $x$ (see Example \ref{skyscrapper}), which we denote $i_{x,*}G$ (this notation will soon become clear). We first prove that they are indeed adjoint: consider the inclusion of the point $x$ in $X$
\begin{equation}
    i_{x}: \{x\} \rightarrow X.
\end{equation}
Let's compute the pullback of $\mathscr{F}$ by the morphism $i_{x}$, that is, $i_{x}^{-1}\mathscr{F}$. By definition

\begin{equation}
i_{x}^{-1}\mathscr{F} = (U \mapsto \varinjlim_{i(U) \subset V} \mathscr{F}(V))^{+} = (\{x\} \mapsto \varinjlim_{i(\{x\}) \subset V} \mathscr{F}(V))^{+} = (\{x\} \mapsto \varinjlim_{x \in V} \mathscr{F}(V))^{+}.
\end{equation}
That is, $i_{x}^{-1}\mathscr{F}$ is given by the sheafification of the stalk functor, which is already a sheaf. So on one hand, we have checked that we can see \textbf{the stalk functor as the pullback by the inclusion $i_x$}. On the other hand, let's now compute the pushforward of the constant sheaf $\underbar{G}$ (which by abuse of notation we denote by $G$) by $i_{x}$, that is $i_{x,*}G$: by definition, for $U \subseteq X$ open

\begin{equation}
i_{x,*}G(U) = G(i_{x}^{-1}(U)).
\end{equation}
If $x \in U$, then we get

\begin{equation}
i_{x,*}G(U) = G(i_{x}^{-1}(U)) = G(\{x\}) = G.
\end{equation}
On the other hand, if $x \notin U$, then

\begin{equation}
i_{x,*}G(U) = G(i_{x}^{-1}(U)) = G(\emptyset)
\end{equation}
which we have defined to be the one point space $\{*\}$. Therefore, we checked that we can see the \textbf{skyscrapper sheaf with value $G$, centered at $x$ as the pushforward of the constant sheaf $\underbar{G}$ by the inclusion $i_{x}$}. So by the adjoint property between $i_{x}^{-1}$ and $i_{x, *}$, we conclude that the \textbf{stalk sheaf and the skyscrapper sheaf are adjoint}, and the pair of functors $x^{*}$ and $x_{*}$ that we have just defined are indeed adjoint.

Now, choose for each $x \in X$ an injective object $D_x$ in \textbf{Ab} such that there is a monomorphism $\mathscr{F}_x \rightarrow D_x$. Define a sheaf of abelian groups $\mathscr{D}$ in $X$ in the following way

\begin{equation}
    \mathscr{D} = \prod_{x \in X} x_{*}D_{x}.
\end{equation}

Its stalk at $x$ is $D_x$, and so the monomorphism $\mathscr{F}_x \rightarrow D_x$ induces a monomorphism $\mathscr{F} \rightarrow D$. Now, let $\mathscr{G}$ and $\mathscr{G}'$ be two sheaves in $\textbf{Sh(X)}$. We have

\begin{align*}
    \prod_{x \in X} \Hom_{\textbf{Ab}}(\mathscr{G}_x, D_x) &\cong \prod_{x \in X} \Hom_{\textbf{Ab}}(x^{*} \mathscr{G}, D_x) \\
    &\cong \prod_{x \in X} \Hom_{\textbf{Sh(X)}}(\mathscr{G}, x_{*}D_x) \\
    &\cong \Hom_{\textbf{Sh(X)}} (\mathscr{G}, \prod_{x \in X} x_{*}D_x) \\
    &\cong \Hom_{\textbf{Sh(X)}} (\mathscr{G}, \mathscr{D})
\end{align*}
where the first isomorphism is by definition, the second is by adjointness of $x^{*}$ and $x_{*}$, the third is because $\Hom$ commutes with limits, and the fourth is by definition. That tells us that given a commutative diagram

    \begin{center}

    \begin{tikzpicture}[>=triangle 60]
    \matrix[matrix of math nodes,column sep={60pt,between origins},row
    sep={60pt,between origins},nodes={asymmetrical rectangle}] (s)
    {
    |[name=0]| 0 &|[name=A]| \mathscr{G}_x &|[name=B]| \mathscr{G}'_x\\
    &|[name=I]| D_x\\
    };
    \draw[overlay,->, font=\scriptsize,>=latex] 
             (0) edge node[auto] {\(\)} (A)
             (A) edge node[auto] {\(\)} (B)
             (A) edge node[auto] {\(\)} (I)
             (B) edge node[auto] {\(\)} (I)
    ;
    
    ;

    \end{tikzpicture}
    \end{center}
    for any $x$, we can always complete the diagram of the induced maps to get a commutative diagram
    
        \begin{center}

    \begin{tikzpicture}[>=triangle 60]
    \matrix[matrix of math nodes,column sep={60pt,between origins},row
    sep={60pt,between origins},nodes={asymmetrical rectangle}] (s)
    {
    |[name=0]| 0 &|[name=A]| \mathscr{G} &|[name=B]| \mathscr{G}'\\
    &|[name=I]| \mathscr{D}\\
    };
    \draw[overlay,->, font=\scriptsize,>=latex] 
             (0) edge node[auto] {\(\)} (A)
             (A) edge node[auto] {\(\)} (B)
             (A) edge node[auto] {\(\)} (I)
             (B) edge node[auto] {\(\)} (I)
    ;
    
    ;

    \end{tikzpicture}
    \end{center}
    and so $\mathscr{D}$ is injective.
\end{proof}

\subsection{Comparison Theorems and Examples}

In this section we discuss some comparison theorems between the \v{C}ech cohomology theory and the derived functor cohomology theory, mostly without proofs, and present some examples. This section is intended just as an illustration of the settings in which the theories coincide, as a way to showcase the computational strength of the \v{C}ech theory.

\begin{example}\label{exsphere}
Let $S^1$ be again the circle with the usual topology, and take again the cover $\mathcal{U} = \{U, V\}$, with $U$ and $V$ being two open semi-circles overlapping on each end, so $U \cap V$ consists of two disjoint small open intervals as in Example (\ref{circlecohomology}). Consider the sheaf \underbar{$\mathbb{Z}$} on $S^1$. We have

\begin{align*}
    &C^{0}(\mathcal{U}, \underbar{$\mathbb{Z}$}) = \Gamma(U, \underbar{$\mathbb{Z}$}) \times \Gamma(V, \underbar{$\mathbb{Z}$}) = \mathbb{Z} \times \mathbb{Z} \\
    &C^{1}(\mathcal{U}, \underbar{$\mathbb{Z}$}) = \Gamma(U \cap V, \underbar{$\mathbb{Z}$}) = \mathbb{Z} \times \mathbb{Z}.
\end{align*}
The differential map $d^0: C^{0}(\mathcal{U}, \underbar{$\mathbb{Z}$}) \rightarrow C^{1}(\mathcal{U}, \underbar{$\mathbb{Z}$})$ maps $(a, b) \in \mathbb{Z}$ to $(b-a, b-a) \in \mathbb{Z}$, and so its kernel is $\mathbb{Z}$, and $\check{\mathrm{H}}^{1}(\mathcal{U}, \underbar{$\mathbb{Z}$}) = \mathbb{Z}$. So $\check{\mathrm{H}}^{0}(\mathcal{U}, \underbar{$\mathbb{Z}$}) = \Gamma(S^1, \underbar{$\mathbb{Z}$}) = \mathbb{Z}$ and $\check{\mathrm{H}}^{1}(\mathcal{U}, \underbar{$\mathbb{Z}$}) = \mathbb{Z}$.

On the other hand, computing the derived functor cohomology of \underbar{$\mathbb{Z}$} on $S^1$ is much harder. It is left as an exercise in Hartshorne (\cite{hartshorne}, Chap. 3, Exercise 2.7) to prove that $\mathrm{H}^{1}(S^1, \underbar{$\mathbb{Z}$}) = \mathbb{Z}$. It can be computed by constructing an injective resolution of \underbar{$\mathbb{Z}$}. This is done using a similar construction to the one presented in Theorem \ref{eninj}, in the context of \textbf{Mod$_{\mathcal{O}_X}$} (cf. \cite{hartshorne}, Chap. 3, Proposition 2.2): denote again the \textbf{skyscrapper sheaf centered at $x$ with value $\mathbb{Z}$} by $i_{x,*}\mathbb{Z}$. Define $I_0 = \prod_{x \in S^1} i_{x,*}\mathbb{Z}$, $I_1 = \prod_{x \in S^1} I_{0, x}/\mathbb{Z}$, $I_2 = \prod_{x \in S^1} I_{1, x}/I_{0, x}$, and so on, where $I_{i, x}$ is the stalk of $I_i$ at $x$. One can then check that this is indeed an injective resolution, and use it to conclude that $\mathrm{H}^{1}(S^1, \underbar{$\mathbb{Z}$}) = \mathbb{Z}$.
\end{example}

\begin{example}
Let $A \rightarrow B$ be an $A$-algebra, and $M$ be a $B$-module. A \textit{derivation} of $B$ over $A$ taking values in $M$ is an \textit{$A$-linear map}

\begin{equation}
    d: B \rightarrow M
\end{equation}
such that for all, $a, b \in B$, it satisfies the \textit{Leibniz rule}:

\begin{equation}
    d(ab) = d(a)b + ad(b).
\end{equation}

One can construct a \textit{universal} derivation $d$ and a $B$-module $\Omega^1_{B/A}$, that is, for any other derivation $\Tilde{d}$ and $B$-module $M$, we have a commutative diagram

\begin{center}

\begin{tikzpicture}[>=triangle 60]
\matrix[matrix of math nodes,column sep={60pt,between origins},row
sep={60pt,between origins},nodes={asymmetrical rectangle}] (s)
{
|[name=A]| A &|[name=B]| \Omega^1_{B/A} \\
&|[name=A']| M\\
};
\draw[overlay,->, font=\scriptsize,>=latex] 
          (A) edge node[auto] {\(d\)} (B)
          (A) edge node[auto] {\(\Tilde{d}\)} (A')
          (B) edge node[auto] {\(\)} (A')
;

;

\end{tikzpicture}

\end{center}
such that $\Tilde{d}$ factors uniquely through $\Omega^1_{B/A}$. This construction is done purely in a formal algebraic way: we define the module $\Omega^1_{B/A}$ to be the free $A$-module generated by the symbols $da$ for every $a \in A$, subject to the relations of linearity and the Leibniz rule. The module $\Omega^1_{B/A}$ is called the module of \textit{Kahler differentials}.

Localizing $B$ with respect to some multiplicative subset $S$ we have the following isomorphism

\begin{equation}
    \Omega^1_{B/A} \otimes_B S^{-1}B \cong \Omega^{1}_{S^{-1}B/A}
\end{equation}
and the sheaf obtained from the association

\begin{equation}
    D(f) \mapsto \Omega^{1}_{B_{f}/A} \cong \Omega^1_{B/A} \otimes_B B_f
\end{equation}
using Lemma \ref{sheafonabasis} is a \textit{quasicoherent sheaf} $\Omega^1_{\operatorname{Spec} B/\operatorname{Spec} A}$ on $\operatorname{Spec} B$ over $\operatorname{Spec} A$. These sheaves can then be glued to a global quasicoherent sheaf on \textit{any} scheme $X$. Its sections are called \textit{regular} or \textit{algebraic differential forms on $X$}.

Let's compute the cohomology of this sheaf in the case of the scheme $X = \mathbb{P}^1$ over $\mathbb{C}$: a regular differential form $\omega \in \Omega^1_{X/\mathbb{C}}$ is determined by its restrictions to the standard affine open subsets as in Example \ref{P1} and Example \ref{Pn}, that is,

\begin{equation}
    U_1 = \mathbb{P}^1 \setminus \{(1:0)\}
\end{equation}
and

\begin{equation}
    U_2 = \mathbb{P}^1 \setminus \{(0:1)\}.
\end{equation}
So over $U_1$ we have $\operatorname{Spec} \mathbb{C}[t]$, and over $U_2$ we have $\operatorname{Spec} \mathbb{C}[s]$. Recall that on the overlap $U_1 \cap U_2$, these two affine subsets agree via the identification $z \mapsto \frac{1}{z}$, and so we have

\begin{equation}\label{ts1}
    t\cdot s = 1
\end{equation}
over $U_1 \cap U_2$.
 
So if $\left. \omega \right|_{U_1}$ is written as $\left. \omega \right|_{U_1} = f(t)dt$ and $\left. \omega \right|_{U_2}$ is written as $\left. \omega \right|_{U_2} = g(s)ds$, derivating the expression (\ref{ts1}), we get

\begin{align*}
    0 &= d(st) \\
      &= tds + sdt\\
\end{align*}
and so $ds = \frac{-s}{t}dt$. Taking $s = \frac{1}{t}$, we get that over $U_1 \cap U_2$

\begin{equation}
    ds = -\frac{1}{t^2}dt.
\end{equation}
Therefore over $U_1 \cap U_2$, $f(t)dt = \frac{-g(s)}{t^2}dt$, and

\begin{equation}
    t^2f(t) = -g(1/t)
\end{equation}
which cannot happen since $f$ and $g$ are polynomials, unless they are 0. So $\omega = 0$ and $\mathbb{P}^1$ \textit{does not have any globally defined algebraic differential forms}, that is $\check{\mathrm{H}}^0(X, \Omega^1_{X/\mathbb{C}}) = 0$.

Now let's compute the \v{C}ech complex of $X$, with respect to the covering $\mathcal{U} = \{U_1, U_2\}$: we know that $\Omega^1_{X/\mathbb{C}}(U_1) = \mathbb{C}[t]$, $\Omega^1_{X/\mathbb{C}}(U_2) = \mathbb{C}[s]$ and $\Omega^1_{X/\mathbb{C}}(U_1 \cap U_2) = \mathbb{C}[t, 1/t]$, and so

\begin{align}
    C^0(\mathcal{U}, X) &= \Omega^1_{X/\mathbb{C}}(U_1) \times \Omega^1_{X/\mathbb{C}}(U_2)\\
                        &= \mathbb{C}[t] \times \mathbb{C}[s]
\end{align}
and

\begin{equation}
    C^1(\mathcal{U}, X) = \Omega^1_{X/\mathbb{C}}(U_1 \cap U_2) = \mathbb{C}[t, 1/t].
\end{equation}

The kernel of the differential map $\delta_{0}: C^0(\mathcal{U}, X) \rightarrow C^1(\mathcal{U}, X)$ is given by $\check{\mathrm{H}}^0(X, \Omega^1_{X/\mathbb{C}}) = 0$. Its image is 

\begin{equation}\label{imad0p1}
    \left. (f(t)dt - g(s)(ds)) \right|_{U_1 \cap U_2}
\end{equation}
and using the change of variables $ds = -\frac{1}{t^2}dt$ over $U_1 \cap U_2$, the image is given by all the combinations

\begin{equation}
    \left. (f(t)dt + \frac{1}{t^2}g(t)dt) \right|_{U_1 \cap U_2}.
\end{equation}
The kernel of $d_1: C^1(\mathcal{U}, X) \rightarrow C^2(\mathcal{U}, X)$ is all of $ C^1(\mathcal{U}, X)$, and so

\begin{equation}
    \check{\mathrm{H}}^1(X, \Omega^1_{X/\mathbb{C}}) \cong \frac{\mathbb{C}[t, \frac{1}{t}]}{\Ima(\delta_0)}
\end{equation}
and we see from equation (\ref{imad0p1}) that in particular, the image $\Ima(\delta_0)$ contains all elements of the form $t^n dt$ for $n \neq 1$, $n \in \mathbb{Z}$, and so it annihilates all nonzero order terms in $\mathbb{C}[t, \frac{1}{t}]$. Therefore,

\begin{equation}
    \check{\mathrm{H}}^1(X, \Omega^1_{X/\mathbb{C}}) \cong \frac{\mathbb{C}[t, \frac{1}{t}]}{\Ima(\delta_0)} \cong \mathbb{C}.
\end{equation}

The fact that this result agrees with the result from Propostion \ref{cohdiffPn} is not a coincidence. It is a realization of an important theorem of Serre, called the \textbf{GAGA principle}.

\end{example}

\begin{example}\label{affcohomology}
\textbf{Cohomology of an Affine Noetherian Scheme:} We will now compute the cohomology of an affine noetherian scheme, following \cite{hartshorne}, Chap. 3, §. 3. Some of the proofs of the preliminary results are omitted.

\begin{definition}
A sheaf $\mathscr{F}$ on $X$ is called \textit{flasque} (or sometimes \textit{flabby}), if for every $U \subseteq X$, the restriction map $\rho_{XU}$ is surjective. Equivalently, $\mathscr{F}$ is flasque if for any inclusion $U \subseteq V$ of open sets, the map $\rho_{VU}$ is surjective.
\end{definition}

\begin{proposition}
If $(X, \mathcal{O}_X)$ is a ringed space, then any \textit{injective $\mathcal{O}_X$-module is flasque}.
\end{proposition}

\begin{proof}[Idea of proof:]
If we denote by $\mathcal{O}_U$ the restriction of the structure sheaf to an open set $U \subseteq X$, and $U \subseteq V$ is an inclusion of open sets, we have an inclusion of sheaves of $\mathcal{O}_X$-modules

\begin{equation}
\begin{tikzcd}
    0 \ar{r} & \mathcal{O}_U \ar{r} & \mathcal{O}_V.
\end{tikzcd}
\end{equation}
Now suppose $\mathscr{I}$ is an injective sheaf of $\mathcal{O}_X$-modules. Since it is injective, the above sequence induces an exact sequence, i.e., a surjection 

\begin{equation}
\begin{tikzcd}
    \Hom (\mathcal{O}_V, \mathscr{I}) \ar{r} & \Hom (\mathcal{O}_U, \mathscr{I}) \ar{r} & 0
\end{tikzcd}
\end{equation}
and since $\Hom (\mathcal{O}_V, \mathscr{I}) = \mathscr{I}(V)$ and $\Hom (\mathcal{O}_U, \mathscr{I}) = \mathscr{I}(U)$, $\mathscr{I}$ is flasque.
\end{proof}

\begin{lemma}
Let

\begin{equation}
\begin{tikzcd}
    0 \ar{r} & \mathscr{F} \ar{r} & \mathscr{G} \ar{r} & \mathscr{H} \ar{r} & 0
\end{tikzcd}
\end{equation}
be an exact sequence of sheaves, and suppose that $\mathscr{F}$ and $\mathscr{G}$ are flasque. Then $\mathscr{H}$ is flasque.
\end{lemma}

\begin{proof}
\cite{ahtopos}, 3, Proposição 3.2.9.
\end{proof}

\begin{lemma}
Suppose that 

\begin{equation}
\begin{tikzcd}
    0 \ar{r} & \mathscr{F} \ar{r} & \mathscr{G} \ar{r} & \mathscr{H} \ar{r} & 0
\end{tikzcd}
\end{equation}
is an exact sequence of sheaves, with $\mathscr{F}$ flasque. Then, the sequence

\begin{equation}
\begin{tikzcd}
    0 \ar{r} & \Gamma(X, \mathscr{F}) \ar{r} & \Gamma(X, \mathscr{G}) \ar{r} & \Gamma(X, \mathscr{H}) \ar{r} & 0
\end{tikzcd}
\end{equation}
is exact.
\end{lemma}

\begin{proof}
One can find a discussion in \cite{stacks-project}, Lemma 20.12.3.
\end{proof}

Our goal is to use flasque sheaves to compute cohomology. For this, we use a result from Hartshorne (\cite{hartshorne}, Chap. III, Proposition 1.2A) that states that if we have a complex

\begin{equation}
   0 \longrightarrow A \longrightarrow J^{\bullet} 
\end{equation}
with each $J^k$ acyclic for a functor $F$ (i.e., $R^{i}F(J^k) = 0$ for all $i >0$), then for each $i \ge 0$, $R^{i}F(A)$ is canonically isomorphic to the ith cohomology group of the complex. So we're left to prove that flasque sheaves are acyclic for the global section functor $\Gamma(X,-)$:

\begin{proposition}\label{flasqueisacyclic}
Let $\mathscr{F}$ be a flasque sheaf on $X$. Then $\mathscr{F}$ is $\Gamma(X, -)$-acyclic, i.e., $\mathrm{H}^i(X, \mathscr{F}) = 0$ for all $i > 0$.
\end{proposition}

\begin{proof}[Idea of proof:]
Let $\mathscr{I}$ be an injective sheaf of abelian groups such that there is an inclusion $\mathscr{F} \rightarrow \mathscr{I}$. Consider the canonical sequence

\begin{equation}
\begin{tikzcd}
    0 \ar{r} & \mathscr{F} \ar{r}{f} & \mathscr{I} \ar{r}{g} & \mathscr{H} \ar{r} & 0
\end{tikzcd}
\end{equation}
with $\mathscr{H}$ being the quotient sheaff, so as to make it exact. Since $\mathscr{I}$ is injective, we know that it is flasque, and so by the above Lemma, $\mathscr{H}$ is also flasque. So the induced sequence

\begin{equation}
\begin{tikzcd}
    0 \ar{r} & \Gamma(X, \mathscr{F}) \ar{r}{\Gamma_f} & \Gamma(X, \mathscr{I}) \ar{r}{\Gamma_g} & \Gamma(X, \mathscr{H}) \ar{r} & 0
\end{tikzcd}
\end{equation}
is exact. We then look at part of the long exact sequence on cohomology

\begin{equation}
\begin{tikzcd}
    \mathrm{H}^0(X, \mathscr{I}) \ar{r} & \mathrm{H}^0(X, \mathscr{H}) \ar{r}{\delta^0} & \mathrm{H}^1(X, \mathscr{F}) \ar{r} & \mathrm{H}^1(X, \mathscr{I})
\end{tikzcd}
\end{equation}
which, since $\mathscr{I}$ is injective, becomes

\begin{equation}
\begin{tikzcd}
    \Gamma(X, \mathscr{I}) \ar{r}{\Gamma_g} & \Gamma(X, \mathscr{H}) \ar{r}{\delta^0} & \mathrm{H}^1(X, \mathscr{F}) \ar{r} & 0.
\end{tikzcd}
\end{equation}
Since the sequence is exact, we have that $\mathrm{H}^1(X, \mathscr{F}) \cong \Gamma(X, \mathscr{H})/ \ker \delta^0$ by the isomorphism theorem. However, since $\mathscr{F}$, $\mathscr{G}$ and $\mathscr{H}$ are flasque, $\Gamma_g$ is surjective, and so $\ker \delta^0 = \Ima \Gamma_g = \Gamma(X, \mathscr{H})$. And so $\mathrm{H}^1(X, \mathscr{F}) = 0$. The result then follows by induction on $i$.
\end{proof}

\begin{proposition}
Let $A$ be a noetherian ring, and let $I$ be an $A$-module. If $I$ is injective, then the sheaf $\Tilde{I}$ associated to $I$ is flasque on $X = \operatorname{Spec} A$.
\end{proposition}

\begin{proof}
\cite{hartshorne}, Chap. 3, Proposition 3.4.
\end{proof}

\begin{theorem}\label{cohaffnoetscheme}
Let $A$ be a noetherian ring, and let $X = \operatorname{Spec} A$. Then for all quasicoherent sheaves $\mathscr{F}$ on $X$ and all $i > 0$, we have that $\mathrm{H}^i (X, \mathscr{F}) = 0$.
\end{theorem}

\begin{proof}
Given a quasicoherent sheaf $\mathscr{F}$, set $M = \Gamma(X, \mathscr{F})$. Take an injective resolution 

\begin{equation}
    0 \longrightarrow M \longrightarrow I^{\bullet}
\end{equation}
of $M$ in \textbf{Mod$_{A}$}. This induces a flasque resolution

\begin{equation}
    0 \longrightarrow \Tilde{M} \longrightarrow \Tilde{I}^{\bullet}
\end{equation}
of $\Tilde{M}$ on \textbf{QCoh($X$)}. Now, we have discussed that $\Tilde{M} = \mathscr{F}$ on Proposition \ref{qcohaffine4}. So applying the functor $\Gamma(X, -)$ to the sequence

\begin{equation}
    0 \longrightarrow \mathscr{F} \longrightarrow \Tilde{I}^{\bullet}
\end{equation}
we get back the exact sequence of $A$-modules $0 \longrightarrow M \longrightarrow I^{\bullet}$. Therefore, $\mathrm{H}^0 (X, \mathscr{F}) = \Gamma(X, \mathscr{F}) = M$ and $\mathrm{H}^i (X, \mathscr{F}) = 0$ for all $i > 0$ since we proved that each $\Tilde{I}$ is flasque and that flasque sheaves are acyclic for $\Gamma(X, -)$. This concludes the proof.

\begin{remark}
This result is valid more generally, without the need for the noetherian hypothesis.
\end{remark}
\end{proof}

\begin{corollary}\label{embbedflasque}
Let $X$ be a noetherian scheme, and let $\mathscr{F}$ be a quasicoherent sheaf on $X$ Then $\mathscr{F}$ can be embedded in a flasque quasicoherent sheaf $\mathscr{G}$.
\end{corollary}

\begin{proof}
\cite{hartshorne}, Chap. 4, Corollary 3.6.
\end{proof}

\end{example}

In fact, we will need a slightly more general result: given any sheaf $\mathscr{F}$ on a topological space $X$, we can find a flasque sheaf $\mathscr{G}$ such that $\mathscr{F}$ is a subsheaf of $\mathscr{G}$. In fact, if we define $\mathscr{G}$ as the sheaf given by

\begin{equation}\label{godementflasquesheaf}
    \mathscr{G}(U) = \prod_{x \in U} = \mathscr{F}_x
\end{equation}
over $U \subset X$ open, it is a flasque sheaf called the \textbf{Godement flasque sheaf}. It was used by Godement to discuss the existence flasque resolutions for any sheaf (cf. \cite{hossein}, Chap. 2, Example 2.4).

For the following result, we will need to define the \textit{sheaf version of the \v{C}ech complex}, that is, we will need to be able to realize the \v{C}ech groups $C^{\bullet}(\mathcal{U}, \mathscr{F})$ as a complex of sheaves: Let $X$ be a topological space, $\mathscr{F}$ a sheaf on $X$ and $\mathcal{U}$ be cover of $X$. Denote by $U_{\alpha_{0},...,\alpha_{i}}$ any intersection $U_{\alpha_{0}} \cap ... \cap U_{\alpha_{i}}$ of open sets of $\mathcal{U}$, and by 

\begin{equation}
    f_{\alpha_{0},...,\alpha_{i}}: U_{\alpha_{0},...,\alpha_{i}} \rightarrow X
\end{equation}
the inclusion map. We construct a complex $\mathscr{C}^{\bullet}(\mathcal{U}, \mathscr{F})$ of sheaves in the following way: for each $i \ge 0$

\begin{equation}
    \mathscr{C}^{i}(\mathcal{U}, \mathscr{F}) = \prod_{|I| = i+1} \left. f_{\alpha_{0},...,\alpha_{i},*}\mathscr{F} \right|_{U_{\alpha_{0},...,\alpha_{i}}}
\end{equation}
and the same differential operator as the classical \v{C}ech complex, since it is defined over sections (\cite{hartshorne}, Chap. 3, §4, \cite{stacks-project}, Lemma 20.24.1).
Note that, this construction gives

\begin{equation}
    \Gamma(X, \mathscr{C}^{i}(\mathcal{U}, \mathscr{F})) = \prod_{|I| = i+1} \left. i_{*}\mathscr{F} \right|_{U_{\alpha_{0},...,\alpha_{i}}}
\end{equation}
with $i$ being the identity map $X \rightarrow X$. And so 

\begin{align}
    \Gamma(X, \mathscr{C}^{i}(\mathcal{U}, \mathscr{F})) &= \prod_{|I| = i+1} \left. \mathscr{F} \right|_{U_{\alpha_{0},...,\alpha_{i}}}(X)\\ &= \prod_{|I| = i+1} \mathscr{F}( U_{\alpha_{0},...,\alpha_{i}} \cap X)\\ &= \prod_{|I| = i+1} \mathscr{F}( U_{\alpha_{0},...,\alpha_{i}}) = C^{i}(\mathcal{U}, \mathscr{F}).
\end{align}

The advantage of this point of view is that it allows us to realize the \textbf{\v{C}ech complex $\mathscr{C}^{\bullet}(\mathcal{U}, \mathscr{F})$ as a resolution of the sheaf $\mathscr{F}$}, so there exists a natural map $\mathscr{F} \rightarrow \mathscr{C}^{0}(\mathcal{U}, \mathscr{F})$ such that the sequence of sheaves $0 \longrightarrow \mathscr{F} \longrightarrow \mathscr{C}^{0}(\mathcal{U}, \mathscr{F}) \longrightarrow \mathscr{C}^{1}(\mathcal{U}, \mathscr{F}) \longrightarrow ...$ is exact (\cite{hartshorne}, Chap. 3, Lemma 4.2, \cite{stacks-project}, Lemma 20.24.1) and therefore, all its higher cohomology groups vanish, and so it is acyclic.

\begin{theorem}\label{cechdermaps}
Let $X$ be a topological space, and let $\mathcal{U}$ be an open covering of $X$. Then for each $i \ge 0$, there is a natural map

\begin{equation}
    \check{\mathrm{H}}^i(\mathcal{U}, \mathscr{F}) \rightarrow {\mathrm{H}}^i(X, \mathscr{F})
\end{equation}
and this map is functorial in $\mathscr{F}$.
\end{theorem}

\begin{proof}[Idea of Proof:]
The proof of this theorem again makes use of a comparison theorem of complexes in homological algebra that states that, if $C^{\bullet}$ is acyclic and $D^{\bullet}$ is injective, then for every morphism

\begin{equation}
    \varphi: \mathrm{H}^0(C^{\bullet}) \rightarrow \mathrm{H}^0(D^{\bullet})
\end{equation}
there exists a map of complexes 

\begin{equation}
    \Tilde{\varphi}: C^{\bullet} \rightarrow D^{\bullet}
\end{equation}
that induces it (cf. \cite{hiltonstammbach}, Chap. IV, Theorem 4.4).

So since the complex $0 \longrightarrow \mathscr{F} \longrightarrow \mathscr{C}^{\bullet}(\mathcal{U}, \mathscr{F})$ is acyclic, if we take any injective resolution $0 \longrightarrow \mathscr{F} \longrightarrow \mathscr{I}^{\bullet}$ of $\mathscr{F}$, the theorem states that there exists a map of complexes $\mathscr{C}^{\bullet}(\mathcal{U}, \mathscr{F}) \rightarrow \mathscr{I}^{\bullet}$ inducing the identity map on $\mathscr{F}$. So applying the global section functor to both complexes, we get maps

\begin{equation}
    C^{\bullet}(\mathcal{U}, \mathscr{F}) \rightarrow \Gamma(X, \mathscr{I}^{\bullet})
\end{equation}
and taking cohomology, we get the induced maps

\begin{equation}
    \check{\mathrm{H}}^i(\mathcal{U}, \mathscr{F}) \rightarrow {\mathrm{H}}^i(X, \mathscr{F}).
\end{equation}
\end{proof}

However it is worth noticing that the maps $\mathscr{C}^{\bullet}(\mathcal{U}, \mathscr{F}) \rightarrow \mathscr{I}^{\bullet}$ above need not be a homotopic equivalence between the resolutions $0 \longrightarrow \mathscr{F} \longrightarrow \mathscr{C}^{\bullet}(\mathcal{U}, \mathscr{F})$ and $0 \longrightarrow \mathscr{F} \longrightarrow \mathscr{I}^{\bullet}$, and so the induced maps $\check{\mathrm{H}}^i(\mathcal{U}, \mathscr{F}) \rightarrow {\mathrm{H}}^i(X, \mathscr{F})$ need not be isomorphisms. We now present a result which gives a condition for when they do agree. In particular, it is no coincidence that we got the same result in Example \ref{exsphere}:

\begin{theorem}\label{hartshorneexercthm}
Let $X$ be a topological space, $\mathscr{F}$ a sheaf of abelian groups and $\mathcal{U}$ an open cover of $X$. If for every intersection $U_{\alpha_{0},...,\alpha_{i}} = U_{\alpha_{0}} \cap ... \cap U_{\alpha_{i}}$ of elements of $\mathcal{U}$, and for any $p > 0$, we have that

\begin{equation}
    \mathrm{H}^p(U_{\alpha_{0},...,\alpha_{i}}, \left. \mathscr{F} \right|_{\alpha_{0},...,\alpha_{i}}) = 0
\end{equation}
then for all $q \ge 0$, the maps

\begin{equation}
    \check{\mathrm{H}}^i(\mathcal{U}, \mathscr{F}) \rightarrow {\mathrm{H}}^i(X, \mathscr{F})    
\end{equation}
given in Theorem \ref{cechdermaps} are isomorphisms.
\end{theorem}

This theorem is given as an exercise in Hartshorne's book (\cite{hartshorne}, Chap. 3, Exercise 4.11) and it can be proved with the machinery of \textit{spectral sequences}. In particular, it states that Leray's Theorem can be used interchengeably with the two theories.

As a consequence of Theorem \ref{hartshorneexercthm}, we can prove

\begin{theorem}
Let $X$ be a \textit{noetherian separated scheme},\footnote{Like a variety over an algebraically closed field $k$.} let $\mathcal{U}$ be an open affine cover of $X$ and let $\mathscr{F}$ be a quasicoherent sheaf on $X$. Then, for all $i \ge 0$, the natural maps $\check{\mathrm{H}}^i(\mathcal{U}, \mathscr{F}) \rightarrow {\mathrm{H}}^i(X, \mathscr{F})$
of the above theorem give isomorphisms $\check{\mathrm{H}}^i(\mathcal{U}, \mathscr{F}) \cong {\mathrm{H}}^i(X, \mathscr{F})$.
\end{theorem}

It follows directly from Theorem \ref{cohaffnoetscheme}, and from the fact that any intersection of affine schemes is an affine scheme.

So now we finally give a proof of Leray's Theorem. This proof was taken from \cite{hossein}:

\begin{proof}[Proof of Leray's Theorem:]
Let $\mathscr{F}$ be a sheaf on $X$, and let $\mathcal{U}$ be an $\mathscr{F}$-acyclic cover of $X$. Let $\mathscr{G}$ be the Godement flasque sheaf constructed from $\mathscr{F}$, as in (\ref{godementflasquesheaf}). Let $\mathscr{H}$ be the quotient sheaf, so as to make the short exact sequence

\begin{equation}\label{sesflasqueleray}
\begin{tikzcd}
    0 \ar{r} & \mathscr{F} \ar{r} & \mathscr{G} \ar{r} & \mathscr{H} \ar{r} & 0
\end{tikzcd}
\end{equation}
exact. We will proceed by induction on $q$. The case $q = 0$ was proved in Propostion \ref{zerocohglobalsecn}. Suppose the result holds for $q-1$. For alll $p \ge 1$, it follows from Proposition \ref{flasqueisacyclic} that

\begin{equation}\label{gisflasque}
    \check{\mathrm{H}}^p(\mathcal{U}, \mathscr{G}) \cong \check{\mathrm{H}}^p(X, \mathscr{G}) = 0.
\end{equation}

Looking at the long exact in cohomology associated to the short exact sequence (\ref{sesflasqueleray}) restricted to each intersection $U_{\alpha_{0},...,\alpha_{i}}$ in $\mathcal{U}$, we get

\begin{equation}
\begin{tikzcd}
    \check{\mathrm{H}}^p(U_{\alpha_{0},...,\alpha_{i}}, \mathscr{G}) \ar{r} & \check{\mathrm{H}}^p(U_{\alpha_{0},...,\alpha_{i}}, \mathscr{H}) \ar{r} & \check{\mathrm{H}}^{p+1}(U_{\alpha_{0},...,\alpha_{i}}, \mathscr{F}) \ar{r} & \check{\mathrm{H}}^{p+1}(U_{\alpha_{0},...,\alpha_{i}}, \mathscr{G})
\end{tikzcd}
\end{equation}
and since $\check{\mathrm{H}}^p(U_{\alpha_{0},...,\alpha_{i}}, \mathscr{G}) \cong \check{\mathrm{H}}^{p+1}(U_{\alpha_{0},...,\alpha_{i}}, \mathscr{G}) = 0$, we conclude that

\begin{equation}\label{hisacyclic}
    \check{\mathrm{H}}^p(U_{\alpha_{0},...,\alpha_{i}}, \mathscr{H}) \cong \check{\mathrm{H}}^{p+1}(U_{\alpha_{0},...,\alpha_{i}}, \mathscr{F})
\end{equation}
for all $p \ge 1$, and for all intersections $U_{\alpha_{0},...,\alpha_{i}}$. Since by hypothesis

\begin{equation}
    \check{\mathrm{H}}^{p+1}(U_{\alpha_{0},...,\alpha_{i}}, \mathscr{F}) = 0
\end{equation}
we look at the following diagram with exact rows

\begin{equation}
\begin{tikzcd}
     \check{\mathrm{H}}^{q-1}(\mathcal{U}, \mathscr{G}) \ar{r} \ar[d, "\sim" {anchor=south, rotate=90, inner sep=.5mm}] & \check{\mathrm{H}}^{q-1}(\mathcal{U}, \mathscr{H}) \ar{r} \ar[d, "\sim" {anchor=south, rotate=90, inner sep=.5mm}] &  \check{\mathrm{H}}^{q}(\mathcal{U}, \mathscr{F}) \ar{r} \ar[d] & \check{\mathrm{H}}^{q}(\mathcal{U}, \mathscr{G}) \ar{r} \ar[d, "\sim" {anchor=south, rotate=90, inner sep=.5mm}] & \check{\mathrm{H}}^{q}(\mathcal{U}, \mathscr{H}) \ar[d, "\sim" {anchor=south, rotate=90, inner sep=.5mm}] \\
    \check{\mathrm{H}}^{q-1}(X, \mathscr{G}) \ar{r} & \check{\mathrm{H}}^{q-1}(X, \mathscr{H}) \ar{r} & \check{\mathrm{H}}^{q}(X, \mathscr{F}) \ar{r} & \check{\mathrm{H}}^{q}(X, \mathscr{G}) \ar{r} & \check{\mathrm{H}}^{q}(X, \mathscr{H})
\end{tikzcd}
\end{equation}
where the first and second isomorphism follow from the induction hypothesis, since we proved in (\ref{hisacyclic}) that $\mathscr{H}$ is $\mathcal{U}$-acyclic, the third isomorphism follows from (\ref{gisflasque}) and the fourth isomorphism follows from the long exact sequence in cohomology and (\ref{gisflasque}). Then applying the $5$-Lemma, we get the desired isomorphism

\begin{equation}
    \check{\mathrm{H}}^{q}(\mathcal{U}, \mathscr{F}) \cong \check{\mathrm{H}}^{q}(X, \mathscr{F}).
\end{equation}
\end{proof}

To end, this section we present an example where the two theories \textbf{do not} agree. This example is due to Grothendieck, and can be found in full detail, with all computations in \cite{tohoku}, section 3.8, Example 3.8.3:

\begin{example}\label{extohoku}
Let $X$ be $\mathbb{A}^{2}_{k}$, the affine plane over a field $k$. Let $Y \subset X$ be the union of two curves in $X$, meeting in \textbf{two distinct points}. Denote by $\mathbb{Z}_{X}$ and $\mathbb{Z}_Y$ the constant sheaf with value $\mathbb{Z}$ in $X$ and $Y$ respectively, and let

\begin{equation}
\begin{tikzcd}
    0 \ar{r} & F\ar{r} & \mathbb{Z}_{X} \ar{r} & \mathbb{Z}_{Y} \ar{r} & 0
\end{tikzcd}
\end{equation}
i.e., let $F$ be the sheaf kernel of the restriction map $\mathbb{Z}_X \rightarrow \mathbb{Z}_Y$. Grothendieck than proceeds to compute $\check{\mathrm{H}}^{2}(X, F)$ and $\mathrm{H}^2(X, F)$, and concludes that $\mathbb{Z} = \mathrm{H}^{2}(X, F) \neq \check{\mathrm{H}}^{2}(X, F) = 0$.
\end{example}

\begin{remark}
One can also find in \cite{bruzzootero} a proof of Theorem \ref{paracompact}. It uses an argument that involves the \textbf{universality} of a certain \textbf{$\delta$-functor} - the generalization proposed by Grothendieck of the notion of derived functor, which was not discussed in this text. But maybe some other time...
\end{remark}

\section{Cohomological Formulation of the Mittag-Leffler \\ Problem}

In this section we tie some loose ends, as well as prove some interesting and remarkable theorems using the theories we've constructed so far in this text. The discussed theorems will then later be related to the main problem, of finding meromorphic functions with specified singular parts on Riemann surfaces.

\subsection{Via Dolbeault cohomology}

We now return to the discussion of finding meromorphic functions with specified poles and principal parts on Riemann surfaces. Suppose that we're given the data of a Mittag-Leffler problem on a Riemann surface $X$. By an arguments similar to that on Theorem \ref{mlplane}, since charts are trivially holomorphic functions, we can find a covering $\mathcal{U} = \{U_{\alpha}\}$ of $X$ and functions $f_{\alpha} \in \mathcal{K}_{X}(U_{\alpha})$ solving the problem locally. So the whole discussion in this section will be about formulating the problem as a problem of passage of data from local to global.

The first approach that one can take is complex analytical, via Dolbeault cohomology: suppose $f_{\alpha} \in \mathcal{K}_{X}(U_{\alpha})$ is a local solution to the problem, and suppose that $\mathcal{U}$ is fine enough so as to have at most one $p_n$ in each $U_{\alpha}$. 

For this approach, we have to introduce the notion of a \textit{$C^{\infty}$-bump function}: a  $C^{\infty}$-bump function or simply a bump function is a function $\rho_{\alpha}$ such that its value is identically $1$ in a neighbourhood $U \subset U_{\alpha}$ of $p_n$, and moreover it is smooth and its support is compact and contained in $U$. So intuitively a bump function $\rho_{\alpha}$ is a function that is identically $1$ in some neighbourhood of our pole and that glues smoothly with the complement of its support.

In particular, if we look at the product $\rho_{\alpha}f_{\alpha}$, we get a smooth function on the domain of definition of $f_{\alpha}$ whose support is compact and contained in $U$. So consider

\begin{equation}
    \varphi = \sum_{\alpha} \Bar{\partial}(\rho_{\alpha}f_{\alpha}) \in \mathcal{A}_{\Bar{\partial}}^{(0,1)}(X).
\end{equation}
We start by noting that that \textbf{$\varphi$ is a $\Bar{\partial}$-closed $C^{\infty}$ $(0,1)$-form on $X$}, since

\begin{equation}
    \Bar{\partial}\varphi = \sum_{\alpha} \Bar{\partial}\Bar{\partial}(\rho_{\alpha}f_{\alpha}) = 0.
\end{equation}

Also, since $\rho_\alpha = 1$ identically on a neighborhood of each $p_n$, we have that $\varphi$ is identically $0$ in a neighborhood of each $p_n$ by construction. So, if $\varphi$ is in the image of $\left. \Bar{\partial} \right|_{\mathcal{A}^{(0,0)}(X) = C^{\infty}(X)}$, that is, if there exists some smooth function $\eta \in C^{\infty}(X)$ on $X$ such that $\Bar{\partial}\eta = \varphi$, then the function defined as

\begin{equation}
    f = \bigl(\sum \rho_{\alpha}f_{\alpha}\bigr) - \eta
\end{equation}
is a global solution to the Mittag-Leffler problem given by the data $(f_{\alpha})$. So in particular, we see that we can solve the problem if, and only if

\begin{equation}
    0 = [\varphi] \in \mathrm{H}^{(0,1)}_{\Bar{\partial}}(X) = \frac{\ker \Bar{\partial_{1}}}{\Ima \Bar{\partial_0}}
\end{equation}
that is, any obstruction for the solution of some Mittag-Leffler problem on $X$ is an element of $\mathrm{H}^{(0,1)}_{\Bar{\partial}}(X)$, and so every problem has a solution if, and only if $\mathrm{H}^{(0,1)}_{\Bar{\partial}}(X) = 0$.

\subsection{Via Sheaf Cohomology}

Our second approach uses sheaf cohomology and the theory of \textit{residues}. We begin with some preliminaries, starting with some more generalities on Riemann surfaces. Reference is \cite{forster}, Chapter 2.

\medskip

\noindent\textbf{Residues:} Let $Y \subset X$ be an open subset, of some Riemann surface $X$, $a \in Y$, and $\omega \in \Omega_{X}^{1}(Y \setminus \{a\})$. Let $(U,z)$ be a coordinate system around $a$ such that $U \subset Y$ and $z(a) = 0$. On $U \setminus \{a\}$ one may write $\omega = fdz$ for some holomorphic function $f \in \mathcal{O}_{X}(U \setminus \{a\})$. Now let

\begin{equation}
    f = \sum_{-\infty}^{+\infty} c_n z^n
\end{equation}
be the Laurent expansion of $f$ about $a$ with respect to the coordinate function $z$. If $c_n = 0$ for all $n < 0$, then $f$ has a \textit{removable singularity} on $a$, and thus can be holomorphically continued to some holomorphic function in $\mathcal{O}_{X}(U)$. Consequently, $\omega$ can be extended to some $1$-form in $\Omega_{X}^{1}(U)$. If there exists some $k < 0$ such that $c_k \neq 0$ and $c_n = 0$ for all $n < k$, then $\omega$ has a \textit{pole of order $k$} at $a$. If $c_n \neq 0$ for infinitely many $n < 0$ then $\omega$ has an essential singularity at $a$. The coefficient $c_{-1}$ of the Laurent expansion of $f$ is called the \textit{residue} of $\omega$ at $a$ and is denoted by $Res_{a}(\omega)$. The definition of the residue \textbf{does not depend on the choice of chart $(U, z)$} (cf. \cite{forster}, 9.9).

\medskip

\noindent\textbf{Meromorphic $1$-forms:} We can give an explicit description of meromorphic $1$-forms on Riemann surfaces, similar to that of meromorphic functions: a differential $1$-form $\omega$ on a Riemann surface $X$ is said to be meromorphic if there is an open subset $Y$ of $X$ such that

\begin{enumerate}
    \item $\omega$ is holomorphic on $X \setminus Y$;
    \item $X \setminus Y$ consists of isolated points;
    \item the coefficient of $\omega$ has a pole at every point of $X \setminus Y$.
\end{enumerate}
We denote the sheaf of meromorphic $1$-forms by $\mathcal{K}^{(1)}_X$.

\medskip

\begin{theorem}
Let $X$ be a compact Riemann surface. Then we have

\begin{equation}
    \dim{\mathrm{H}^1(X, \mathcal{O}_X)} < \infty
\end{equation}
that is, the first cohomology group of $X$ is a finite dimensional complex vector space.
\end{theorem}

If $X$ is a compact riemann surface, we denote $g = \dim{\mathrm{H}^1(X, \mathcal{O}_X)}$, and call it the \textit{genus} of $X$ The finiteness of the genus of a compact Riemann surface can be proven using functional analysis.

\medskip

\noindent\textbf{The degree of a divisor and the Riemann-Roch formula:} Let $X$ be a Riemann surface, and let $D = \sum n_i[x_i]$.\footnote{Note that on a Riemann surface, a divisor is just a linear combination of points.} The \textit{degree} of $D$ is the integer

\begin{equation}
    deg(D) = \sum n_i.
\end{equation}

The degree defines a surjective group homomorphism $deg: Div(X) \rightarrow \mathbb{Z}$.

\begin{proposition}
If $(f)$ is a principal divisor on a Riemann surface $X$, then $deg(f) = 0$.
\end{proposition}
Therefore \textbf{linearly equivalent divisors have the same degree}.

\begin{theorem}[The Riemann-Roch Theorem]
Let $X$ be a compact Riemann surface of genus $g$, and let $D$ be a divisor on $X$. Then, $\dim{\mathrm{H}^{0}(X, \mathcal{O}(D))}$ and $\dim{\mathrm{H}^1(X, \mathcal{O}(D))}$ are finite and

\begin{equation}\label{rrformula}
    \dim{\mathrm{H}^{0}(X, \mathcal{O}(D))} - \dim{\mathrm{H}^1(X, \mathcal{O}(D))} = 1 - g + deg(D).
\end{equation}
\end{theorem}

A proof of the Riemann-Roch formula can be found in the appendix at the end of this part.

\medskip

\noindent\textbf{Mittag-Leffler Distributions:} We now discuss the Mittag-Leffler problem first on the case of \textbf{compact Riemann surfaces}. We begin by reformulating it in the language of \v{C}ech theory. As we noted in the Dolbeault solution, every Mittag-Leffler problem is trivial locally, and so it is a problem of whether we can glue this locally defined functions around each pole to a global meromorphic function. With this in mind, we generalize our discussion by making the following definition:

\begin{definition}\label{mldistribution}
Let $X$ be any Riemann surface, and let $\mathcal{U} = \{U_i\}$ be an open covering of $X$. A \textit{$0$-cochain} $\mu = (f_i) \in C^{0}(\mathcal{U}, \mathcal{K}_X)$ is called a \textit{Mittag-Leffler distribution} if the diferences $f_i - f_j$ are holomorphic in $U_i \cap U_j$, that is, \textbf{if $\delta\mu$ is an element of $Z^1(\mathcal{U}, \mathcal{O}_X)$} (note that the condition requires that it be a \textbf{holomorphic} 1-cocycle, and so an element of $Z^1(\mathcal{U}, \mathcal{O}_X)$).
\end{definition}

Note that the condition for a $0$-cochain to be a Mittag-Leffler distribution implies that since the differences are holomorphic, they have the \textbf{same principal parts}. A \textit{solution} of $\mu$ is a global meromorphic function $f \in \mathcal{K}_{X}(X)$ with this same principal part, that is $\left. f \right|_{U_{i}} - f_i \in \mathcal{O}_{X}(U_i)$.

So giving a Mittag-Leffler distribution on $X$ is the same as giving the data that defines a Mittag-Leffler problem on $X$, such that the local solutions are encoded in the $0$-cochain $\mu$. This allows us to solve the problem using \v{C}ech cohomology. In fact, we can prove

\begin{theorem}
A Mittag-Leffler distribution $\mu$ has a solution if, and only if $[\delta\mu] = 0$, where $[\delta\mu] \in \check{\mathrm{H}}^1(X, \mathcal{O}_X)$ is the cohomology class represented by the cocycle $\delta\mu$.
\end{theorem}

\begin{proof}
For the \textit{if} part: suppose that $f \in \mathcal{K}_{X}(X)$ is a solution of $\mu$. Set $g_i = f_i - f$. Then $g_i \in \mathcal{O}_{X}(U_i)$ by hypothesis. On $U_i \cap U_j$ we have

\begin{equation}
    f_j - f_i = (f_j - f) - (f_i - f) = g_j - g_i
\end{equation}
and so $\delta\mu$ is a $1$-coboundary, that is $\delta\mu \in B^1(\mathcal{U}, \mathcal{O}_X)$, and $[\delta\mu] = 0$ in $\check{\mathrm{H}}^1(X, \mathcal{O}_X)$.

For the \textit{only if} part: suppose $\delta \mu \in B^1(\mathcal{U}, \mathcal{O}_X)$. Then there exists a $0$-cochain $(g_i) \in C^0(\mathcal{U}, \mathcal{O}_X)$ such that on $U_i \cap U_j$

\begin{equation}
    f_i - f_j = g_i - g_j
\end{equation}
and so, $f_i - g_i = f_j - g_j$ on $U_i \cap U_j$. Thus, by the \textbf{gluability axiom}, the differences $f_i - g_i$ glue together to some global section $\mathcal{K}_{X}(X)$. Moreover, since $\left. f \right|_{U_i} = f_i - g_i \in \mathcal{O}_{X}(U_i)$ for every $i$, then by the \textbf{identity axiom} $f$ is a solution of $\mu$.
\end{proof}

Now, lets suppose that $X$ is a compact Riemann surface, and lets look at the exact sequence

\begin{equation}\label{sheafprincipal}
\begin{tikzcd}
    0 \arrow{r} & \mathcal{O}_X \arrow{r} & \mathcal{K}_X \arrow{r} & \mathcal{K}_X / \mathcal{O}_X \arrow{r} & 0.
\end{tikzcd}
\end{equation}

The quotient sheaf $\mathcal{K}_X / \mathcal{O}_X$ is the sheaf whose sections are equivalence classes of meromorphic functions such that $f_i - f_j = 0$ if the difference is holomorphic. That is, we can think of \textbf{the sheaf $\mathcal{K}_X / \mathcal{O}_X$ as the sheaf of principal parts on $X$}.\footnote{We will discuss more about this formulation of the data of a Mittag-Leffler problem below.} So a $0$-cochain $\mu \in C^0(\mathcal{U}, \mathcal{K}_X / \mathcal{O}_X)$ is nothing but a Mittag-Leffler distribution on $X$. Now, lets look at part of the long exact sequence in cohomology induced by (\ref{sheafprincipal}):

\begin{equation}
\begin{tikzcd}
    \mathrm{H}^0(X, \mathcal{K}_X) \arrow{r} & \mathrm{H}^0(X, \mathcal{K}_X/ \mathcal{O}_X) \arrow{r} & \mathrm{H}^1(X, \mathcal{O}_X) \arrow{r} & \mathrm{H}^1(X, \mathcal{K}_X).
\end{tikzcd}
\end{equation}

One can prove using Serre Duality that $\mathrm{H}^1(X, \mathcal{K}_X) = 0$ (cf. \cite{forster}, Corollary 17.17) and so the map $\mathrm{H}^0(X, \mathcal{K}_X/ \mathcal{O}_X) \longrightarrow \mathrm{H}^1(X, \mathcal{O}_X)$ is surjective. Therefore, for every $\xi \in \mathrm{H}^1(X, \mathcal{O}_X)$ there exists a Mittag-Leffler distribution $\mu$ such that $[\delta\mu] = \xi$, and so \textbf{if $X$ is compact there exists Mittag-Leffler problems on $X$ which have no solution}.

We can prove a criterion for when a Mittag-Leffler distribution $\mu$ on a compact Riemann surface $X$ has a solution. For this, we need to introduce the Serre Duality Theorem.

However, before discussing the Serre Duality Theorem, we would like to point that it is not a coincidence that so far we have found the obstructions to the solutions of Mittag-Leffler problems to be both on $\mathrm{H}^1(X, \mathcal{O}_X)$ and $\mathrm{H}^{(0,1)}_{\Bar{\partial}}(X)$. This is a particular case of the following theorem, which is the holomorphic analog of Theorem \ref{derhamthm}:

\begin{theorem}[Dolbeault Theorem]\label{dolbeaultthm}
Let $X$ be a complex manifold. We have

\begin{equation}
    \mathrm{H}^{q}(X, \Omega_{X}^p) = \mathrm{H}^{(p,q)}_{\Bar{\partial}}(X).
\end{equation}
\end{theorem}

To prove this theorem we need two facts, whose proofs can be found in \cite{griffithsharris}, Chapter 0:

\begin{lemma}\label{cohomcinftyforms}
If $X$ is a complex manifold, then

\begin{equation}
    \mathrm{H}^r(X, \mathcal{A}^{(p,q)}) = 0
\end{equation}
for all $r > 0$.
\end{lemma}

\begin{lemma}\label{conseqpoincarelemma}
If $X$ is a complex manifold, then the sequence of sheaves

\begin{equation}
\begin{tikzcd}
    0 \arrow{r} & \Omega^{p}_{X} \arrow{r}{\Bar{\partial}} & \mathcal{A}^{(p,0)} \arrow{r}{\Bar{\partial}} & \mathcal{A}^{(p,1)} \arrow{r}{\Bar{\partial}} & \ldots \arrow{r}{\Bar{\partial}} & \mathcal{A}^{(p,q)} \arrow{r}{\Bar{\partial}} & \ldots 
\end{tikzcd}
\end{equation}
is exact.
\end{lemma}

Lemma \ref{conseqpoincarelemma} is a direct consequence of the \textbf{$\Bar{\partial}$-Poincaré Lemma}, which is the complex analog of Theorem \ref{poincarelemma}.

\begin{proof}[Proof of Dolbeault Theorem]
Let $\mathcal{A}^{(p,q)}_{\Bar{\partial}}(X)$ denote the $\Bar{\partial}$-closed $(p,q)$-forms on $X$. Then, by Lemma \ref{conseqpoincarelemma}, we get short exact sequences

\begin{equation}
\begin{tikzcd}
    0 \arrow{r} & \Omega^{p}_{X} \arrow{r} & \mathcal{A}^{(p,0)} \arrow{r}{\Bar{\partial}} & \mathcal{A}^{(p,1)}_{\Bar{\partial}} \arrow{r} & 0.
\end{tikzcd}
\end{equation}
and

\begin{equation}
\begin{tikzcd}
    0 \arrow{r} & \mathcal{A}^{(p,q)}_{\Bar{\partial}} \arrow{r} & \mathcal{A}^{(p,q)} \arrow{r}{\Bar{\partial}} & \mathcal{A}^{(p,q+1)}_{\Bar{\partial}} \arrow{r} & 0.
\end{tikzcd}
\end{equation}
of sheaves, for every $p, q$. Looking at the long exact sequences in cohomology induced by these short exact sequences, we get

\begin{equation}
\begin{tikzcd}
    0 \arrow{r} & \Omega^{p}_{X} \arrow{r} & \mathcal{A}^{(p,0)} \arrow{r}{\Bar{\partial}} & \mathcal{A}^{(p,1)}_{\Bar{\partial}} \arrow{r} & 0
\end{tikzcd}
\end{equation}
induces

\begin{equation}
\begin{tikzcd}
    \mathrm{H}^{q-1}(X, \Omega^{p}_{X}) \arrow{r} & \mathrm{H}^{q-1}(X, \mathcal{A}^{(p,0)}) \arrow{r} & \mathrm{H}^{q-1}(X, \mathcal{A}^{(p,1)}_{\Bar{\partial}}) \\ \arrow{r} & \mathrm{H}^{q}(X, \Omega^{p}_{X}) \arrow{r} & \mathrm{H}^{q}(X, \mathcal{A}^{(p,0)}) \arrow{r} & \mathrm{H}^{q}(X, \mathcal{A}^{(p,1)}_{\Bar{\partial}})
\end{tikzcd}
\end{equation}
and since by Lemma \ref{cohomcinftyforms} $\mathrm{H}^{q-1}(X, \mathcal{A}^{(p,0)})$ and $\mathrm{H}^{q}(X, \mathcal{A}^{(p,0)})$ are $0$, we conclude that

\begin{equation}
\mathrm{H}^{q}(X, \Omega^{p}_{X}) \cong \mathrm{H}^{q-1}(X, \mathcal{A}^{(p,1)}_{\Bar{\partial}}).
\end{equation}
If we repeat this process for the next sequence, we get that 

\begin{equation}
\begin{tikzcd}
    0 \arrow{r} & \mathcal{A}^{(p,1)}_{\Bar{\partial}} \arrow{r} & \mathcal{A}^{(p,1)} \arrow{r}{\Bar{\partial}} & \mathcal{A}^{(p,2)}_{\Bar{\partial}} \arrow{r} & 0
\end{tikzcd}
\end{equation}
induces

\begin{equation}
\begin{tikzcd}
    \mathrm{H}^{q-2}(X, \mathcal{A}^{(p,1)}_{\Bar{\partial}}) \arrow{r} & \mathrm{H}^{q-2}(X, \mathcal{A}^{(p,1)}) \arrow{r} & \mathrm{H}^{q-2}(X, \mathcal{A}^{(p,2)}_{\Bar{\partial}}) \\ \arrow{r} & \mathrm{H}^{q-1}(X, \mathcal{A}^{(p,1)}_{\Bar{\partial}}) \arrow{r} & \mathrm{H}^{q-1}(X, \mathcal{A}^{(p,1)}) \arrow{r} & \mathrm{H}^{q-1}(X, \mathcal{A}^{(p,2)}_{\Bar{\partial}})
\end{tikzcd}
\end{equation}
and again, conclude that $\mathrm{H}^{q}(X, \Omega^{p}_{X}) \cong \mathrm{H}^{q-1}(X, \mathcal{A}^{(p,1)}_{\Bar{\partial}}) \cong \mathrm{H}^{q-2}(X, \mathcal{A}^{(p,2)}_{\Bar{\partial}})$. Repeating this process $q$ times, we get to

\begin{equation}
\begin{tikzcd}
    0 \arrow{r} & \mathrm{H}^{0}(X, \mathcal{A}^{(p,q-1)}) \arrow{r}{\Bar{\partial}} & \mathrm{H}^{0}(X, \mathcal{A}^{(p,q)}_{\Bar{\partial}}) \\ \arrow{r} & \mathrm{H}^{1}(X, \mathcal{A}^{(p,q-1)}_{\Bar{\partial}}) \arrow{r} & \mathrm{H}^{1}(X, \mathcal{A}^{(p,q-1)}) \arrow{r} & \mathrm{H}^{1}(X, \mathcal{A}^{(p,q)}_{\Bar{\partial}})
\end{tikzcd}
\end{equation}
and so

\begin{equation}
    \mathrm{H}^{q}(X, \Omega^{p}_{X}) \cong \frac{\mathrm{H}^0(X, \mathcal{A}^{(p,q)}_{\Bar{\partial}})}{\Bar{\partial}\mathrm{H}^0(X, \mathcal{A}^{(p,q-1)})} = \frac{\mathcal{A}^{(p,q)}_{\Bar{\partial}}(X)}{\Bar{\partial}\mathcal{A}^{(p,q-1)}(X)} = \mathrm{H}^{(p,q)}_{\Bar{\partial}}(X).
\end{equation}
\end{proof}

In particular, Theorem \ref{dolbeaultthm} tells us that for a complex manifold $X$ we have

\begin{equation}
    \mathrm{H}^1(X, \mathcal{O}_X) \cong \mathrm{H}^{(0,1)}_{\Bar{\partial}}(X).
\end{equation}

\noindent\textbf{The Serre Duality Theorem:}  A proof of the version of the duality theorem of Serre that we will present can be found in \cite{forster}, Chap. 2, §17.\footnote{More general versions of both the Serre Duality Theorem and the Riemann-Roch Theorem as well as their proofs can be found in \cite{hartshorne}, Chapters III and IV respectively.} The reason for this section is just to be able to introduce the objects necessary to state the Serre Duality Theorem, and most of it is not necessary outside of this context. For this reason, most proofs are omitted.

We begin by generalizing the notion of residue of a meromorphic function. We want to define a linear functional $Res: \mathrm{H}^1(X, \Omega^{1}_{X}) \rightarrow \mathbb{C}$. Suppose $X$ is a compact Riemann surface. By Theorem \ref{dolbeaultthm}, we have a short exact sequence

\begin{equation}
\begin{tikzcd}
    0 \arrow{r} & \Omega^{1}_{X} \arrow{r} & \mathcal{A}^{(1,0)} \arrow{r}{\Bar{\partial}} & \mathcal{A}^{(1,1)} \arrow{r} & 0
\end{tikzcd}
\end{equation}
of sheaves on $X$, and an isomorphism $\mathrm{H}^1(X, \Omega^{1}_{X}) \cong \frac{\mathcal{A}^{(1,1)}}{\Bar{\partial}\mathcal{A}^{(1,0)}}$. Let $\xi \in \mathrm{H}^1(X, \Omega^{1}_{X})$ be a cohomology class, and $\omega \in \mathcal{A}^{(1,1)}(X)$ be a representative of $\xi$ via the above isomorphism. Define

\begin{equation}
    Res(\xi) = \frac{1}{2 \pi i} \int\int_{X} \omega.
\end{equation}
This definition is independent of the choice of representative $\omega$ of $\xi$ (cf. \cite{forster}, Chap. 2, §17).\footnote{The ideia from this definition comes from the theory of functions of one complex variable, where the residue is the value of the contour integral of a meromorphic function. For background on integration of forms on Riemann surfaces, see \cite{forster}, Chap. 1, §10.}

We now extend Definition \ref{mldistribution} to meromorphic $1$-forms: let $X$ be a Riemann surface, and denote by $\mathcal{K}_{X}^{(1)}$ the sheaf of meromorphic 1-forms on $X$. Let $\mathcal{U} = \{U_i\}$ be an open covering of $X$. A $0$-cochain $\mu = (\omega_i) \in C^0(\mathcal{U}, \mathcal{K}_{X}^{(1)})$ is called a Mittag-Leffler distribution of meromorphic $1$-forms if $\delta\mu \in Z^1(\mathcal{U}, \Omega_{X}^{1})$.

We can now define the \textit{residue of a Mittag-Leffler distribution}:

\begin{definition}
Let $X$ be a Riemann surface, and let $a \in X$. Let $\mu = (\omega_i)$ be a Mittag-Leffler distribution of meromorphic $1$-forms on $X$. Let $U_i \in \mathcal{U}$ be such that $a \in U_i$. Then the \textit{residue of $\mu$ at $a$} is

\begin{equation}
    Res_{a}(\mu) = Res_{a}(\omega_i).
\end{equation}
In any intersection such that $a \in U_i \cap U_j$ we have that $\omega_i - \omega_j$ is holomorphic, and so since they have the same principal part at $a$, their residue is the same. So the definition is independent of the index $i$ such that $a \in U_i$.
\end{definition}

Now, if $X$ is compact, we may assume that the covering is finite, and so we have finitely many meromorphic $1$-forms, and $Res_{a}(\mu) \neq 0$ for only a finite number of points of $X$. So if $X$ is compact, we can define the \textbf{residue of $\mu$} as

\begin{equation}
    Res(\mu) = \sum Res_{a}(\mu)
\end{equation}
and this sum is finite.

The notion of the residue of a Mittag-Leffler distribution and the map $Res$ defined above are related in the following way:

\begin{theorem}\label{relatingres}
Let $X$ be a compact Riemann surface, and let $\mu$ be a Mittag-Leffler distribution of meromorphic $1$-forms. Denote by $[\delta\mu]$ the cohomology class of $\delta\mu$ in $\mathrm{H}^1(X, \Omega^{1}_X)$. Then

\begin{equation}
    Res(\mu) = Res([\delta\mu]).
\end{equation}
\end{theorem}

The proof of this theorem can be found in \cite{forster}, Chap. 2, §17, Theorem 17.3.

The main goal of the Serre Duality Theorem is to be able to understand the groups $\mathrm{H}^1(X, \mathcal{O}(D))$ for some divisor $D$ in terms of some simpler cohomology groups. We now have to develop an analog of the sheaves $\mathcal{O}(D))$ for sheaves of meromorphic forms:

\begin{definition}
Let $\omega \in \mathcal{K}_{X}^{(1)}(U)$ be a meromorphic $1$-form on some open set $U$. We can define the order of $\omega$ at some point $a \in U$ in the following way: let $(V, z)$ be a coordinate neighborhood around $a$. Then on $U \cap V$, $\omega = fdz$ for some meromorphic function $f$. Set the order of $\omega$ at $a$ $ord_{a}(\omega)$ to be $ord_{a}(f)$. This definition is independent of the choice of coordinate neighborhood $(V, z)$. In this way, we can define the divisor associated to $\omega$ in the same way that we defined the divisor associated to a meromorphic function $f$. This divisor will be denoted $(\omega)$. Now let $D = \sum n_{x}[x]$ be a divisor. We then define the subsheaf $\Omega_{D}$ of $\mathcal{K}_{X}^{(1)}(U)$ to be the sheaf given by

\begin{equation}
    \Omega_{D}(U) = \{\omega \in \mathcal{K}_{X}^{(1)}(U): ord_{x}(\omega) \ge -n_x, x \in U\}.
\end{equation}
\end{definition}

In particular, note that if $D = 0$ then $\Omega_{D} = \Omega_{X}^{1}$, the sheaf of holomorphic $1$-forms, since there are no poles.

Now, let $X$ be a compact Riemann surface, $D \in Div(X)$ be a divisor on $X$. The product map

\begin{equation}
    \Omega_{-D} \times \mathcal{O}(D) \rightarrow \Omega_{X}^{1}, \text{ } (\omega, f) \mapsto \omega f
\end{equation}
induces a map on cohomology

\begin{equation}
    \mathrm{H}^0(X, \Omega_{-D}) \times \mathrm{H}^1(X, \mathcal{O}(D)) \rightarrow \mathrm{H}^1(X, \Omega_{X}^{1}).
\end{equation}

Composing this map with the $Res$ map defined above, we get a bilinear form

\begin{equation}
    \mathrm{H}^0(X, \Omega_{-D}) \times \mathrm{H}^1(X, \mathcal{O}(D)) \rightarrow \mathbb{C}
\end{equation}
given by $(\omega, \xi) = Res(\omega\xi)$. So by the representation theorem in linear algebra, this bilinear form induces a map

\begin{equation}
    \mathrm{H}^0(X, \Omega_{-D}) \rightarrow \mathrm{H}^1(X, \mathcal{O}(D))^{*}
\end{equation}
to the dual of $\mathrm{H}^1(X, \mathcal{O}(D))$. \textbf{The Serre Duality Theorem states that this map is in fact an isomorphism}.\footnote{In other words, it states that the \textbf{pairing} $\mathrm{H}^0(X, \Omega_{-D}) \times \mathrm{H}^1(X, \mathcal{O}(D)) \rightarrow \mathbb{C}$ is \textbf{perfect}.}

\begin{theorem}[Serre Duality Theorem]\label{serreduality}
Let $X$ be a compact Riemann surface, and let $D \in Div(X)$ be any divisor on $X$. Then the map

\begin{equation}
    \mathrm{H}^0(X, \Omega_{-D}) \rightarrow \mathrm{H}^1(X, \mathcal{O}(D))^{*}
\end{equation}
defined above is an isomorphism.
\end{theorem}

The Serre Duality Theorem allows us to prove the following theorem that characterizes when a Mittag-Leffler problem on a compact Riemann surface has a solution:

\begin{theorem}
Let $X$ be a compact Riemann surface, and let $\mathcal{U}$ be an open covering of $X$. Let $\mu \in C^0(\mathcal{U}, \mathcal{K}_{X})$ be a Mittag-Leffler distribution of meromorphic functions on $X$. Then $\mu$ has a solution if, and only if

\begin{equation}
    Res(\omega\mu) = 0
\end{equation}
for every $\omega \in \Omega^{1}_{X}$.
\end{theorem}

Note that if $\omega \in \Omega^{1}_{X}$ then the product $\omega \mu$ is defined and is a Mittag-Leffler distribution of meromorphic $1$-forms, $\omega\mu \in C^0(\mathcal{U}, \mathcal{K}_{X}^{(1)})$. Then its residue is defined, as we have discussed above.

\begin{proof}
Recall that we have proved that $\mu$ has a solution if, and only if $[\delta\mu] \in \mathrm{H}^1(X, \mathcal{O}_X)$ vanishes. That happens precisely if for every linear functional $\lambda \in \mathrm{H}^1(X, \mathcal{O}_X)^{*}$, we have that $\lambda([\delta\mu]) = 0$. But $\mathrm{H}^1(X, \mathcal{O}_X)$ is the same as $\mathrm{H}^1(X, \mathcal{O}(0))$, and so, by Serre Duality we have that $\mathrm{H}^1(X, \mathcal{O}_X)^{*} \cong \mathrm{H}^0(X, \Omega^{1}_{X})$, and so every linear functional on $\mathrm{H}^1(X, \mathcal{O}_X)$ is equivalent to some holomorphic $1$-form $\omega \in \mathrm{H}^0(X, \Omega^{1}_{X})$ on $X$. Moreover, by the discussion leading to Theorem \ref{serreduality}, applying the linear functional $\lambda$ is the same as computing the bilinear form

\begin{equation}
    (\omega, [\delta\mu]) = Res(\omega[\delta\mu])
\end{equation}
and by Theorem \ref{relatingres}, $Res(\omega[\delta\mu]) = Res(\omega\mu)$. So $[\delta\mu] = 0$ if, and only if for every $\omega \in \Omega^{1}_{X}$ we have that $Res(\omega\mu) = 0$.
\end{proof}

To end this section, we quickly turn our attention to the noncompact case. We can prove that if $X$ is a noncompact Riemann surface, then every Mittag-Leffler problem on $X$ has a solution:

\begin{theorem}\label{mlncpct}
Let $X$ be a noncompact Riemann surface. Then

\begin{equation}
    \mathrm{H}^1(X, \mathcal{O}_X) = 0.
\end{equation}
\end{theorem}

By Theorem \ref{dolbeaultthm}, we know that $\mathrm{H}^1(X, \mathcal{O}_X) = \mathrm{H}^{(0,1)}_{\Bar{\partial}}(X) = \frac{\mathcal{A}^{(0,1)}(X)}{\Bar{\partial}\mathcal{A}^{(0,0)}(X)}$. So to prove that $\mathrm{H}^1(X, \mathcal{O}_X) = 0$, one has to prove that every $(0,1)$-form $\omega$ on a noncompact Riemann surface $X$ is $\Bar{\partial}$-exact, that is, for every $\omega \in \mathcal{A}^{(0,1)}(X)$ there exists a smooth function $f \in C^{\infty}(X)$ such that $\Bar{\partial}f = \omega$. This is done with an analog of Theorem \ref{runge}, and so we're back to complex analysis. The full proof can be found in \cite{forster}, Chap. 3, §25, Theorem 25.6.

The proof begins with the definition of a \textit{Runge domain}: a subset $Y$ of a Riemann surface $X$ is called a Runge domain if $X \setminus Y$ has no compact connected components.

One then proceeds to prove that if $X$ is noncompact, then there exists an \textit{exhaustion} on $X$ by Runge domains, that is, there exists a sequence

\begin{equation}
    Y_1 \Subset Y_2 \Subset Y_3 \Subset ... \Subset Y_k \Subset
\end{equation}
of relatively compact Runge domains such that $X = \bigcup Y_n$. Such an exhaustion in can be used to prove Theorem \ref{rungesurfaces}. The next step is again a special case of a result from complex analysis:

\begin{theorem}[Dolbeault's lemma]\label{dolbeaultlemma}
Let $g$ be a smooth function on an open disc in the plane. Then the \textit{inhomogenious Cauchy-Riemann equation}

\begin{equation}
    \frac{\partial f}{\partial\overline{z}} = g
\end{equation}
has a smooth solution $f$.
\end{theorem}

Then Theorem \ref{dolbeaultlemma} allows one to prove that if $X$ is a noncompact Riemann surface and $Y \Subset \Tilde{Y} \subset X$ are open subsets, then for every form $\omega \in \mathcal{A}^{0,1}(\Tilde{Y})$ there exists $f \in C^{\infty}(Y)$ such that $\bar{\partial}f = \left. \omega \right|_{Y}$. With this local solutions in hand, we then take an exhaustion of $X$ by relatively compact Runge domains and prove Theorem \ref{mlncpct} by induction: let $f_0 \in C^{\infty}(Y_0)$ be a local solution of

\begin{equation}
    \bar{\partial}f_0 = \left. \omega \right|_{Y_0}.
\end{equation}
Now suppose that we have constructed solutions for $Y_0,...,Y_n$. We again find a local solution $g_{n+1}$ such that

\begin{equation}
    \bar{\partial}g_{n+1} = \left. \omega \right|_{Y_{n+1}}.
\end{equation}
Since

\begin{equation}
    Y_{n} \Subset Y_{n+1},
\end{equation}
over $Y_n$ we have $\bar{\partial}g_{n+1} = \bar{\partial}f_{n}$, and so $\bar{\partial}(g_{n+1} - f_n) = 0$ and $g_{n+1} - f_n \in \mathcal{O}_{Y_n}(Y_n)$. We then use Theorem \ref{rungesurfaces} and find $\Tilde{g} \in \mathcal{O}_{Y_{n+1}}(Y_{n+1})$ such that 

\begin{equation}
    \lVert (g_{n+1} - f_n) - \Tilde{g} \rVert \le \frac{1}{2^n}
\end{equation}
where the norm is that of uniform convergence over compact sets on $C^{\infty}(Y_{n+1})$. Setting

\begin{equation}
    g_{n+1} - \Tilde{g} = f_{n+1}
\end{equation}
since $\bar{\partial}(\Tilde{g}) = 0$, $\bar{\partial}f_{n+1} = \bar{\partial}g_{n+1}$ and $\bar{\partial}g_{n+1} = \left. \omega \right|_{Y_{n+1}}$. One then proves that as $n$ grows larger, the difference

\begin{equation}
    \lVert f_{n+1} - f_n \rVert \le \frac{1}{2^n}
\end{equation}
becomes smaller, and that the sequence $\{f_n\}$ converges to a global solution $f$ of $\bar{\partial}f = \omega$, and so $\omega$ is $\bar{\partial}$-closed.

This theorem is a special case of a very important theorem, which we will discuss below.

\subsection{A Glimpse into Higher Dimensions: Analytic Spaces and The First Cousin Problem}

So far, we have asked ourselves whether we could construct a specific type of global meromorphic function on a Riemann surface $X$. The purpose of this section is to showcase the extension of this discussion to higher dimensional spaces. The theory which we will discuss is beyond the scope of this work, and so this section will have an expository flavour. The references are \cite{gunningrossi} and \cite{grauertremmert}.

\medskip

\noindent\textbf{Analytic Spaces:} Analytic spaces generalize analytic varieties and complex manifolds very similarly to how schemes generalize algebraic varieties. In what follows, $\mathcal{O}_n$ denotes the sheaf of holomorphic functions in $\mathbb{C}^n$.

Let $\mathscr{I}$ be a sheaf of ideals of $\mathcal{O}_n$ such that for every $z \in \mathbb{C}^n$ we have an open neighborhood $z \in U \subseteq \mathbb{C}^n$ and holomorphic functions $f_1,...,f_k \in \mathcal{O}_n(U)$ such that the ideal $\mathscr{I}(U)$ is \textit{finitely generated as an ideal} by the functions $f_1,...,f_k$. Consider the set $Y$ of points $z \in \mathbb{C}^n$ such that the stalk $\mathscr{I}_z$ of $\mathscr{I}$ at $z$ is \textit{not the unit ideal} in $\mathscr{O}_{n, z}$, or equivalently, the set of points in which the stalk of the quotient sheaf

\begin{equation}
    (\mathcal{O}_n / \mathscr{I})_z
\end{equation}
is not the zero ring.

For each $z \in Y$ we can find an open neighborhood $W$ of $z$ such that $Y \cap W$ is given by the vanishing locus of the functions $f_1,..., f_k$, and so locally, $Y$ \textit{is given by the zero locus of finitely many holomorphic functions}. The restriction of this quotient

\begin{equation}
    \mathcal{O}_Y = \left. \mathcal{O}_n/\mathscr{I} \right|_{Y}
\end{equation}
is a sheaf of rings on $Y$.

\begin{definition}
A ringed space $(X, \mathcal{O}_X)$ is called an \textit{analytic space} if for every $x \in X$ we can find an open neighbourhood $U$ of $x$ in $X$ such that $(U, \mathcal{O}_{\left. X \right|_{U}})$ is isomorphic as a ringed space to a ringed space $(Y, \mathcal{O}_Y)$ as above.
\end{definition}

One may observe that analytic subvarieties of $\mathbb{C}^n$ as defined in Section \ref{divlb} are examples of analytic spaces.

A section of $\mathcal{O}_X$ is called a \textbf{holomorphic function on $X$}. A map of analytic spaces is a map between the underlying ringed spaces. A map of analytic spaces is called a \textit{holomorphic map}. An isomorphism of analytic spaces is called a \textit{biholomorphic map}.

Complex manifolds are a special case of analytic spaces: we may define a complex manifold as a ringed space $(X, \mathcal{O}_X)$ if for every $x \in X$ we can find an open neighbourhood $U$ of $x$ in $X$ such that $(U, \mathcal{O}_{\left. X \right|_{U}})$ is isomorphic as a ringed space to a ringed space $(Y, \mathcal{O}_Y)$ where $Y \subset 
\mathbb{C}^n$ is a domain (i.e., an open connected subset) and $\mathcal{O}_Y$ is the sheaf of germs of holomorphic functions on $Y$.

We can then redefine meromorphic functions on complex manifolds in this new language of analytic spaces:

\begin{proposition}
Let $(X, \mathcal{O}_X)$ be a connected complex manifold. Then $\Gamma(X, \mathcal{O}_X)$ is a unique factorization domain.
\end{proposition}

A proof and a discussion of this fact can be found at the beginning of \cite{griffithsharris}, Chap. 0.

\begin{definition}
Let $(X, \mathcal{O}_X)$ be a complex manifold, and let $\mathcal{U}$ be an open covering of $X$ such that every $U \in \mathcal{U}$ is connected. Denote by $\mathcal{K}_{X}(U)$ the fraction field of $\mathcal{O}_{X}(U)$. The collection $\mathcal{K}_{X}(U)$ determines a presheaf on $X$. Its sheafification, which we also denote by $\mathcal{K}_X$ is called the \textit{sheaf of germs of meromorphic functions} on $(X, \mathcal{O}_X)$.
\end{definition}

In particular, we have that for every $x \in X$, the stalk $\mathcal{K}_{X,x}$ of $\mathcal{K}_{X}$ at $x$ is the fraction field of $\mathcal{O}_{X,x}$ (cf. \cite{gunningrossi}, Chapter VIII, Section B, Proposition 2).

Since $\mathcal{O}_{X,x}$ is an UFD, any $m \in \mathcal{K}_{X,x}$ can be written uniquely, up to multiplication by unit, as $m = \frac{f}{g}$, with $f, g \in \mathcal{O}_{X,x}$ relatively prime.

We can now formulate the analog of the Mittag-Leffler problem for higher dimensional manifolds, the First Cousin Problem:

\medskip

\noindent\textbf{First Cousin Problem:} Let $(X, \mathcal{O}_X)$ be a complex manifold, $\mathcal{U} = \{U_{\alpha}\}$ be an open covering of $X$, and let $m_{\alpha}$ be the germ of a meromorphic function on $U_{\alpha}$, that is, $m_{\alpha} \in \Gamma(U_{\alpha}, \mathcal{K}_X)$. Suppose that $m_{\alpha} - m_{\beta} \in \Gamma(U_{\alpha} \cap U_{\beta}, \mathcal{O}_X)$ for all $\alpha, \beta$. Can we find $m \in \Gamma(X, \mathcal{K}_X)$ such that $m - m_{\alpha} \in \Gamma(U_{\alpha}, \mathcal{O}_X)$ for all $\alpha$?

We can solve this problem for a nice class of complex manifolds, namely that of \textbf{Stein spaces}. The definition of Stein spaces is rather technical and analytical, and can be found in \cite{gunningrossi}, Chapter VII, Section A, Definition 2. 

Examples of Stein spaces include $\mathbb{C}^n$ and noncompact Riemann surfaces. For a more elaborate example, consider an \textit{analytic subspace $(X, \mathcal{O}_X)$ of $(\mathbb{C}^n, \mathcal{O}_n)$}, and let $f_1,...,f_l \in \mathcal{O}_X(X)$ be such that the set

\begin{equation}
    \{z \in X : |f_i(z)| \le 1\}
\end{equation}
is compact. Then the set $\{z \in X : |f_i(z)| < 1\}$ is a Stein space (cf. \cite{gunningrossi}, Chapter VII, Section A, Examples).

What we will need here however is an important feature of Stein spaces, the celebrated

\begin{theorem}[Cartan's Theorem B]\label{cartanb}
Let $(X, \mathcal{O}_X)$ be a Stein space, and let $\mathscr{F}$ be a coherent sheaf on $X$. Then $\mathrm{H}^q(X, \mathscr{F}) = 0$ for all $q \ge 1$.
\end{theorem}

This theorem is one of the most important results in the theory of analytic spaces. In particular, it generalizes Theorem \ref{mlncpct}. Its proof is (way) beyond the scope of this document. It can be found in \cite{gunningrossi}, Chapter VIII, Section A, Theorem 14.

With Theorem \ref{cartanb}, we can prove

\begin{theorem}
The First Cousin Problem has a solution if $X$ is Stein.
\end{theorem}

\begin{proof}
Consider again the exact sequence of sheaves

\begin{equation}
\begin{tikzcd}
    0 \arrow{r} & \mathcal{O}_X \arrow{r} & \mathcal{K}_{X} \arrow{r}{\pi} & \mathcal{K}_{X}/\mathcal{O}_X \arrow{r} & 0
\end{tikzcd}
\end{equation}
where $\pi$ is the projection. Looking at the long exact sequence in cohomology, we have

\begin{equation}
\begin{tikzcd}
    \mathrm{H}^{0}(X, \mathcal{O}_X) \arrow{r} & \mathrm{H}^{0}(X, \mathcal{K}_X) \arrow{r} & \mathrm{H}^{0}(X, \mathcal{K}_{X}/\mathcal{O}_X) \\ \arrow{r} & \mathrm{H}^{1}(X, \mathcal{O}_X) \arrow{r} & \mathrm{H}^{1}(X, \mathcal{K}_X) \arrow{r} & \mathrm{H}^{1}(X, \mathcal{K}_{X}/\mathcal{O}_X)
\end{tikzcd}
\end{equation}
and since $X$ is Stein by hypothesis, Theorem \ref{cartanb} tells us that $\mathrm{H}^{1}(X, \mathcal{O}_X) = 0$, and so

\begin{equation}
    \pi: \mathrm{H}^{0}(X, \mathcal{K}_X) \rightarrow \mathrm{H}^{0}(X, \mathcal{K}_{X}/\mathcal{O}_X)
\end{equation}
is surjective. Denote by $\{m_{\alpha} \in \Gamma(U_{\alpha}, \mathcal{K}_X)\}$ the given data of the First Cousin Problem on $X$. Then for every $\alpha$,

\begin{equation}
    \pi (m_{\alpha}) \in \mathrm{H}^{0}(X, \mathcal{K}_{X}/\mathcal{O}_X)
\end{equation}
and since in $U_{\alpha} \cap U_{\beta}$ the difference $m_{\alpha} - m_{\beta}$  is holomorphic by hypothesis, that is, is an element of $\mathrm{H}^{0}(U_{\alpha} \cap U_{\beta}, \mathcal{O}_X)$ we have that $\pi(m_{\alpha}) = \pi(m_{\beta})$ in $\mathrm{H}^{0}(U_{\alpha} \cap U_{\beta}, \mathcal{K}_{X}/\mathcal{O}_X)$. So let

\begin{equation}
    \sigma \in \mathrm{H}^{0}(X, \mathcal{K}_{X}/\mathcal{O}_X)
\end{equation}
be such that $\sigma = \pi(m_{\alpha})$ in $\mathrm{H}^{0}(U_{\alpha}, \mathcal{K}_{X}/\mathcal{O}_X)$. Since $\pi$ is surjective, there exists some $m \in \mathrm{H}^{0}(X, \mathcal{K}_X)$ such that $\sigma = \pi(m)$. This way, we have that $\pi(m) = \pi(m_{\alpha}) \in \mathrm{H}^{0}(U_{\alpha}, \mathcal{K}_{X}/\mathcal{O}_X)$, so $\pi(m) - \pi(m_{\alpha}) = 0$, and $m - m_{\alpha} \in \Gamma(U_{\alpha}, \mathcal{O}_X)$.
\end{proof}

\pagebreak

\begin{appendices}

\section{Proof of the Riemann-Roch formula}

\begin{theorem}[The Riemann-Roch Theorem]
Let $X$ be a compact Riemann surface of genus $g$, and let $D$ be a divisor on $X$. Then, $\dim{\mathrm{H}^{0}(X, \mathcal{O}(D))}$ and $l(D)$ are finite and

\begin{equation}\label{rrformula}
    \dim{\mathrm{H}^{0}(X, \mathcal{O}(D))} - l(D) = 1 - g + deg(D).
\end{equation}
\end{theorem}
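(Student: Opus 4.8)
The plan is to read the left-hand side of \eqref{rrformula} as an Euler characteristic and to exploit its additivity along short exact sequences of sheaves. Writing
\[
  \chi(D) \;:=\; \dim \mathrm{H}^{0}(X,\mathcal{O}(D)) \;-\; \dim \mathrm{H}^{1}(X,\mathcal{O}(D)),
\]
and observing that $l(D) = \dim \mathrm{H}^{1}(X,\mathcal{O}(D))$ (either by its definition, or via the Serre duality isomorphism with $\mathrm{H}^{0}(X,\Omega^{1}(-D))$), it suffices to prove $\chi(D) = 1 - g + \deg(D)$ for every divisor $D$. Since each divisor $D = \sum_{P} n_{P}\,P$ is obtained from the zero divisor by $\sum_{P} |n_{P}|$ steps of the form $E \mapsto E \pm P$, this reduces by induction to two checks: the base case $D = 0$, and the claim that $\chi$ increases by exactly $1$ under $D \mapsto D + P$.

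For the base case, compactness and connectedness of $X$ force $\mathrm{H}^{0}(X,\mathcal{O}) = \mathbb{C}$ by the maximum principle, while the Dolbeault/Hodge description of the genus recorded in the earlier sections gives $\dim \mathrm{H}^{1}(X,\mathcal{O}) = g$; hence $\chi(0) = 1 - g = 1 - g + \deg(0)$. For the inductive step, fix $P \in X$ and consider the short exact sequence of sheaves
\[
  0 \longrightarrow \mathcal{O}(D) \longrightarrow \mathcal{O}(D+P) \longrightarrow \mathbb{C}_{P} \longrightarrow 0,
\]
whose cokernel, computed on stalks, is the skyscraper sheaf at $P$ with one-dimensional stalk. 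Because $\mathrm{H}^{0}(X,\mathbb{C}_{P}) = \mathbb{C}$ and $\mathrm{H}^{q}(X,\mathbb{C}_{P}) = 0$ for $q \ge 1$ (a skyscraper sheaf is flasque, hence acyclic), the associated long exact cohomology sequence collapses to the exact sequence
\[
  0 \to \mathrm{H}^{0}(X,\mathcal{O}(D)) \to \mathrm{H}^{0}(X,\mathcal{O}(D+P)) \to \mathbb{C} \to \mathrm{H}^{1}(X,\mathcal{O}(D)) \to \mathrm{H}^{1}(X,\mathcal{O}(D+P)) \to 0,
\]
and the vanishing of the alternating sum of dimensions of a finite exact sequence yields $\chi(D+P) - \chi(D) = 1 = \deg(D+P) - \deg(D)$. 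Interchanging the roles of $D$ and $D+P$ gives the same conclusion for $D \mapsto D - P$, which completes the induction and with it the formula \eqref{rrformula}.

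The genuinely substantive ingredient — which I would quote from the sheaf-cohomology section rather than reprove — is the finiteness of $\dim \mathrm{H}^{0}(X,\mathcal{O}(D))$ and $\dim \mathrm{H}^{1}(X,\mathcal{O}(D))$ for compact $X$; this is the analytic heart of the theorem, obtained either from coherence of $\mathcal{O}(D)$ together with compactness of $X$, or from the functional-analytic argument that the restriction maps between the \v{C}ech cochain spaces of two suitably nested finite coverings are compact operators. Everything after that — exactness of the skyscraper sequence, acyclicity of $\mathbb{C}_{P}$, and the alternating-sum bookkeeping — is routine, so the real obstacle has effectively been discharged upstream, and the appendix's job is merely to assemble those pieces into \eqref{rrformula}.
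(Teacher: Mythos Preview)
Your proof is correct and follows essentially the same route as the paper: verify the formula for $D=0$, then use the skyscraper short exact sequence $0 \to \mathcal{O}(D) \to \mathcal{O}(D+P) \to \mathbb{C}_P \to 0$ and its long exact cohomology sequence to propagate the result inductively one point at a time. The only cosmetic difference is that the paper splits the six-term sequence into two short exact sequences (via $V=\operatorname{im}(\mathrm{H}^0\to\mathbb{C})$ and $W=\mathbb{C}/V$) and, rather than quoting a general finiteness theorem, bootstraps finite-dimensionality along the same induction from the base case $D=0$.
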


\begin{proof}
We begin by noting that the formula (\ref{rrformula}) holds if $D$ is the zero divisor: indeed, since we showed that $\mathcal{O}(D) = \mathcal{O}_X$, by taking global sections we have 

\begin{equation}
   \mathrm{H}^{0}(X, \mathcal{O}(D)) = \mathcal{O}_{X}(X)
\end{equation}
and since $X$ is compact by hypothesis, it has no nonconstant globally defined holomorphic functions. Then

\begin{equation}
    \mathcal{O}_{X}(X) = \mathrm{H}^{0}(X, \mathcal{O}_X) \cong \mathbb{C}
\end{equation}
and $\dim{\mathrm{H}^{0}(X, \mathcal{O}(D))} = \dim{\mathrm{H}^{0}(X, \mathcal{O}_X)} = 1$. Moreover, $\mathrm{H}^{1}(X, \mathcal{O}(D)) = \mathrm{H}^{1}(X, \mathcal{O}_X)$ and $\dim{\mathrm{H}^{1}(X, \mathcal{O}_X)} = g$ by definition, so

\begin{equation}
    \dim{\mathrm{H}^{0}(X, \mathcal{O}(D))} - l(D) = 1 - g + deg(D)
\end{equation}
becomes

\begin{equation}
    1 - g = 1 - g + 0
\end{equation}
and the formula indeed holds for $D = 0$.

Now, by an abuse of notation, denote by $P$ the divisor $\sum n_x [x]$, such that $n_P = 1$, and $n_x = 0$ for every $x \neq P$. Then obviously, if we denote the coefficient of $P$ in $D' = D + P$ by $P'$, and the coefficient of $P$ in $D$ by $n_P$, we have $n_P \le P'$ and we have a natural inclusion $\mathcal{O}(D) \rightarrow \mathcal{O}(D+P)$. Now let $(V, z)$ be a coordinate neighborhood on $X$ about $P$ with $z(P) = 0$, and let $i_{P,*}\mathbb{C}$ denote the \textit{skyscrapper sheaf} (of fields) with value $\mathbb{C}$ centered at $P$. Define a sheaf morphism

\begin{equation}
    \beta : \mathcal{O}(D') \rightarrow i_{P,*}\mathbb{C}
\end{equation}
as follows: let $U \subseteq X$ be an open subset. If $P \notin U$, then $\beta (U): \Gamma(U, \mathcal{O}(D')) \rightarrow \Gamma(U, i_{P,*}\mathbb{C})$ is the zero morphism. If $P \in U$ and $f \in \Gamma(U, \mathcal{O}(D'))$, then $f$ admits a Laurent series expansion about $P$, with respect to $z$, say

\begin{equation}
    f = \sum_{n=-k-1}^{+\infty}c_n z^n
\end{equation}
where $k = n_P$ as above. Define $\beta (U)(f)$ to be

\begin{equation}
    \beta (U)(f) = c_{-k-1} \in \mathbb{C} = \Gamma(U, i_{P,*}\mathbb{C}).
\end{equation}

We have that $\beta$ is a sheaf epimorphism, and so we get a short exact sequence

\begin{equation}
\begin{tikzcd}
    0 \arrow{r} & \mathcal{O}(D) \arrow{r} & \mathcal{O}(D') \arrow{r}{\beta} & i_{P,*}\mathbb{C} \arrow{r} & 0
\end{tikzcd}
\end{equation}
from which we get a \textit{long exact sequence in cohomology}:

\begin{equation}
\begin{tikzcd}
    0 \arrow{r} & \mathrm{H}^{0}(X, \mathcal{O}(D)) \arrow{r} & \mathrm{H}^{0}(X, \mathcal{O}(D')) \arrow{r} & \mathrm{H}^{0}(X, i_{P,*}\mathbb{C}) \\ \arrow{r} & \mathrm{H}^{1}(X, \mathcal{O}(D)) \arrow{r} & \mathrm{H}^{1}(X, \mathcal{O}(D')) \arrow{r} & \mathrm{H}^{1}(X, i_{P,*}\mathbb{C})
\end{tikzcd}
\end{equation}
that becomes

\begin{equation}\label{lesrr}
\begin{tikzcd}
    0 \arrow{r} & \mathrm{H}^{0}(X, \mathcal{O}(D)) \arrow{r} & \mathrm{H}^{0}(X, \mathcal{O}(D')) \arrow{r} & \mathbb{C} \\ \arrow{r} & \mathrm{H}^{1}(X, \mathcal{O}(D)) \arrow{r} & \mathrm{H}^{1}(X, \mathcal{O}(D')) \arrow{r} & \mathrm{H}^{1}(X, i_{P,*}\mathbb{C}).
\end{tikzcd}
\end{equation}

Suppose that $[\xi] \in \mathrm{H}^{1}(X, i_{P,*}\mathbb{C})$ is a cohomology class. Let $\mathcal{U}$ be a covering of $X$, and we can suppose that $\mathcal{U}$ is fine enough so as to guarantee that $P$ is only in one element of $\mathcal{U}$. Let $\eta \in Z^1(\mathcal{U}, i_{P,*}\mathbb{C})$ be a $1$-cocycle representing $[\xi]$. But then by construction, $P$ is not in any intersection and so $Z^1(\mathcal{U}, i_{P,*}\mathbb{C}) = 0$, and $[\xi] = 0$. So $\mathrm{H}^{1}(X, i_{P,*}\mathbb{C}) = 0$ and we have

\begin{equation}
\begin{tikzcd}
    0 \arrow{r} & \mathrm{H}^{0}(X, \mathcal{O}(D)) \arrow{r} & \mathrm{H}^{0}(X, \mathcal{O}(D')) \arrow{r} & \mathbb{C} \\ \arrow{r} & \mathrm{H}^{1}(X, \mathcal{O}(D)) \arrow{r} & \mathrm{H}^{1}(X, \mathcal{O}(D')) \arrow{r} & 0.
\end{tikzcd}
\end{equation}

In this way, we can split the sequence (\ref{lesrr}) into two short exact sequences: denote by $V$ the image of $\mathrm{H}^{0}(X, \mathcal{O}(D'))$ in $\mathbb{C}$, that is $V = \Ima (\mathrm{H}^{0}(X, \mathcal{O}(D')) \rightarrow \mathbb{C})$, and by $W$ the quotient $\mathbb{C}/V$. Then (\ref{lesrr}) becomes

\begin{equation}
\begin{tikzcd}
    0 \arrow{r} & \mathrm{H}^{0}(X, \mathcal{O}(D)) \arrow{r} & \mathrm{H}^{0}(X, \mathcal{O}(D')) \arrow{r} & V \arrow{r} & 0
\end{tikzcd}
\end{equation}
and

\begin{equation}
\begin{tikzcd}
    0 \arrow{r} & W \arrow{r} & \mathrm{H}^{1}(X, \mathcal{O}(D)) \arrow{r} & \mathrm{H}^{1}(X, \mathcal{O}(D')) \arrow{r} & 0.
\end{tikzcd}
\end{equation}

So we have that $\dim V + \dim W = 1 = deg(D') - deg(D)$, and that the sequences above are exact by construction.

Now, suppose that the formula (\ref{rrformula}) holds for one of the divisors $D, D'$. Then, the short exact sequences imply that all of the complex vector spaces are finite dimensional, and we have the following relations

\begin{equation}
    \dim V = \dim \mathrm{H}^0(X, \mathcal{O}(D')) - \dim \mathrm{H}^0(X, \mathcal{O}(D))
\end{equation}
and

\begin{equation}
    \dim \mathrm{H}^1(X, \mathcal{O}(D')) = \dim \mathrm{H}^1(X, \mathcal{O}(D)) - \dim W.
\end{equation}

Adding these equations, we get

\begin{align}
    \dim \mathrm{H}^0(X, \mathcal{O}(D')) - &\dim \mathrm{H}^1(X, \mathcal{O}(D)) = \\ = &\dim \mathrm{H}^0(X, \mathcal{O}(D)) - \dim \mathrm{H}^1(X, \mathcal{O}(D')) + \dim V + \dim W
\end{align}
and by $\dim V + \dim W = 1 = deg(D') - deg(D)$

\begin{align}
    \dim \mathrm{H}^0(X, \mathcal{O}(D')) - &\dim \mathrm{H}^1(X, \mathcal{O}(D')) - deg(D') = \\ = &\dim \mathrm{H}^0(X, \mathcal{O}(D)) - \dim \mathrm{H}^1(X, \mathcal{O}(D)) - deg(D)
\end{align}
And so, supposing that (\ref{rrformula}) holds for either $D$ or $D'$, we get

\begin{align}
    \dim \mathrm{H}^0(X, \mathcal{O}(D')) - &\dim \mathrm{H}^1(X, \mathcal{O}(D')) - deg(D') = \\ = &\dim \mathrm{H}^0(X, \mathcal{O}(D)) - \dim \mathrm{H}^1(X, \mathcal{O}(D)) - deg(D) \\
    =& 1 - g
\end{align}
and the formula holds for the other one. So since we proved that the formula holds for $D = 0$, we get that the formula holds for every effective divisor $D'$.

Now note that any arbitrary divisor $D$ can be written as a finite sum

\begin{equation}
    D = P_1 +...+ P_m - P_{m+1} -...-Pn.
\end{equation}

We prove the general case by induction. The base case $D = 0$ is already proved. Suppose that (\ref{rrformula}) holds for a divisor $D = P_1 +...+ P_m - P_{m+1} -...-Pn$. Then, (\ref{rrformula}) holds for $D' = D + P_{n+1}$, since our induction hypothesis guarantees that it holds for $D$, and by the above argument it must also hold for $D'$. Also, it holds for $D' = D - P_{n+1}$, since since our induction hypothesis guarantees that it holds for $D$, and by the above argument it must also hold for $D'$, since $D' + P_{n+1} = D$. So by induction, the formula holds for any $D$, and this completes the proof.
\end{proof}

Using the Riemann-Roch Theorem, one can prove that

\begin{theorem}\label{rsac}
Every compact Riemann surface is a complex algebraic curve.
\end{theorem}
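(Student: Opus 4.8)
The plan is to show that a compact Riemann surface $X$ carries enough meromorphic functions to separate points and tangent directions, and then to invoke the fact that a projective variety of dimension one which is also a complex manifold is an algebraic curve. The entire strength of the argument comes from Riemann--Roch, which guarantees a supply of meromorphic functions with prescribed polar behavior: concretely, for a divisor $D$ of large degree the equality $\dim \mathrm{H}^0(X, \mathcal{O}(D)) - l(D) = 1 - g + \deg(D)$ forces $\dim \mathrm{H}^0(X, \mathcal{O}(D))$ to be at least $1 - g + \deg(D)$, which is strictly bigger than $1$ once $\deg(D) \geq g$. In particular $X$ admits nonconstant meromorphic functions, so it is not merely a formal object; this already recovers the classical fact that a compact Riemann surface is not ``too rigid''.

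\medskip

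The main steps I would carry out are as follows. First, using Riemann--Roch with $D = n[x]$ for a point $x$ and $n$ large, produce a nonconstant meromorphic function $f$ on $X$; more carefully, for two distinct points $x \neq y$ choose $D$ supported away from them with $\deg D$ large enough that the natural evaluation-type map $\mathrm{H}^0(X,\mathcal{O}(D)) \to \mathbb{C}^2$ (comparing the two ``leading data'' at $x$ and $y$, in the spirit of the skyscraper sheaf argument used in the Riemann--Roch proof above) is surjective, giving a meromorphic function separating $x$ and $y$. Second, by the same mechanism with $D$ and $D - 2[x]$, or $D - [x] - [y]$, produce meromorphic functions whose differentials do not simultaneously vanish at a given point, so that the functions separate tangent vectors as well. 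Third, assemble finitely many such functions $f_0, \dots, f_N$ into a holomorphic map $\varphi \colon X \to \mathbb{P}^N$; compactness of $X$ together with the separation properties makes $\varphi$ an injective immersion, hence (since $X$ is compact) a biholomorphism onto its image, which is an analytic submanifold of $\mathbb{P}^N$ of dimension one. Fourth, apply Chow's theorem: an analytic subvariety of projective space is algebraic, so $\varphi(X)$ is a projective algebraic curve, and therefore so is $X$.

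\medskip

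The main obstacle is the third step: upgrading the separation of points and tangents into a genuine projective embedding, and in particular verifying that a single finite collection of functions works simultaneously at all pairs of points and all tangent directions. This requires a compactness argument — for each pair $(x,y)$ one gets a function separating them on a neighborhood of that pair, and one must pass to a finite subcover — together with care near the base locus and the poles, since the $f_i$ are only meromorphic; the usual device is to clear denominators so that $\varphi$ is given in homogeneous coordinates by sections of $\mathcal{O}(D)$ for a common $D$, and to note that $\mathrm{H}^0(X, \mathcal{O}(D))$ is base-point-free and very ample once $\deg D \geq 2g+1$, which is exactly what Riemann--Roch (applied to $D$, $D - [x]$, and $D - [x] - [y]$) delivers. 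A secondary obstacle is Chow's theorem itself, which I would simply cite from \cite{griffithsharris}, as its proof is outside the scope built up in this survey; alternatively, for curves one can give a more hands-on argument realizing $X$ as a branched cover of $\mathbb{P}^1$ via a single nonconstant meromorphic function $f$, exhibiting the function field of $X$ as a finite extension of $\mathbb{C}(f) = \mathbb{C}(z)$, and then reconstructing the curve from that field — but invoking Chow is cleaner given the machinery already in place.
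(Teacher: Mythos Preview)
The paper does not actually prove this theorem. Immediately after stating it, the text only surveys three possible routes --- a hard-analysis existence argument, Kodaira vanishing as in \cite{griffithsharris}, and a direct application of Riemann--Roch as in \cite{miranda} --- and defers to those references for the details. So there is no in-paper proof to compare your proposal against.

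That said, your sketch is a correct outline of the Riemann--Roch route the paper alludes to: apply the formula to $D$, $D-[x]$, and $D-[x]-[y]$ to see that the complete linear system $|D|$ separates points and tangents once $\deg D \geq 2g+1$, embed $X$ in $\mathbb{P}^N$, and finish with Chow's theorem. One small comment on what you call the main obstacle: the compactness/finite-subcover argument you describe is not actually needed. If you fix a single divisor $D$ with $\deg D \geq 2g+1$ and use the \emph{full} space $\mathrm{H}^0(X,\mathcal{O}(D))$ to define $\varphi$, the dimension drops $\dim \mathrm{H}^0(X,\mathcal{O}(D)) > \dim \mathrm{H}^0(X,\mathcal{O}(D-[x])) > \dim \mathrm{H}^0(X,\mathcal{O}(D-[x]-[y]))$ hold uniformly for every $x$ and every $y$, so base-point-freeness and very-ampleness follow globally without any patching of local choices.
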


This rather remarkable fact has some very disctinct proofs. It can be proved as a somewhat hard theorem in analysis, using some functional analytic mahinery to prove the existance of meromorphic functions on compact Riemann surfaces. It can also be proved using the before mentioned Kodaira's Vanishing Theorem. A proof through this approach can be found in \cite{griffithsharris}. Finally, it can also be proved as a direct application of the Riemann-Roch formula. A discussion about the Riemann-Roch Theorem and its relation with the algebraicity of compact Riemann surfaces can be found in \cite{miranda}.

\end{appendices}

\pagebreak

\bibliography{bibliography}
\bibliographystyle{alpha}

\end{document}